\newcommand{\bbC}{\mathbb{C}}
\newcommand{\bbE}{\mathbb{E}}
\newcommand{\bbI}{\mathbb{I}}
\newcommand{\bbN}{\mathbb{N}}
\newcommand{\bbQ}{\mathbb{Q}}
\newcommand{\bbR}{\mathbb{R}}
\newcommand{\bbV}{\mathbb{V}}
\newcommand{\bbZ}{\mathbb{Z}}
\newcommand{\calB}{\mathcal{B}}
\newcommand{\calF}{\mathcal{F}}
\newcommand{\calH}{\mathcal{H}}
\newcommand{\calP}{\mathcal{P}}
\newcommand{\calR}{\mathcal{R}}
\newcommand{\frakl}{\mathfrak{l}}
\DeclareMathOperator{\id}{id}
\DeclareMathOperator{\C}{C}
\DeclareMathOperator{\PC}{PC}
\DeclareMathOperator{\WPC}{WPC}
\DeclareMathOperator{\SL}{SL}
\DeclareMathOperator{\GL}{GL}
\DeclarePairedDelimiter\floor{\lfloor}{\rfloor}
\DeclareMathOperator{\PA}{PA}
\DeclareMathOperator{\inte}{int}
\DeclarePairedDelimiterX{\norm}[1]{\lVert}{\rVert}{#1}
\def\fillandplacepagenumber{%
 \par\pagestyle{empty}%
 \vbox to 0pt{\vss}\vfill
 \vbox to 0pt{\baselineskip0pt
   \hbox to\linewidth{\hss}%
   \baselineskip\footskip
   \hbox to\linewidth{%
     \hfil\thepage\hfil}\vss}}
\theoremstyle{plain}
\newtheorem{thm}{Theorem}[section]
\newtheorem{prop}[thm]{Proposition}
\newtheorem{lem}[thm]{Lemma}
\newtheorem{cor}[thm]{Corollary}
\newtheorem{mainthm}{Theorem}
\newtheorem{conj}{Conjecture}
\theoremstyle{definition}
\newtheorem{defn}[thm]{Definition}
\newtheorem{ex}[thm]{Example}
\newtheorem{rmk}[thm]{Remark}
\title{Constructing pseudo-Anosovs from expanding interval maps}
\author{Ethan Farber\thanks{Boston College}}
\begin{document}
\newpage
\maketitle


\begin{abstract}
	We investigate a phenomenon observed by W. Thurston wherein one constructs a pseudo-Anosov homeomorphism on the limit set of a certain lift of a piecewise-linear expanding interval map. We reconcile this construction with a special subclass of generalized pseudo-Anosovs, first defined by de Carvalho. From there we classify the circumstances under which this construction produces a pseudo-Anosov. As an application, we produce for each $g \geq 1$ a pseudo-Anosov $\phi_g$ on the closed surface of genus $g$ that preserves an algebraically primitive translation structure and whose dilatation is a Salem number.
\end{abstract}

\section{Introduction}

In \cite{Th2}, Thurston investigates the properties of \textit{uniform $\lambda$-expanders}, piecewise linear maps $f: I \to I$ of a compact interval with $|f'|=\lambda>1$ wherever defined. Here $\lambda$ is sometimes referred to as the \textit{growth rate} of $f$. These expanders arise naturally as linear models for interval endomorphisms of positive entropy (see, e.g. \cite{MTh}). In many ways, uniform $\lambda$-expanders may be seen as one-dimensional analogues of pseudo-Anosov homeomorphisms of a surface.

Recall that a \textit{pseudo-Anosov} is a homeomorphism $\phi: S \to S$ of surface $S$ such that there exist two singular transverse measured foliations $(\calF^u,\mu^u)$, $(\calF^s, \mu^s)$ and a constant $\lambda>1$ such that $\phi_\ast(\calF^u, \mu^u)=(\calF^u,\lambda \mu^u)$ and $\phi_\ast(\calF^s, \mu^s)=(\calF^s, \lambda^{-1}\mu^s)$. The constant $\lambda$ is called the \textit{dilatation} of $\phi$, and determines the topological entropy of $\phi$: namely, $h(\phi)=\log{\lambda}$. For a uniform $\lambda$-expander $f$ we also have $h(f)=\log{\lambda}$.

In \cite{Th2} Thurston considers another parallel: it is shown by Fried \cite{Fri} that the dilatation of a pseudo-Anosov is a \textit{bi-Perron unit}, i.e. that $\lambda$ is a real, positive algebraic unit whose Galois conjugates lie in the annulus $A_\lambda=\{z : \lambda^{-1}<|z|<\lambda\}$, with the exception of $\lambda$ and possibly $\lambda^{-1}$. It is conjectured that this condition is also sufficient for $\lambda$ to be the dilatation of some pseudo-Anosov on some surface. On the other hand, Thurston argues in \cite{Th2} that $\lambda$ is the growth rate of a postcritically finite (PCF, cf. Definition \ref{defn:Markov}) uniform expander if and only if $\lambda$ is a \textit{weak Perron number}; that is to say, all Galois conjugates are contained in the disc $D_\lambda=\{z:|z| \leq \lambda\}$.

Given these and other dynamical similarities between uniform expanders and pseudo-Anosovs, it is reasonable to ask if one can lift a uniform expander to a pseudo-Anosov on some surface, or conversely if one can find a projection from a pseudo-Anosov to some uniform expander.  Thurston gives a suggestive example near the end of \cite{Th2}. Thurston's example concerns the tent map for $\lambda=(1+\sqrt{5})/2$ defined by 

\begin{equation}\label{eqn:tent}
f(x)=\begin{cases}
\lambda x & 0 \leq x < \lambda^{-1}\\
2-\lambda x & \lambda^{-1} \leq x \leq 1
\end{cases}
\end{equation}

The sole Galois conjugate of $\lambda$ is $\psi=(1-\sqrt{5})/2=-\lambda^{-1}$, and Thurston defines the map $f_G: I \times \bbR \to I \times \bbR$ by 

\[
f_G(x,y)=\begin{cases}
(\lambda x, \psi y) & 0 \leq x < \lambda^{-1} \\
(2-\lambda x, 2-\psi y) & \lambda^{-1} \leq x \leq 1
\end{cases}
\]

\begin{figure}
\begin{subfigure}{.25\paperwidth}
\centering
\includegraphics[scale=.3]{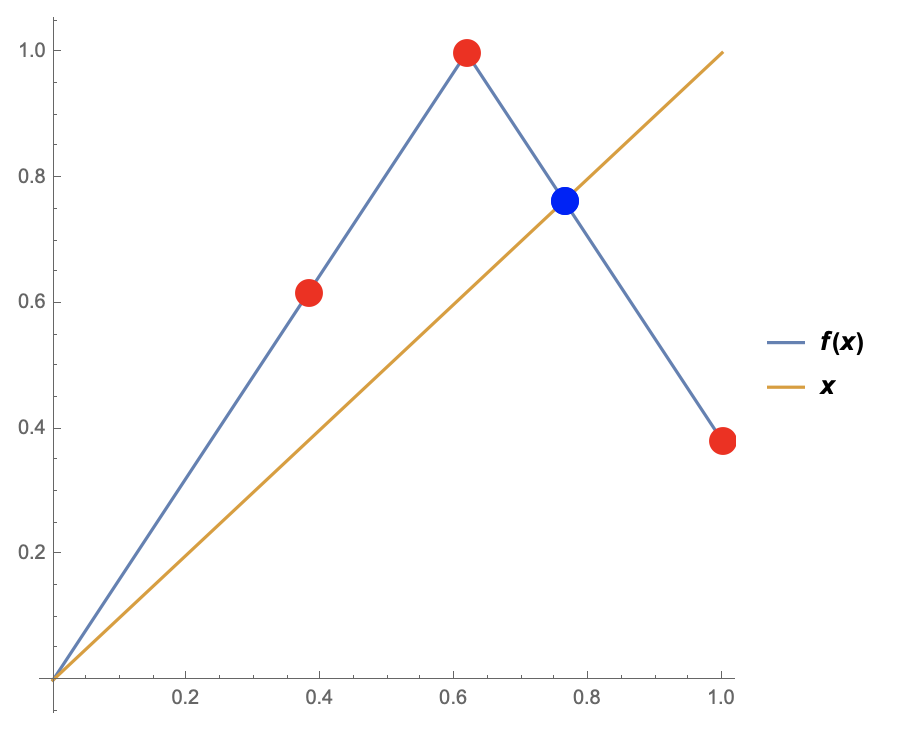}
\caption{A uniform $\lambda$-expander $f$ for $\lambda=(1+\sqrt{5})/2$.}
\end{subfigure}
\begin{subfigure}{.5\paperwidth}
\centering
\includegraphics[scale=.3]{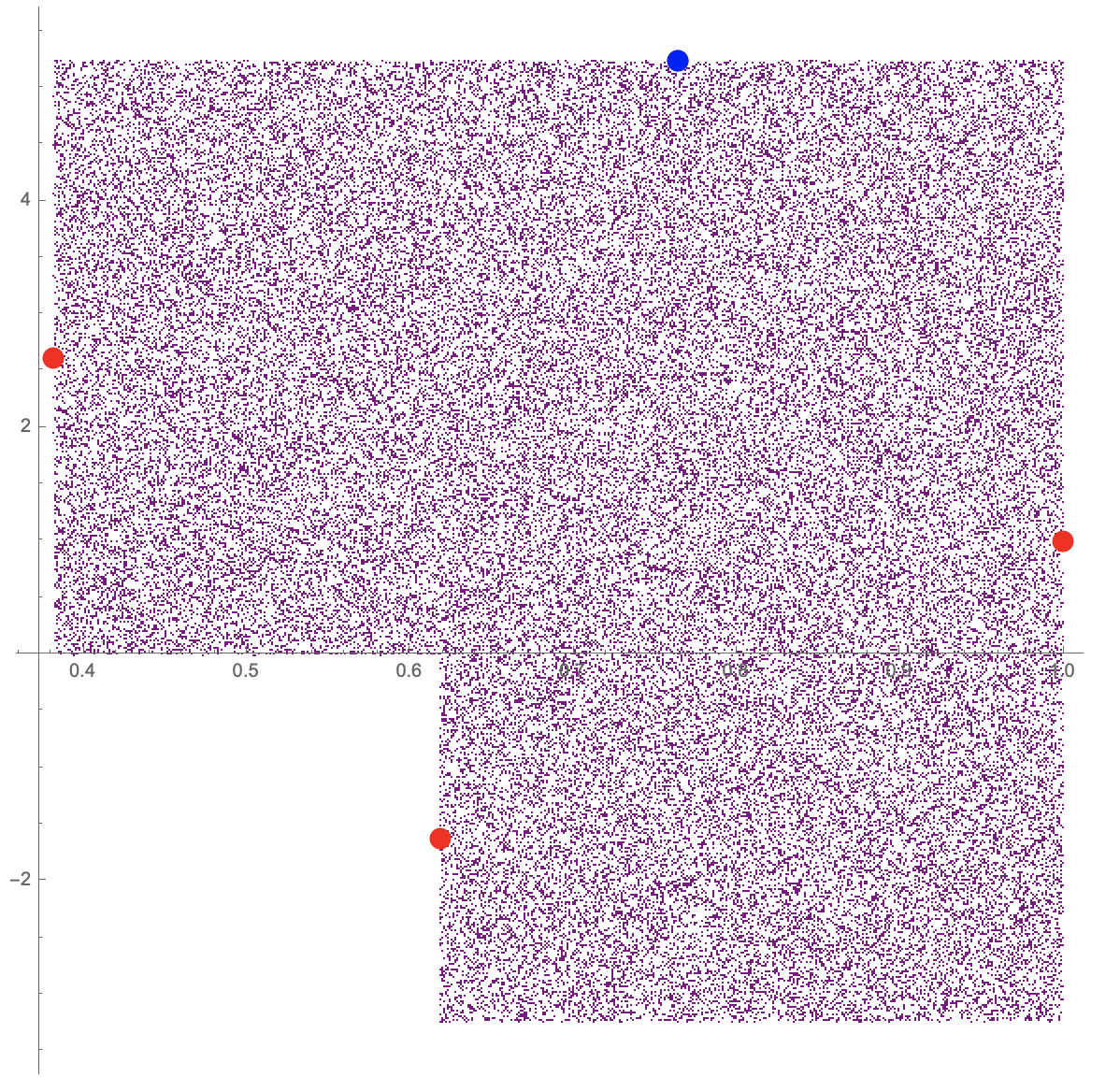}
\caption{The limit set of the Galois lift $f_G$.}
\end{subfigure}
		
\vspace{1cm}
		
\caption{Thurston's example.  He gives gluings under which the limit set is a sphere with four cone points, corresponding to the postcritical orbit (red) and a fixed point (blue) of $f$.}\label{fig:Thurston}
\end{figure}

Note that if $\pi: I \times \bbR \to I$ is the projection onto the first coordinate, then $\pi \circ f_G = f \circ \pi$. The \textit{limit set} $\Lambda_f$ of $f_G$ is defined to be the smallest closed set containing all accumulation points of orbits under $f_G$. In this example, $\Lambda_f$ is a finite union of rectangles in $I \times \bbR$, and while $f_G$ is discontinuous across the line $x=\lambda^{-1}$, one may find gluings of the edges of $\Lambda_f$ such that the induced map $\phi$ is a homeomorphism of a closed surface $\tilde{\Lambda}$. The definition of $f_G$ implies that the foliations of $\Lambda_f$ by horizontal and vertical lines are each preserved under the transformation, with the one-dimensional measure inherited from Lebesgue scaling in the horizontal direction by $\lambda$ and in the vertical direction by $|\psi|=\lambda^{-1}$. This persists in the quotient $\tilde{\Lambda}$, which is homeomorphic to $S^2$. In $\tilde{\Lambda}$ there are four cone points of angle $\pi$, each having a single preimage in $\Lambda$. These preimages have distinct $x$-coordinates and so project to four points of dynamical relevance for $f$: in particular, there is a correspondence between these points of $I$ and the cone points of $\tilde{\Lambda}$, namely

\begin{itemize}
\item Three points lying over the unique postcritical orbit of $f$ (shown in red in Figure \ref{fig:Thurston}), and 
\item One point lying over the non-zero fixed point of $f$ (shown in blue in the same figure)
\end{itemize}

Taken together, we see that the Galois lift $f_G$ admits a quotient map $\phi$ which is an orientation-reversing pseudo-Anosov homeomorphism of the sphere with four marked points. The dilatation of $\phi$ is $\lambda$, and $\phi$ projects to the $\lambda$-expander $f$. It is not clear from the discussion in \cite{Th2} whether this process is canonical, or whether it produces pseudo-Anosovs in greater generality. In this paper we classify when this construction works for interval maps of a particular form, and use it to quickly generate pseudo-Anosovs on surfaces of any genus.

\begin{rmk}
The fact that $\phi$ reverses orientation is because $\lambda$ and $\psi$ have opposite signs. Throughout this paper we will restrict our attention to orientation-preserving pseudo-Anosovs, but the methods for understanding the orientation-reversing case are similar.
\end{rmk}

\begin{rmk}
In general, we will see that for a pseudo-Anosov $\phi$ generated by thickening an interval map $f$, there is a one-pronged singularity of $\phi$ for each point in the forward orbit of a critical point for $f$. See Theorem \ref{PCP} and the discussion in Section 2. These singularities lie on the vertical boundary of $\Lambda$, whereas on the horizontal boundary there exists a unique singularity corresponding to some periodic cycle of $f$. In Thurston's example this periodic cycle is a fixed point, marked in blue, but in general the cycle is longer, generating a singularity of higher rank.
\end{rmk}

The question of how to formalize this construction was also addressed by Baik, Rafiqi, and Wu in \cite{BRW}. Their construction concerns a different class of uniform expanders, and together our work and theirs constitute special cases of a larger phenomenon wherein interval maps with certain combinatorics can be realized as train track maps.\\

\begin{defn}\label{defn:PCP}
We say that a PCF map $f: I \to I$ is \textit{postcritically periodic} (or \textit{PCP}) if each critical value $f(c)$ is periodic. In other words, $f$ acts on the finite set

\[
\PC(f)=\bigcup_{c \ \text{critical}} \{f^n(c): n \geq 1\}
\]

\noindent by a permutation. It is possible for multiple critical points $c_1, c_2$ to have the same critical value.
\end{defn}

As we will see (cf. Theorem \ref{PCP}), an interval map $f: I \to I$ being PCP is necessary for producing a pseudo-Anosov by thickening $f$.

\begin{defn}\label{defn:zig-zag}
We call a uniform $\lambda$-expander $f: I \to I$ a \textit{zig-zag map} (or \textit{zig-zag} or \textit{$\lambda$-zig-zag}) if the only critical points of $f$ are $c_i=i \cdot \lambda^{-1}$ for $i=1, \ldots, \floor{\lambda}$. In other words, $f(0) \in \partial I$, and $c \in \inte(I)$ is a critical point of $f$ if and only if $f(c) \in \partial I$. Note that for a fixed $\lambda$ there are two distinct $\lambda$-zig-zags: the \textit{positive} zig-zag satisfies $f(0)=0$, while the \textit{negative} zig-zag satisfies $f(0)=1$.
\end{defn}

\begin{figure}
\begin{subfigure}{.35\paperwidth}
\centering
\includegraphics[scale=.5]{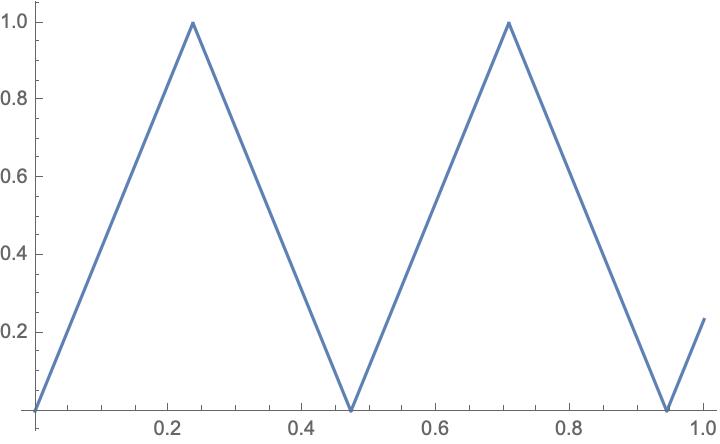}
\caption{A positive zig-zag map.}
\end{subfigure}
\begin{subfigure}{.4\paperwidth}
\centering
\includegraphics[scale=.5]{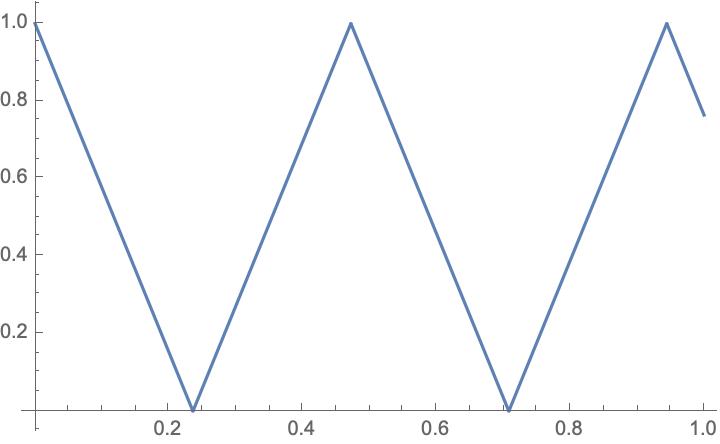}
\caption{A negative zig-zag map.}
\end{subfigure}
		
\vspace{1cm}
		
\caption{The positive and negative zig-zag maps of slope $\lambda$ equal to the Perron root of $x^2-4x-1$.}\label{fig:zigzagex}
\end{figure}

Observe that a zig-zag map $f$ is PCP if and only if $x=1$ is periodic. Throughout this paper we consider primarily the case of a PCP zig-zag map. For the sake of generality, however, we give the following two definitions for \textit{any} PCF uniform $\lambda$-expander with coefficients in $\bbQ(\lambda)$: that is, after forming the Markov partition $\calP_f=\{P_j\}$ for $f$ by cutting $I$ at the union of the critical points and their forward images (cf. Definition \ref{defn:Markov} and Remark \ref{Markov}), there exist polynomials $a_j(\lambda) \in \bbQ(\lambda)$ such that the linear map defining $f$ on the subinterval $P_j$ is given by $f_j(x)=a_j(\lambda) \pm \lambda x$. This assumption is mild, and allows us to make the following two definitions.

\begin{defn}\label{defn:Galois lift}
Let $f: I \to I$ be a uniform $\lambda$-expander with linear branches $f_j(x)=a_j(\lambda) \pm \lambda x$. Set $\tilde{f}_j(x)=a_j(\lambda^{-1}) \pm \lambda^{-1} x$. The \textit{Galois lift} of $f$ is defined to be the function $f_G: I \times \bbR \to I \times \bbR$ such that

\[
f_G(x,y)= \left ( f_j(x), \tilde{f}_j(y) \right ) \hspace{2mm} \text{if $x \in P_j$}
\]

\noindent For concreteness, we define each $P_j$ to be closed on the left, so that $f_G$ is continuous from the left. The limit set of $f_G$ will be denoted $\Lambda_f$.
\end{defn}

In this paper, we investigate when $\Lambda_f$ satisfies the following property.

\begin{defn}\label{defn:rectangular}
Let $f: I \to I$ be a PCF uniform $\lambda$-expander with canonical Markov partition $\calP_f=\{P_j\}$ and let $\pi: \Lambda_f \to I$ be projection onto the first coordinate. We say that $\Lambda_f$ is \textit{rectangular} if it has connected interior and each $R_j=\pi^{-1}(P_j)$ is a Euclidean rectangle. 
\end{defn}

\begin{rmk}
Note here that, despite the terminology ``Galois lift," we do not explicitly require $\lambda$ and $\lambda^{-1}$ to be Galois conjugate. There is, however, a natural integral polynomial $D_f(t)$ associated to a PCP $\lambda$-zig-zag $f$, called the \textit{digit polynomial} (cf. Definition \ref{defn:digit} below), and under the condition that $f_G: \Lambda_f \to \Lambda_f$ defines a pseudo-Anosov this polynomial has both $\lambda$ and $\lambda^{-1}$ as roots (cf. Theorem \ref{Galois}). Indeed, in all observed examples $\lambda$ and $\lambda^{-1}$ are Galois conjugate.
\end{rmk}

\begin{defn}\label{defn:digit}
Let $f$ be a PCP $\lambda$-zig-zag, and let $n \in \bbN$ be minimal such that $f^n(1) \in \partial I$. For each $0 \leq k < n$ let $f_k: \bbC^2 \to \bbC$ be the map of the form $f_k(x, z)=c_k \pm zx$ such that the restriction $f_k |_{\lambda}: x \mapsto f_k(x, \lambda)$ coincides with $f$ on a neighborhood of $f^k(1)$ in $I$. Then the \textit{digit polynomial} of $f$ is the degree $n$ polynomial $D_f: \bbC \to \bbC$ defined by

\[
D_f(t)= \epsilon \big [ f_{n-1} |_{z=t} \circ \cdots \circ f_0 |_{z=t}(1)-f^n(1) \big ]
\]

\noindent where $\epsilon=\pm 1$ is a normalization factor to make $D_f$ monic. By definition, $D_f(\lambda)=0$. 
\end{defn}

\begin{ex}\label{ex:tent}
Let $f: I \to I$ be the tent map from Thurston's example defined by (\ref{eqn:tent}). Then $f^3(1)=1$, and we have $f(1)=\lambda^{-2}$ and $f^2(1)=\lambda^{-1}$. Explicitly, $f_0(x, t)=2-tx$, $f_1(x,t)=tx$, and $f_2(x,t)=2-tx$. We compute

\begin{align*}
\epsilon D_f(t) & = f_2 \left ( f_1 \left ( f_0(1,t), t \right ), t \right)-f^3(1) \\
& = f_2(f_1(2-t,t), t)-1\\
& = f_2(2t-t^2,t)-1\\
& = 2-2t^2+t^3-1\\
& = t^3-2t^2+1
\end{align*}

\noindent Thus in this case $\epsilon=1$ and we find that

\[
D_f(t)=t^3-2t^2+1=(t-1)(t^2-t-1)
\]

\noindent is the digit polynomial of $f$. In particular, we verify that $\lambda=(1+\sqrt{5})/2$ is a root of $D_f$. We remark that even though $D_f(t) \neq t^3D_f(t^{-1})$, this does not provide a counterexample to Theorem \ref{Perm}, since $f$ is not technically of pseudo-Anosov type: it defines an orientation-reversing sphere homeomorphism, rather than an orientation-preserving one.
\end{ex}

\begin{ex}
Let $f: I \to I$ be the (unrestricted negative PCP) zig-zag map defined by 

\begin{equation}
f(x)=\begin{cases}
1-\lambda x & 0 \leq x < \lambda^{-1} \\
\lambda x - 1 & \lambda^{-1} \leq x < 2 \lambda^{-1} \\
3-\lambda x & 2\lambda^{-1} \leq x < 3 \lambda^{-1} \\
\lambda x - 3 & 3\lambda^{-1} \leq x \leq 1
\end{cases}
\end{equation}

\noindent where $\lambda>1$ is the largest real root of $t^4-3t^3-3t^2-3t+1$. In this case $f^4(1)=0$, $f_0(x,t)=f_1(x,t)=tx-3$, $f_2(x,t)=3-tx$, and $f_3(x,t)=1-tx$. We compute as before:

\begin{align*}
\epsilon D_f(t) & = f_3(f_2(f_1(f_0(1,t), t), t), t)-f^4(1)\\
& = f_3(f_2(f_1(t-3), t), t)\\
& = f_3(f_2(t^2-3t-3, t), t)\\
& = f_3(3+3t+3t^2-t^3,1)\\
& = 1-3t-3t^2-3t^3+t^4
\end{align*}

\noindent Here again $\epsilon=1$ and we obtain the polynomial relation $D_f(\lambda)=0$, where $D_f(t)=t^4-3t^3-3t^2-3t+1$ coincides with the minimial polynomial of $\lambda$.
\end{ex}

\begin{rmk}
The term ``digit" polynomial is chosen to reflect the relation between the coefficients of $D_f(t)$ and the digits of the $f$-expansion of $x=1$ (cf. Section 7 and Theorem \ref{Perm}). Since $D_f(t) \in \bbZ[t]$, the minimal polynomial of $\lambda$ divides the digit polynomial. The definition of $D_f(t)$ resembles that of the \textit{Parry polynomial} of the $\lambda$-expander $f$ (cf. \cite{Thom}).
\end{rmk}

Our approach appeals to the machinery of so called \textit{generalized pseudo-Anosovs}, defined in \cite{dC} by de Carvalho and further investigated in \cite{dCH} by de Carvalho and Hall. In brief terms, a \textit{generalized pseudo-Anosov} is a pseudo-Anosov except that we allow for infinitely many singularities of the measured foliations, as long as these only accumulate on finitely many points (cf. Definition \ref{defn:gpA}).\\

The first goal of this paper is Theorem \ref{Galois}, which shows that Thurston's construction for PCP zig-zags recovers the generalized pseudo-Anosov of de Carvalho and Hall in the case when the latter is a pseudo-Anosov.

For a given zig-zag $f$ there is only one thickening $F_L$ of $f$ that can produce a pseudo-Anosov, which we call the \textit{exterior left-veering} thick interval map (cf. Definition \ref{defn:ext} and Proposition \ref{veer}). We say that $f$ is \textit{of pseudo-Anosov type} if $F_L$ generates a pseudo-Anosov according to the construction of de Carvalho and Hall (cf. Definition \ref{defn:pAtype}). In this case there is associated to $F_L$ an \textit{invariant train track} $\tau_L$ of a very explicit form (cf. Theorem \ref{PCP}).

\begin{mainthm}\label{Galois}
Let $f: I \to I$ be a PCP $\lambda$-zig-zag map with $\lambda>2$. Then $f$ is of pseudo-Anosov type if and only if the following conditions are satisfied:

\begin{enumerate}
\item The digit polynomial $D_f$ of $f$ has $\lambda^{-1}$ as a root, and
\item the limit set $\Lambda_f$ of $f_G$ is rectangular (cf. Definition \ref{defn:rectangular}).
\end{enumerate}

In this case, the invariant generalized train track $\tau_L$ of $F_L$ is finite, and recovers the action of $f_G$ on $\Lambda_f$ in the following way: Let $S'$ be the closed topological disc obtained by performing the gluings indicated by the non-loop infinitesimal edges of $\tau_L$. Let $\tilde{f}: S' \to S'$ be the map induced by $F_L$. Then there is an isometry $i: S' \to \Lambda_f$ such that the following diagram commutes:

\[
\begin{tikzcd}
S' \arrow{r}{\tilde{f}} \arrow{d}{i} & S' \arrow{d}{i}\\
\Lambda_f \arrow{r}{f_G} & \Lambda_f
\end{tikzcd}
\]

Moreover, $i$ sends the horizontal and vertical foliations of $S'$ to those of $\Lambda_f$. Therefore, after identifying segments of boundary in each set so as to obtain pseudo-Anosovs $\phi_1: S \to S$ and $\phi_2: \tilde{\Lambda}_f \to \tilde{\Lambda}_f$, these systems are conjugate via an isometry that sends the (un)stable foliation of $\phi_1$ to the (un)stable foliation of $\phi_2$.
\end{mainthm}

\begin{rmk}
The content of Theorem \ref{Galois} is to reveal that, for PCP zig-zags, Thurston's construction of a pseudo-Anosov from a zig-zag is essentially the construction of a generalized pseudo-Anosov following de Carvalho and Hall. Indeed, Thurston's construction is more direct: one need only examine $\Lambda_f$ to determine if $f$ is of pseudo-Anosov type, and if so then the action of $f_G$ on $\Lambda_f$ recovers the pseudo-Anosov action given by applying the methods of de Carvalho and Hall. The proof of Theorem \ref{Galois} is given in Section 4.
\end{rmk}

\begin{rmk}
Theorem \ref{Perm} below strengthens condition (1) of Theorem 1: rather than merely vanishing at $\lambda^{-1}$, $D_f$ is in fact reciprocal, meaning that for any $\alpha \in \bbC$, $D_f(\alpha)=0$ if and only if $D_f(\alpha^{-1})=0$.
\end{rmk}

The remainder of the paper classifies for each modality $m$ the PCP zig-zags of pseudo-Anosov type, i.e. the maps whose Galois lift produces a pseudo-Anosov. These results, in particular Theorems \ref{bijection} and \ref{Perm}, are extensions of the work of Hall, who in \cite{H} classifies unimodal maps (i.e. tent maps) of pseudo-Anosov type, although not in these terms. In particular, Hall shows that there is an explicit bijection between the tent maps of pseudo-Anosov type and $\bbQ \cap (0,1/2)$, defined dynamically by the action of the tent map on its postcritical set. Theorem \ref{bijection} generalizes this to multimodal PCP zig-zags for each modality $m \geq 2$.

The key observation is that the outward winding of the exterior left-veering thickening $F_L$ places strong restrictions on how $f$ permutes the periodic orbit of the point $x=1$ (cf. Section 5). We prove that these restrictions are also sufficient to determine a PCP zig-zag of pseudo-Anosov type. The analysis is carried out in three separate cases, according to whether the modality $m$ of $f$ satisfies

\begin{itemize}
\item[(a)] $m \geq 4$ \text{even},
\item[(b)] $m \geq 3$ \text{odd}, or
\item[(c)] $m=2$
\end{itemize}


\begin{defn}
For $m \geq 2$ and $p \geq 3$, define $\PA(m,p)$ to be the set of zig-zags $f$ of pseudo-Anosov type such that

\begin{enumerate}
\item $f$ has $m$ critical points, and
\item $\#\PC(f) = p$.
\end{enumerate}

\noindent We also define the set

\[
\PA(m) = \bigcup_{p \geq 4} \PA(m,p).
\]
\end{defn}

As it turns out, the elements of $\PA(m,3)$ are relatively uninteresting: the growth rates of these maps are precisely the positive quadratic units, and all of the resulting pseudo-Anosovs are defined on the four-punctured sphere (cf. Corollary \ref{quad}). In particular, we recover hyperbolic automorphisms of the torus for each trace $t \geq 3$. The case when $p \geq 4$ is more interesting.

\begin{defn}\label{defn:ptype}
Let $f$ be a PCP interval map, and let $x_1<x_2<\ldots<x_n=1$ denote the elements of the forward orbit of $x=1$. The \textit{permutation type} of $f$ is the permutation $\rho(f) \in S_n$ such that $f(x_i)=x_{\rho(f)(i)}$.
\end{defn}

In Section 6 we show that for $m \geq 2$, if $f \in \PA(m)$ then $\rho(f)$ has a particular form. Indeed, there exist integers $k, n$ such that $2 \leq k \leq n-1$ and $\gcd(n-1, n-k)=1$ so that we have

\[
\rho(f)=\begin{cases}
\rho_e(n,k) & \text{if $m \geq 4$ even,}\\
\rho_o(n,k) & \text{if $m \geq 3$ odd, or}\\
\rho_2(n,k) & \text{if $m=2$}.
\end{cases}
\]

\noindent The permutations $\rho_e$, $\rho_o$, and $\rho_2$ are defined in Section 6. We will write $\rho(f)=\rho_m(n,k)$ when $m$ is not specified. A consequence of the following theorem is that for each $m \geq 2$, there is a bijection between the permutations $\rho_m(n,k)$ and the elements of $\PA(m)$.

\begin{mainthm}\label{bijection}
Fix $m \geq 2$ and let $\Phi: \PA(m) \to \bbQ \cap (0,1)$ be the map defined by

\[
\Phi(f)=\frac{n-k}{n-1} \hspace{5mm} \text{if $\rho(f)=\rho_m(n,k)$}
\]

\noindent Then $\Phi$ is a bijection. Moreover, for each $p \geq 4$ the image $\Phi(\PA(m,p))$ consists of the set of reduced rationals in $(0,1)$ of denominator $p-2$.
\end{mainthm}

Theorem \ref{bijection} classifies PCP zig-zags of pseudo-Anosov type of modality $m \geq 2$. As we mentioned, the proof in the case of unimodal maps of pseudo-Anosov type is due to Hall \cite{H}; our results should be interpreted as an extension of his to higher modalities. Sections 5 and 6 are dedicated to proving Theorem \ref{bijection}, after which we pass to a discussion of their kneading theoretic implications in Section 7.

We also have the following conjecture.

\begin{conj}
Let $f_1, f_2 \in \PA(m)$ for $m \geq 2$. If $\lambda_i$ is the growth rate of $f_i$ and $\Phi(f_i)=q_i$ as in Theorem \ref{bijection}, then

\[
\lambda_1<\lambda_2 \iff q_1 < q_2
\]

\end{conj}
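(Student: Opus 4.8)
To attack this conjecture, the first move is to observe that the assignment $f \mapsto \lambda$ is injective on $\PA(m)$: the modality $m = \floor{\lambda}$ together with the standardness requirement (sign $(-1)^m$) pins down the zig-zag among all uniform $\lambda$-expanders, so $\lambda_1 = \lambda_2$ forces $f_1 = f_2$, hence $q_1 = q_2$ since $\Phi$ is a bijection (Theorem \ref{bijection}). Granting this, the biconditional reduces to the single \emph{non-strict} implication
\[
q_1 < q_2 \ \Longrightarrow\ \lambda_1 \le \lambda_2,
\]
since strictness then comes for free from injectivity, and the reverse direction follows by trichotomy on the $q_i$. So the real content is the monotonicity of the growth rate as a function of the combinatorial parameter $q = (n-k)/(n-1)$.

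The plan for that monotonicity is to route it through kneading data. First I would use the explicit description of $\rho_e,\rho_o,\rho_2$ from Section 6 to write down the itinerary of the point $x=1$ under $f$: for $0 \le j < n$, record which of the $m$ linear branches of $f$ contains $f^j(1)$, together with the sign $\pm$ of that branch, obtaining a finite word $w_m(n,k)$ over the branch alphabet. The shape of $\rho_m(n,k)$ — a permutation of ``rotation type'' with rotation number $q$ — should force $w_m(n,k)$ to be a mechanical (Sturmian) word built from the increments $\floor{(j+1)q} - \floor{jq}$, with the parity of $m$ and the sign conventions of Definition \ref{defn:sign} dictating how the Sturmian $0/1$ pattern gets dressed up into $\{\pm 1,\dots,\pm m\}$. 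The key sub-claim to isolate and prove is that $q \mapsto w_m(n,k)$ is strictly increasing from $\bbQ \cap (0,1)$, with its usual order, to branch words ordered by the alternating (parity-)lexicographic order — the order under which $\beta$-expansions, Parry words, and kneading sequences are monotone in their growth rates. Carried out separately in the three parity cases (a), (b), (c), this step is where I expect the bulk of the work and the main obstacle: extracting the exact formula for $w_m(n,k)$ from the permutations and verifying order-preservation, which is a multimodal refinement of Hall's unimodal bookkeeping in \cite{H}.

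Given the kneading word, the second half is a now-standard comparison. The growth rate $\lambda$ is the largest root of the digit polynomial $D_f$ (the inverse Artin--Mazur zeta function of $f$), whose coefficients are read off from $w_m(n,k)$ via Theorem \ref{Perm} and Section 7; normalize $D_f$ so that $D_f(t) > 0$ for $t$ large. If $w_m(n_1,k_1)$ precedes $w_m(n_2,k_2)$ in the alternating lexicographic order, then tracking the sign contribution of the first coordinate where the two words disagree — and using that each branch is $a_j(\lambda) \pm \lambda x$, so the signs alternate down the telescoping composition that defines $D_f$ — should yield $D_{f_2}(\lambda_1) \le 0$, whence $\lambda_2 \ge \lambda_1$ since $D_{f_2}$ is eventually positive. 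Chaining this with the previous paragraph gives $q_1 < q_2 \Rightarrow \lambda_1 \le \lambda_2$, and together with injectivity, $\lambda_1 < \lambda_2$.

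As a cross-check and possible alternative route, one can try to embed $\PA(m)$ into the one-parameter family of standard $m$-modal $\lambda$-zig-zags and invoke Milnor--Thurston monotonicity: the kneading sequence is non-decreasing along such a family, so if $q$ is a monotone function of the kneading sequence — exactly the sub-claim above — then $q$ is monotone in $\lambda$. The wrinkle is that this family is not pointwise ordered, since the breakpoints $c_i = i\lambda^{-1}$ slide as $\lambda$ varies, so one must apply the monotonicity after a fiberwise coordinate change straightening the Markov partitions, or argue directly on the set of admissible kneading words. I expect the self-contained digit-polynomial argument to be the cleaner path; in either case the crux remains the purely combinatorial Sturmian description of the orbit of $x=1$ coming out of $\rho_e,\rho_o,\rho_2$.
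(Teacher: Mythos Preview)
There is nothing to compare against: the statement is labelled a \emph{Conjecture} in the paper, and the authors say explicitly ``We treat this conjecture in a forthcoming paper.'' No proof is given.

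Your proposal is a plausible strategy sketch rather than a proof, and you are candid about this. Two remarks on the plan itself. First, your ``key sub-claim'' --- that the itinerary word of $x=1$ is a mechanical/Sturmian word in the rotation number $q$ --- is already essentially the content of Theorem~\ref{Perm}: the coefficient $c_i$ is $m$ or $m-2$ according to whether the line $t\mapsto qt$ crosses an integer on $[i-1,i]$, which is exactly the Sturmian increment $\lfloor iq\rfloor-\lfloor (i-1)q\rfloor$. So the combinatorial identification you flag as the main obstacle is largely done in the paper; what remains is the order-theoretic step. Second, the comparison $D_{f_2}(\lambda_1)\le 0$ is not as immediate as you suggest, because $D_{f_1}$ and $D_{f_2}$ generally have \emph{different degrees} $b_1+1\ne b_2+1$: the ``first coordinate where the two words disagree'' argument only makes sense after you pass to the infinite periodic extensions of the coefficient sequences (equivalently, compare the power series $\sum_{i\ge 1} c_i t^{-i}$ obtained by repeating the finite block), and you then need to justify that the largest real root is monotone in that infinite word under the appropriate signed-lexicographic order. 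This is the genuine analytic content that your outline does not supply, and it is presumably what the authors defer to the forthcoming paper.
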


\noindent We treat this conjecture in a forthcoming paper.

\begin{rmk}
There is a certain symmetry in the choice of rational number representing a zig-zag of pseudo-Anosov type. Namely, one may choose to assign either $q=\frac{n-k}{n-1}$ to $f$, or else $1-q=\frac{k-1}{n-1}$. One may view this symmetry as a manifestation of the fact that $\rho(f)=\rho_m(n,k)$ is essentially a rotation by $n-k$ modulo $n-1$, which may equivalently be seen as a rotation by $-(k-1)$ modulo $n-1$. Making a choice determines whether Conjecture 1 posits that the association $\lambda_i \mapsto q_i$ preserves or reverses linear order. Here we have defined $q=\frac{n-k}{n-1}$ so that this map is order-preserving. This choice also makes the statement of Theorem \ref{Perm} more natural.
\end{rmk}
 
\begin{mainthm}\label{Perm}
Suppose $f \in \PA(m)$ for $m \geq 2$ with $\Phi(f)=\tfrac{a}{b} \in \bbQ \cap (0,1)$ in lowest terms. Define $L: [0, b] \to \bbR$ by $L(t)=\frac{a}{b} \cdot t$. Then

\[
D_f(t)=t^{b+1}+1-\sum_{i=1}^b c_it^{b+1-i},
\]

\noindent where the $c_i$ satisfy

\begin{equation}
c_i=\begin{cases}
m & \text{if $L(t) \in \bbN$ some $t \in [i-1,i]$}\\
m-2 & \text{otherwise}
\end{cases}
\end{equation} 

\noindent In particular, $c_i=c_{b-i}$, so $D_f$ is reciprocal: that is,

\[
D_f(t)=t^{b+1} D_f(t^{-1}).
\]
\end{mainthm}

\noindent Theorem \ref{Perm} generalizes Lemmas 2.5 and 2.6 of \cite{H}.\\

In Section 8 we consider an infinite family of zig-zags producing an algebraically primitive translation surface for each positive genus.

\begin{mainthm}\label{Salem}
For each $g \geq 1$ define $f_g: I \to I$ to be the bimodal PCP zig-zag map of pseudo-Anosov type corresponding to $q_g=\tfrac{1}{2g} \in \bbQ \cap (0,1)$. Let $\lambda_g$ be the growth rate of $f_g$. Then the following are true for each $g \geq 1$.

\begin{enumerate}
\item $\lambda_g$ is a Salem number of degree $2g$.
\item The pseudo-Anosov $\phi_g$ obtained from $f_g$ is defined on a $(2g+2)$-punctured sphere $\Sigma_{0, 2g+2}$.
\item The translation surface $(X_g, \omega_g)$ obtained as the hyperelliptic double cover of $\Sigma_{0, 2g+2}$ is of genus $g$, and hence algebraically primitive.
\end{enumerate}
\end{mainthm}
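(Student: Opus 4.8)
The plan is to compute the digit polynomial $D_{f_g}(t)$ explicitly using Theorem \ref{Perm}, deduce from its shape that $\lambda_g$ is Salem of degree $2g$, and then run the general machinery (Theorems \ref{Galois}, \ref{bijection} and the train-track/singularity data of Theorem \ref{PCP}) to read off the surface and its double cover. First I would apply Theorem \ref{Perm} with $m=2$ and $\Phi(f_g)=q_g = \tfrac{1}{2g}$, so $a=1$, $b=2g$, and $L(t) = t/(2g)$. Then $L(t)\in\bbN$ for some $t\in[i-1,i]$ precisely when the interval $[i-1,i]$ contains a multiple of $2g$, i.e. only for $i=1$ (containing $t=0$) and $i=2g$ (containing $t=2g$); for $2\le i\le 2g-1$ we get $c_i = m-2 = 0$. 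Hence
\[
D_{f_g}(t) = t^{2g+1} + 1 - 2t^{2g} - 2t = (t-1)\bigl(t^{2g} - t^{2g-1} - \cdots - t^{2g-1}?\bigr),
\]
more precisely $D_{f_g}(t) = t^{2g+1} - 2t^{2g} - 2t + 1$, and since $D_{f_g}$ is reciprocal and $D_{f_g}(1) = 1 - 2 - 2 + 1 = -2 \neq 0$ — wait, one must check the factor structure carefully: $D_{f_g}(t)/(t+1)$ or the irreducible reciprocal factor $S_g(t) = t^{2g} - 2t^{2g-1} - 2t + 1$ (degree $2g$) is the minimal polynomial of $\lambda_g$. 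To prove (1) I would verify: $S_g$ is reciprocal of degree $2g$; $S_g$ has exactly one real root $>1$ (by Perron/Descartes-type sign analysis of $t^{2g}-2t^{2g-1}-2t+1$, which for large $t$ is positive, at $t=2$ is $-3<0$, so there is a root in $(2,3)$ — consistent with $\lambda_g>2$), hence by reciprocity exactly one real root in $(0,1)$; and all other roots lie on the unit circle. The last point is the crux: I would write $S_g(t)/t^g = (t^g + t^{-g}) - 2(t^{g-1} \cdot t^{?})\ldots$ — cleaner is to substitute $t = e^{i\theta}$ and show $S_g(e^{i\theta})=0$ has $2g-2$ solutions, or to use a known criterion (Salem-ness follows once we know $\lambda_g$ is a reciprocal algebraic integer $>1$ all of whose conjugates $\neq \lambda_g^{\pm1}$ have modulus $\le 1$, together with at least one conjugate on the unit circle, i.e. it is not Pisot). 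The cleanest route may be: the interlacing/self-reciprocal trick — set $u = t + t^{-1}$, divide $S_g(t)$ by $t^g$, and show it becomes a real polynomial $P_g(u)$ of degree $g$ with $P_g$ having $g-1$ roots in $(-2,2)$ and one root $>2$; this is a finite Chebyshev-type computation. I expect this verification that all conjugates off $\{\lambda_g,\lambda_g^{-1}\}$ lie on the unit circle (equivalently, that the $u$-polynomial has all but one root in $[-2,2]$) to be the main obstacle, though it should succumb to an explicit factorization.

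For (2), I would invoke Theorem \ref{bijection}: since $q_g \in \bbQ\cap(0,1)$, there is a unique $f_g \in \PA(2)$ with $\Phi(f_g)=q_g$, and one recovers $(n,k)$ from $\tfrac{n-k}{n-1} = \tfrac{1}{2g}$ in lowest terms, so $n-1 = 2g$, $n-k = 1$, i.e. $n = 2g+1$, $k = 2g$. Thus the forward orbit of $x=1$ has $n = 2g+1$ points, and $\rho(f_g) = \rho_2(2g+1, 2g)$. By the remark following the statement of Theorem \ref{Galois} (and Theorem \ref{PCP}), the pseudo-Anosov $\phi_g$ has one one-pronged singularity for each point of the postcritical orbit $\PC(f_g)$ plus one further singularity of higher rank on the horizontal boundary corresponding to the periodic cycle. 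I would count: for a bimodal zig-zag the critical points are $c_1,c_2$, whose images constitute the postcritical set, and the periodic orbit of $x=1$ has $2g+1$ points; tallying the prong data (the $1$-pronged points are forced, and the Euler–Poincaré formula $\sum_p (2-p_p) = 2\chi(\Sigma)$ for the foliation on a punctured sphere) pins down the number of marked points to be $2g+2$. The key is that each of the $2g+1$ orbit points of $x=1$ contributes a $1$-pronged singularity at a puncture, plus exactly one $3$-pronged singularity (on the horizontal boundary) at one more puncture, so $\Sigma_{0,2g+2}$; I would confirm $\sum(2-p_p) = (2g+1)\cdot 1 + 1\cdot(2-3) = 2g$, while $2\chi(S^2) - 2\cdot(\#\text{punctures with }p=2)$ must match $-2(\text{punctures})+4 = 2g$ forces $\#\text{punctures} = 2g+2$. (The precise bookkeeping between interior singularities and punctures is the thing to get right, reading it off the train track $\tau_L$ of Theorem \ref{PCP} as in Section 2.)

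For (3), I would use the hyperelliptic involution. A sphere with an even number $2g+2$ of marked points admits a double cover branched exactly over those marked points, which is a closed surface $X_g$ of genus $g$ by Riemann–Hurwitz: $\chi(X_g) = 2\chi(S^2) - (2g+2) = 4 - (2g+2) = 2-2g$. The $1$-pronged singularities of $\phi_g$ lift to regular points or to even-pronged singularities, and the measured foliations, being invariant under the deck involution, pull back to an orientable pair, giving a translation structure $\omega_g$ with $\phi_g$ lifting to an affine automorphism of derivative $\mathrm{diag}(\lambda_g,\lambda_g^{-1})$ — this is the standard orienting double cover for pseudo-Anosovs with only $1$- and odd-pronged singularities on a sphere (cf. the hyperelliptic locus). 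Finally, \emph{algebraically primitive} means $[\bbQ(\mathrm{tr}) : \bbQ] = \deg$ equals the genus, i.e. the trace field $\bbQ(\lambda_g + \lambda_g^{-1})$ has degree $g$ over $\bbQ$; but from part (1), $\lambda_g$ has degree $2g$ and is reciprocal, so $\lambda_g + \lambda_g^{-1}$ has degree exactly $g$, which is the genus of $X_g$ — hence $(X_g,\omega_g)$ is algebraically primitive. The only genuinely new content beyond citing earlier theorems is the explicit computation in (1) and the prong-count in (2); everything in (3) is a formal consequence of the hyperelliptic double cover construction together with the degree statement from (1).
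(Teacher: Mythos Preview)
Your overall strategy matches the paper's: compute $D_{f_g}$ via Theorem~\ref{Perm}, pass to the companion polynomial in $u=t+t^{-1}$ to locate roots, and deduce algebraic primitivity from $[\bbQ(\lambda_g+\lambda_g^{-1}):\bbQ]=g$. But there are two genuine gaps in the execution.

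\textbf{Part (1).} Your explicit factor $S_g(t)=t^{2g}-2t^{2g-1}-2t+1$ is not $D_{f_g}(t)/(t+1)$; already for $g=1$ this gives $t^2-4t+1$ rather than the correct $t^2-3t+1$. The actual quotient is $d_g(t)=t^{2g}+1+3\sum_{i=1}^{2g-1}(-1)^it^i$, and the paper exploits the resulting recurrence $q_{g+2}(w)=wq_{g+1}(w)-q_g(w)$ for the companions to prove interlacing of roots in $(-2,2)$. More importantly, you never address \emph{irreducibility}. Showing that $d_g$ has $2g-2$ roots on the unit circle and a real pair $\lambda_g,\lambda_g^{-1}$ proves $\lambda_g$ is Salem, but not that it has degree $2g$: one must still rule out cyclotomic factors of $d_g$. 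The paper does this by a separate trigonometric argument (assuming $e^{2\pi i/m}$ is a root, taking real and imaginary parts, and deriving a contradiction). Your sketch contains no mechanism for this, and a Chebyshev-type root count alone cannot supply it.

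\textbf{Part (2).} Your singularity bookkeeping is off. Since $m=2$ is even, $f_g$ is \emph{positive}, so $x=0$ is a fixed point and is postcritical (it is $f(c_2)$). Thus $\PC(f_g)$ consists of the $2g{+}1$ points in the orbit of $x=1$ \emph{together with} the fixed point $0$, giving $2g{+}2$ one-pronged singularities. The singularity at infinity is then $2g$-pronged, not $3$-pronged: on $S^2$ the index relation reads $\sum(2-p_s)=2\chi(S^2)=4$, and $(2g{+}2)\cdot 1+(2-2g)=4$ checks, whereas your tally $(2g{+}1)\cdot 1+(2-3)=2g$ does not. The paper's double-cover computation in part (3) depends on this correct count (the $2g$-pronged point lifts to two cone points of angle $2g\pi$ upstairs), so the error propagates.
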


To summarize: in Section 2 we review the construction of generalized pseudo-Anosovs, focusing particularly on the concept of a thick interval map. This is followed in Section 3 by a discussion on the different ways of thickening an interval map to a thick interval map. Here we prove that, when attempting to construct pseudo-Anosovs, the only thickening of a zig-zag we need consider is the exterior left-veering map $F_L$ (cf. Proposition \ref{veer}).

In Section 4 we prove Theorem \ref{Galois}, reconciling Thurston's construction with that of de Carvalho. Beginning in Section 5, we turn our attention to classifying zig-zag maps of pseudo-Anosov type, observing several necessary conditions on their postcritical orbit structure. Section 6 in effect demonstrates that these conditions are also sufficient, proving Theorem \ref{bijection}.

Section 7 investigates the digit polynomial $D_f(t)$, establishing Theorem \ref{Perm}.

Section 8 turns briefly to considerations of flat geometry, providing a terse introduction to part of the theory. As an application, we prove Theorem \ref{Salem}, demonstrating the existence of an infinite family of algebraically primitive translation surfaces arising from our construction, one for each genus $g \geq 1$. 

\tableofcontents


\subsection{Acknowledgements}


I gratefully acknowledge Andr\'e de Carvalho and Hyungryul Baik for their helpful and insightful comments. I am also particularly grateful to my advisor Kathryn Lindsey for introducing me to this subject and participating in many extremely helpful conversations.


\section{Review of generalized pseudo-Anosovs}


The purpose of this section is to review the theory of generalized pseudo-Anosovs, following the work of de Carvalho in \cite{dC} and de Carvalho and Hall in \cite{dCH}. A reader who is already familiar with the theory may skip this section.


\subsection{Thick intervals}


In this subsection we introduce thick interval maps and the procedure of \textit{thickening} an interval map $f: I \to I$ to a thick interval map $F$.\\

A \textit{thick interval} is a closed topological 2-disc $\bbI \subseteq S^2$ consisting of \textit{decomposition elements} which come in two types: a \textit{leaf}, homeomorphic to the interval $I=[0,1]$, and a \textit{junction}, homeomorphic to the closed $2$-disc. The intersection of the boundary of a junction with $\bbI$ may consist of one or two connected components. We allow only finitely many junctions in a thick interval.

We denote by $\bbV$ the union of the junctions of $\bbI$, and we refer to the connected components of $\bbI \setminus \bbV$ as \textit{strips}. Each strip is homeomorphic to $(0,1) \times [0,1]$, and is a union of leaves. We put coordinates $h_s: \overline{s} \to [0,1] \times [0,1]$ on the closure of each strip $s$ such that the leaves of $s$ are precisely the sets

\[
h_s^{-1}(\{x\} \times [0,1]), \hspace{5mm} x \in (0,1)
\]

\noindent Following \cite{dCH} we denote by $\bbE$ the union of the closures of the strips. See Figure \ref{fig:ti}.

The notation $\bbV$, $\bbE$ is purposefully suggestive. As we shall see, a thick interval map is meant as a dynamical blow-up of an unrestricted interval map $f: I \to I$. The junctions correspond to elements of the weak postcritical set $\WPC(f)$ (cf. Section 3), whereas the strips correspond to the subintervals between these points. It is for this reason that we sometimes refer to junctions as \textit{fat vertices} and the strips as \textit{thick edges}.

\begin{figure}
\centering
\includegraphics[scale=.2]{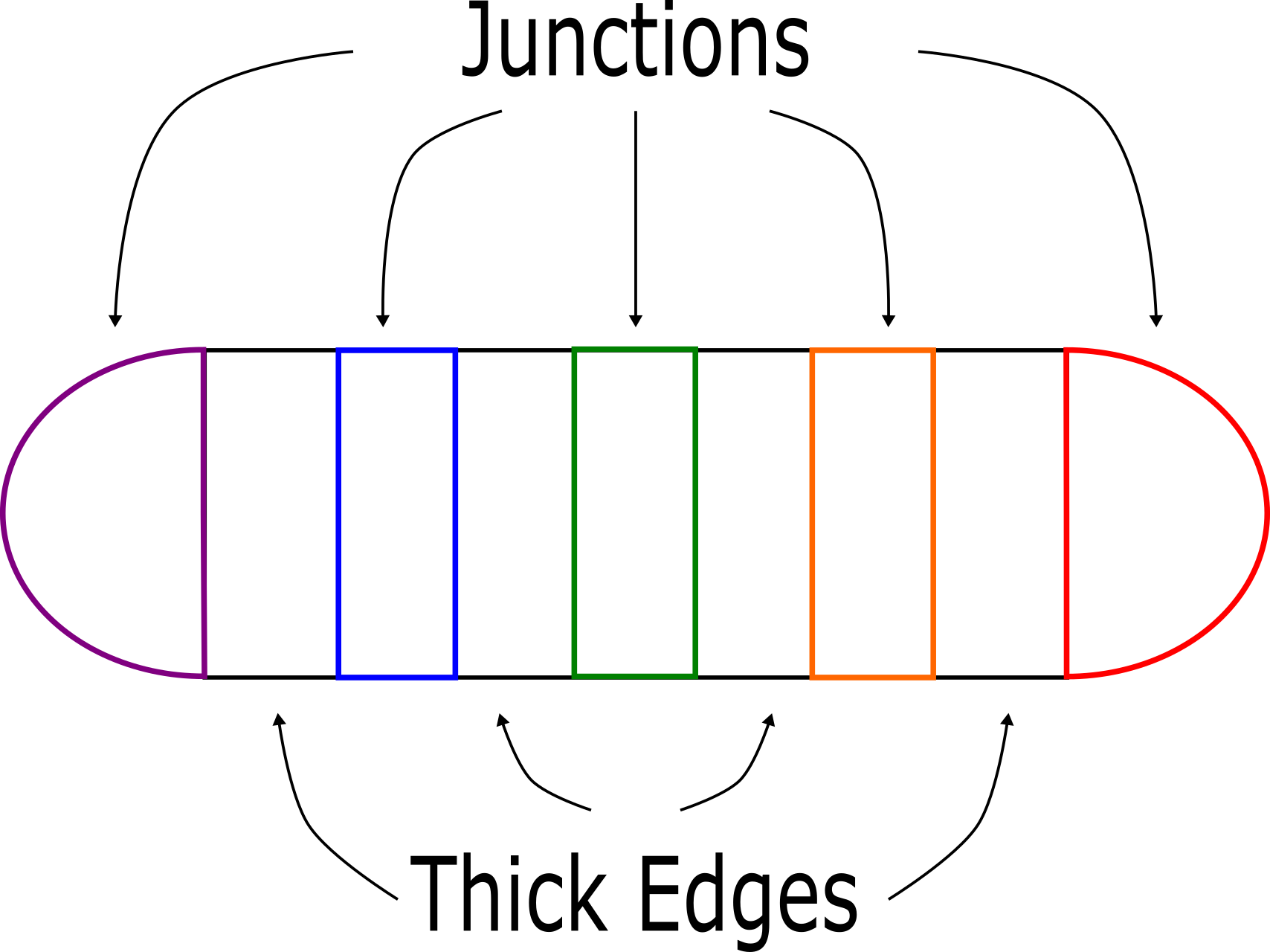}
\caption{A typical thick interval consists of alternating thick edges and two-sided junctions, bookended by a pair of one-sided junctions.}\label{fig:ti}
\end{figure}

We refer to \cite{dCH} for the following definition. We write $F: (X,A) \to (X,A)$ to represent a map $F: X \to X$ of topological spaces such that $F(A) \subseteq A$ for some $A \subseteq X$.

\begin{defn}\label{defn:thickinterval}
A \textit{thick interval map} is an orientation-preserving homeomorphism $F: (S^2, \bbI) \to (S^2, \bbI)$ such that

\begin{enumerate}
\item $F(\bbI)$ is contained in the interior of $\bbI$,
\item if $\gamma$ is a leaf of $\bbI$ then $F(\gamma)$ is contained in a decomposition element, and the diameter of $F^n(\gamma)$ with respect to the coordinates $h_s$ tends to $0$ as $n \to \infty$,
\item if $J$ is a junction of $\bbI$ then $F(J)$ is contained in a junction,
\item $F$ is linear with respect to the coordinates $h_s$: in each connected component of $s_i \cap F^{-1}(s_j)$, where $s_i$ and $s_j$ are strips, $F$ contracts vertical coordinates uniformly by a factor $\mu_j<1$ and expands horizontal coordinates uniformly by a factor $\lambda_j>1$,
\item if $J$ and $J'$ are junctions such that $F(J) \subseteq J'$ then $F(\partial J \setminus \partial \bbI) \subseteq (\partial J' \setminus \partial \bbI)$,
\item if $J$ is a junction with $F^n(J) \subseteq J$ for some $n \geq 1$ then $J$ has an attracting periodic point of period $n$ in its interior whose basin contains the interior of $J$.
\end{enumerate}
\end{defn}

\begin{figure}[h!]
\centering
\includegraphics[scale=.2]{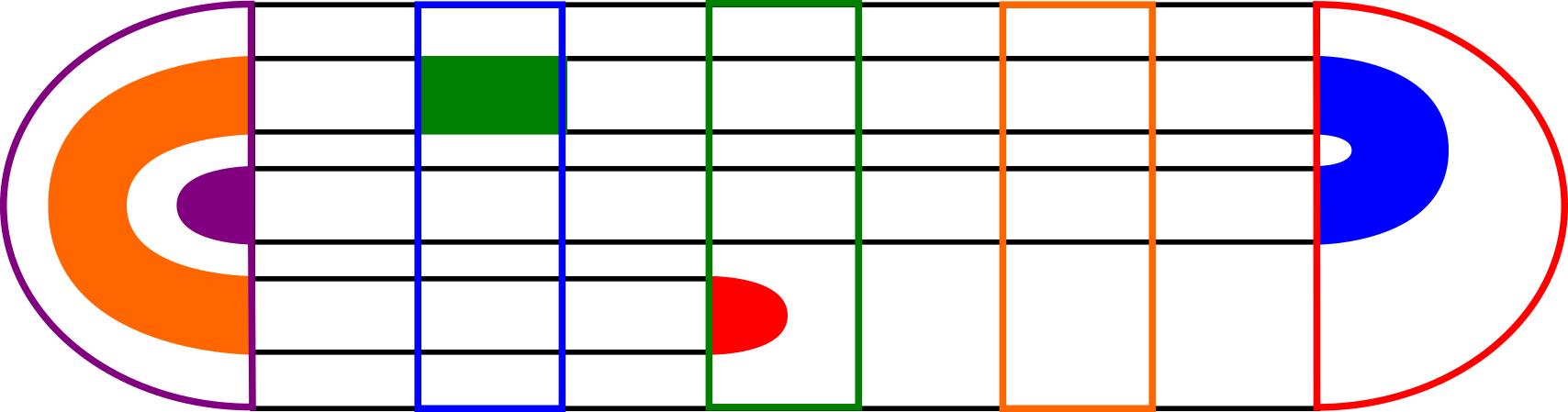}
\caption{A thick interval map. The images of the junctions have been shaded darker for clarity. The first junction is mapped into itself, the second into the fifth, the third into the second, the fourth into the first, and the fifth into the third.}\label{fig:timap}
\end{figure}

\begin{defn}\label{defn:transition1}
We associate to a thick interval map $F$ a \textit{transition matrix} $M=(m_{i,j})$ such that if $s_1, \ldots, s_n$ are the strips of $\bbI$ then $m_{i,j}$ is the number of times $F(s_j)$ crosses $s_i$. Note that since strips are separated by junctions and since junctions are mapped by $F$ to other junctions, the $m_{i,j}$ are integers; there are no partial crossings.
\end{defn}

\begin{defn}\label{defn:primitive}
A non-negative matrix $M$ is said to be \textit{primitive} if there exists some $m \in \bbN$ such that $M^m$ is positive, i.e. $(M^m)_{i,j}$ is positive for each $i, j$. 
\end{defn}

\begin{defn}\label{defn:thickthin}
Let $F:(S^2, \bbI) \to (S^2, \bbI)$ be a thick interval map. Collapse each decomposition element of $\bbI$ to a point, obtaining an interval $\tilde{I}$ and an induced map $\tilde{f}: \tilde{I} \to \tilde{I}$. If $x \in \tilde{I}$ corresponds to a leaf that $F$ maps into a junction, then $\tilde{f}$ will be constant in a neighborhood of $x$. Further collapsing these intervals of constancy produces either a single point or a new interval $I$ with an induced map $f: I \to I$. We say that $f$ is a \textit{thinning} of $F$. Similarly, we say that $F$ is a \textit{thickening} of $f$, and \textit{thickening} $f$ refers to the process of associating to $f$ a thick interval map $F$ that thins to $f$.
\end{defn}

\begin{prop}\label{prop:thinterval}[Theorem 2 in \cite{dC}]
If $F$ has primitive transition matrix $M$ then thinning $F$ always produces a nontrivial interval $I$. Moreover, the induced map $f: I \to I$ also has transition matrix $M$.
\end{prop}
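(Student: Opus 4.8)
The plan is to analyze the thinning procedure of Definition~\ref{defn:thickthin} step by step and track how the transition matrix is affected at each stage, showing that primitivity of $M$ forces the two collapses not to destroy the combinatorial structure encoded by $M$. First I would fix the strips $s_1,\dots,s_n$ of $\bbI$ and note that after collapsing each decomposition element to a point we obtain an interval $\tilde I$ partitioned into $n$ closed subintervals $\tilde P_1,\dots,\tilde P_n$ (the images of the strips) together with the points coming from junctions; the induced map $\tilde f$ is well-defined by conditions (2) and (3) of Definition~\ref{defn:thickinterval}, since leaves map into decomposition elements and junctions into junctions. The key point is that $\tilde f$ restricted to the interior of $\tilde P_j$ is a homeomorphism onto its image (because $F$ is linear and orientation-preserving on each strip, so it cannot fold a strip over itself), and the number of $\tilde P_i$ that $\tilde f(\tilde P_j)$ covers is exactly $m_{i,j}$, so the symbolic dynamics of $\tilde f$ on the partition $\{\tilde P_j\}$ is governed by $M$.

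Next I would handle the two ways the thinning can degenerate. The interval $\tilde I$ is a single point only if $\bbI$ has no strips, i.e.\ $n=0$, which is incompatible with $M$ being a (nonempty) primitive matrix. For the second collapse, one removes the intervals of constancy of $\tilde f$: these are precisely the subintervals $\tilde P_j$ whose image $F(s_j)$ lies entirely in a junction, equivalently the indices $j$ with $m_{i,j}=0$ for all $i$, i.e.\ the zero columns of $M$. But a primitive matrix has no zero column: if the $j$th column of $M$ were zero then the $j$th column of $M^m$ would be zero for every $m$, contradicting positivity of some power. Hence no subinterval is collapsed in the second step, the map $f$ is defined on a genuine interval $I \cong \tilde I$, and the partition $\{P_j\}$ survives unchanged. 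I should also check that no $\tilde P_j$ degenerates to a point under the first collapse — this again follows because $F$ expands horizontal coordinates on $s_j$, so the strip is not collapsed, giving a nondegenerate $\tilde P_j$.

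Finally, with $I = \tilde I$ and the partition $\{P_j\}$ identified with $\{\tilde P_j\}$, I would conclude that $f$ has the same transition matrix $M$: for each $j$, $f(P_j)$ crosses $P_i$ exactly the number of times $F(s_j)$ crosses $s_i$, which is $m_{i,j}$ by definition of $M$, and the orientation-preserving linear structure guarantees these crossings are transverse and counted without cancellation. The main obstacle I anticipate is the bookkeeping in the first paragraph: verifying cleanly that collapsing decomposition elements yields a well-defined \emph{interval} map (rather than something with branching or non-monotone behavior on a piece) and that the crossing counts are preserved under the collapse — this requires using conditions (2)--(5) of Definition~\ref{defn:thickinterval} carefully, particularly that $F$ maps the non-boundary part of a junction's boundary into the non-boundary part of the target junction's boundary (condition (5)), which is what keeps distinct strips from being merged across a junction during the collapse.
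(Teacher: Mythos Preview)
The paper does not give its own proof of this proposition: it is quoted as Theorem~2 of \cite{dC} and used as a black box. So there is nothing to compare your argument against in this paper; what follows is feedback on your proposal on its own terms.

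Your overall strategy is sound --- track the two collapses and use primitivity to rule out degeneration --- but two of your intermediate claims are inaccurate and should be corrected. First, you assert that $\tilde f$ restricted to the interior of $\tilde P_j$ is a homeomorphism onto its image. This is false in general: condition~(4) of Definition~\ref{defn:thickinterval} only gives linearity on each connected component of $s_j \cap F^{-1}(s_i)$, and between such components the image $F(s_j)$ passes through junctions, so $\tilde f$ on $\tilde P_j$ is piecewise monotone with constant plateaus at the ``turns,'' not injective. Second, and relatedly, you identify the intervals of constancy of $\tilde f$ with the full subintervals $\tilde P_j$ having zero $j$th column. In fact every $\tilde P_j$ typically contains several smaller intervals of constancy, one for each junction that $F(s_j)$ traverses; the zero-column case is only the extreme situation in which the \emph{entire} $\tilde P_j$ is a single plateau.

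Neither error is fatal. What you actually need is the weaker statement that no $\tilde P_j$ collapses \emph{entirely} in the second step, and this is exactly what ``no zero column'' gives you. After collapsing the internal plateaus, each $\tilde P_j$ survives as a nondegenerate subinterval $P_j$ of $I$, and the crossing count $m_{i,j}$ is unchanged because the plateaus contribute nothing to the image and the monotone pieces between them correspond bijectively to the horizontal crossings of $F(s_j)$ over the various $s_i$. Rewriting your second paragraph with this corrected picture --- piecewise monotone with plateaus, collapse the plateaus, count surviving monotone laps --- gives a clean proof.
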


\begin{rmk}
Observe that while there is a unique thinning of a thick interval map, there are multiple thickenings of an interval map with at least one critical point. In Section 3 we investigate the proper thickening to choose for a zig-zag map if one hopes to produce a pseudo-Anosov.
\end{rmk}


\subsection{Train tracks and generalized pseudo-Anosovs}


In this subsection we describe how to associate to a given thick interval map $F$ a branched 1-manifold $\tau$ invariant under $F$, up to isotopy. This $\tau$ is called a \textit{generalized invariant train track} (cf. Definition \ref{defn:track}), and provides the blueprints for constructing a \textit{generalized pseudo-Anosov} $\phi: S^2 \to S^2$ (cf. Definition \ref{defn:gpA}). In particular, $\tau$ dictates the structure of the singular invariant foliations of $\phi$.\\

To a thick interval $\bbI$ we associate the data of a finite set $A$ of points, called \textit{punctures}. Each puncture is contained in a junction, and each junction contains at most one puncture. For a strip $s$ we define the arc $\gamma_s$ to be the path $\gamma_s(t)=h_s^{-1}(t, 1/2)$. Let RE denote the set of such paths. The endpoints of each arc $\gamma_s$ are on the boundary components of $s$ and are called \textit{switches}. We denote by $L$ the set of switches.

We again take the following definition from \cite{dCH}.

\begin{defn}\label{defn:track}
Given a thick interval $\bbI \subseteq S^2$ with a set of punctures $A$, a \textit{generalized train track} $\tau \subseteq \bbI \setminus A$ is a graph with vertex set $L$ and countably many edges, each of which intersects $\partial \bbV$ only at $L$, such that

\begin{enumerate}
\item The edges of $\tau$ which intersect the interior of $\bbE$ are precisely the elements of RE, and
\item no two edges $e_1$, $e_2$ contained in a given junction $J$ are \textit{parallel}: that is, $e_1$ and $e_2$ may only bound a disc if it contains a point of $A$ or another edge of $\tau$.
\end{enumerate}

Two generalized train tracks $\tau$ and $\tau'$ are \textit{equivalent}, denoted $\tau \sim \tau'$, if they are isotopic by an isotopy supported on $\bbV \setminus A$.
\end{defn}

Condition $1$ says that the edges of $\tau$ contained in the strips of $\bbI$ are simple to describe: they are elements $\gamma_s$ of RE, which are called \textit{real edges}. The more complicated edges are those contained in the junctions, which are called \textit{infinitesimal edges}. The collection of infinitesimal edges will be denoted by IE.

The infinitesimal edges will provide extra information not already given by the (finite) incidence matrix $M$ for some thick interval map $F$. Indeed, we associate to $F$ a specific generalized train track as follows. Let $\tau_0$ denote the (disconnected) generalized train track given by the real edges $\gamma_s \in$ RE. We apply $F$ to $\tau_0$ and then perform the following series of pseudo-isotopies:

\begin{enumerate}
\item On each strip $s$ we define the pseudo-isotopy $\psi_s: \overline{s} \times [0,1] \to \overline{s}$ by 

\[
\psi_s(x,y,t)=(x,(1-t)y+t/2)
\]

\item Within each junction we define another pseudo-isotopy $\psi_{e_1, e_2}$ for each pair of infinitesimal edges that are parallel, which homotopes $e_1$ and $e_2$ together. 
\end{enumerate}

The effect of the first set of pseudo-isotopies is to collapse all components of $F(\tau_0)$ contained within $\bbE$ to the real edges $\gamma_s$, while the second set homotopes parallel infinitesimal edges and is only supported on a disc containing the relevant junction. Composing these pseudo-isotopies produces a new generalized train track, denoted $\tau_1':=F_\ast(\tau_0)$. One may check that $\tau_1'$ is isotopic, relative to $A$, to a generalized train track $\tau_1$ containing $\tau_0$. Continuing in this way, we obtain an increasing sequence $\tau_0 \subseteq \tau_1 \subseteq \cdots $ of train tracks, and the union $\tau= \cup_{n \geq 0} \tau_n$ is \textit{F-invariant}, i.e., $F_\ast(\tau)$ is isotopic to $\tau$. See Figure \ref{fig:invt1}.

\begin{defn}
Let $\psi: S^2 \times I \to S^2$ denote the composition of the pseudo-isotopies in steps 1 and 2 above. The generalized train track $\tau=\cup_{n \geq 0} \tau_n$ is called the \textit{invariant generalized train track} for $F$. The \textit{train track map} associated to $F$ is the map $\phi: \tau \to \tau$ defined by $\phi(x)=\psi(F(x), 1)$. 
\end{defn}

Let $\bbI$ be a thick interval and $F:(S^2, \bbI, A) \to (S^2, \bbI, A)$ be a thick interval map with $A$ a finite invariant set of $F$. Let $\tau$ be the associated invariant generalized train track. $\tau$ has at most countably many edges. Label the finitely many real edges $e_1, \ldots, e_n$ and then label the possibly infinitely many infinitesimal edges $e_k$ for $k \geq n+1$. We form an \textit{extended transition matrix} $N=(n_{i,j})$ by setting

\[
n_{i,j}=\text{the number of times $\phi(e_j)$ crosses $e_i$}
\]

\noindent We may write this as the block matrix

\[
N=\begin{pmatrix}
M & 0\\
B & \Pi
\end{pmatrix}
\]

\noindent where $M$ is the incidence matrix of $F$, $B$ records the transitions from real to infinitesimal edges, and $\Pi$ the transitions from infinitesimal edges to other infinitesimal edges. 

\begin{landscape}
\thispagestyle{empty}
\begin{figure}
\centering
\includegraphics[scale=.15]{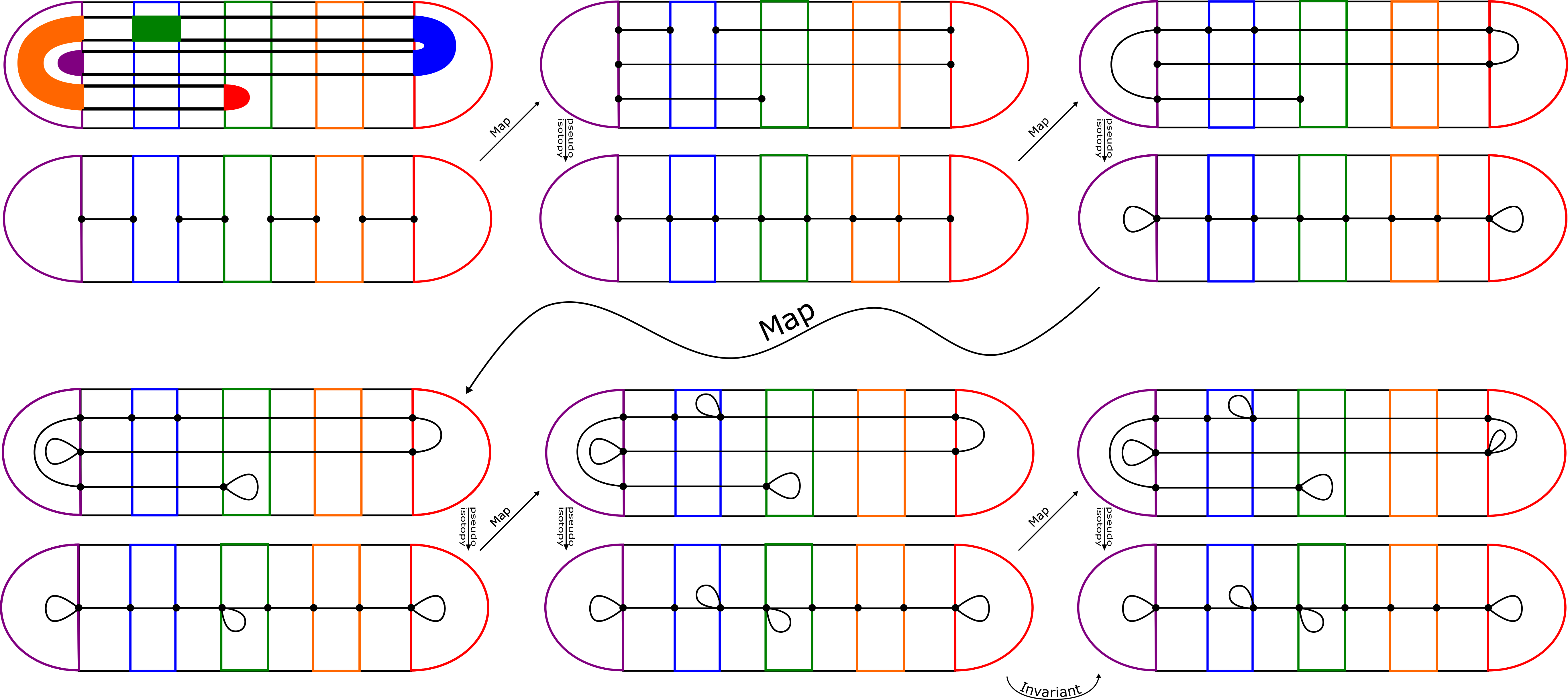}
\caption{The process for generating an invariant train track for a given thick interval map. One alternately applies the map to $\tau_n$ and the pseudo-isotopies to obtain $\tau_{n+1}$. In this case, the process terminates with $\tau_4$, which is invariant. Since the invariant generalized train track is finite, the resulting surface homeomorphism will be a pseudo-Anosov (cf. Figure \ref{fig:map}). In general, however, this process may continue indefinitely, producing a \textit{generalized} pseudo-Anosov with infinitely many singularities. See Figures \ref{fig:gpA} and \ref{fig:rect2} for such an example.}\label{fig:invt1}
\end{figure}
\end{landscape}

We assume from now on that $M$ is primitive. In this case, the Perron-Frobenius theorem states that the spectral radius $\rho(M)>0$ of $M$ is in fact a real eigenvalue of $M$, called the \textit{dominant eigenvalue} of $M$. The Perron-Frobenius theorem further states that the dominant eigenvalue is simple, and that the associated one-dimensional eigenspace is spanned by a positive eigenvector $u=(u_i)$, while no other eigenspace contains a positive eigenvector. We normalize this eigenvector to have unit $L^1$-norm: that is, $\sum_i u_i=1$.

If $\lambda=\rho(M)$ is the Perron eigenvalue of $M$, let $x'=(x_1, \ldots, x_n)$ denote the canonical positive left $\lambda$-eigenvector associated to $M$ such that $\sum_i x_i=1$, and let $y'=(y_1, \ldots, y_n)$ denote the positive right $\lambda$-eigenvector of $M$ such that $\sum_i x_iy_i=1$. One shows that these can be extended to left- and right- $\lambda$-eigenvectors of $N$, i.e. that there exist possibly infinite vectors $x$, $y$ such that $xN=\lambda x$ and $Ny=\lambda y$. Moreover, it is not hard to see that $x=(x_1, \ldots, x_n, 0 ,0, \ldots)$.

For $i=1, \ldots, n$, construct rectangles $R_i$ of dimensions $x_i \times y_i$. These are the building blocks of the surface on which the generalized pseudo-Anosov will act. The infinitesimal edges incident to one endpoint of a real edge $e_i$, along with their weights, encode how to identify segments of the corresponding vertical boundary of $R_i$. While this process is visually intuitive, a precise explanation is nonetheless elusive in the literature, so we describe it here for completeness.

In what follows, we fix a junction $J$ between two adjacent real edges $e_L$ and $e_R$. Denote by $v_L$ the endpoint of $e_L$ on $\partial J$, and similarly define $v_R$.

\begin{defn}\label{defn:end}
Let $e$ be an infinitesimal edge contained in $J$, considered as a smooth parameterized arc $e: [0,1] \to J$. Observe that $e(\{0,1\}) \subseteq \{v_L, v_R\}$. We define an \textit{end} of $e$ to be an arc of the form 

\[
\alpha=e\left ( \left [0,\frac{1}{2} \right ] \right ) \ \text{or} \ \alpha=e \left ( \left [\frac{1}{2},1\right ] \right ). 
\]

\noindent If $\alpha$ is an end of $e$, then for $0<\epsilon<\tfrac{1}{2}$ we define the \textit{$\epsilon$-subend} of $\alpha$ to be

\[
\alpha_\epsilon=\begin{cases}
e \left ( \left [0,\epsilon \right ] \right ) & \alpha=e\left ( \left [0,\frac{1}{2}\right ] \right ) \\
e \left ( \left [1-\epsilon, 1 \right ] \right ) & \alpha=e\left ( \left [\frac{1}{2},1\right ] \right )
\end{cases}
\]

\noindent If $e, f$ are two infinitesimal edges of $J$, not necessarily distinct, with ends $\alpha, \beta$ incident to $v_L$, we set

\[
\alpha \leq_L \beta \hspace{2mm} \text{if $\alpha_\delta$ is below $\beta_\delta$ for all $\delta>0$ sufficiently small}
\]

\noindent If instead $\alpha, \beta$ are incident to $v_R$, we set

\[
\alpha \leq_R \beta \hspace{2mm} \text{if $\alpha_\delta$ is below $\beta_\delta$ for all $\delta>0$ sufficiently small}
\]
\end{defn}

It is not difficult to see that $\leq_L$ is a total order on the set of ends incident to $v_L$. That is, for any two arcs $\alpha, \beta$ incident to $v_L$, we either have $\alpha \leq_L \beta$ or $\beta \leq_L \alpha$. Moreover, if 

\begin{itemize}
\item $e_{j_1}, e_{j_2}, \ldots$ are the infinitesimal edges with one end incident to $e_L$, 
\item $e_{k_1}, e_{k_2}, \ldots$ are the infinitesimal edges with two ends incident to $e_L$, and
\item the rectangle $R_{e_L}$ has height $y(L)$,
\end{itemize}

\noindent then the fact that $y$ is a right-$\lambda$-eigenvector for $N$ implies that we have

\begin{equation}\label{eqn:switch}
y(L)=\sum_i y_{j_i} + 2 \cdot \sum_l y_{k_l}
\end{equation}

\noindent Equation (\ref{eqn:switch}) is often referred to as the \textit{switch condition}.

For an end $\alpha$, denote by $e(\alpha)$ the infinitesimal edge of which $\alpha$ is an end, and for an infinitesimal edge $e$, let $y(e)$ denote the entry of the eigenvector $y$ for $N$ such that $y(e)=y_i$ if $e=e_i$. Then equation (\ref{eqn:switch}) and the fact that $\leq_L$ is a total order together imply that there is a unique way to partition the right vertical boundary of $R_{e_L}$ into segments $d_\alpha$ of length $y(e(\alpha))$ such that for two ends $\alpha$, $\beta$ incident to $e_L$, 

\[
\text{$d_\alpha$ is below $d_\beta \iff \alpha \leq_L \beta$}
\]

\noindent The same argument shows how to partition the left vertical boundary of $R_{e_R}$. It remains to describe how to identify these boundary segments. If $e$ is an infinitesimal edge incident to both $e_L$ and $e_R$, and $\alpha, \beta$ are the corresponding ends of $e$, then $y(e(\alpha))=y(e(\beta))=y(e)$ and we identify $d_\alpha$ on $\partial_VR_{e_L}$ with $d_\beta$ on $\partial_V R_{e_R}$ by an orientation-preserving isometry. If instead both ends $\alpha, \beta$ of $e$ are incident to $e_L$ then we identify the segments $d_\alpha, d_\beta$ on $\partial_VR_{e_L}$ by an orientation-reversing isometry, and similarly if both ends $\alpha, \beta$ are incident to $e_R$. See Figure \ref{fig:rect} for an example with finitely many infinitesimal edges, and Figure \ref{fig:rect2} for an example with infinitely many.

\begin{figure}[h!]
\centering
\includegraphics[scale=.2]{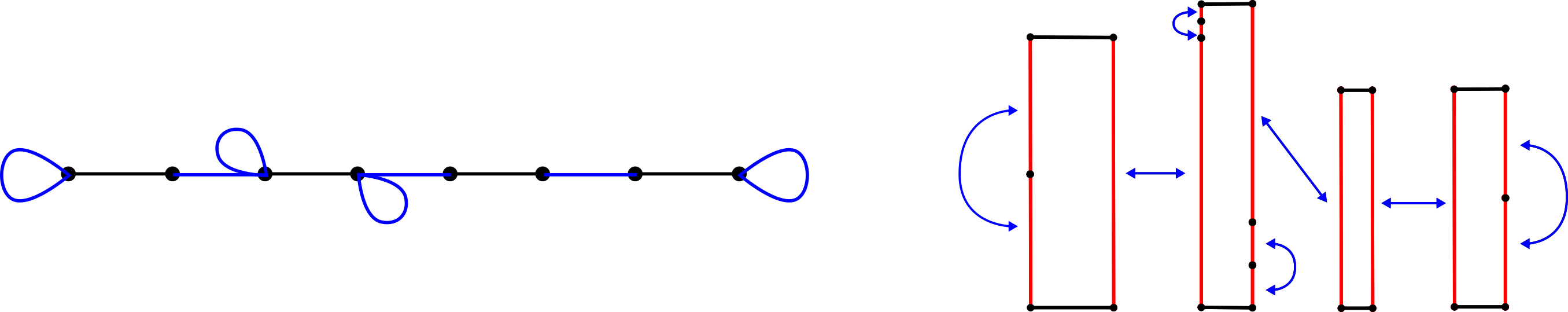}
\caption{The rectangle decomposition of a surface from the invariant train track. The black edges are real edges, while the blue edges are infinitesimal edges.}\label{fig:rect}
\end{figure}

The train track map $\phi$ induces an endomorphism $\tilde{\Phi}: \calR \to \calR$ which stretches the foliation of $\calR$ by horizontal lines by a factor of $\lambda$, and it scales the vertical foliation by a factor of $\lambda^{-1}$. This map $\tilde{\Phi}$ is a homeomorphism except on the boundary of $\calR$. This boundary is a topological circle and contains a periodic orbit of $\tilde{\Phi}$. After identifying adjacent segments of this circle that eventually map to the same segment, we obtain a homeomorphism $\Phi$ in the quotient. This new quotient surface is homeomorphic to $S^2$, and the periodic orbit on $\partial \calR$ becomes a single point, called the \textit{point at infinity}. The induced map $\Phi: S^2 \to S^2$ inherits stable and unstable foliations, and is a \textit{generalized pseudo-Anosov}.

\begin{figure}[h!]
\centering
\includegraphics[scale=.3]{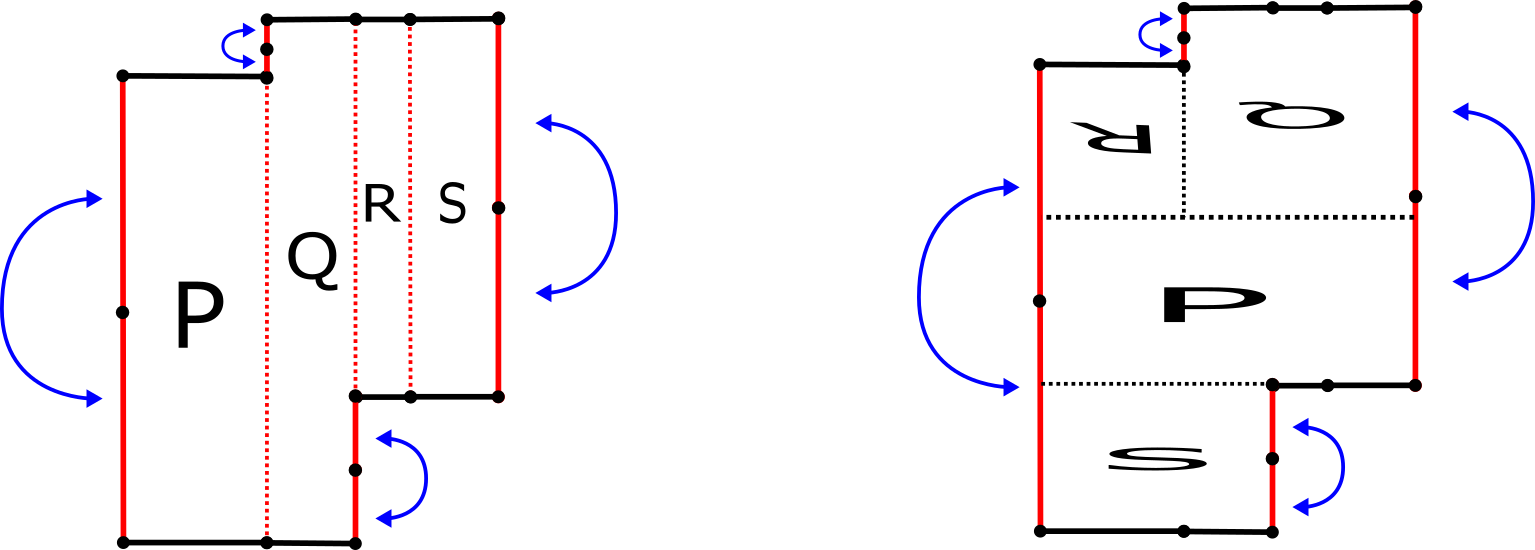}
\caption{The transformation $\tilde{\Phi}: \calR \to \calR$. Identifying points on the horizontal boundary that are eventually mapped to the same point produces a homeomorphism of $S^2$. Since there are only finitely many singularities, this is a pseudo-Anosov.}\label{fig:map}
\end{figure}

\begin{defn}\label{defn:gpA}
A \textit{generalized pseudo-Anosov} is a homeomorphism $\phi$ of a compact surface $S$ such that the following hold:
\begin{enumerate}
\item There exists a number $\lambda>1$ and two transverse singular measured foliations $(\calF_u, \mu_u)$, $(\calF_s, \mu_s)$ of $S$ such that

\[
\phi_\ast (\calF_u, \mu_u)=(\calF_u, \lambda \mu_u), \hspace{5mm} \phi_\ast(\calF_s, \mu_s)=(\calF_s, \lambda^{-1}\mu_s), 
\]
\item the singularities of $\calF_u$ and $\calF_s$, while potentially infinite in number, accumulate on only finitely many points of $S$
\end{enumerate}
\end{defn}

\begin{rmk}
Observe that in the case that the foliations have only finitely many singularities, $\phi$ is a pseudo-Anosov.
\end{rmk}

In what follows we will investigate closely the construction of generalized pseudo-Anosovs from thick interval maps, focusing particularly on the circumstances under which this process produces a pseudo-Anosov. This occurs if and only if the generalized invariant train track is finite, i.e. has only finitely many edges. In this case $\tau$ is simply a train track on a multiply-punctured sphere.
 
\begin{ex}\label{ex:gpA}
It will be instructive to see an example where the invariant generalized train track has infinitely many edges, so that the resulting homeomorphism $\Phi$ is not a pseudo-Anosov. Let $\lambda=1+\sqrt{2}$ and $f: I \to I$ the uniform $\lambda$-expander defined by

\[
f(x)=\begin{cases}
\lambda x & 0 \leq x < \lambda^{-1}\\
2-\lambda x & \lambda^{-1} \leq x < 2 \lambda^{-1}\\
\lambda x - 2 & 2 \lambda^{-1} \leq x \leq 1
\end{cases}
\]

We see that $f$ has two critical points, $c_1=\lambda^{-1}$ and $c_2=2\lambda^{-1}$. The first of these is $2$-periodic, with $f(c_1)=1$ and $f(1)=c_1$. On the other hand, $f(c_2)=0$ is a fixed point. Thus the weak postcritical set of $f$ is $\WPC(f)=\{0, c_1, c_2, 1\}$. We thicken this set to junctions and the intermediate subintervals to thick edges.

\begin{figure}[h!]
\centering
\includegraphics[scale=.17]{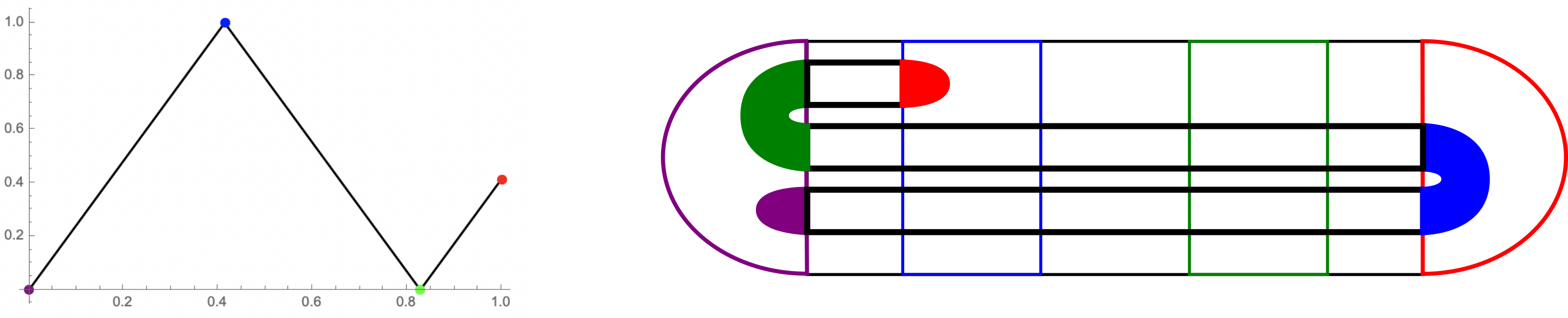}
\caption{A uniform $\lambda$-expander for $\lambda=1+\sqrt{2}$ and a thickening of it. Note that in the first junction the tightening pseudo-isotopies will produce non-parallel loops of the generalized invariant train track.}\label{fig:gpA}
\end{figure}

Figure \ref{fig:gpA} shows an example of a thickening $F$ of $f$. In this case the generalized invariant train track for $F$ has infinitely many edges, corresponding to infinitely many singularities for the resulting sphere homeomorphism. See Figure \ref{fig:rect2}. It is not hard to show that any orientation-\textit{preserving} thickening of $f$ will fail to produce a pseudo-Anosov. Interestingly enough, there does exist an orientation-reversing map that accomplishes this, and indeed if we define the Galois lift $f_G$ of $f$ by replacing all instances of $\lambda$ with its conjugate $-\lambda^{-1}$, then the limit set $\Lambda_f$ is rectangular.

\begin{figure}[h!]
\centering
\includegraphics[scale=.2]{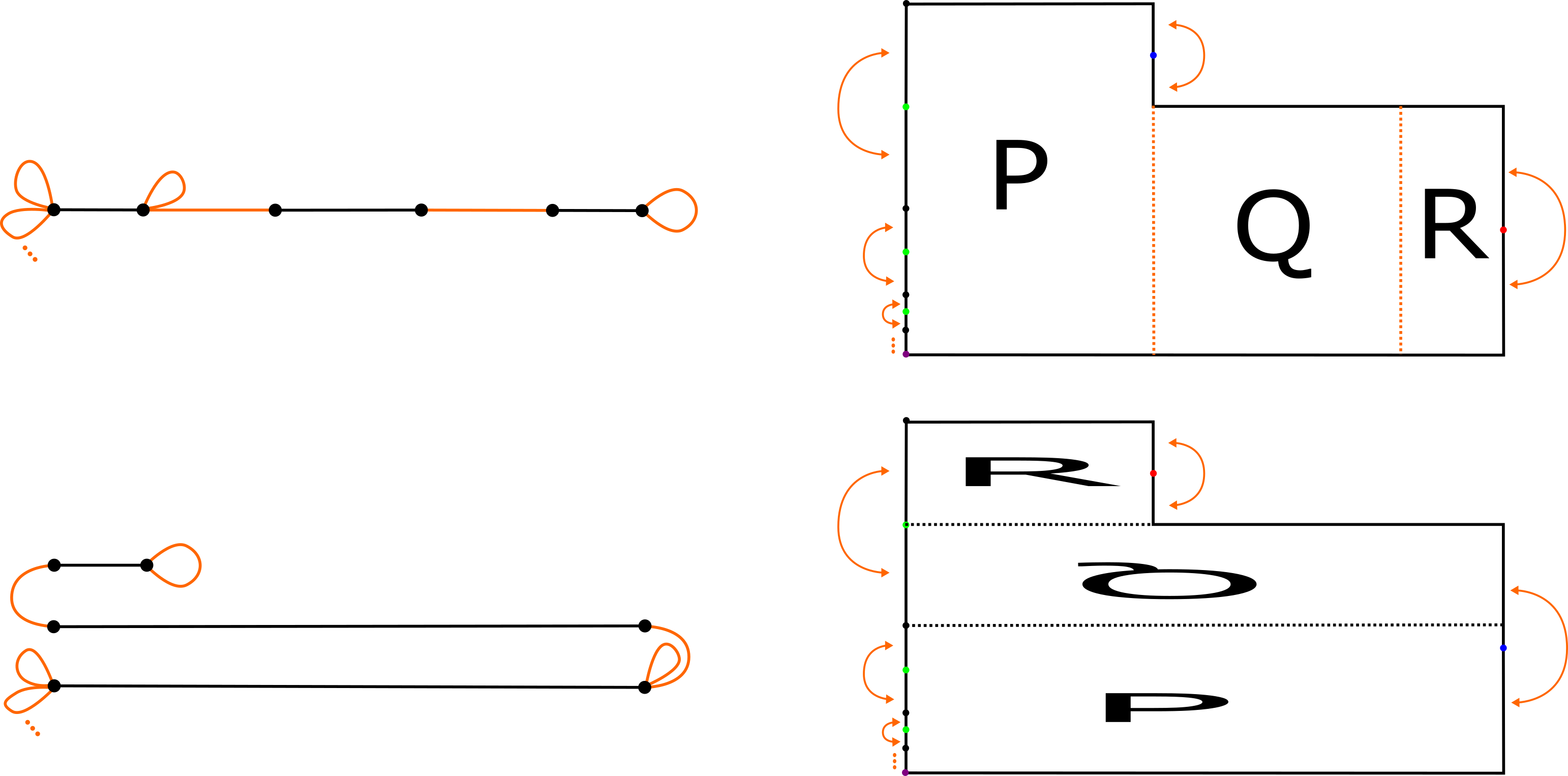}
\caption{The action of $F$ on its generalized invariant train track, and the induced map on the corresponding surface.}\label{fig:rect2}
\end{figure}

\end{ex}


 \section{First constructions}
 

We turn our attention to the task of constructing a pseudo-Anosov on $S^2$ from a zig-zag map (cf. Definition \ref{defn:zig-zag}). Our goal in this section is to show that for a zig-zag map there is a unique pair of thickenings to consider when attempting to construct a pseudo-Anosov: that is, no other possible thickenings can give a pseudo-Anosov (cf. Theorem \ref{veer}). Moreover, these thickenings produce conjugate generalized pseudo-Anosovs, so it suffices to only consider one of them.\\

Before we proceed to the proof of Theorem \ref{veer}, we must consider how to thicken zig-zag maps, and whether they can be thickened in the first place. For this we will rely on Proposition \ref{prop:thinterval}.


\subsection{Some ergodic theory for zig-zag maps}


In this subsection we show that every zig-zag map is weak-mixing. It follows that the transition matrix for a postcritically finite $\lambda$-zig-zag with $\lambda>2$ is primitive.
 
\begin{defn}
A \textit{critical point} of a map $f: I \to I$  is a point $x \in I$ where $f: I \to I$ is not a local homeomorphism. We will say that $f$ is \textit{multimodal} if it has finitely many critical points. If $c$ is a critical point, then $f(c)$ is a \textit{critical value}. Any point of the form $f^n(c)$ for $n \geq 1$ and $c$ a critical point is a \textit{postcritical point}.
\end{defn}
 
\begin{defn}
The \textit{critical set} $\C(f)$ will mean the set of critical points of $f$, and the union of the postcritical points of $f$ is the \textit{postcritical set}, denoted $\PC(f)$. Note in particular that a critical point need not be in the postcritical set, and so we define the \textit{weak postcritical set} $\WPC(f)$ to be the union of $\C(f)$ and $\PC(f)$.

If $\#\WPC(f)$ is finite, we say $f$ is \textit{postcritically finite}, or \textit{PCF}.
\end{defn}

Throughout this paper we will assume that all interval maps $f: I \to I$ are multimodal. In other words, $\#\C(f)<\infty$ and $f$ is monotone between each adjacent pair of critical points.

\begin{defn}\label{defn:Markov}
A \textit{Markov partition} for a map $f: I \to I$ is a decomposition $I=I_1 \cup \cdots \cup I_l$ into finitely many subintervals such that

\begin{enumerate}
\item $\inte(I_i) \cap \inte(I_j) = \emptyset$ if $i \neq j$,
\item for each $i$, $f(\overline{I_i})$ is a union of $\overline{I_{j_k}}$'s, and
\item for each $i$, the restriction $f|_{\inte(I_i)}$ is injective.
\end{enumerate}

\noindent In particular, if $f$ has a Markov partition then $f$ is postcritically finite.
\end{defn}

\begin{rmk}\label{Markov}
Since we will always assume that $\#\C(f)<\infty$, it follows that $f$ has a Markov partition if and only if $f$ is PCF. Moreover, $f$ has a unique minimal Markov partition in terms of inclusion: namely, the partition $W$ obtained by cutting the interval at the points of $\WPC(f)$. Unless otherwise specified, this is the Markov partition we will use. 
\end{rmk}

\begin{defn}\label{def:transition2}
Let $f: I \to I$ be a PCF multimodal map with Markov partition $\calP=\{I_1, \ldots, I_l\}$. The \textit{transition matrix} of $f$ is the $l \times l$ matrix $M=(m_{ij})$ such that

\[
m_{ij}=\begin{cases}
1 & \text{if $f(\overline{I_j}) \supseteq \overline{I_i}$}\\
0 & \text{otherwise}
\end{cases}
\]
\end{defn}

Recall that a multimodal $f: I \to I$ is \textit{PCP} if each critical value $f(c)$ is periodic. Observe that a zig-zag map is PCP if and only if the point $x=1$ is periodic.

In \cite{H} Hall classifies the $\lambda$-zig-zags of pseudo-Anosov type for $1<\lambda \leq 2$. A unimodal zig-zag map is called a \textit{tent map}. Much of our focus will be on PCP $\lambda$-zig-zags for $\lambda>2$. It is therefore important for the generalized pseudo-Anosov construction that we ensure such maps have primitive transition matrix $M$. Theorem \ref{thm:wmixing} essentially accomplishes this goal, and its proof uses a result by Wilkinson \cite{W}.

\begin{thm}\label{thm:wmixing}
If $f: I \to I$ is a $\lambda$-zig-zag for some $\lambda>2$, then $f$ is weak-mixing with respect to Lebesgue measure.
\end{thm}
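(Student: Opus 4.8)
The plan is to establish weak-mixing by verifying the Rokhlin/Weyl criterion for a $\lambda$-zig-zag $f$ with $\lambda > 2$, exploiting the fact that $f$ is a piecewise-linear Markov expander with constant slope $\pm\lambda$. First I would recall that such an $f$ preserves a unique absolutely continuous invariant measure $\nu$ equivalent to Lebesgue measure $m$: since $f$ is a uniform $\lambda$-expander with a Markov partition, the Lasota–Yorke framework gives an a.c.i.m. with density of bounded variation, and the Markov structure together with the transition matrix $M$ being irreducible (which holds for a zig-zag because every branch is onto across many partition elements when $\lambda>2$) forces uniqueness and ergodicity. The density is in fact piecewise constant on the Markov partition, and $(f,\nu)$ is measurably conjugate to a Markov shift on the vertices of $M$ weighted by a Perron eigenvector. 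So it remains to upgrade ergodicity of this Markov measure to weak-mixing.

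The key step is to invoke Wilkinson's result (reference \cite{W} in the text): for a constant-slope piecewise-linear Markov map, the system $(f,\nu)$ is weak-mixing if and only if the underlying subshift of finite type defined by $M$ is \emph{topologically mixing}, i.e.\ $M$ is primitive. (Equivalently one rules out the presence of a nontrivial rotation factor / eigenvalue of modulus one by a gcd-of-cycle-lengths argument.) Thus the theorem reduces to the combinatorial claim that the transition matrix of a $\lambda$-zig-zag with $\lambda > 2$ is primitive. To prove primitivity I would argue as follows: because $\lambda > 2$, each of the $m = \lfloor\lambda\rfloor \ge 2$ monotone branches of $f$ has image of length $\ge 1$ after at most a bounded number of iterations — more precisely, since $|f'|=\lambda>2$, the image $f(I_j)$ of each Markov interval $I_j$ has length $\lambda\,|I_j|$ and, being a union of whole Markov intervals beginning or ending at $\partial I$, it spans across several partition elements; chaining this, $f^N(I_j)=I$ for some uniform $N$. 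This shows some power $M^N$ is strictly positive, i.e.\ $M$ is primitive, and then Wilkinson's criterion yields weak-mixing with respect to $\nu$, hence (since $\nu \sim m$ and weak-mixing is a measure-class-invariant notion for the purposes of the later applications) with respect to Lebesgue.

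I would present the argument in this order: (i) set up the a.c.i.m.\ $\nu$ and note it is the Markov measure attached to $M$; (ii) state precisely the version of Wilkinson's theorem being used, reducing weak-mixing to primitivity of $M$; (iii) prove primitivity of $M$ from $\lambda > 2$ via the length-expansion/covering argument above; (iv) conclude. The main obstacle I anticipate is step (iii): one must be careful that the Markov partition is the one cut at $\WPC(f)$ (as fixed in Remark~\ref{Markov}), that a zig-zag's branch images genuinely land on $\partial I$ so that no ``partial'' overlaps occur, and that the covering time $N$ can be taken uniform — this requires tracking how the endpoints $0,1$ and the critical values propagate, and using that $f(0)\in\partial I$ together with periodicity of $x=1$ in the PCP case (though for the weak-mixing statement as stated we need it for all zig-zags, PCF or not, so the covering argument should be made purely in terms of interval lengths and the $\lambda>2$ expansion, not assuming PCF). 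Handling the non-PCF case — where there is no finite Markov partition — may require instead working directly with the (possibly countable) natural partition or approximating, and confirming that Wilkinson's theorem applies in that generality; this is the delicate point to get right.
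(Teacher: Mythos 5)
There is a genuine gap, and it lies exactly where you flagged it: your route runs through a finite Markov partition and the primitivity of its transition matrix, but Theorem \ref{thm:wmixing} is stated for \emph{every} $\lambda$-zig-zag with $\lambda>2$, with no PCF hypothesis. For a non-PCF zig-zag there is no finite Markov partition and no transition matrix $M$ at all, so steps (i)--(iii) of your outline have nothing to act on; your closing remark that the non-PCF case ``may require working with a countable partition or approximating'' is precisely the unresolved core of the problem, not a technicality. There is also a misstatement of the key external input: the result of Wilkinson \cite{W} used in the paper is not ``weak-mixing iff the subshift defined by $M$ is topologically mixing.'' It is a sufficient condition phrased with no matrix at all: partitioning $I$ at the critical points into the branch intervals $P_0,\dots,P_m$ and forming cylinders $\Delta(j_1,\dots,j_n)=P_{j_1}\cap f^{-1}P_{j_2}\cap\cdots\cap f^{-(n-1)}P_{j_n}$, one calls a cylinder full if its $n$-th image has full Lebesgue measure, and Wilkinson shows $f$ is weak-mixing provided $\frakl=\sup_n \frakl_n<\lambda$, where $\frakl_n$ bounds the number of non-full positive-measure subcylinders of rank $n+1$ inside a rank-$n$ cylinder. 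The paper's proof is then a two-line count: since every branch of a zig-zag maps onto an interval with an endpoint in $\partial I$, all $P_i$ with $i\le m-1$ are full, and inside any cylinder the rank-$(n+1)$ subcylinders of positive measure are $\Delta(j_1,\dots,j_n,j)$ for $0\le j\le J$ with at most the last one non-full; hence $\frakl\le 1<\lambda$ once $m\ge 2$, i.e. once $\lambda>2$. No a.c.i.m., no Markov structure, and no primitivity enter.

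Beyond the missing non-PCF case, your proposed logical order is inverted relative to the paper and would be awkward even in the PCF setting: the paper \emph{deduces} primitivity of the transition matrix from weak-mixing (Corollary \ref{primitive}), whereas you want to prove primitivity first and feed it into a matrix-based mixing criterion. Your covering argument for primitivity (``$f^N(I_j)=I$ for a uniform $N$'') is plausible for PCP zig-zags but is not carried out, and in any case it cannot rescue the general statement. To repair the proposal you should replace steps (i)--(iii) by the correct form of Wilkinson's criterion and the non-full-cylinder count; as written, the argument does not prove the theorem as stated.
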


\begin{proof}
Let $\floor{\lambda}=m \geq 2$ be the number of critical points of $f$ and set $P_0=[0,c_1)$, $P_m=[c_m,1]$, and $P_i=[c_i, c_{i+1}]$ for $i=1, \ldots, m-1$. Following \cite{W}, set

\[
\Delta(j_1, \ldots, j_n)=P_{j_1} \cap f^{-1}P_{j_2} \cap \cdots \cap f^{-(n-1)}P_{j_n}
\]

We say $\Delta(j_1, \ldots, j_n)$ is full of rank $n$ if $\mu(f^n(\Delta(j_1, \ldots, j_n)))=1$, where $\mu$ is Lebesgue measure; otherwise $\Delta(j_1, \ldots, j_n)$ is said to be non-full.

For the $n$-tuple $(j_1, \ldots, j_n)$ let $\frakl(j_1, \ldots, j_n)$ be the number of non-full intervals of positive measure of the form $\Delta(j_1, \ldots, j_n, i)$, for $1 \leq i \leq m+1$. Define $\frakl_n=\sup \frakl(j_1, \ldots, j_n)$, where the supremum is taken over all $n$-tuples $(j_1, \ldots, j_n)$ such that $\Delta(j_1, \ldots, j_n)$ has positive Lebesgue measure. Wilkinson shows in \cite{W} that $f$ is weak-mixing as long as 

\[
\frakl=\sup_n \frakl_n < \lambda
\]

\noindent However, note that all $P_i$ are full for $i=0, \ldots, m-1$. The full subsets of $\Delta(j_1, \ldots, j_n)$ of rank $n+1$ and positive measure are of the form $\Delta(j_1, \ldots, j_n, j)$ for $0 \leq j \leq J$, where $J$ depends on the ordered $n$-tuple $(j_1, \ldots, j_n)$. If $m \geq 2$ the only one of these that can be non-full is $\Delta(j_1, \ldots, j_n, J)$, hence $\frakl \leq 1 < \lambda$.
\end{proof}

\begin{rmk}
If a given zig-zag map $f$ is of pseudo-Anosov type (cf. Definition \ref{defn:pAtype} below), then it follows that $f$ is in fact \textit{mixing}.
\end{rmk}

\begin{defn}\label{defn:density}
A subset $E \subseteq \bbN$ is said to have \textit{density 0} if

\[
\lim_{n \to \infty} \frac{\# \left (E \cap \{1, \ldots, n\} \right )}{n} = 0
\]
\end{defn}

The union of two sets of density 0 also has density 0. It is well-known that a dynamical system $(X, \calB, \mu, T)$ is weak-mixing if and only if for every $A, B \in \calB$ there exists a set $E=E(A,B) \subseteq \bbN$ of density $0$ such that 

\[
\lim_{E \not \ni n \to \infty} \mu(T^{-n}A \cap B)=\mu(A) \mu(B)
\]

Recall that a non-negative matrix $M$ is \textit{primitive} if there is some $k \in \bbN$ such that every entry of $M^k$ is positive.

\begin{cor}\label{primitive}
If $f: I \to I$ is a PCP $\lambda$-zig-zag map for some $\lambda>2$, then the transition matrix of $f$ is primitive.
\end{cor}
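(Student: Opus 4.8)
The plan is to derive primitivity \emph{directly} from weak-mixing (Theorem \ref{thm:wmixing}), using the combinatorial characterization of weak-mixing recalled just above together with the single soft fact that a finite union of density-$0$ sets has density $0$.

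First, since $f$ is PCP it is in particular PCF, so the minimal Markov partition $W=\{I_1,\dots,I_l\}$ and the transition matrix $M$ are defined, and by Theorem \ref{thm:wmixing} the system $(I,\mu,f)$ is weak-mixing, where $\mu$ is the invariant measure (equivalent to Lebesgue, so that ``positive $\mu$-measure'' and ``positive length'' coincide). The key step is the standard dictionary between powers of $M$ and overlaps of partition elements: for every $k\geq 1$ and all $i,j$,
\[
(M^k)_{ij}>0 \iff f^k(\overline{I_j})\supseteq \overline{I_i} \iff \mu\!\left(f^{-k}I_i\cap I_j\right)>0.
\]
The first equivalence follows by iterating the Markov property ($f$ carries each $\overline{I_j}$ onto a union of closures of partition elements, so a chain $j=l_0,\dots,l_k=i$ with $f(\overline{I_{l_t}})\supseteq\overline{I_{l_{t+1}}}$ exists exactly when $\overline{I_i}\subseteq f^k(\overline{I_j})$). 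The second follows because $f$ is piecewise linear: if $\overline{I_i}\subseteq f^k(\overline{I_j})$ then some interior point of $I_j$ has a neighborhood mapped by $f^k$ into $I_i$; conversely, if $\mu(f^{-k}I_i\cap I_j)>0$ then $f^k$ sends an interior point of $I_j$ into $\mathrm{int}(I_i)$, and since $f^k(\overline{I_j})$ is a union of closures of partition elements it must contain $\overline{I_i}$.

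Now fix a pair $(i,j)$ and apply the weak-mixing characterization with $A=I_i$, $B=I_j$: there is a density-$0$ set $E_{ij}\subseteq\bbN$ and an index $N_{ij}$ with $\mu(f^{-n}I_i\cap I_j)>\tfrac12\mu(I_i)\mu(I_j)>0$ for every $n\geq N_{ij}$ with $n\notin E_{ij}$. Put $E=\bigcup_{i,j}E_{ij}$ and $N=\max_{i,j}N_{ij}$; there are only $l^2$ pairs, so $E$ has density $0$ and $\{n\geq N:\ n\notin E\}$ has density $1$, hence is nonempty. For any $k$ in this set we get $\mu(f^{-k}I_i\cap I_j)>0$, and therefore $(M^k)_{ij}>0$, for \emph{all} $i,j$ at once; that is, $M^k$ is positive and $M$ is primitive.

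I do not anticipate a serious obstacle: the only point demanding care is the combinatorial-to-measure-theoretic dictionary in the first step, and that is routine given that $f$ is piecewise linear and Markov. (One could instead note that ergodicity forces $M$ irreducible and that a nontrivial cyclic structure $C_0,\dots,C_{p-1}$ on the partition would produce a non-constant eigenfunction $\sum_r\zeta^r\mathbf{1}_{U_r}$, with $U_r=\bigcup_{i\in C_r}I_i$ and $\zeta=e^{2\pi i/p}$, contradicting weak-mixing; but the density argument above is shorter and uses precisely the statement quoted in the text.)
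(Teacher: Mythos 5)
Your proposal is correct and follows essentially the same route as the paper: invoke Theorem \ref{thm:wmixing}, apply the density-$0$ characterization of weak-mixing to each pair of partition elements, use that a finite union of density-$0$ sets has density $0$ to find a single exponent $k$ with all overlaps of positive measure, and conclude $M^k>0$. The only difference is that you spell out the dictionary between positive entries of $M^k$ and positive-measure overlaps, which the paper leaves implicit.
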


\begin{proof}
Let $I_j, I_k$ be two subintervals in the Markov partition for $f$ obtained by cutting at the points of $\WPC(f)$. Since $f$ is weak-mixing and both $I_j$ and $I_k$ have positive measure, there exists a subset $E_{jk} \subseteq \bbN$ of density 0 such that $m(I_j \cap f^{-i}(I_k))$ is positive for large $i \not \in E_{jk}$. The set $E=\cup_{j,k} E_{jk}$ also has density 0, so there exists some large $i$ so that $m(I_j \cap f^{-i}(I_k))>0$ for all $j,k$. In particular, if $M$ is the transition matrix associated to the Markov partition, then $M^i$ is a positive matrix.
\end{proof}


\subsection{Passing between intervals and thick intervals}


\begin{defn}\label{defn:thin}
Recall that from a thick interval map $F: (S^2, \bbI) \to (S^2, \bbI)$ we obtain an interval map $f: I \to I$ by first collapsing all decomposition elements of $\bbI$ to points, and then further collapsing any subintervals on which the induced map is constant. We call this composition of quotient maps the \textit{thinning projection} and denote it by $\pi: \bbI \to I$.
\end{defn}

Note that by definition, $\pi \circ F|_{\bbI} = f \circ \pi$.

\begin{lem}
Let $F:(S^2, \bbI) \to (S^2, \bbI)$ be a thickening of $f: I \to I$. Then for each $x \in \PC(f)$ there exists a junction $J_x \subseteq \bbI$ such that $\pi(J_x)=x$. In particular, since thick intervals have only finitely many junctions, $f$ cannot be thickened unless it is PCF.
\end{lem}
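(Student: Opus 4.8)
The plan is to prove the set inclusion $\PC(f)\subseteq\pi(\bbV)$, where $\bbV$ is the (finite) union of junctions; the ``in particular'' clause then follows at once, since $\pi$ collapses each junction to a point, so $\#\PC(f)<\infty$, and as $f$ is multimodal ($\#\C(f)<\infty$) we get $\#\WPC(f)<\infty$, which by Remark~\ref{Markov} is the same as $f$ being PCF. Writing $V=f(\C(f))$ for the set of critical values, we have $\PC(f)=\bigcup_{m\geq0}f^m(V)$, so it suffices to show (a)~that $V\subseteq\pi(\bbV)$ and (b)~that $f(\pi(\bbV))\subseteq\pi(\bbV)$. Part (b) is immediate from axiom~(3) of Definition~\ref{defn:thickinterval}: if $J$ is a junction then $F(J)$ lies in some junction $J'$, and applying $\pi$ to this inclusion together with $\pi\circ F|_{\bbI}=f\circ\pi$ gives $f(\pi(J))\subseteq\pi(J')$; both $\pi(J)$ and $\pi(J')$ are single points, so in fact $f(\pi(J))=\pi(J')\in\pi(\bbV)$.

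The heart of the matter is part (a). I would fix $c\in\C(f)$ and examine the fibre $\pi^{-1}(c)$. Since $\pi$ is the collapse of decomposition elements followed by the collapse of the intervals of constancy of the resulting map, $\pi^{-1}(c)$ is a closed connected union of decomposition elements, so either it meets a junction $J$ --- in which case $c=\pi(J)$ and $f(c)\in\pi(\bbV)$ by (b) --- or it is a union of leaves of a single strip $s$. In the latter case I would argue by contradiction: suppose $f(c)\notin\pi(\bbV)$. Then, by the same dichotomy, $\pi^{-1}(f(c))$ is a union of leaves of a single strip $s^{*}$; since the leaves of $s^{*}$ are the arcs $h_{s^{*}}^{-1}(\{x\}\times[0,1])$ with $x\in(0,1)$, this fibre is a compact subset of $s^{*}$ lying away from the two boundary segments $h_{s^{*}}^{-1}(\{0,1\}\times[0,1])$. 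Because $F(\pi^{-1}(c))\subseteq\pi^{-1}(f(c))\subseteq s^{*}$, continuity of $F$ and the tube lemma produce an open interval $U\subseteq(0,1)$, containing the horizontal coordinates of $\pi^{-1}(c)$, with $F(h_{s}^{-1}(U\times[0,1]))\subseteq s^{*}$; this full-leaf neighbourhood lies in one connected component of $s\cap F^{-1}(s^{*})$, so axiom~(4) of Definition~\ref{defn:thickinterval} applies and $F$ acts affinely there on the $h$-coordinates, with horizontal derivative of absolute value $\lambda_{s^{*}}\neq0$.

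To close the argument I would use two simple observations: the restriction of $\pi$ to any strip is, in the horizontal coordinate, a weakly monotone surjection onto a subinterval of $I$ (it is the first-coordinate projection followed by the monotone collapse of constancy intervals); and $f$ itself has no interval of constancy (such an interval would lift to a positive-length constancy interval of the intermediate map, which was collapsed, and no new one can appear by continuity). Transporting $\pi\circ F=f\circ\pi$ along a leaf of the neighbourhood above and reading off horizontal coordinates gives $f\circ u=w\circ g$ near $c$, where $u$ is the (weakly monotone, locally surjective) horizontal part of $\pi$ on that neighbourhood with $u^{-1}(c)$ equal to the horizontal coordinates of $\pi^{-1}(c)$, $g$ is the affine horizontal action of $F$, and $w$ is the horizontal part of $\pi$ on $s^{*}$. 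The right-hand side is weakly monotone, hence so is $f$ near $c$; combined with the absence of constancy intervals this forces $f$ to be strictly monotone near $c$, i.e.\ a local homeomorphism at $c$ --- contradicting $c\in\C(f)$. This establishes (a) and completes the proof.

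The step I expect to be the main obstacle is the second case of part (a): ruling out a ``fold'' of $f$ at a critical point whose fibre lies inside a strip. Part (b) and the orbit reduction are formal; the subtlety is reconciling the two collapsing maps that make up $\pi$ with the linearity axiom~(4), and in particular checking --- as sketched via the tube lemma --- that suitably small neighbourhoods of $\pi^{-1}(c)$ and of $\pi^{-1}(f(c))$ remain inside single strips. (This last point is where finiteness of the junction set enters, in the mild form that only finitely many points of $I$ are $\pi$-images of junctions, so a point of $I$ not of this form cannot be approached arbitrarily closely from within a strip.)
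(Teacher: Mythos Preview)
Your argument is correct and follows the same two–step outline as the paper: first show that every critical value lies under a junction, then push forward along the orbit using axiom~(3). Part~(b) and the orbit reduction are identical to the paper's.

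For part~(a) the paper takes a more direct, geometric route. Rather than assuming for contradiction that neither $\pi^{-1}(c)$ nor $\pi^{-1}(f(c))$ meets a junction and then deducing monotonicity of $f$ from the affine action on strips, the paper simply observes that $c$ is a local extremum, so nearby points $x_1<c<x_2$ have $f(x_1)=f(x_2)$ on the same side of $f(c)$; hence $F(\bbI)$ must turn around somewhere between $F(X_1)$ and $F(X_2)$, and by the linearity axiom~(4) such a turn can only occur inside a junction $J$. Letting $x_1,x_2\to c$ pins this junction down to satisfy $F(\pi^{-1}(c))\subseteq J$, whence $\pi(J)=f(c)$.

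The trade-off: your approach is more formal and makes the appeal to axiom~(4) completely explicit via the tube lemma, but it incurs the side lemma that $f$ has no interval of constancy, whose justification (``no new one can appear by continuity'') you leave somewhat sketchy. A cleaner way to finish, avoiding that lemma entirely, is to note that on the full-leaf neighbourhood you constructed, the horizontal action of $F$ is strictly monotone \emph{and expanding} by axiom~(4), which already forces the first-collapse map $\tilde f$ to be strictly monotone there; hence after the second collapse $f$ is strictly monotone on $u(U)$, with no appeal to a global no-constancy statement. The paper's fold argument achieves the same thing in one stroke.
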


\begin{proof}
Let $\pi: \bbI \to I$ denote the thinning projection. Let $c$ be a critical point of $f$, and let $C=\pi^{-1}(c)$ be the set of decomposition elements projecting to $c$. Similarly, let $C'=\pi^{-1}(f(c))$.

We may assume without loss of generality that $c$ is a local maximum of $f$. Therefore, there exist points $x_1, x_2$ satisfying $x_1<c<x_2$ such that $f(x_1)=f(x_2)<f(c)$. Let $X_i=\pi^{-1}(x_i)$ for $i=1, 2$. Then each $F(X_i)$ lies to the left of $C'$, whereas $F(C) \subseteq C'$. Since $F$ is linear with respect to the coordinates $h_s$ (cf. Definition \ref{defn:thickinterval}), $F(\bbI)$ must pass through a junction $J$ after $F(X_1)$ and before $F(X_2)$. Picking $x_i$ arbitrarily close to $c$ shows that in fact $F(C) \subseteq J$. But now $J \cap C' \neq \emptyset$, and since $C'$ is the collection of all decomposition elements that project to $f(c)$ it follows that $J \subseteq C'$.

$F$ maps junctions into junctions, so $F^n(C')$ must be contained in a junction for each $n \geq 1$. But $\pi(F^n(C'))=f^n(f(c))$, so by the same argument as before, $\pi^{-1}(f^n(f(c)))$ contains a junction for each $n$. Repeating this procedure for all critical points shows that $\pi^{-1}(x)$ contains a junction for each $x \in \PC(f)$.
\end{proof}

\begin{ex}
We revisit Example \ref{ex:gpA}, demonstrated in Figure \ref{fig:gpA}. $\PC(f)$ consists of the first, second, and fourth heavily drawn points, and the thick interval map $F$ has a junction for each of these. Note that $F$ also has a junction for the third point $c_2=2\lambda^{-1}$, despite the fact that $c_2$ is not a postcritical point. This junction is not strictly necessary, since no junction maps into it, but it does allow us to identify the transition matrices of $F$ and $f$: each is given by

\[
M_W=\begin{pmatrix}
1 & 1 & 1 \\
1 & 1 & 0\\
1 & 1 & 0
\end{pmatrix}
\]

\noindent One checks that $\chi_W(t)=t(t^2-2t-1)$ has dominant root $\lambda=1+\sqrt{2}$, the growth rate of $f$. Eliminating the extraneous junction over $c_2$ has the effect of combining the second and third subintervals, producing a new transition matrix

\[
M_P=\begin{pmatrix}
1 & 2 \\
1 & 1
\end{pmatrix}
\]

\noindent with characteristic polynomial $\chi_P(t)=t^2-2t-1$. See Remark \ref{partition} below.
\end{ex}

\begin{defn}\label{defn:pAtype}
We say that a map $f: I \to I$ is \textit{of pseudo-Anosov type} if there is a thickening $F: \bbI \to \bbI$ of $f$ whose invariant generalized train track is finite, i.e. is a train track in the classical sense of Thurston.
\end{defn}

\begin{thm}\label{PCP}
Let $f: I \to I$ be a PCF $\lambda$-expander of pseudo-Anosov type, and let $F$ be a thickening of $f$ that has finite invariant generalized train track $\tau$. Then $\tau$ has exactly one loop in each fat vertex corresponding to an element of $\PC(f)$. Moreover, these loops determine the one-pronged singularities of the pseudo-Anosov $\phi$, and $f$ is in fact PCP.
\end{thm}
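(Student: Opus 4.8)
The plan is to read the loops of the invariant train track $\tau$ off the fold structure of $f$, and then to combine the resulting correspondence ``postcritical point $\leftrightarrow$ loop $\leftrightarrow$ one-pronged singularity'' with the fact that $\phi$ is a homeomorphism. Fix a thickening $F:(S^2,\bbI)\to(S^2,\bbI)$ of $f$ that induces the pseudo-Anosov $\phi$, so that $\tau$ is finite, and let $\pi:\bbI\to I$ be the thinning projection; by the preceding Lemma each $x\in\PC(f)$ has a junction $J_x$ with $\pi(J_x)=x$. First I would show that each critical point $c$, say a local maximum, forces a loop in $J_{f(c)}$. Since the Markov partition is cut at $\WPC(f)$, the map $f$ is monotone on the two strips $\overline{s_L},\overline{s_R}$ flanking the junction over $c$, and both $f(\overline{s_L})$, $f(\overline{s_R})$ reach $f(c)$ from below; conditions (4)--(5) of Definition \ref{defn:thickinterval} then force $F(\overline{s_L})$ and $F(\overline{s_R})$ to run into $J_{f(c)}$ along the same boundary arc. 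Applying $F$ to the real edges $\gamma_{s_L},\gamma_{s_R}$ and carrying out the tightening pseudo-isotopies pushes both images onto the real edge bordering $J_{f(c)}$ and splices their ends inside $J_{f(c)}$ into a single infinitesimal edge with both ends at one switch, i.e.\ a loop (this is the local picture of Figure \ref{fig:invt1}). Since $\phi$ carries $J_{f^{j}(c)}$ into $J_{f^{j+1}(c)}$ and sends loops to loops, propagating this loop forward shows that every junction $J_x$ over a postcritical point $x=f^{n}(c)$, $n\ge1$, contains a loop, and that all loops of $\tau$ arise this way, hence lie over points of $\PC(f)$.

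Next I would prove uniqueness. Each $J_x$ contains exactly one puncture of $A$, and two loops in $J_x$ cannot be nested: if $\ell_1$ lay inside the disc bounded by $\ell_2$, then, $\ell_1$ and $\ell_2$ being non-parallel (Definition \ref{defn:track}(2)), the region between them would have to meet a puncture or another edge of $\tau$; the unique puncture lies on one side only, while any infinitesimal edge trapped inside $\ell_2$ is cut off from at least one of the two switches of $J_x$, so it can be neither a non-loop infinitesimal edge nor (by minimality) a further loop. Hence the loops in $J_x$ are pairwise unnested, so any two would be parallel, again contradicting Definition \ref{defn:track}(2); thus $J_x$ carries exactly one loop. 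This is the first assertion of the theorem.

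It remains to connect loops with one-pronged singularities and to deduce that $f$ is PCP. In the rectangle model $\calR$ (cf.\ the discussion following Definition \ref{defn:end}), a loop at a switch $v$ glues two adjacent segments of a vertical side of the corresponding rectangle by an orientation-reversing isometry; this fold is precisely a one-pronged singularity of the invariant foliations, lying over $\pi(J_x)=x$, and distinct loops yield distinct such singularities. (Any one-pronged singularity at the point at infinity, over the distinguished periodic cycle on $\partial\calR$, is fixed by $\phi$ and disjoint from these; so the loops do account for all of the one-pronged singularities coming from $\PC(f)$.) Finally, since $\pi(F(J_x))=f(x)\in\PC(f)$, the map $F$ carries $J_x$ into $J_{f(x)}$ and its loop to that of $J_{f(x)}$; as $\phi$ is a homeomorphism permuting its one-pronged singularities, the induced self-map of the (finite) set of loops of $\tau$ is a bijection, so $x\mapsto f(x)$ is a bijection of $\PC(f)$ and $f$ is PCP by Definition \ref{defn:PCP}.

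The main difficulty is the loop analysis itself: making rigorous, in the bookkeeping of \cite{dCH}, both that a fold of $f$ genuinely produces a \emph{loop} in $J_{f(c)}$ (rather than a non-loop infinitesimal edge), and that as $\tau_n\nearrow\tau$ the ``turning back'' of $F_\ast(\tau_0)$ accumulated in any postcritical junction never escapes a single parallel class. Granting these local facts, the rest is routine bookkeeping: the switch condition governs the rectangle identifications, and the fact that $\phi$ is a homeomorphism does the combinatorial work.
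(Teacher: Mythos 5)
Your overall architecture matches the paper's intent (loops correspond to one-pronged singularities, and the pseudo-Anosov permuting its singularities forces $f$ to be PCP), but your key uniqueness step is argued by a different mechanism than the paper's, and that is where there is a genuine gap. You try to exclude a second loop in a junction $J_x$ purely combinatorially from Definition \ref{defn:track}(2): loops are either nested (and then, you claim, nothing can separate them) or unnested (and then, you claim, parallel). Neither claim holds. Two loops based at the \emph{different} switches $v_L$ and $v_R$ of $J_x$ are unnested but do not jointly bound a disc, so the non-parallelism condition says nothing about them; and two nested loops at the same switch are perfectly admissible when the puncture of $J_x$ lies in the region between them (your argument tacitly assumes the puncture sits inside the innermost loop, which is not forced by anything you cite). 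That no purely combinatorial parallelism argument can do this job is illustrated by the paper's own Example \ref{ex:gpA} and Figure \ref{fig:rect2}, where a single junction of the invariant track carries infinitely many pairwise non-parallel loops; so the pseudo-Anosov hypothesis must be used in an essential way to bound the number of loops per junction, not just to finish at the end.

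The paper's proof supplies exactly this missing ingredient: each loop in $J_x$ produces a one-pronged singularity lying on the vertical side of the corresponding rectangle, so two loops in the same junction would give two singularities joined by a segment of a leaf of the stable (vertical) foliation, i.e.\ a saddle connection, which is impossible for the invariant foliations of a pseudo-Anosov. You already established the correspondence ``loop $\leftrightarrow$ one-pronged singularity over $\pi(J_x)$'' in your third paragraph; if you use it to observe that two loops in one junction would place two singularities on the same vertical leaf, your argument closes and essentially reproduces the paper's proof. Your existence discussion (a fold of $f$ creates a loop in $J_{f(c)}$, which then propagates through the postcritical orbit) is more detailed than the paper's, which simply asserts that the loops sit over $\PC(f)$, and is a reasonable supplement; but as written the uniqueness step does not stand on its own.
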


\begin{proof}
Each loop of $\tau$ determines a one-pronged singularity of $\phi$. Multiple loops in a single vertex $V$ would imply that $\phi$ has singularities connected by a leaf of its stable (vertical) foliation. It is well known that such a thing, called a \textit{saddle connection}, is impossible for the invariant foliation of a pseudo-Anosov. Thus each vertex $V$, corresponding to a point $v \in \WPC(f)$, contains either 0 or 1 loops of $\tau$. We claim that $V$ contains a loop of $\tau$ if and only if $v$ is in the subset $\PC(f)$.

To see this, observe that $V$ contains a loop if $v$ is a critical value of $f$, since in this case $F(\tau)$ makes a turn through $V$, and after pseudo-isotopy this turn pinches to a loop. By the invariance of $\tau$, it now follows that $V$ contains a loop if $v$ is in the forward orbit of a critical point, which is to say that $v \in \PC(f)$. The only other possibility remaining for $v$ is that it is an element of $\WPC(f) \setminus \PC(f)$, i.e. a critical point that is not in the forward orbit of any critical point. But then no junction of $\bbI$ maps into $V$, so no loop of $\tau$ maps into $V$. Furthermore, $F(\tau)$ does not make a turn through $V$, since that would imply that $v$ is a critical value, which is it not. These are the only ways a loop of $\tau$ will appear in $V$ during the process of constructing $\tau$, so in fact $V$ does not contain a loop of $\tau$ if $v \not \in \PC(f)$.

To finish the proof, recall that a pseudo-Anosov permutes its $1$-prong singularities. Hence $f$ acts on the elements of $\PC(f)$ by a permutation: in other words, $f$ is PCP.
\end{proof}

\begin{defn}\label{defn:connecting}
Let $f: I \to I$ be a PCP $\lambda$-expander of pseudo-Anosov type, and let $\tau$ be the invariant train track associated to a thickening $F$ of $f$. We say an infinitesimal edge of $\tau$ is a \textit{connecting edge} if it is not a loop, i.e. if it joins distinct adjacent real edges.
\end{defn}

\begin{rmk}\label{partition}
In a sense, Theorem \ref{PCP} demonstrates that the important points of $f$ to consider are the elements of $\PC(f)$, rather than those of $\WPC(f)$, since these are the points that correspond to the singularities of any generalized pseudo-Anosov obtained from a thickening of $f$. One could define a transition matrix according to the partition of $I$ given by $\PC(f)$, although this partition would not technically be Markov, since $f$ might fail to be injective on each subinterval. Since injectivity will be helpful in our arguments, we will resort to using the weak postcritical Markov partition. In any case, there is a straightforward relationship between the two transition matrices and their characteristic polynomials. Specifically, the weakly postcritical transition matrix $M_W$ is primitive if and only if the postcritical transition matrix $M_P$ is as well, and the relationship between the characteristic polynomials is $\chi_W(t)=t^a \chi_P(t)$, where $a=\#(\WPC(f) \setminus \PC(f))$. 
\end{rmk}

\begin{figure}
\centering
\includegraphics[scale=.4]{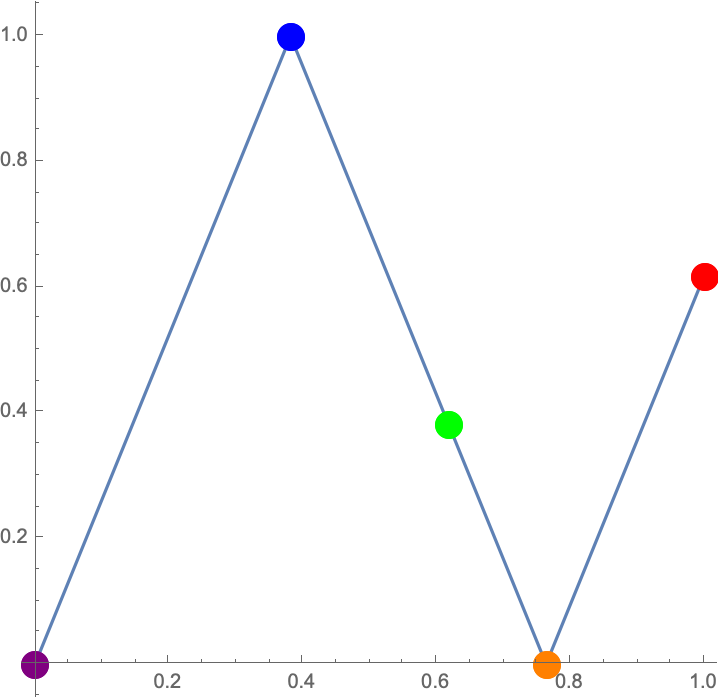}
\caption{The positive zig-zag map for $\lambda=(3+\sqrt{5})/2$. Compare with Figures \ref{fig:invt1} and \ref{fig:types}.}\label{fig:pos}
\end{figure}

\begin{ex}\label{ex:Markov1}
Let $f: I \to I$ be the positive zig-zag map of growth rate $\lambda=(3+\sqrt{5})/2$, the dominant root of $x^2-3x+1$. The orbit of $x=1$ is periodic of period 3 and includes the first critical point $c_1=\lambda^{-1}$. The other postcritical orbit is the forward orbit of $x=c_2$, which maps to the fixed point at $x=0$. See Figure \ref{fig:pos}. Thus 

\[
C(f)=\{c_1, c_2\}, \hspace{5mm} \PC(f)=\{0, c_1, v, 2\}, \hspace{5mm} \WPC(f)=\{0, c_1, v, c_2, 1\}
\]

In particular, $f$ is postcritically periodic, acting on $\PC(f)$ by the permutation $(1)(2, 4, 3)$. The postcritical and weak postcritical transition matrices for $f$ are

\[
M_P=\begin{pmatrix}
1 & 0 & 2\\
1 & 1 & 1\\
1 & 1 & 0
\end{pmatrix},
\hspace{2cm}
M_W=\begin{pmatrix}
1 & 0 & 1 & 1\\
1 & 1 & 0 & 1\\
1 & 1 & 0 & 0\\
1 & 1 & 0 & 0
\end{pmatrix}
\]

\noindent with characteristic polynomials $\chi_P(t)=(t+1)(t^2-3t+1)$ and $\chi_W(t)=t(t+1)(t^2-3t+1)$. $M_P$ and $M_W$ are each primitive: we have

\[
M_P^2=\begin{pmatrix}
3 & 2 & 2\\
3 & 2 & 3 \\
2 & 1 & 3
\end{pmatrix},
\hspace{2cm}
M_W^2=\begin{pmatrix}
3 & 2 & 1 & 1 \\
3 & 2 & 1 & 2\\
2 & 1 & 1 & 2\\
2 & 1 & 1 & 2
\end{pmatrix}
\]
\end{ex}

\begin{ex}\label{ex:Markov2}
Here is an example of a tent map, i.e. a zig-zag of growth rate $1 < \lambda < 2$. It is not hard to show that positive tent maps are not ergodic: indeed, $x=0$ is a repelling fixed point with only itself as a pre-image. Nonetheless, restricting to the subinterval $[f(1),1]$ produces an ergodic transformation with the same growth rate.

Let $g: I \to I$ be the positive zig-zag of growth rate $(1+\sqrt{5})/2$, the dominant root of $x^2-x-1$. The orbit of $x=1$ is periodic of period $3$ and includes the lone critical point $c=\lambda^{-1}$. See Figure \ref{fig:tent}. Thus $C(f)=\{c\}$ and $\PC(f)=\WPC(f)=\{0, u, c, 1\}$. The postcritical and weak postcritical transition matrices for $g$ coincide as 

\[
M=\begin{pmatrix}
1 & 0 & 0\\
1 & 0 & 1\\
0 & 1 & 1
\end{pmatrix}
\]

\noindent with characteristic polynomial $\chi_M(t)=(t-1)(t^2-t-1)$. Note that this is equal to the digit polynomial $D_f(t)$ of the tent map (cf. Example \ref{ex:tent}). Since the only subinterval that maps to the first is itself, $M$ cannot be primitive. However, the 2-by-2 minor describing the transitions between the second two subintervals is primitive:

\[
M'=\begin{pmatrix}
0 & 1 \\
1 & 1
\end{pmatrix},
\hspace{2cm} (M')^2=\begin{pmatrix}
1 & 1 \\
1 & 2
\end{pmatrix}
\]

\begin{figure}
\centering
\includegraphics[scale=.4]{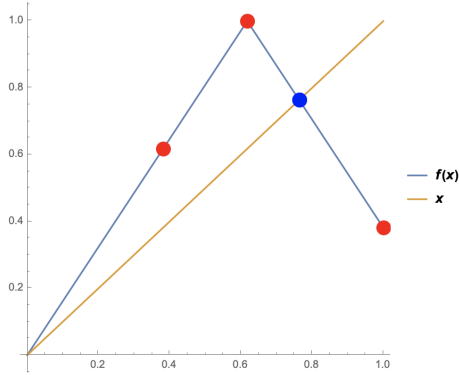}
\caption{The positive tent map for $\lambda=(1+\sqrt{5})/2$, with postcritical set shown in red.}\label{fig:tent}
\end{figure}
\end{ex}


\subsection{The exterior left-veering thickening}


In this subsection we investigate the possible thickenings of a PCP zig-zag, and show that only two of these have a chance of producing a pseudo-Anosov. These are the \textit{exterior left-} and \textit{exterior right-veering} thickenings $F_L$ and $F_R$ (cf. Definition \ref{defn:ext}). These thickenings ``swirl" out from the center, turning either always left or always right, respectively. Since these maps are topologically conjugate, we therefore restrict our analysis to $F_L$ in future sections.\\

Let $f$ be a zig-zag map of growth rate $\lambda>2$ and critical points $c_i=i \cdot \lambda^{-1}$ for $i=1, \ldots, \floor{\lambda}$. We assume that $f$ is PCP, which is equivalent to the orbit of $x=1$ being periodic. Let $0 \leq v_1<v_2<\ldots<v_n=1$ be the orbit of $1$. These, along with $x=0$ if it is not already among the $v_i$, are precisely the points to which the junctions of our thick interval $\bbI$ will project. To specify a particular thick interval map $F$ projecting to $f$, however, it remains to determine how $F$ folds $\bbI$, i.e. how the image $F(\bbI)$ turns within $\bbI$.

If $F: \bbI \to \bbI$ is a thickening of $f: I \to I$ with thinning projection $\pi$, then for brevity we will denote by \textbf{0} the junction satisfying $\pi(\textbf{0})=0$. Similarly, \textbf{1} will denote the junction satisfying $\pi(\textbf{1})=1$.

\begin{defn}
Let $f: I \to I$ be a PCP $\lambda$-zig-zag map with $\floor{\lambda}=m \geq 2$. Let $F: \bbI \to \bbI$ be a thickening of $f$. The image $F(\bbI) \subseteq \bbI$ is a collection of thick intervals stacked vertically stretching between \textbf{0} and \textbf{1} with at most a single exception. Orienting $\bbI$ from left to right, we number these thick subintervals $\bbI_j$ as we travel along $F(\bbI)$, beginning at \textbf{0} if $f$ is a positive zig-zag and at \textbf{1} if $f$ is a negative zig-zag. We may also assign a \textit{height} to each $\bbI_j$ by determining its vertical order among the other thick subintervals, counting from bottom to top. The height of $\bbI_j$ is defined to be $h(\bbI_j)=i$ if $\bbI_j$ is the $i$-th thick subinterval in this ordering. The \textit{type} of the thick interval map $F$ is then the element $\sigma$ of the permutation group $S_{m+1}$ such that

\[
\sigma(j)=i \iff h(\bbI_j)=i
\]

\noindent For an example, see Figure \ref{fig:types} below.
\end{defn}

\begin{ex}
Let $f$ be the positive zig-zag map for $\lambda=(3+\sqrt{5})/2$ from Example \ref{ex:Markov1}. We have $\floor{\lambda}=2$, and $f$ is postcritically periodic of length $3$ with $\PC(f)$ given by

\[
0<c_1=v_1<v_2<c_2<v_3=1
\]

\noindent More specifically, the orbit of $x=1$ is $1 \mapsto v_2 \mapsto v_1 \mapsto 1$. The six possible thickenings of $f$ are pictured in Figure \ref{fig:types}. These are paired according to the topological conjugacy class of the resulting generalized pseudo-Anosov. Note that the conjugacy classes in this example are the elements of the orbit space of $\iota_3$ acting on $S_3$, where $\iota_3$ is the order-reversing permutation

\[
\tau_3=\begin{pmatrix}
1 & 2 & 3\\
3 & 2 & 1
\end{pmatrix}
\]

\noindent See Definition \ref{defn:tau} below.
\end{ex}

\begin{figure}
\centering
\includegraphics[scale=.1]{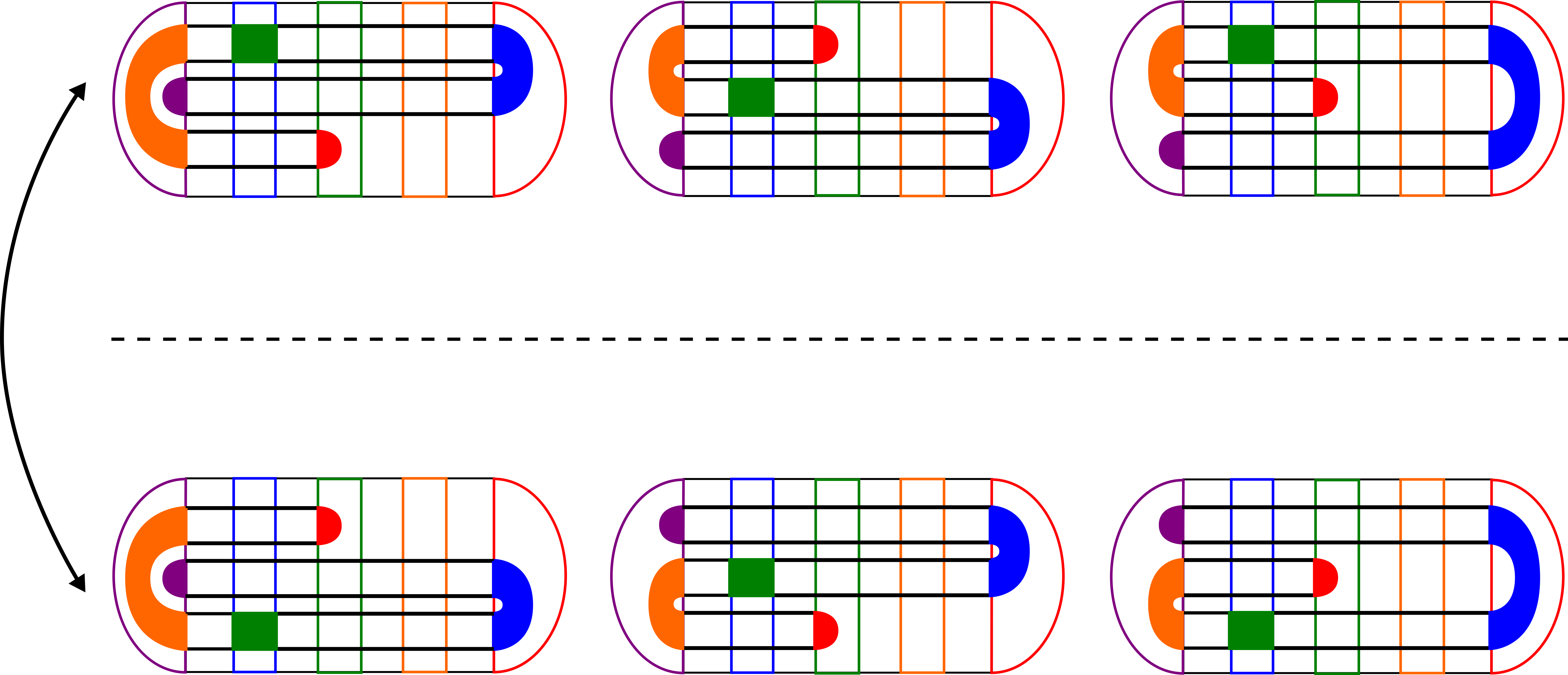}
\caption{The six possible thickenings of the positive zig-zag for $\lambda=(3+\sqrt{5})/2$ . The permutation type of each is an element of $S_3$, determined by the heights of the horizontal layers of $F(\bbI)$. For example, the top left thick interval map has permutation type $\sigma_1=(1, 2, 3)$, and the top right has permutation type $\sigma_2=(1)(2,3)$. Each column is an orbit of the action of $\tau_3$. }\label{fig:types}
\end{figure}

\begin{ex}
Not every permutation $\sigma \in S_{m+1}$ gives a valid thick interval map. For example, let $f$ be a PCP $\lambda$-zig-zag map such that $\floor{\lambda}=3$. Then regardless of the orbit of $x=1$, there can be no thickening of $f$ of type 

\[
\sigma=\begin{pmatrix}
1 & 2 & 3 & 4 \\
1 & 3 & 2 & 4
\end{pmatrix},
\]

\noindent since the image of this thick interval map would necessarily intersect itself. 
\end{ex}

\begin{defn}\label{defn:tau}
The \textit{orientation-reversing permutation} $\iota_n \in S_n$ is the element 

\[
\iota_n=\begin{pmatrix}
1 & 2 & \cdots & n\\
n & n-1 & \cdots & 1
\end{pmatrix}.
\]
\end{defn}

We will often suppress notation and write $\iota_n=\iota$ when $n$ is understood from context. Note that $\iota^2=\id$, so if we let $\iota_n$ act on $S_n$ on the left then the orbit space is parameterized by pairs of elements of $S_n$. In general $\iota_n$ is not central in $S_n$, so the orbit space is not a group. The following proposition says that the elements of the orbit space correspond to topological conjugacy classes of generalized pseudo-Anosovs.

\begin{prop}\label{conj}
Let $f$ be a PCP zig zap map and $F_1$, $F_2$ thickenings of $f$ of type $\sigma_1$, $\sigma_2$ respectively. If $\sigma_1=\iota \sigma_2$ then $F_1$ is topologically conjugate to $F_2$ via the orientation-reversing homeomorphism $i: \bbI \to \bbI$ that reflects through the horizontal midline of $\bbI$. Consequently, if $\phi_i$ is the generalized pseudo-Anosov induced by $F_i$, then $\phi_1$ is topologically conjugate to $\phi_2$. 
\end{prop}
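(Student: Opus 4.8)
The plan is to construct the conjugating homeomorphism $i:\bbI\to\bbI$ explicitly as the Euclidean reflection through the horizontal midline, and then to verify the three things that make the conclusion run: (a) $i$ is a homeomorphism of the thick interval taking decomposition elements to decomposition elements and junctions to junctions; (b) $i\circ F_1 = F_2\circ i$ on $\bbI$; and (c) conjugacy of thick interval maps descends to conjugacy of the induced generalized pseudo-Anosovs. For (a), note that reflecting $\bbI$ through its horizontal midline carries each strip $s$ to a strip, and in the strip coordinates $h_s:\overline{s}\to[0,1]\times[0,1]$ it is the map $(x,y)\mapsto(x,1-y)$ up to relabeling; it carries junctions to junctions and $\partial\bbI$ to $\partial\bbI$, and it preserves the thinning projection $\pi$ (i.e. $\pi\circ i=\pi$), since collapsing decomposition elements is unaffected by a fiberwise reflection. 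Thus $i$ is an orientation-reversing homeomorphism of the pair $(S^2,\bbI)$ fixing the puncture set $A$ setwise (the punctures sit inside junctions, which are permuted among themselves).

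For (b), the key point is the definition of the type $\sigma_i$ of $F_i$. Both $F_1$ and $F_2$ thin to the same $f$, so $\pi\circ F_1=f\circ\pi=\pi\circ F_2$; the two lifts differ only in how the image rectangle-stack $F_i(\bbI)$ is ordered vertically inside $\bbI$, which is precisely the data recorded by $\sigma_i\in S_{m+1}$. Composing with the midline reflection $i$ reverses the vertical ordering of the horizontal layers $\bbI_j$, i.e. it replaces the height function $\h$ by $m+2-\h$, hence replaces $\sigma_i$ by $\tau_{m+1}\sigma_i$. Under the hypothesis $\sigma_1=\tau\sigma_2$ this means $i$ sends (a thick interval map whose image is arranged according to) $\sigma_2$ to one arranged according to $\sigma_1$; since $F_j$ is the \emph{unique} thickening of $f$ of its given type with the linear/expansion-contraction normalizations of Definition \ref{defn:thickinterval} (the branch data of $f$ together with the type determine $F$ up to isotopy rel $\bbI$), we conclude $i\circ F_2\circ i^{-1}=F_1$, i.e. $i\circ F_1=F_2\circ i$. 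One should spell out that $i$ does not disturb the linearity in strip coordinates (reflection in the second coordinate is linear) nor the contraction/expansion factors $\mu_j,\lambda_j$, so the conjugated map still satisfies all the axioms of a thick interval map and agrees with $F_2$.

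For (c), invoke the naturality of the construction of the invariant generalized train track and of the generalized pseudo-Anosov from a thick interval map. Since $i$ is a homeomorphism of $(S^2,\bbI,A)$ conjugating $F_1$ to $F_2$, it carries the real edges RE of $\tau_0$ for $F_1$ to those for $F_2$, commutes with the tightening pseudo-isotopies $\psi_s$ and $\psi_{e_1,e_2}$ (these are defined purely in terms of the strip/junction structure, which $i$ preserves), and hence carries the invariant train track $\tau^{(1)}$ to $\tau^{(2)}$ and intertwines the train-track maps. It therefore carries the associated transition data $N$, the Perron eigenvalue $\lambda$, and the eigenvectors $x,y$ over isomorphically, so it induces an isometry of the rectangle complexes $\calR^{(1)}\to\calR^{(2)}$ conjugating $\tilde\Phi^{(1)}$ to $\tilde\Phi^{(2)}$ (carrying horizontal to horizontal and vertical to vertical foliation, since $i$ preserves the strip coordinates up to the second-coordinate reflection, which preserves the pair of foliations). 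Passing to the quotient $S^2$ where the boundary periodic orbit collapses to the point at infinity yields a topological conjugacy $\phi_1\sim\phi_2$.

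The main obstacle I expect is step (b): making rigorous the claim that the combinatorial type $\sigma$ together with the interval map $f$ determines the thickening $F$ up to the isotopy class relevant here, and that the midline reflection acts on types by precomposition with $\tau_{m+1}$. This is geometrically evident from the picture (cf. Figure \ref{fig:types}, where each $\tau_3$-orbit is exactly a vertically-reflected pair), but phrasing it cleanly requires being careful about what "the thickening of type $\sigma$" means — one wants to say that any two thickenings of $f$ with the same type are isotopic rel $\bbI$ through thick interval maps, so that checking $i\circ F_1\circ i^{-1}$ has type $\sigma_2$ suffices to identify it with $F_2$ up to the conjugacy we are proving. Everything else is a routine check that the reflection respects the structures (strips, junctions, coordinates, punctures, foliations) involved.
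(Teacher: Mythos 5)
Your proposal is correct and follows essentially the same route as the paper: its proof simply asserts that $\sigma_1=\tau\sigma_2$ forces $F_1=i\circ F_2\circ i^{-1}$ for the midline reflection $i$, and then invokes naturality of the invariant train track construction (the tracks satisfy $\tau_1\sim i_\ast(\tau_2)$) to conclude that the induced generalized pseudo-Anosovs are conjugate. The extra care you take in step (b), that the type together with $f$ determines the thickening up to the relevant isotopy, is precisely what the paper compresses into ``one immediately checks.''
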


\begin{proof}
One immediately checks that if $\sigma_1=\iota \sigma_2$ then $F_1=i \circ F_2 \circ i^{-1}$. Following through the details of construction given in Section 2, we see that the invariant generalized train tracks $\tau_i$ of $F_i$ satisfy $\tau_1 \sim i_\ast(\tau_2)$, and so the resulting generalized pseudo-Anosovs are conjugate by the homeomorphism induced by $i$.
\end{proof}

\begin{defn}\label{defn:ext}
For a fixed $n \geq 1$ the \textit{positive exterior left-veering permutation} $\sigma^+_L \in S_n$ is the permutation defined by

\[
\sigma^+_L=\begin{cases}\begin{pmatrix}
1 & 2 & 3 & 4 & \cdots & n-2 & n-1 & n\\
\frac{n}{2} & \frac{n}{2}+1 & \frac{n}{2}-1 & \frac{n}{2}+2 & \cdots & n-1 & 1 & n
\end{pmatrix} & \text{$n$ even}\\
\vspace{1mm}\\
\begin{pmatrix}
1 & 2 & 3 & 4 & \cdots & n-2 & n-1 & n\\
\frac{n+1}{2} & \frac{n+1}{2}+1 & \frac{n+1}{2}-1 & \frac{n+1}{2}+2 & \cdots & 2 & n & 1
\end{pmatrix} & \text{$n$ odd}
\end{cases}
\]

\noindent Similarly, the \textit{negative exterior left-veering permutation} $\sigma^-_L \in S_n$ is defined by

\[
\sigma^-_L=\begin{cases}\begin{pmatrix}
1 & 2 & 3 & 4 & \cdots & n-2 & n-1 & n \\
\frac{n}{2} & \frac{n}{2}-1 & \frac{n}{2}+1 & \frac{n}{2}-2 & \cdots & 2 & n & 1
\end{pmatrix} & \text{$n$ even}\\
\vspace{1mm}\\
\begin{pmatrix}
1 & 2 & 3 & 4 & \cdots & n-2 & n-1 & n\\
\frac{n+1}{2} & \frac{n+1}{2}-1 & \frac{n+1}{2}+1 & \frac{n+1}{2}-2 & \cdots & n-1 & 1 & n
\end{pmatrix} & \text{$n$ odd}
\end{cases}
\]

\noindent The \textit{positive (resp. negative) exterior right-veering permutation} is defined to be $\sigma^{\pm}_R=\iota \circ \sigma^{\pm}_L \in S_n$. If $F$ is a thick interval map of type $\sigma^{\pm}_L$ we also say $F$ is \textit{exterior left-veering}, and write $F=F^\pm_L$. Similarly we denote by $F^\pm_R$ the \textit{exterior right-veering} thick interval of permutation type $\sigma^{\pm}_R$.  
\end{defn}

\begin{ex}
For $S_3$, $\sigma^+_L$ and $\sigma^+_R$ are shown in Figure \ref{fig:types} as the top and bottom of the leftmost column, respectively. For $S_4$ we have 

\[
\sigma^+_L=\begin{pmatrix}
1 & 2 & 3 & 4\\
2 & 3 & 1 & 4
\end{pmatrix}, \hspace{5mm}
\sigma^+_R=\begin{pmatrix}
1 & 2 & 3 & 4\\
3 & 2 & 4 & 1
\end{pmatrix}
\]
\end{ex}

\begin{rmk}
Given a PCP zig-zag $f$, only one pair of thickenings is defined: either $F^+_L$ and $F^+_R$ if $f$ is positive, or $F^-_L$ and $F^-_R$ if $f$ is negative. In this case we will drop the superscripts from the notation and simply refer to $F_L$ and $F_R$.
\end{rmk}

\begin{prop}\label{veer}
Let $f: I \to I$ be a PCP zig-zag map of pseudo-Anosov type with growth rate $\lambda>2$, and let $F: \bbI \to \bbI$ be a thickening of $f$ that induces a pseudo-Anosov. Then $F=F_L$ or $F=F_R$.
\end{prop}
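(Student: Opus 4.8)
The plan is to argue by elimination: among all thick interval maps $F$ thickening a given PCP $\lambda$-zig-zag $f$ with $\lambda > 2$, only the exterior-veering types $\sigma^{\pm}_L$ and $\sigma^{\pm}_R$ can possibly induce a pseudo-Anosov, and then invoke Theorem \ref{PCP} and the saddle-connection obstruction to rule out every other permutation type. First I would recall from the construction of $\bbI$ that the thick vertices of $\bbI$ project exactly onto the orbit $0 \leq v_1 < \dots < v_n = 1$ of $x=1$ (together with $0$ if necessary), and that a thickening $F$ is determined up to conjugacy by its type $\sigma \in S_{m+1}$ recording the heights of the horizontal layers of $F(\bbI)$. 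By Proposition \ref{conj}, types differing by $\tau$ give conjugate generalized pseudo-Anosovs, so it suffices to pick one representative in each $\langle \tau \rangle$-orbit, and by Proposition \ref{conj} again it is enough to show that if $F$ induces a pseudo-Anosov then $\sigma \in \{\sigma^{+}_L, \sigma^{+}_R\}$ (positive case) or $\{\sigma^{-}_L, \sigma^{-}_R\}$ (negative case).

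The core of the argument is to examine the invariant train track $\tau$ of $F$ inside each fat vertex and use Theorem \ref{PCP}: if $f$ is of pseudo-Anosov type, then each fat vertex over a point of $\PC(f)$ contains exactly one loop of $\tau$ (a one-pronged singularity), and no fat vertex contains two loops (a saddle connection in the stable foliation is impossible). The key geometric input is that the loop in the fat vertex over a postcritical point arises precisely because $f$ folds there — that is, some critical point of $f$ maps onto that point — and since $f$ is a zig-zag, every critical value $f(c_i)$ is an endpoint $0$ or $1$ of $I$, so in fact all the folding happens at the two extreme fat vertices $\mathbf{0}$ and $\mathbf{1}$. Now for a thickening $F$ of type $\sigma$, the way $F(\bbI)$ winds through the fat vertices between consecutive layers determines, in each fat vertex $J$ over a point $v_j$, how many "turnbacks" of the $F$-image occur: if $\sigma$ is not exterior-veering, then as one follows $F(\bbI)$ from $\mathbf{0}$ (or $\mathbf{1}$) there is some intermediate fat vertex at which the image reverses direction — i.e. where $\sigma$ is not monotone in the appropriate local sense — and this reversal forces a loop of $\tau$ in an interior fat vertex, which cannot project to a postcritical point since $f$ is monotone near $v_j$. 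That yields a loop over a non-postcritical point, contradicting Theorem \ref{PCP}. Conversely, the exterior-veering condition is exactly the statement that $F(\bbI)$ spirals monotonically outward, so that the only turnbacks occur at $\mathbf{0}$ and $\mathbf{1}$, matching the folding data of $f$.

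Concretely, I would set up the following chain of steps. (1) Describe, for each type $\sigma \in S_{m+1}$, the combinatorial pattern of how $F(\bbI)$ threads the fat vertices, and define for each fat vertex $J$ the number $r_J(\sigma)$ of "reversals" of the image strand at $J$ (equivalently, the number of prospective loops of $\tau$ forced at $J$ by the tightening pseudo-isotopies). (2) Show $\sum_J r_J(\sigma) = m-1$ for every valid type, reflecting that $f$ has $m$ critical points all of whose critical values are at $\partial I$, so $F(\bbI)$ must make $m-1$ net turnbacks. (3) Show $r_J(\sigma) \ne 0$ for an interior fat vertex $J$ unless $\sigma$ is exterior-veering — this is the combinatorial heart and where I'd work with the explicit two-line notation for $\sigma^{\pm}_L$, checking that it is the unique permutation (up to $\tau$) realizing a valid non-self-intersecting image for which all reversals concentrate at $\mathbf{0}$ and $\mathbf{1}$. (4) Invoke Theorem \ref{PCP}: any nonzero $r_J$ at an interior vertex gives a loop of $\tau$ over a non-postcritical point, which is impossible for a pseudo-Anosov; hence $\sigma$ must be exterior-veering, so $F = F_L$ or $F = F_R$ after applying Proposition \ref{conj}. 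The main obstacle will be step (3) — giving a clean, case-free proof that the only types with all reversals at the extreme vertices are $\sigma^{\pm}_L$ and its $\tau$-partner $\sigma^{\pm}_R$, likely requiring a careful induction on $m$ (or on the number of fat vertices traversed) together with the planarity/non-self-intersection constraint that rules out types like the $S_4$ example $\binom{1\,2\,3\,4}{1\,3\,2\,4}$ given above. I would also need to handle the positive versus negative zig-zag cases and the small exceptional behavior when $x=0$ lies on the orbit of $x=1$ versus when it is a separate fixed point contributing the horizontal-boundary singularity.
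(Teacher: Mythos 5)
Your overall strategy---use Theorem \ref{PCP} to forbid extra loops and then eliminate every permutation type other than $\sigma^{\pm}_L$, $\sigma^{\pm}_R$---points in the right direction, but the mechanism you build it on is wrong, and it is exactly the step you flag as the heart of the argument. For a zig-zag $f$ every critical value lies in $\partial I$, so for \emph{every} thickening $F$, of whatever type $\sigma$, all $m$ turns of $F(\bbI)$ occur inside the two extreme junctions $\textbf{0}$ and $\textbf{1}$; the type $\sigma$ records only the vertical ordering (heights) of the $m+1$ horizontal layers, not where the image reverses direction. Hence your claim in step (3)---that a non-exterior-veering $\sigma$ forces a reversal, and with it a loop of $\tau$, at an \emph{intermediate} fat vertex---is false: no thickening of a zig-zag turns at an intermediate vertex (compare the six thickenings in Figure \ref{fig:types}, all of which turn only at $\textbf{0}$ and $\textbf{1}$). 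Consequently your proposed invariant $r_J(\sigma)$ vanishes at every interior vertex for every type, the count in step (2) is also off ($F(\bbI)$ makes $m$ turns, not $m-1$), and the elimination scheme never rules anything out. Relatedly, the loops of $\tau$ over interior postcritical points do not arise ``because $f$ folds there''; they are forward images of the loops at $\textbf{0}$ and $\textbf{1}$ pushed along the postcritical orbit.

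The actual obstruction lives inside $\textbf{0}$ and $\textbf{1}$, and this is how the paper argues: by Theorem \ref{PCP} each of these two junctions contains exactly one loop of $\tau$, so the turns of $F(\bbI)$ passing through $\textbf{0}$ (resp.\ $\textbf{1}$) must be nested concentrically---a non-concentric configuration would, after the tightening pseudo-isotopies, leave a second non-parallel loop in that junction. Orienting $F(\bbI)$ from $F(\textbf{0})$ to $F(\textbf{1})$ and applying this concentricity inductively, turn by turn, forces every turn to be in the same direction as the previous one; hence all turns are left turns and $F=F_L$, or all are right turns and $F=F_R$. If you want to salvage your write-up, replace the interior-vertex reversal count by this nesting analysis at the two extreme junctions.
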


\begin{proof}
Since $F$ induces a pseudo-Anosov, Theorem \ref{PCP} implies that each of \textbf{0} and \textbf{1} contains a single loop of the invariant generalized train track $\tau$. For each of these vertices, the turns of $F$ that pass through them must be concentric, since otherwise a second loop would appear. Orienting $F(\bbI)$ from $F(\textbf{0})$ to $F(\textbf{1})$, it follows inductively that each turn has to be in the same direction as the previous one. Thus if the first turn is to the left, all turns are to the left and $F=F_L$. Similarly, if the first turn is to the right then $F=F_R$. 
\end{proof}

It is always possible to construct a thick interval map $F$ having permutation type $\sigma_L$ or $\sigma_R$ projecting to a given PCP zig-zag map. Indeed, such a thick interval map swirls out from the center, turning to the left if it is has type $\sigma_L$ and to the right if it has type $\sigma_R$. In particular, it follows from Propositions \ref{conj} and \ref{veer} that associated to a given PCP zig-zag map $f: I \to I$ is a canonical thick interval map to consider when investigating whether $f$ is of pseudo-Anosov type: namely, $F_L$.


\section{Reconciling two constructions}


In this section we prove Theorem \ref{Galois}, establishing the connection between generalized pseudo-Anosovs and the Galois lift $f_G$ of Thurston (cf. Definition \ref{defn:Galois lift}.) We recall the statement of the theorem.

\addtocounter{mainthm}{-4}
\begin{mainthm}
Let $f: I \to I$ be a PCP $\lambda$-zig-zag map with $\lambda>2$. Then $f$ is of pseudo-Anosov type if and only if the following conditions are satisfied:

\begin{enumerate}
\item The digit polynomial $D_f$ of $f$ has $\lambda^{-1}$ as a root, and
\item the limit set $\Lambda_f$ of $f_G$ is rectangular.
\end{enumerate}

In this case, the invariant generalized train track $\tau_L$ of $F_L$ is finite, and recovers the action of $f_G$ on $\Lambda_f$ in the following way: Let $S'$ be the closed topological disc obtained by performing the gluings indicated by the non-loop infinitesimal edges of $\tau_L$. Let $\tilde{f}: S' \to S'$ be the map induced by $F_L$. Then there is an isometry $i: S' \to \Lambda_f$ such that the following diagram commutes:

\[
\begin{tikzcd}
S' \arrow{r}{\tilde{f}} \arrow{d}{i} & S' \arrow{d}{i}\\
\Lambda_f \arrow{r}{f_G} & \Lambda_f
\end{tikzcd}
\]

Moreover, $i$ sends the horizontal and vertical foliations of $S'$ to those of $\Lambda_f$. Therefore, after identifying segments of boundary in each set so as to obtain pseudo-Anosovs $\phi_1: S \to S$ and $\phi_2: \tilde{\Lambda}_f \to \tilde{\Lambda}_f$, these systems are conjugate via an isometry that sends the (un)stable foliation of $\phi_1$ to the (un)stable foliation of $\phi_2$.
\end{mainthm}


\subsection{The reverse direction}


Throughout this subsection we assume that $f: I \to I$ is a PCP $\lambda$-zig-zag whose Galois lift $f_G$ has rectangular limit set $\Lambda_f$ and whose digit polynomial $D_f$ satisfies $D_f(\lambda^{-1})=0$. We prove that the exterior left-veering thickening $F_L$ of $f$ has finite invariant generalized train track, and that the generalized pseudo-Anosov so obtained will recover the action of $f_G$ on $\Lambda_f$.

\begin{lem}\label{erg}
Let $\mu$ be the finite measure on $\Lambda_f$ inherited from Lebesgue measure on $\bbR^2$. Then $f_G: (\Lambda_f, \mu) \to (\Lambda_f, \mu)$ is measure-preserving. 
\end{lem}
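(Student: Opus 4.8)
The plan is to verify measure-preservation branch-by-branch on the pieces of $\Lambda_f$ where $f_G$ is given by a single affine map, and then assemble these local statements into the global one using the fact that $\Lambda_f$ is rectangular (so the pieces $R_j = \pi^{-1}(P_j)$ are genuine Euclidean rectangles that tile $\Lambda_f$ up to measure zero). First I would observe that since $f$ is a $\lambda$-zig-zag, each branch $f_j$ of $f$ on the subinterval $P_j$ has the form $f_j(x) = a_j(\lambda) \pm \lambda x$, and correspondingly the Galois lift acts on $R_j$ by $f_G(x,y) = (a_j(\lambda) \pm \lambda x,\ a_j(\lambda^{-1}) \pm \lambda^{-1} y)$. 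This is an affine map whose linear part is the diagonal matrix $\mathrm{diag}(\pm\lambda, \pm\lambda^{-1})$, which has determinant of absolute value $\lambda \cdot \lambda^{-1} = 1$. Hence $f_G|_{R_j}$ is locally area-preserving: for any measurable $A \subseteq R_j$, $\mu(f_G(A)) = |\det| \cdot \mu(A) = \mu(A)$.

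Next I would upgrade this to "measure-preserving" in the dynamical sense, i.e. $\mu(f_G^{-1}(B)) = \mu(B)$ for every measurable $B \subseteq \Lambda_f$. The point is that $f_G$, being (left-)continuous and piecewise affine with unimodular linear parts, maps $\Lambda_f$ onto itself and the preimage $f_G^{-1}(B)$ decomposes as a disjoint union (up to measure zero) over the branches: $f_G^{-1}(B) = \bigsqcup_j \big(R_j \cap f_G^{-1}(B)\big)$, since the $R_j$ cover $\Lambda_f$ and overlap only on their boundaries, which have $\mu$-measure zero (here rectangularity is used: each $R_j$ is a Euclidean rectangle, so $\partial R_j$ is Lebesgue-null). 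On each piece $f_G|_{R_j}$ is an affine bijection onto its image with unimodular Jacobian, so $\mu\big(R_j \cap f_G^{-1}(B)\big) = \mu\big(f_G(R_j) \cap B\big)$. Since $f$ has a Markov partition, the images $f_G(R_j)$ tile the part of $\Lambda_f$ they cover compatibly — more precisely, $\pi(f_G(R_j)) = f(P_j)$ is a union of partition elements $P_i$, and the strips of $\Lambda_f$ over these are exactly the $R_i$ with $R_i \subseteq \overline{f_G(R_j)}$ — and counting with multiplicity via the transition matrix $M$ we get $\sum_j \mu\big(f_G(R_j) \cap B\big) = \mu(B)$, because $f_G$ restricted to $\Lambda_f$ is surjective and, by the structure of the limit set as a union of rectangles respecting the Markov dynamics, covers $\mu$-almost every point of $\Lambda_f$ exactly once. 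Summing over $j$ yields $\mu(f_G^{-1}(B)) = \mu(B)$.

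The main obstacle I anticipate is the bookkeeping in the previous paragraph: one must be careful that $f_G$ does not "overcount" or "undercount" points of $\Lambda_f$ — that is, that the covering $\{f_G(R_j)\}$ of $\Lambda_f$ is, up to measure zero, a partition. This is where rectangularity of $\Lambda_f$ is essential and cannot be circumvented: without it the pieces $R_j$ need not be rectangles and the "vertical slices match up" argument breaks down. A clean way to handle this is to note that $\pi_*\mu$ is (a multiple of) Lebesgue measure on $I$, that $f$ preserves Lebesgue measure up to the $\lambda$-expansion recorded by $M$, and that the fibers of $\pi$ over each $P_j$ are intervals of a fixed length $y_j$ (the rectangle heights), with the vertical action contracting by $\lambda^{-1}$ exactly compensating the horizontal expansion by $\lambda$ in each of the $m_{ij}$ preimage strips; a short Fubini computation then gives the result. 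Alternatively — and perhaps most economically — one can simply invoke that $f_G|_{\Lambda_f}$ is a piecewise-affine bijection of $\Lambda_f$ (onto itself, essentially by definition of the limit set as the maximal invariant set, together with rectangularity ensuring no collapsing) with unimodular linear parts, whence it preserves Lebesgue measure restricted to $\Lambda_f$ on the nose.
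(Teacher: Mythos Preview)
Your approach is correct and is essentially the same as the paper's: observe that each branch of $f_G$ is an affine map with unimodular linear part, decompose $f_G^{-1}(B)$ over the rectangles $R_j$, and sum. You have also correctly identified the one nontrivial point, namely that the images $\{f_G(R_j)\}$ partition $\Lambda_f$ up to measure zero.

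Where the paper is cleaner is exactly on that point. Rather than invoking Markov combinatorics or a Fubini computation, the paper argues by measure-counting and the definition of the limit set. Since each $\tilde f_i$ preserves area, one has
\[
\mu(\Lambda_f)=\sum_i \mu(R_i)=\sum_i \mu\bigl(\tilde f_i(R_i)\bigr)\ \geq\ \mu\bigl(f_G(\Lambda_f)\bigr).
\]
If this inequality were strict, then $A=\Lambda_f\setminus f_G(\Lambda_f)$ would have positive measure; but $\Lambda_f$ and $f_G(\Lambda_f)$ are each finite unions of rectangles, so $A$ would have nonempty interior, and no interior point of $A$ could be an accumulation point of an $f_G$-orbit, contradicting the definition of $\Lambda_f$. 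Hence equality holds, which simultaneously gives surjectivity up to measure zero \emph{and} forces $\mu\bigl(\tilde f_i(R_i)\cap \tilde f_j(R_j)\bigr)=0$ for $i\neq j$. From there the computation $\mu(f_G^{-1}(B))=\sum_i \mu\bigl(B\cap \tilde f_i(R_i)\bigr)=\mu(B)$ is immediate. This dispatches your ``bookkeeping obstacle'' in two lines, without needing to analyze how the vertical heights interact with the transition matrix.
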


\begin{proof}
For each $i$ the affine map $\tilde{f}_i$ defining $f_G$ on the rectangle $R_i$ is measure-preserving, having Jacobian $\left ( \begin{smallmatrix} \lambda & 0 \\ 0 & \lambda^{-1} \end{smallmatrix} \right )$. Indeed, these maps are invertible with measure-preserving inverse, so $\mu(\tilde{f}_i(R_i))=\mu(R_i)$. Therefore we have

\[
\mu(\Lambda_f) = \sum_{i=0}^k \mu(R_i) = \sum_{i=0}^k \mu(\tilde{f}_i(R_i)) \geq \mu(f_G(\Lambda_f))
\]

\noindent If the final inequality is strict then $A=\Lambda_f \setminus f_G(\Lambda_f)$ has positive measure. But since $\Lambda_f$ and $f_G(\Lambda_f)$ are each finite unions of rectangles, so is $A$, hence if $\mu(A)>0$ then $A$ has non-empty interior $U$. No point $x \in U \subseteq \Lambda_f$ can be a limit point of an orbit of $f_G$, contradicting the definition of $\Lambda_f$. Hence in fact $\mu(f_G(\Lambda_f))=\mu(\Lambda_f)$, and in particular 

\[
\sum_{i=0}^k \mu(\tilde{f}_i(R_i)) = \mu(f_G(\Lambda_f))
\]

\noindent In other words, $\mu(\tilde{f}_i(R_i) \cap \tilde{f}_j(R_j)) = 0$ for all indices $i \neq j$, and so for any measurable $B \subseteq \Lambda_f$ we have

\[
\mu(f_G^{-1}(B)) = \sum_{i=0}^k \mu(\tilde{f}_i^{-1}(B))=\sum_{i=0}^k \mu(B \cap R_i) = \mu(B)
\]

\noindent Thus $f_G$ preserves $\mu$.
\end{proof}

Let $M$ be the transition matrix for an ergodic PCF uniform $\lambda$-expander $g$. The map $g$ has a unique invariant measure that is absolutely continuous, i.e. that is in the equivalence class of Lebesgue measure. Denote this measure by $\nu_0$. If $M$ is primitive, then results of Parry in \cite{P} on subshifts of finite type imply that the Perron eigenvectors of $M$ determine $\nu_0$. Namely, let $u=(u_1, \ldots, u_n)$ denote the left $\lambda$-eigenvector of $M$ with $\sum_i u_i=1$, and let $v=(v_1, \ldots, v_n)$ denote the right $\lambda$-eigenvector of $M$ such that $u \cdot v=1$. Then the density of $\nu_0$ is (up to a null set) a step function with height $v_i$ on a subinterval of Lebesgue measure $u_i$. Note that in general if we use the right $\lambda$-eigenvector $v'=cv$ for $c>0$ then we obtain the invariant measure $\nu_0'=c \cdot \nu_0$, which is a probability measure exactly when $c=1$. 

\begin{lem}\label{measure}
Let $\pi: \Lambda_f \to I$ denote projection onto the first coordinate, and define $\nu=\pi_\ast\mu$. Then $\nu=\nu_0(I) \cdot \nu_0$.
\end{lem}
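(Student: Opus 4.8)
The plan is to show that the pushforward measure $\nu = \pi_\ast \mu$ is an $f$-invariant measure on $I$ that is absolutely continuous with respect to Lebesgue measure, and then to identify it with a scalar multiple of the canonical Parry measure $\nu_0$ by matching up the relevant eigenvector data. First I would verify invariance: since $\pi \circ f_G = f \circ \pi$ (this is noted right after Definition \ref{defn:Galois lift}) and $f_G$ preserves $\mu$ by Lemma \ref{erg}, for any measurable $B \subseteq I$ we get $\nu(f^{-1}B) = \mu(\pi^{-1}f^{-1}B) = \mu(f_G^{-1}\pi^{-1}B) = \mu(\pi^{-1}B) = \nu(B)$, so $\nu$ is $f$-invariant. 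Next, absolute continuity: because $\Lambda_f$ is rectangular, each $R_j = \pi^{-1}(P_j)$ is a genuine Euclidean rectangle, say of width $w_j$ (equal to $|P_j|$) and height $h_j$; hence $\mu|_{R_j}$ is just (normalized) planar Lebesgue measure on a rectangle, and its pushforward under the coordinate projection $\pi$ is $h_j$ times Lebesgue measure on $P_j$. So $\nu$ has density equal to the step function that takes the constant value $h_j$ on the interior of each $P_j$.

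The heart of the argument is then to show that the vector of heights $(h_j)$ is, up to a global positive scalar, the right Perron eigenvector $v$ of the transition matrix $M$ normalized so that $u \cdot v = 1$. For this I would use the self-similar structure of $\Lambda_f$: the Galois lift acts on $R_j$ by the affine map $\tilde f_j(x,y) = (f_j(x), \tilde f_j(y))$, which expands horizontally by $\lambda$ and contracts vertically by $\lambda^{-1}$. Since $\Lambda_f = f_G(\Lambda_f)$ up to a null set (established inside the proof of Lemma \ref{erg}), the rectangle $R_i$ is, up to measure zero, tiled by the images $\tilde f_j(R_j \cap f^{-1}(P_i))$ for those $j$ with $m_{ij} = 1$ — and each such image is a rectangle of height $\lambda^{-1} h_j$ spanning the full width of $R_i$ and not overlapping the others (again by the disjointness conclusion $\mu(\tilde f_i(R_i)\cap\tilde f_j(R_j))=0$ from Lemma \ref{erg}, combined with rectangularity). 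Stacking these gives $h_i = \sum_j m_{ij}\lambda^{-1} h_j$, i.e. $M h = \lambda h$, so $h = (h_j)$ is a positive right $\lambda$-eigenvector of $M$. By Perron–Frobenius (applicable since $M$ is primitive by Corollary \ref{primitive}) it is a positive scalar multiple of $v$; write $h = c\,v$. It remains to pin down $c$. Comparing total masses, $\nu_0(I) = \sum_j v_j |P_j| = \sum_j v_j u_j = u\cdot v = 1$ after recalling that $(|P_j|)_j = u$ is precisely the left eigenvector normalization used by Parry (Remark \ref{Markov} together with the discussion preceding Lemma \ref{measure}); but $\nu$ is a probability measure (it is $\pi_\ast$ of the probability measure $\mu$ — here I would normalize $\mu$ to be a probability measure, or carry the total mass $\mu(\Lambda_f)$ through as in the statement), so in fact the comparison to make is $\nu = c\cdot\nu_0$ with $c = \nu(I)/\nu_0(I) = \mu(\Lambda_f)/1$, giving $\nu = \nu_0(I)\cdot\nu_0$ once the bookkeeping of normalizations in the statement is unwound. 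I would double-check the precise normalization convention for $\mu$ in the statement (it is "the finite measure inherited from Lebesgue," not necessarily a probability measure) and phrase the scalar accordingly, since the clean identity $\nu = \nu_0(I)\cdot\nu_0$ suggests $\mu$ is taken with total mass equal to $\nu_0(I)$, or equivalently that the heights $h_j$ are normalized exactly to $v_j$.

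The main obstacle I anticipate is the geometric tiling step — rigorously justifying that, for a rectangular $\Lambda_f$, the image $\tilde f_j(R_j)$ lands inside $\bigcup_{i : m_{ij}=1} R_i$ as a full-width horizontal sub-band, with the bands over a fixed $R_i$ stacking up to exactly fill $R_i$ (no gaps, no overlaps beyond measure zero, correct multiplicities when $m_{ij}>1$, which here is impossible since $M$ is a $0/1$ matrix by Definition \ref{def:transition2} but one should still handle the ordering). This is "visually obvious" from the picture of $\Lambda_f$ but needs the rectangularity hypothesis and the disjointness/surjectivity facts from Lemma \ref{erg} to be invoked carefully; everything else is eigenvector bookkeeping and the elementary identity $\pi_\ast(\text{Leb on a rectangle}) = (\text{height})\cdot(\text{Leb on base})$.
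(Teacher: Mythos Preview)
Your argument is correct and shares the paper's overall strategy---show that $\nu$ is $f$-invariant and absolutely continuous with step-function density given by the rectangle heights, then identify it with a scalar multiple of $\nu_0$---but you take a longer route to the identification. The paper simply invokes the uniqueness (up to scale) of the absolutely continuous invariant measure, which was set up in the paragraph immediately preceding the lemma via a citation to Parry; this dispatches the lemma in three sentences. You instead verify directly, via the geometric tiling of each $R_i$ by horizontal sub-bands $\tilde f_j(R_j\cap f_G^{-1}R_i)$, that the height vector satisfies $Mh=\lambda h$, and then appeal to Perron--Frobenius uniqueness of the positive eigenvector. Your approach is more self-contained (it does not need the Parry uniqueness statement, only the simpler eigenvector uniqueness) at the cost of the extra tiling verification, which, as you correctly anticipate, requires the rectangularity hypothesis together with the disjointness and surjectivity facts established inside Lemma~\ref{erg}. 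Your hesitation about the normalization is well-founded: with $\nu_0$ normalized so that $u\cdot v=1$ one has $\nu_0(I)=1$, so the displayed identity as written reduces to $\nu=\nu_0$, in tension with the paper's own closing remark that $\nu(I)=\mu(\Lambda_f)\neq 1$ in general; the intended conclusion is evidently $\nu=\nu(I)\cdot\nu_0$.
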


\begin{proof}
Since $\mu$ is $f_G$-invariant and $\pi \circ f_G = f \circ \pi$, $\nu$ is invariant with respect to $f$. Furthermore, because $\Lambda_f$ is a connected finite union of rectangles of positive measure, we see that $\nu$ is equivalent to Lebesgue measure, and in fact has invariant density a step function whose values are given by the heights of the rectangles of $\Lambda_f$. The result now follows by uniqueness of $\nu_0$. Note in general that $\nu(I)=\mu(\Lambda_f) \neq 1$.
\end{proof}

\begin{lem}\label{periodic}
Let $\alpha=\alpha_0$ be a periodic point of $f$ of least period $p$. Set $\alpha_i=f^i(\alpha_0)$ for $0 \leq i \leq p-1$, and let $g_i: \bbR \to \bbR$ denote the affine map defining $f$ at $x=\alpha_i$ so that $f(\alpha_i)=g_i(\alpha_i)=c_i \pm \lambda \alpha_i=\alpha_{i+1}$, where the indices are understood modulo $p$. Define $\widetilde{g}_i: \bbR \to \bbR$ by $\widetilde{g}_i(y)=a_i \pm \lambda^{-1}y$.

Then there is a unique periodic point of $f_G$ projecting to $\alpha_0$, and it is given by $(\alpha_0, y_0)$ where $y_0$ is the unique solution to the equation $\widetilde{g}_{p-1} \circ \cdots \circ \widetilde{g}_0(y)=y$. In the case $\alpha_0=1$ this periodic point is $(1,1)$.
\end{lem}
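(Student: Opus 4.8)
The plan is to analyze $f_G$ one fibre at a time. For $0 \le i \le p-1$ let $R_{j(i)} = \pi^{-1}(P_{j(i)})$ be the rectangle containing $\alpha_i$; by Definition~\ref{defn:Galois lift} the branch of $f_G$ there is $(x,y) \mapsto (g_i(x), \tilde g_i(y))$, where $g_i$ is the linear branch of $f$ at $\alpha_i$ and $\tilde g_i$ is obtained from $g_i$ by replacing $\lambda$ with $\lambda^{-1}$. (For a zig-zag every branch constant $a_i$ is an integer, so this is genuinely consistent with the formula $\tilde f_j(y) = a_j(\lambda^{-1}) \pm \lambda^{-1}y$.) Since $\pi \circ f_G = f \circ \pi$, the map $f_G$ carries the fibre $\{\alpha_i\} \times \bbR$ onto $\{\alpha_{i+1}\} \times \bbR$ by $\mathrm{id} \times \tilde g_i$, and hence $f_G^{\,p}$ carries $\{\alpha_0\} \times \bbR$ onto itself by $\mathrm{id} \times G$, where $G := \tilde g_{p-1} \circ \cdots \circ \tilde g_0$.

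First I would settle existence and uniqueness. The map $G$ is affine with linear part $(\pm\lambda^{-1})^p = \pm\lambda^{-p}$ of absolute value $\lambda^{-p} < 1$, so it is a contraction of $\bbR$ with a unique fixed point $y_0$; then $f_G^{\,p}(\alpha_0,y_0) = (\alpha_0,y_0)$, with least period exactly $p$ because a shorter period would force $\alpha_0$ to have period below $p$. For uniqueness, let $(\alpha_0,y)$ be any $f_G$-periodic point, of period $q$. Applying $\pi$ gives $f^q(\alpha_0) = \alpha_0$, so $p \mid q$, say $q = mp$; then $f_G^{\,q}(\alpha_0,y) = (\alpha_0, G^m(y))$, so $y$ is a fixed point of $G^m$. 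But $G^m$ is again a contraction, so its unique fixed point is $y_0$ (since $G(y_0) = y_0$ gives $G^m(y_0) = y_0$), whence $y = y_0$. Finally, every periodic orbit of $f_G$ accumulates on itself, so $(\alpha_0,y_0) \in \Lambda_f$.

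It remains to identify $y_0$ when $\alpha_0 = 1$, and this is where the standing hypothesis $D_f(\lambda^{-1}) = 0$ is used. Let $n$ be as in Definition~\ref{defn:digit}, the least index with $f^n(1) \in \partial I$. If $f^n(1) = 1$ then, since $1$ has least period $p$, one checks $n = p$, so $G = \tilde g_{p-1} \circ \cdots \circ \tilde g_0$ equals $f_{n-1}|_{z=\lambda^{-1}} \circ \cdots \circ f_0|_{z=\lambda^{-1}}$, and $D_f(\lambda^{-1}) = \epsilon\bigl(G(1) - 1\bigr) = 0$ forces $G(1) = 1$. For a zig-zag the only other possibility is $f^n(1) = 0$ with $f(0) = 1$ (so $f$ is a negative zig-zag and $n = p-1$); then $g_{p-1}$ is the branch $x \mapsto 1 - \lambda x$ at $0$, so $\tilde g_{p-1}(0) = 1$, while $D_f(\lambda^{-1}) = \epsilon\bigl(\tilde g_{p-2} \circ \cdots \circ \tilde g_0(1) - 0\bigr) = 0$ gives $\tilde g_{p-2} \circ \cdots \circ \tilde g_0(1) = 0$, hence $G(1) = \tilde g_{p-1}(0) = 1$. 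In either case $1$ is the fixed point of $G$, so $y_0 = 1$ and the periodic point over $x=1$ is $(1,1)$.

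The only mildly delicate step is the bookkeeping in the last paragraph: matching the period-$p$ composition $G$ with the length-$n$ composition that defines $D_f$, and separating the two cases according to whether the forward orbit of $1$ reaches the endpoint $0$ before returning. Everything else reduces to a single application of the contraction mapping principle, carried out fibrewise.
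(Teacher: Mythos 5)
Your proof is correct and follows essentially the same route as the paper's: a fibrewise contraction-mapping argument for existence and uniqueness of the periodic lift, with $D_f(\lambda^{-1})=0$ used to identify the fixed point over $x=1$. You are in fact somewhat more careful than the paper, which treats only periodic lifts of least period $p$ and declares the $(1,1)$ identification ``immediate,'' whereas you handle arbitrary periods $q=mp$ and spell out the two cases $f^n(1)=1$ and $f^n(1)=0$ in matching the composition $G$ with the digit polynomial.
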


\begin{figure}[h!]
\centering
\includegraphics[scale=.18]{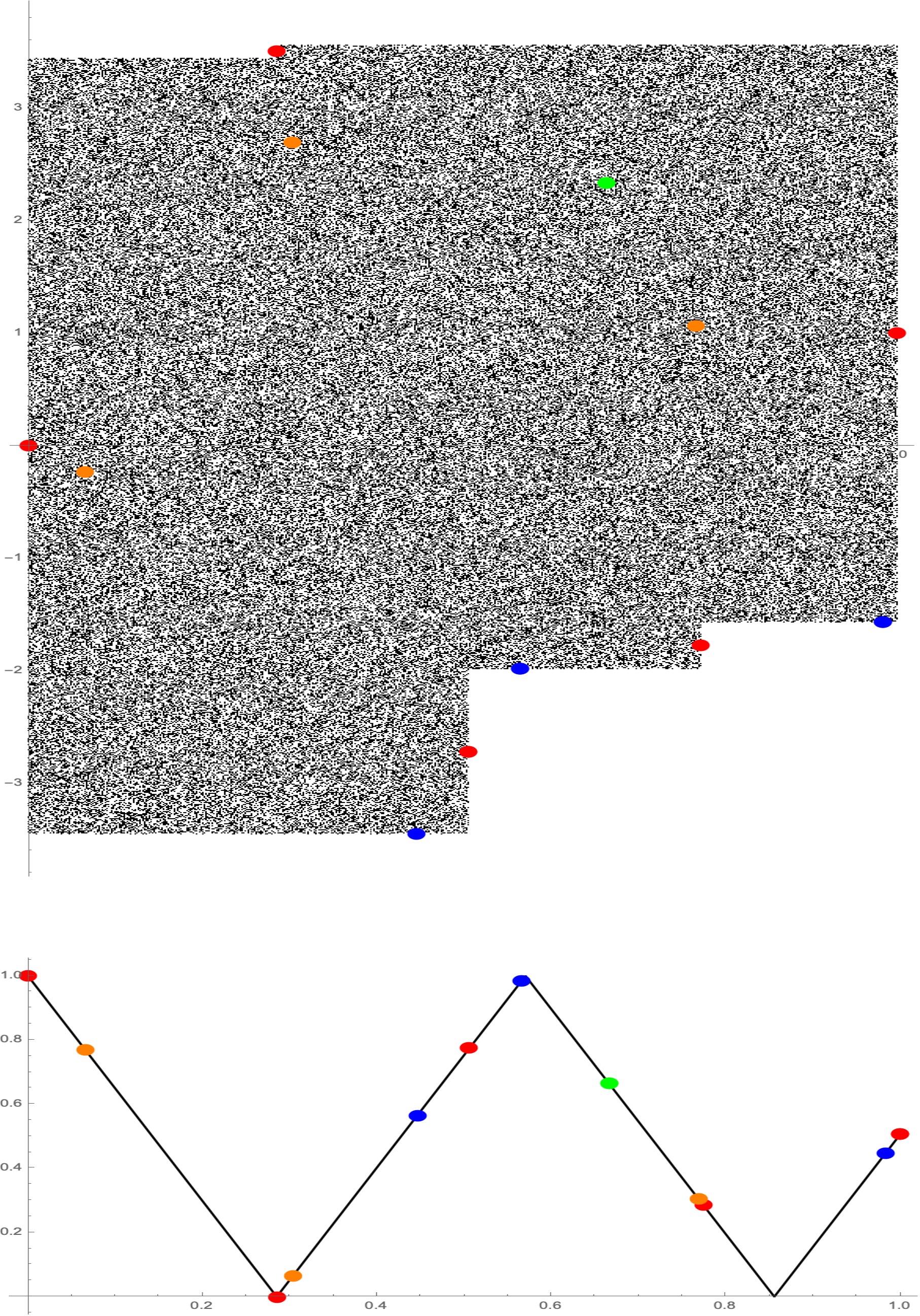}
\caption{(Below) The graph of the negative $\lambda$-zig-zag $f$, for $\lambda$ the Perron root of $D_f(t)=t^4-3t^3-t^2-3t+1$. (Above) The limit set $\Lambda_f$ of the Galois lift of $f$. In each picture, heavily marked points of the same color belong to the same periodic orbit, and the points of one color in $\Lambda_f$ are the unique periodic lifts of the points of $I$ of the same color. This limit set was drawn by plotting the orbit under $f_G$ of a single point with transcendental coordinates, to ensure that it is not eventually periodic. The same method was used to draw all limit sets in the paper.}\label{fig:periodicpoints}
\end{figure}

\begin{proof}
Observe first that $\tilde{g}_{p-1} \circ \cdots \circ \tilde{g}_0: \bbR \to \bbR$ is a contraction by a factor of $\lambda^{-k}<1$. This map has a unique fixed point $y_0$. By definition, 

\[
f_G(\alpha_0, y_0)=(g_0(\alpha_0), \tilde{g}_0(y_0))
\]

\noindent Indeed, inductively defining $y_i=\widetilde{g}_{i-1}(y_{i-1})$ we have $f_G^i(\alpha_0, y_0)=(\alpha_i, y_i)$ for $0 \leq i \leq p-1$. Since $\alpha_0$ and $y_0$ are periodic of period $p$, it now follows that $f_G^p(\alpha_0, y_0)=(\alpha_0, y_0)$.

Suppose that $(\alpha_0, z)$ is a periodic point of $f_G$ of least period $p$. Then

\[
f_G^i(\alpha_0, z)=(\alpha_i, \widetilde{g}_i \circ \cdots \circ \widetilde{g}_0(z)) \hspace{5mm} \text{for $0 \leq i \leq p-1$}
\]

\noindent In particular, $z$ is a fixed point of $\widetilde{g}_{k-1} \circ \cdots \circ \widetilde{g}_0$, hence is equal to $y_0$.

The fact that $(1,1)$ is the periodic point of $f_G$ projecting to $1 \in I$ follows immediately from the assumption that $D_f(\lambda^{-1})=0$.
\end{proof}

\begin{defn}\label{defn:align}
Let $R=[a,b] \times [c,d]$. The \textit{horizontal boundary of $R$} is $\partial_HR=[a,b] \times \{c,d\}$. The \textit{vertical boundary of $R$} is $\partial_VR=\{a,b\} \times [c,d]$.
Two rectangles $R_1=[a_1, b_1] \times [c_1,d_1]$, $R_2=[a_2,b_2] \times [c_2,d_2]$ are \textit{lower- (upper-)aligned} if $c_1=c_2$ ($d_1=d_2$).
\end{defn}

\begin{defn}
Let $f: I \to I$ be a PCP zig-zag whose Galois lift $f_G$ has rectangular limit set $\Lambda_f$. Let $R_0, \ldots, R_k$ be the rectangles defined by the canonical Markov partition of $f$ which subdivide $\Lambda_f$. The \textit{vertical boundary of $\Lambda_f$} is the set

\[
\partial_V\Lambda_f=\left  ( \partial\Lambda_f \cap \bigsqcup_{i=0}^k \partial_V R_i \right ) \setminus A,
\]

\noindent where $A$ is the set of isolated points of $\partial \Lambda_f \cap \bigsqcup_i \partial_V R_i$. The \textit{horizontal boundary of $\Lambda_f$} is the set

\[
\partial_H\Lambda_f = \bigsqcup_{i=0}^k \partial_HR_i
\]

\noindent A \textit{vertical (horizontal) component} of $\partial\Lambda_f$ is a connected component of $\partial_V\Lambda_f$ ($\partial_H\Lambda_f$).
\end{defn}

\begin{lem}\label{lem:vcomp}
Vertical components of $\partial\Lambda_f$ project to postcritical points of $f$.
\end{lem}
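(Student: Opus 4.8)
The plan is to show that each vertical boundary component $V$ of $\partial\Lambda_f$ must consist of points whose $f_G$-orbit behavior forces the $\pi$-image to be postcritical, arguing by contradiction: if $\pi(V)$ were not postcritical, then a neighborhood of $V$ inside $\Lambda_f$ would have to extend to the left and right (i.e. $V$ would be interior), contradicting $V \subseteq \partial\Lambda_f$.

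First I would set up the local picture. Since $\Lambda_f$ is rectangular, near any point $p = (x_0, y_0) \in \partial_V\Lambda_f$ the set $\Lambda_f$ looks locally like a union of the rectangles $R_j = \pi^{-1}(P_j)$ meeting the vertical line $\{x = x_0\}$; being on the \emph{vertical} boundary means that on (at least) one side, say the left, there is no rectangle of $\Lambda_f$ abutting $p$. Equivalently, the projection $\pi(V)$ is a point $x_0 \in I$ such that the collection of ``heights'' (the $y$-intervals of rectangles of $\Lambda_f$ over points just to the left of $x_0$) differs from that just to the right — otherwise the rectangles would glue up and $V$ would be in the interior. So the task reduces to: if $x_0$ is such a point of ``height discontinuity'' for $\Lambda_f$, then $x_0 \in \PC(f)$.

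Next I would push this forward under $f_G$. Because $\pi \circ f_G = f \circ \pi$ and $f_G$ acts affinely (expanding horizontally by $\lambda$, contracting vertically by $\lambda^{-1}$) on each $R_j$, the image $f_G(V)$ is again a vertical segment, and $f_G$ carries vertical-boundary structure to vertical-boundary structure \emph{unless} $x_0$ is a critical point of $f$: at a critical point two branches of $f$ fold together, and the two affine maps $\tilde f_i, \tilde f_{i'}$ on the two sides generally send the fibers to overlapping or nested $y$-intervals, which is exactly the mechanism that can create — or destroy — a height discontinuity. So the dichotomy is: either $x_0$ is a critical point of $f$, in which case $f(x_0) = f(c) \in \PC(f)$ and we are in business after one more step (see below), or $x_0$ is not critical, in which case $f$ is a local homeomorphism near $x_0$, $f_G$ is a local homeomorphism near $V$ in $\Lambda_f$, and hence $f_G(V)$ is again a vertical component of $\partial\Lambda_f$ with $\pi(f_G(V)) = f(x_0)$. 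Iterating, the forward orbit $x_0, f(x_0), f^2(x_0), \ldots$ consists of projections of vertical components as long as we never hit a critical point; but $\Lambda_f$ has only finitely many rectangles, hence only finitely many vertical components, so this forward orbit is finite, i.e. $x_0$ is (pre)periodic. I then need to rule out the ``purely periodic, never critical'' case: a periodic orbit avoiding $\C(f)$ over which $\Lambda_f$ has a genuine height discontinuity would, by Lemma \ref{periodic}, have a \emph{unique} periodic lift $(\alpha_0, y_0) \in \Lambda_f$ — and one checks that around a unique periodic point the contraction in the $y$-direction forces the local height data on the two sides to agree (the rectangle stack closes up), so no such discontinuity survives; this is the horizontal-singularity point, not a vertical one. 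Hence the forward orbit of $x_0$ must eventually hit a critical point $c$, and then $x_0 \in \{c\} \cup \{f^{-k}(\text{something mapping to }c)\}$... — more precisely, tracing back, the first time the orbit meets $\C(f)$ at some $f^m(x_0) = c$, we get $f^{m+1}(x_0) = f(c) \in \PC(f)$, and since $f$ acts on $\PC(f)$ by a permutation (as $f$ is PCP), the component $V$ itself, being carried to a vertical component over $f^{m+1}(x_0)$ by $f_G^{m+1}$, must already have had $\pi(V) = x_0 \in \PC(f)$: indeed the number of vertical components is finite and $f_G$ permutes the ``stable'' part of this finite set, so every vertical component lies over a point in the periodic part, which is $\PC(f)$.

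The main obstacle I anticipate is making rigorous the claim ``at a non-critical point, $f_G$ sends vertical components to vertical components, and cannot create new ones going forward'' — i.e. controlling precisely how the height data of $\Lambda_f$ transforms under the affine branches, including the subtle bookkeeping at critical points where folding occurs and at the point $(1,1)$ / the horizontal singularity where the behavior is genuinely different. Concretely one must verify that a height discontinuity at $x_0$ propagates \emph{forward} to a height discontinuity at $f(x_0)$ when $x_0 \notin \C(f)$ (so that the orbit stays among vertical components) — this uses that $f_G$ is an affine homeomorphism onto its image near such points and that $\Lambda_f$, being the minimal closed $f_G$-invariant-ish set containing all $\omega$-limits, cannot have ``removable'' boundary; the argument that interior points of $\Lambda_f$ cannot be lost, used already in the proof of Lemma \ref{erg}, is the tool here. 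Everything else — finiteness of components, periodicity, the PCP permutation structure — is then straightforward pigeonholing.
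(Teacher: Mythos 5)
Your proposal has genuine gaps, and it also misses the short local argument the paper actually uses. First, note that by the paper's definition, $\partial_V\Lambda_f$ sits inside $\bigsqcup_i \partial_V R_i$, so a vertical component automatically projects to a point of $\WPC(f)$; the lemma therefore only asks you to exclude points of $\WPC(f)\setminus\PC(f)$, i.e.\ critical points $a$ that are not postcritical. For such $a$, no point of $\WPC(f)$ maps to $a$ under $f$, so no image rectangle $f_G(R)$ has a vertical edge on the line $x=a$: every $f_G(R)$ meeting that line crosses it completely, forcing the two rectangles of $\Lambda_f$ adjacent to $x=a$ to be both upper- and lower-aligned, hence no vertical component over $a$. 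That is the whole proof in the paper --- purely local, no orbit analysis, no pigeonhole.

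Your global forward-iteration scheme has two unrepaired holes. (i) The key claim that a ``height discontinuity'' over a non-critical $x_0$ propagates to one over $f(x_0)$ is asserted, not proved --- you flag it yourself as the main obstacle --- and it genuinely needs an argument, since the side of $f_G(V)$ not covered by the local branch could a priori be filled by the image of a \emph{different} rectangle of $\Lambda_f$; the measure-preservation fact from Lemma \ref{erg} does not by itself rule this out. (ii) Even granting (i), your endgame is a non sequitur: once the orbit of $x_0$ first hits a critical point $c$ at time $m$, you only learn that $f^{m+1}(x_0)\in\PC(f)$, and ``$x_0$ eventually lands in $\PC(f)$'' does not imply $x_0\in\PC(f)$ (every preimage of a postcritical point also lands there). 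Your attempted fix --- that $f_G$ ``permutes'' the finite set of vertical components, so every component lies over the periodic part --- requires showing that every vertical component is the $f_G$-image of a vertical component (surjectivity of the induced map), which you have not established; a forward map on a finite set only gives eventual periodicity of orbits. (The paper proves the relevant backward statement separately, in Lemma \ref{lem:align}, by a different argument.) Finally, your claim that a periodic orbit cannot carry a height discontinuity, justified only by ``one checks,'' is in tension with the actual picture: vertical components do lie over periodic points --- namely the postcritical ones, e.g.\ the component through $(1,1)$ --- so any such argument must use criticality somewhere, exactly as the paper's local alignment argument does.
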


\begin{proof}
If $x=a \in \WPC(f) \setminus \PC(f)$ then $a$ is a critical point of $f$ that is not in the forward image of any $b \in \WPC(f)$. Therefore any rectangle $R$ projecting to an element of the weak postcritical Markov partition of $f$ such that $f_G(R)$ intersects the line $x=a$ must in fact map across this line. In other words, if $R_i$ and $R_{i+1}$ are the rectangles bordering the line $x=a$, then for any rectangle $R$ the image $f_G(R)$ crosses $R_i$ if and only if it also crosses $R_{i+1}$. Consequently $R_i$ and $R_{i+1}$ are both upper- and lower-aligned, and hence there is no vertical component of $\partial\Lambda_f$ projecting to $a \in I$. 
\end{proof}

\begin{figure}
\centering
\includegraphics[scale=.2]{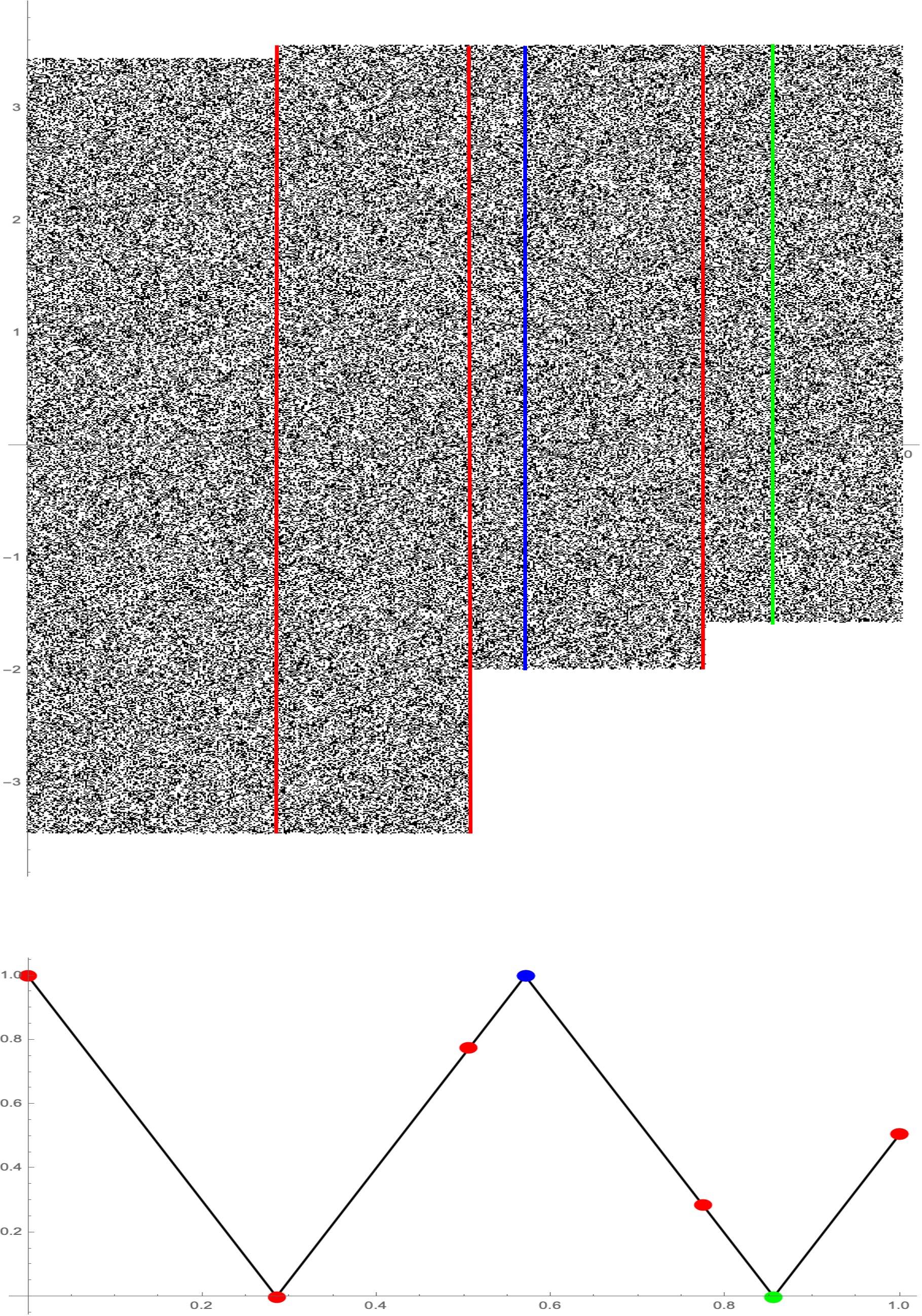}
\caption{The weak postcritical set of $f$ pulls back to a rectangular decomposition of $\Lambda_f$. Note that the vertical components of $\partial \Lambda_f$ project to the points of $\PC(f)$, in red. Observe that each rectangle is either upper- or lower-aligned with its neighbors (cf. Lemma \ref{lem:align}). Moreover, each vertical edge in this case contains at its metric center the periodic lift of the postcritical point to which it projects (cf. Figure \ref{fig:periodicpoints} and Lemma \ref{lem:center}).}\label{fig:vertedges}
\end{figure}

\begin{lem}
If $a \in \PC(f)$ then the unique periodic point of $f_G$ projecting to $a$ is contained in $\partial_V\Lambda_f$. 
\end{lem}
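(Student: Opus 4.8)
The plan is to show that the periodic point $p_a=(a,y_0)$ of $f_G$ lying over $a\in\PC(f)$ cannot lie in the interior of $\Lambda_f$, and that its vertical slice is not an isolated point, so it must belong to $\partial_V\Lambda_f$. First I would set up notation: let $a=v_i$ be a postcritical point (so $a$ is a critical value of $f$, or a forward iterate of one), and let $R, R'$ be the two rectangles of the canonical Markov partition $\{R_0,\dots,R_k\}$ adjacent to the vertical line $x=a$, say $R$ on the left and $R'$ on the right. By Lemma~\ref{periodic} (applied to the periodic orbit of $a$) there is a unique periodic point $p_a=(a,y_0)$ of $f_G$ over $a$; when $a=1$, $p_a=(1,1)$ and the statement then reduces to showing $(1,1)\in\partial_V\Lambda_f$.

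The key step is to show there is a genuine vertical component of $\partial\Lambda_f$ over $x=a$; this is essentially the converse direction to Lemma~\ref{lem:vcomp}. Because $a$ is a \emph{postcritical} point, it \emph{is} in the forward image of some $b\in\WPC(f)$ — indeed $a=f(c)$ or $a=f^n(c)$ for a critical point $c$ — and the branch of $f$ containing the relevant preimage of $a$ has $a$ as the image of an \emph{endpoint} of a Markov subinterval, not an interior point. Consequently, there is a rectangle $R^\ast$ in the decomposition such that $f_G(R^\ast)$ has a horizontal edge landing exactly on the line $x=a$ rather than crossing it; equivalently, $R$ and $R'$ fail to be simultaneously upper- and lower-aligned. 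So at least one of the two horizontal strips bordering $x=a$ (above or below $y_0$, on one side or the other) is "ragged," producing a vertical segment of $\partial\Lambda_f$ over $a$ that is not a single isolated point and hence survives the removal of $A$ in the definition of $\partial_V\Lambda_f$. I would extract this from the rectangularity hypothesis on $\Lambda_f$ together with the combinatorics of the zig-zag's branches, much as in the proof of Lemma~\ref{lem:vcomp} but run in reverse.

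Next I would locate $p_a=(a,y_0)$ on this vertical component. The $y$-coordinate $y_0$ is the fixed point of the contraction $\tilde g_{p-1}\circ\cdots\circ\tilde g_0$ from Lemma~\ref{periodic}, and $f_G$ maps the vertical fiber $\{a\}\times([c,d]\cap\Lambda_f)$ into the vertical fiber over $f(a)$ by the affine contraction $\tilde g_0$; iterating around the period $p$ shows that the nested images of the vertical boundary segment over $a$ shrink down to the point $y_0$. Since the vertical boundary is $f_G$-invariant (its image is contained in vertical boundary, again by the alignment analysis), $y_0$ lies in the closure of this vertical component, which is closed, so $p_a\in\partial_V\Lambda_f$. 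I expect the main obstacle to be the reverse-alignment argument in the second paragraph: one must rule out the degenerate possibility that, even though $a$ is postcritical, the particular geometry of $\Lambda_f$ accidentally makes both bordering rectangles fully aligned and the vertical fiber over $a$ collapses to an isolated point — this is where rectangularity of $\Lambda_f$ and the fact that $a$ is an \emph{endpoint} (hence a fold point or boundary image) of the Markov structure, not an interior regular value, must be used decisively; I would phrase it via the observation that the critical-point structure of a zig-zag forces $f_G(R^\ast)$ to terminate at $x=a$ for the rectangle $R^\ast$ whose projection has $a$ as the image of a \emph{critical} point.
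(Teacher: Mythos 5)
There is a genuine gap, and it sits exactly where you flagged it: the passage from ``some image $f_G(R^\ast)$ terminates on the line $x=a$'' to ``the two rectangles adjacent to $x=a$ fail to be simultaneously upper- and lower-aligned'' is not an equivalence. Since distinct images $f_G(R_j)$ only have disjoint interiors, a terminating image on one side of the line could be vertically enveloped by images that cross the line, or could meet flush with a terminating image coming from the other side, and in either scenario both adjacent rectangles would remain upper- and lower-aligned with no vertical boundary over $a$ at all. Ruling this out is precisely the nontrivial content here; your proposed fix (``the critical-point structure forces $f_G(R^\ast)$ to terminate at $x=a$'') only re-asserts the hypothesis of the non-implication, so the existence of a vertical component over $a$ is never actually established. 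Your second step has a related unproven ingredient: the forward invariance of $\partial_V\Lambda_f$ (``its image is contained in vertical boundary, again by the alignment analysis'') is not available at this point in the paper --- it is essentially what is proved later, in the course of Lemma \ref{lem:align} --- so the contraction argument that pushes $V_a$ around the orbit and down onto $y_0$ rests on two facts (nonemptiness and invariance of the vertical boundary over the orbit) neither of which your proposal supplies.

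The paper's proof avoids the existence question entirely and runs in the opposite direction. For the critical point $a$ with $f(a)=1$, the two affine branches of $f_G$ on either side of the line $x=a$ send $(a,y)$ to points on $x=1$ symmetric about $(1,1)$, hence agree exactly at the periodic lift $\tilde a$; if $\tilde a$ were interior, the two halves of a small open rectangle symmetric about $\tilde a$ would have the same image, contradicting the measure preservation of Lemma \ref{erg}. Then one proceeds \emph{backwards} through the periodic orbit of $(1,1)$: if $f_G(\tilde b)=\tilde a$ with $\tilde a\in\partial_V\Lambda_f$ and $\pi(\tilde b)$ is not critical, then $f_G$ is a single affine homeomorphism near $\tilde b$, so were $\tilde b$ interior its neighborhood would map to an open neighborhood of $\tilde a$ inside $\Lambda_f$, contradicting $\tilde a\in\partial\Lambda_f$; if $\pi(\tilde b)$ is critical one repeats the symmetry/measure argument. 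This uses only Lemma \ref{erg}, Lemma \ref{periodic}, and the forward inclusion $f_G(\Lambda_f)\subseteq\Lambda_f$, and in particular needs no prior knowledge that vertical components exist over postcritical points (that, together with uniqueness, comes afterwards in Lemma \ref{lem:align}). If you want to salvage your forward/contraction route, you would first have to prove the alignment and invariance statements of Lemma \ref{lem:align} independently and then apply your fixed-point argument to the closed, $f_G^p$-invariant union of vertical components over $a$; as written, the proposal does not do this.
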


\begin{proof}
If $a=1$ then the conclusion holds. Since the point $x=1$ is periodic under $f$, its forward orbit contains a critical point $a \in \PC(f)$ such that $f(a)=1$. Since $\lambda^{-1}$ is a root of $D_f(t)$ the periodic point of $f_G$ projecting to $1 \in I$ is $(1,1) \in I \times \bbR$.

Moreover, if $f_i$, $f_{i+1}$ are the affine maps defining $f$ on either side of $a \in I$ then in fact $f_i(x)=a_i+\lambda x$ and $f_{i+1}(x)=\lambda x+2-a_i$ for some integer $a_i$. Therefore the affine maps defining $f_G$ on either side of the line $x=a$ are

\[
\tilde{f}_i(x,y)=(a_i+\lambda x, a_i-\lambda^{-1}y), \hspace{5mm} \tilde{f}_{i+1}(x,y)=(\lambda x+2-a_i, \lambda^{-1}y+2-a_i)
\]

\noindent If $(a,y) \in \Lambda_f$ then the images $\tilde{f}_i(a,y)=(1,a_i-\lambda^{-1}y)$ and $\tilde{f}_{i+1}(1,\lambda^{-1}y+2-a_i)$ are on the line $x=1$ and are symmetric about the point $(1,1)$. In particular, the maps $\tilde{f}_i$ and $\tilde{f}_{i+1}$ agree precisely at the periodic point $\tilde{a}$ of $f_G$ projecting to $a$. It follows that if $\tilde{a} \in \inte(R_i \cup R_{i+1})$ and $U \subseteq \inte (R_i \cup R_{i+1})$ is an open rectangle symmetric about $\tilde{a}$ then

\[
f_G(U \cap \inte(R_i)) = f_G(U \cap \inte(R_{i+1}))
\]

\noindent This contradicts the fact that $f_G$ is measure-preserving, so in fact $\tilde{a} \not \in \inte(R_i \cup R_{i+1})$. In other words, $\tilde{a} \in \partial_V\Lambda_f$.

We now proceed inductively, going through the periodic orbit of $(1,1)$ in reverse order. Suppose that $\tilde{a} \in I \times \bbR$ is a point in the orbit of $(1,1)$ that is contained in $\partial_V\Lambda_f$, and let $\tilde{b}$ be the periodic point such that $f_G(\tilde{b})=\tilde{a}$. If $\pi(\tilde{b})=b$ is a critical point of $f$, then we repeat the above argument. Otherwise $f_G$ is defined by a single affine map in a neighborhood of $\tilde{b}$. In particular, if $R_j$ and $R_{j+1}$ are the rectangles bordering the line $x=b$, if $\tilde{b} \in \inte(R_j \cup R_{j+1})$ then $f_G$ maps an open rectangular neighborhood of $\tilde{b}$ to an open rectangular neighborhood of $f_G(\tilde{b})=\tilde{a}$ in $\bbR^2$. This neighborhood cannot be a subset of $\Lambda_f$, since $\tilde{a} \in \partial_V \Lambda_f$ by assumption, but this contradictions the invariance of $\Lambda_f$.

Therefore $\tilde{b} \not \in \inte(R_j \cup R_{j+1})$, hence $\tilde{b} \in \partial_V\Lambda_f$. 
\end{proof}

\begin{lem}\label{lem:align}
There is only one vertical component of $\partial\Lambda_f$ projecting to a point $a \in \PC(f)$. Therefore, all adjacent rectangles $R_i, R_{i+1}$ of $\Lambda_f$ are either upper- or lower-aligned.
\end{lem}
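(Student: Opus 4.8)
The plan is to first convert the statement into a combinatorial condition on the rectangles, and then prove that condition by running the same backward induction along the orbit of $x=1$ that was used in the preceding lemma. For the reduction, fix $a$ in the interior of $I$ and write $R_i=P_i\times[c_i,d_i]$ and $R_{i+1}=P_{i+1}\times[c_{i+1},d_{i+1}]$ for the rectangles flanking the line $x=a$, with $P_i$ on the left. Since $\Lambda_f$ has connected interior and $P_i,P_{i+1}$ are adjacent in the Markov partition, the intervals $[c_i,d_i]$ and $[c_{i+1},d_{i+1}]$ must overlap, so their union is a single interval and, after discarding isolated points, the slice $\partial_V\Lambda_f\cap(\{a\}\times\bbR)$ is the closure of the symmetric difference $[c_i,d_i]\,\triangle\,[c_{i+1},d_{i+1}]$. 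This has one component precisely when the two intervals share an endpoint in the non-degenerate way, i.e.\ when $R_i,R_{i+1}$ are upper- or lower-aligned; the nested and the crossing configurations give two components, and the degenerate ``flush corner'' ($d_i=c_{i+1}$) must also be excluded, which I will note is ruled out by connectedness of $\operatorname{int}\Lambda_f$ together with the later observation that the periodic lift sits at the metric center of the edge. For $a\in\{0,1\}$ only one rectangle borders the line, so the slice is a single segment there. Hence the first assertion of the lemma is equivalent to the second, and it suffices to prove that over each interior postcritical point there is exactly one vertical component.

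For this I would induct backwards along the periodic orbit $1=f^{p}(1)\to f^{p-1}(1)\to\cdots\to f(1)\to 1$, exactly as in the proof of the preceding lemma. The base case $a=1$ (and $a=0$, if $0\in\PC(f)$) is immediate from the paragraph above. For the inductive step write $a=f(b)$ with $b\in\PC(f)$ and assume the slice over $b$ is a single segment $V_b$; by the preceding lemma $V_b$ contains the periodic lift $\tilde b$ of $b$, whose location is pinned down by Lemma~\ref{periodic}. I will use two facts established earlier: $f_G$ carries vertical lines to vertical lines, and (from the proof of Lemma~\ref{erg}) the images $\tilde f_\ell(R_\ell)$ are rectangles tiling $\Lambda_f$ up to a null set, each spanning the full width of every Markov interval it meets. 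Consequently the strip of $\Lambda_f$ over $P_i$ is tiled without gaps, bottom to top, by horizontal sub-rectangles, each of which is the image of a whole rectangle crossing the line $x=a$, and likewise over $P_{i+1}$; the slice $\partial_V\Lambda_f\cap(\{a\}\times\bbR)$ records exactly the heights at which these two ``tile columns'' disagree. A rectangle $R_\ell$ with $a$ in the interior of $f(\overline{P_\ell})$ contributes the \emph{same} tile to both columns, so the only possible discrepancy comes from the rectangles whose image has $a$ as an endpoint, which are precisely the two rectangles flanking $x=b$ (together with the second branch of $f$ at $b$ in the one step where $b$ is a critical point, namely the step $a=1$, handled exactly as in the preceding lemma). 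By the inductive hypothesis these produce a single discrepancy interval, and the position of $\tilde b$ inside $V_b$ forces it to be stacked flush against one end of its column, i.e.\ $R_i$ and $R_{i+1}$ are upper- or lower-aligned.

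The main obstacle is that the Perron right-eigenvector fixes the \emph{heights} $d_i-c_i$ of the rectangles but says nothing about their vertical \emph{positions} $c_i$, and the whole content of the lemma is that these positions form a staircase. So the delicate point is to show the two tile columns over $P_i$ and $P_{i+1}$ can disagree on at most one interval, stacked against an end — ruling out both the nested and crossing configurations and the degenerate flush-corner case. This is where connectedness of $\operatorname{int}\Lambda_f$, measure preservation (so that the columns are honest tilings with neither gaps nor overlaps), and the precise placement of the periodic lift of $a$ on $\partial_V\Lambda_f$ must all be combined, and where the bookkeeping splits off the single critical step.
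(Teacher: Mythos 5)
Your proposal is correct and follows essentially the same route as the paper: both reduce alignment to counting vertical components of $\partial\Lambda_f$ over each line, take the single segment over $x=1$ (and $x=0$) as the base case, and propagate along the periodic orbit of $x=1$ using that $f_G$ acts by a single affine branch near the line over the unique preimage $b \in \WPC(f)$ of an interior postcritical point $a$. Your tiling-by-images bookkeeping (via Lemma \ref{erg}) is just a more explicit phrasing of the paper's transfer step, namely that vertical components over $b$ map homeomorphically onto those over $a$ and no other rectangle's image can create vertical boundary over $a$; the appeal to the position of the periodic lift $\tilde b$ is unnecessary, since a single discrepancy interval already forces a shared endpoint by your own first-paragraph combinatorics.
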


\begin{proof}
If $a=0$ or $a=1$ then the claim follows immediately. Suppose otherwise. Since $f$ is PCP, there is a unique $b \in \WPC(f)$ such that $f(b)=a$. For each $i$ the connected components of $\partial_VR_i$ project to elements of $\WPC(f)$, so if $f_G$ maps such a component into the line $x=a$ then that component must project to $b$, since $\pi \circ f_G = f \circ \pi$. In other words, the preimage of any vertical component of $\partial\Lambda_f$ is contained in the intersection of $\Lambda_f$ with the line $x=b$.

Since $a \neq 0, 1$, $b$ is not a critical point of $f$, and therefore if $R_i$, $R_{i+1}$ are the rectangles of $\Lambda_f$ intersecting the line $x=b$ then $f_G$ acts on $R_i$ and $R_{i+1}$ by the same affine map $\tilde{f}$. The fact that $f_G(\partial_V R_j)$ does not intersect the line $x=b$ for any $j \neq i, i+1$ now implies that the number of vertical components of $\partial \Lambda_f$ projecting to $a \in \PC(f)$ is equal to the number of vertical components of $\partial \Lambda_f$ projecting to $b \in \PC(f)$, and $f_G$ maps the latter homeomorphically onto the former.

We proceed inductively backwards through the periodic orbit of $a \in \PC(f)$. Eventually we will arrive at $c=f(1) \in \PC(f)$. Since there is only a single vertical component of $\partial \Lambda_f$ projecting to $1 \in \PC(f)$, the above argument shows that there is similarly a single vertical component projecting to $c$. Moreover, the same statement now follows for each point in the periodic orbit of $a$, including $a$ itself.

That all adjacent rectangles are either upper- or lower-aligned now follows immediately: if two such rectangles are neither upper- nor lower-aligned, then there would exist at least two components of $\partial_V\Lambda_f$ along their intersection.
\end{proof}

\begin{lem}\label{lem:center}
Let $a \in \PC(f)$ and $\tilde{a} \in \Lambda_f$ the unique periodic point projecting to it. Then $\tilde{a}$ is at the metric center of the vertical component of $\partial\Lambda_f$ containing it.
\end{lem}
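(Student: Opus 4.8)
The plan is to use the previous lemmas — in particular Lemma~\ref{lem:align}, which says there is exactly one vertical component of $\partial\Lambda_f$ over each point of $\PC(f)$, and Lemma~\ref{periodic}, which identifies the unique periodic lift $\tilde a$ — together with an induction running backwards along the periodic orbit of $x=1$, exactly as in the proofs of the two preceding lemmas. The base case is $a=1$: here $\tilde a=(1,1)$ by Lemma~\ref{periodic}, and I would argue directly that the vertical component of $\partial\Lambda_f$ over $x=1$ is symmetric about height $y=1$. The symmetry comes from the involution $y\mapsto 2-y$ that intertwines the two affine branches $\tilde f_i,\tilde f_{i+1}$ adjacent to the critical point $b$ with $f(b)=1$: as computed in the proof of the previous lemma, $\tilde f_i(b,y)=(1,a_i-\lambda^{-1}y)$ and $\tilde f_{i+1}(b,y)=(1,\lambda^{-1}y+2-a_i)$, whose images are reflections of one another through $(1,1)$. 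Since $\Lambda_f$ is $f_G$-invariant and $\mu$ is preserved, the portion of the vertical component over $x=1$ that arises as the image of $\Lambda_f\cap\{x=b\}$ under each branch must agree, forcing this component to be invariant under $y\mapsto 2-y$, hence centered at $y=1=\pi_2(\tilde a)$.

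Next I would carry out the inductive step. Suppose $\tilde a$ lies at the metric center of its vertical component $V_a$, and let $\tilde b$ be the periodic point with $f_G(\tilde b)=\tilde a$, lying in the vertical component $V_b$ over $b=\pi(\tilde b)\in\PC(f)$ (unique by Lemma~\ref{lem:align}). As in the proof of Lemma~\ref{lem:align}, $f_G$ maps $V_b$ homeomorphically onto $V_a$, and it does so via a restriction of an affine map of the form $y\mapsto a_j\pm\lambda^{-1}y$ (one branch if $b$ is not critical; and if $b$ is critical the two branches are exchanged by a reflection through $\tilde a$ as above, so the analysis of the base case applies verbatim). An affine map of $\bbR$ sends the metric center of an interval to the metric center of its image; since $f_G|_{V_b}$ is (a restriction of) such a map and $\tilde b\mapsto\tilde a$ with $\tilde a$ central in $V_a=f_G(V_b)$, it follows that $\tilde b$ is central in $V_b$. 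This closes the induction, and since every point of $\PC(f)$ eventually maps to $1$ under $f$, every periodic lift $\tilde a$ is at the metric center of its vertical component.

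The main obstacle I anticipate is the bookkeeping at a \emph{critical} point $b$ in the inductive step: there $f_G$ is not given by a single affine map but by two branches on $R_j$ and $R_{j+1}$, and one must confirm that the vertical component $V_b$ straddles the line $x=b$ symmetrically — i.e., that the two sub-pieces $V_b\cap\partial_VR_j$ and $V_b\cap\partial_VR_{j+1}$ have equal length and each is mapped isometrically (up to orientation) by its branch onto $V_a$. This is precisely the reflection-through-$(1,1)$ type argument used in the proof of the lemma asserting $\tilde a\in\partial_V\Lambda_f$, and rectangularity of $\Lambda_f$ guarantees $V_b$ really is a single interval of a vertical edge; combining these, the symmetry of $V_a$ pulls back to symmetry of $V_b$ about $\tilde b$. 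A secondary subtlety is ruling out that $V_b$ could be strictly shorter than the full vertical edge it lies in (so that "metric center of $V_b$" is the object we want, not the center of the ambient edge), but Lemma~\ref{lem:align} and the fact that $f_G(V_b)=V_a$ force $V_b$ to be exactly the full vertical component, so no discrepancy arises.
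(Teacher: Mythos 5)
Your overall strategy is close to the paper's — a base case at $x=1$ using the reflection symmetry of the two branches at a critical preimage, followed by an induction along the periodic orbit using that $f_G$ restricted to vertical components is affine and affine maps preserve midpoints — but you run the induction \emph{backwards}, and this creates a genuine gap at the very first backward step, where the predecessor of $1$ is the periodic critical point $c$. There your key claim, that $f_G$ maps $V_c$ homeomorphically onto the full component $V_1$ over $x=1$ (so that ``$f_G(V_b)=V_a$ forces $V_b$ to be the full vertical component''), is false in general: $V_1$ is the whole segment $\Lambda_f\cap\{x=1\}$, which is the union of $f_G(V_c)$ together with the images of the fold edges $R_j\cap R_{j+1}$ at \emph{every} critical point mapping to $1$ (there are several once the modality exceeds $2$), each such pair of images being mirror segments through $(1,1)$. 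So $f_G(V_c)$ is typically a proper subsegment of $V_1$, and knowing $(1,1)$ is the midpoint of $V_1$ does not tell you it is the midpoint of $f_G(V_c)$; hence you cannot pull the centering back through the affine branch to conclude $\tilde{c}$ is the center of $V_c$. Saying ``the analysis of the base case applies verbatim'' does not fill this, because your base case only asserts symmetry of $V_1$, not of the subsegment $f_G(V_c)$ — and your base case is itself imprecise at exactly this point: the two branch images of $\Lambda_f\cap\{x=b\}$ do not ``agree'' (by Lemma \ref{erg} their interiors are essentially disjoint; they are reflections of one another through $(1,1)$), and the symmetric placement of $f_G(V_c)$ inside $V_1$ is the delicate part. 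The paper handles it by applying both branches to the same segment $V_c\cup(R_l\cap R_{l+1})$ and noting that the intersection of the two mirror-image segments is exactly $f_G(V_c)$, hence symmetric about $(1,1)$; that single fact is what both completes the base case and would rescue your backward step at $c$.

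The paper avoids the issue entirely by running the induction \emph{forwards} along the orbit $1\mapsto f(1)\mapsto\cdots$: every forward step has a non-critical source (critical points of a zig-zag map into $\partial I$), so the argument of Lemma \ref{lem:align} really does give that $f_G$ carries the component over $a$ affinely and homeomorphically onto the component over $f(a)$, and midpoints push forward; the periodic critical point then appears only as the final target, never as a source. Separately, your closing sentence (``every point of $\PC(f)$ eventually maps to $1$'') fails for positive zig-zags: $0\in\PC(f)$ is a fixed point that never reaches $1$, so its component requires the additional base-case-style argument at $(0,0)$, which the paper carries out but your induction never reaches.
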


\begin{proof}
We begin by proving the statement for $a=1$. By Lemma \ref{periodic} we know $\tilde{a}=(1,1)$ in this case. If $f(c)=1$ then $c$ is a critical point that either is or is not periodic.

If $c$ is not periodic then $c \not \in \PC(f)$ and the two rectangles of $\Lambda_f$ intersecting the line $x=c$ are both upper- and lower-aligned, by Lemma \ref{lem:vcomp}. Denote these two rectangles by $R_i, R_{i+1}$ and let $\tilde{f}_i$, $\tilde{f}_{i+1}$ denote the affine maps by which $f_G$ acts on these rectangles, respectively. Then $\tilde{f}_i(R_i \cap R_{i+1})$ and $\tilde{f}_{i+1}(R_i \cap R_{i+1})$ are vertical line segments of equal length lying on the line $x=1$. Furthermore, these line segments are contained in $\Lambda_f$ and symmetric about $\tilde{a}$.

If $c'$ is another non-postcritical point satisfying $f(c')=1$ and $R_j, R_{j+1}$ the rectangles intersecting the line $x=c'$, then the images $\tilde{f}_j(R_j \cap R_{j+1})$ and $\tilde{f}_{j+1}(R_j \cap R_{j+1})$ do not intersect $\tilde{f}_i(R_i \cap R_{i+1})$ and $\tilde{f}_{i+1}(R_i \cap R_{i+1})$ except perhaps at their endpoints: otherwise two rectangles, say $R_i$ and $R_j$, would have

\[
f_G(\inte R_i) \cap f_G(\inte R_j) \neq \emptyset,
\]

\noindent contradicting the fact that $f_G$ is measure-preserving.

If $c$ is the unique periodic critical point of $f$, then there is a single vertical component of $\partial \Lambda_f$ projecting to $c$, and this component contains the periodic lift $\tilde{c}$ of $c$. Denote by $R_l, R_{l+1}$ the rectangles of $\Lambda_f$ intersecting at $x=c$. As before, $\tilde{f}_l(R_l \cap R_{l+1})$ and $\tilde{f}_{l+1}(R_l \cap R_{l+1})$ are vertical line segments symmetric about $\tilde{a}$. Furthermore, $f_G$ maps the single vertical component $V$ to a line segment containing $\tilde{a}$. The union $V \cup (R_l \cap R_{l+1})$ is a connected line segment, hence $\tilde{f}_l(V \cup (R_l \cap R_{l+1}))$ and $\tilde{f}_{l+1}(V \cup (R_l \cap R_{l+1}))$ are connected line segments such that

\[
\tilde{f}_l \left (V \cup (R_l \cap R_{l+1}) \right ) \cap \tilde{f}_{l+1}\left ( V \cup (R_l \cap R_{l+1})\right ) = f_G(V)
\]

\noindent It now follows that $f_G(V)$ is symmetric about $\tilde{a}$, and consequently so is the entire component of $\partial_V\Lambda_f$ containing $\tilde{a}$.

We now prove the claim for all points in the forward orbit of $1 \in \PC(f)$, proceeding inductively. As we argued in the proof of Lemma \ref{lem:align}, $f_G$ maps the component of $\partial_V\Lambda_f$ projecting to $a \in \PC(f)$ homeomorphically onto the component projecting to $f(a)$. In particular, if the unique periodic lift $\tilde{a}$ of $a$ lies at the metric center of the vertical component containing it, then so does $f_G(\tilde{a})$, since $f_G$ contracts the vertical direction uniformly by $\lambda^{-1}$.

If $f$ is a negative zig-zag, then it has a single postcritical orbit and so the proof is complete. If, however, $f$ is a positive zig-zag then we must still prove the claim for the fixed point $0 \in \PC(f)$. The exact argument used for $a=1$ applies here, after observing that the unique periodic lift of $a=0$ is the point $\tilde{a}=(0,0)$. 
\end{proof}

\begin{lem}\label{reverse}
The action of $f_G$ on $\Lambda_f$ can be recovered via the de Carvalho-Hall construction in the sense of Theorem \ref{Galois}. In particular, $f$ is of pseudo-Anosov type.
\end{lem}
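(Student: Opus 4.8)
The plan is to read off from the rectangular limit set a finite generalized train track, to identify it with the invariant train track $\tau_L$ of $F_L$, and then to check that the de Carvalho--Hall surface built from $\tau_L$ is isometric to $\Lambda_f$ compatibly with the two dynamics. Concretely, let $R_0,\dots,R_k$ be the rectangles subdividing $\Lambda_f$; by Lemma~\ref{measure} their widths are the entries of the left Perron eigenvector of the transition matrix $M$ and their heights are (proportional to) the entries of a right Perron eigenvector, so up to isometry the $R_i$ are exactly the blocks $x_i\times y_i$ of the de Carvalho--Hall construction and the right-eigenvector relation is precisely the switch condition \eqref{eqn:switch} at each real-edge endpoint. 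I would take for $\tau$ the following data: one real edge = the horizontal core arc of each $R_i$; inside the fat vertex over a point $a\in\WPC(f)$, one \emph{connecting} edge joining the two adjacent real edges along the vertical segment they share; and, when $a\in\PC(f)$, one additional \emph{loop} encircling the unique periodic lift $\tilde a$. Lemmas~\ref{lem:vcomp} and~\ref{lem:align} guarantee that each vertex over $\WPC(f)\setminus\PC(f)$ carries a single connecting edge and no loop (there the adjacent rectangles are both upper- and lower-aligned), that each vertex over $\PC(f)$ carries exactly one vertical boundary component, and Lemma~\ref{lem:center} places $\tilde a$ at its metric center, so the loop weight is split symmetrically and the weights of all infinitesimal edges are forced by the lengths of the vertical boundary segments of $\Lambda_f$. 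One checks no two infinitesimal edges in a vertex are parallel, so $\tau$ is a legitimate generalized train track (Definition~\ref{defn:track}) containing $\tau_0$.

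Next I would prove that $\tau$ is $F_L$-invariant. The image $f_G(R_j)$ is a short, wide rectangle crossing precisely the $R_i$ with $m_{ij}=1$; the outward left-veering combinatorics of $F_L$ prescribes the vertical order in which these images are stacked, and one checks that this order coincides with the order the rectangles $f_G(R_j)$ actually have inside $\Lambda_f$ (the swirling of $F_L$ matches the way $f_G$ carries vertical boundary components to vertical boundary components). Applying $F_L$ to $\tau_0$ and running the tightening pseudo-isotopies returns the real edges to the core arcs and homotopes together all parallel infinitesimal edges; using that $f_G$ maps the vertical boundary component over $a$ homeomorphically onto the one over $f(a)$ and preserves its center (Lemmas~\ref{lem:align},~\ref{lem:center}), the loop over $a$ is carried to the loop over $f(a)$ and no new loop appears. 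Hence $F_{L\ast}(\tau)\sim\tau$; since $\tau$ is finite and is (up to isotopy) the smallest $F_L$-invariant train track containing $\tau_0$, it must equal the invariant train track $\tau_L$. In particular $\tau_L$ is finite, so the generalized pseudo-Anosov attached to $F_L$ is a genuine pseudo-Anosov and $f$ is of pseudo-Anosov type.

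Finally, performing the gluings indicated by the non-loop (connecting) edges of $\tau_L$ glues the abstract blocks $R_i$ along exactly the shared vertical segments by which they sit inside $\Lambda_f$, producing an isometry $i\colon S'\to\Lambda_f$. Under $i$ the map $\tilde f$ induced by $F_L$ is affine on each $R_i$, stretching horizontally by $\lambda$ and contracting vertically by $\lambda^{-1}$ with the same translational part as $f_G$ (the branch $\tilde f_j$ of Definition~\ref{defn:Galois lift}), so the square commutes and $i$ carries the horizontal and vertical foliations of $S'$ to those of $\Lambda_f$; by Lemma~\ref{erg} this identification is measure-preserving, matching the invariant measures on both sides. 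Identifying boundary segments with a common forward image in each picture then yields pseudo-Anosovs $\phi_1\colon S\to S$ and $\phi_2\colon\tilde\Lambda_f\to\tilde\Lambda_f$, and $i$ descends to a conjugacy between them matching (un)stable foliations, which is exactly the content of Theorem~\ref{Galois} in this direction.

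I expect the crux to be the invariance step: rigorously matching the left-veering stacking order of the images $F_L(\gamma_s)$ with the geometric stacking of the strips $f_G(R_j)$ inside $\Lambda_f$, and verifying that the tightening procedure produces exactly the loops predicted by $\PC(f)$ --- neither more nor fewer. All the geometric input needed for this has already been isolated in Lemmas~\ref{lem:vcomp}--\ref{lem:center}, so what remains is careful combinatorial bookkeeping of how $F_L$ folds $\bbI$ against the rectangle adjacencies of $\Lambda_f$.
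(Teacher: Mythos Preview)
Your proposal is correct and follows essentially the same arc as the paper's proof: build a finite train track $\tau$ from the rectangle geometry of $\Lambda_f$ (real edges from rectangles, connecting edges from shared vertical sides, loops from the periodic lifts on $\partial_V\Lambda_f$), verify invariance, and then use Lemma~\ref{measure} to match dimensions and produce the isometry $i$.

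The one substantive difference is in how you identify the thickening. You propose to check directly that $\tau$ is $F_L$-invariant by matching the left-veering stacking order against the geometric vertical order of the strips $f_G(R_j)$ inside $\Lambda_f$ --- the step you yourself flag as the crux. The paper sidesteps this bookkeeping: it observes that the map \emph{induced by $f_G$ itself} on $\tau$ leaves $\tau$ invariant after the tightening pseudo-isotopies, so $\tau$ is the invariant train track for \emph{some} thickening of $f$; since $\tau$ is finite this thickening is of pseudo-Anosov type, whence Proposition~\ref{veer} forces it to be $F_L$ or $F_R$, and a quick orientation check (the sign of the vertical derivative $\lambda^{-1}>0$) rules out $F_R$. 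This is shorter because Proposition~\ref{veer} has already done the hard combinatorial work of excluding all other thickenings. Your direct verification would also succeed, but it duplicates effort already invested in Proposition~\ref{veer}.

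One small point: your claim that $\tau=\tau_L$ because $\tau$ is ``the smallest $F_L$-invariant train track containing $\tau_0$'' needs a line of justification. Invariance plus $\tau_0\subseteq\tau$ gives $\tau_L\subseteq\tau$ immediately; for the reverse inclusion you should note that every loop of $\tau$ lies over a point of $\PC(f)$ and hence is produced at some finite stage $\tau_n$ by a turn of $F_L^n(\tau_0)$.
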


\begin{figure}
\centering
\includegraphics[scale=.11]{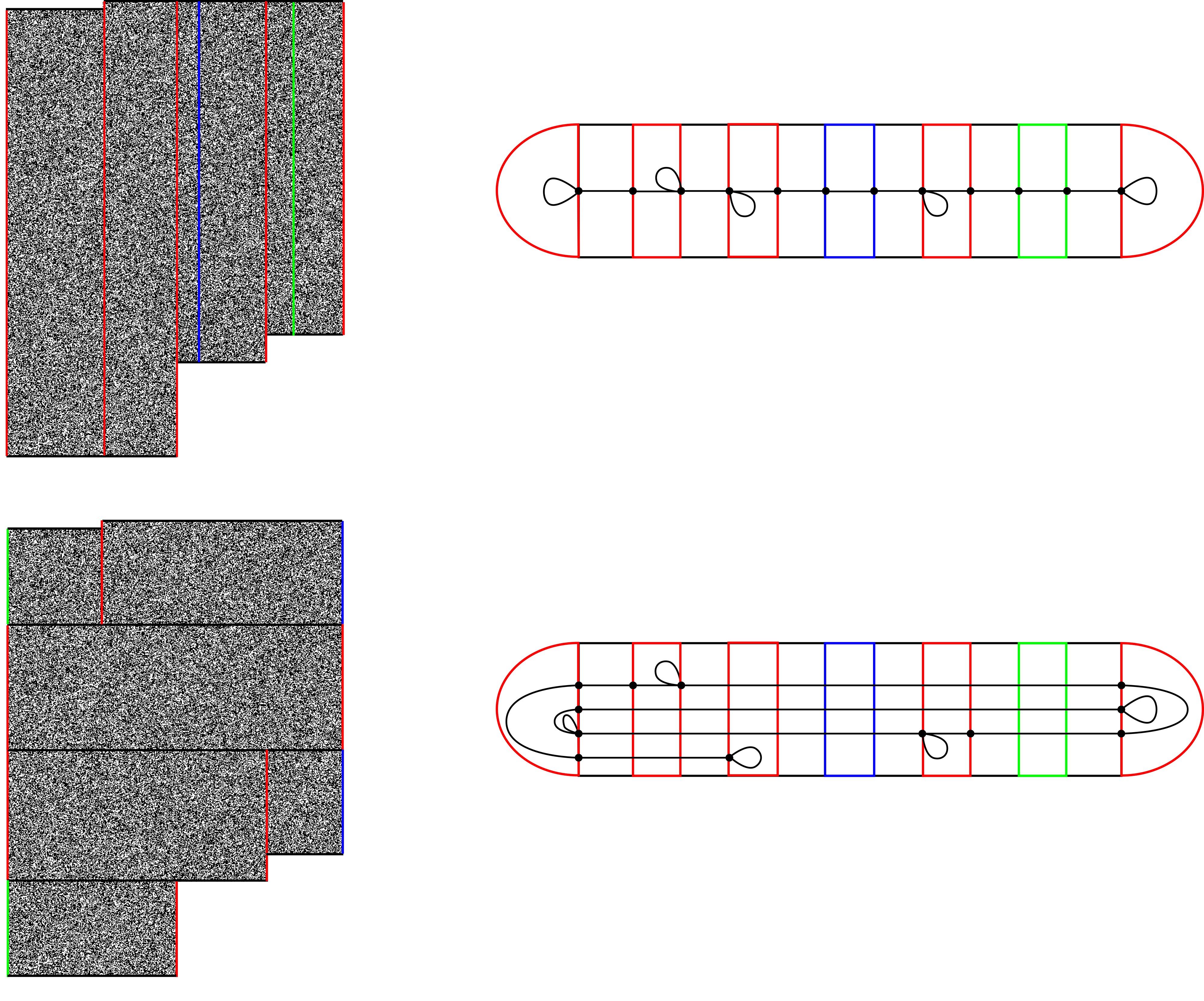}
\caption{Reverse-engineering the invariant train track $\tau$ for the exterior left-veering thickening of a zig-zag $f$ with rectangular limit set $\Lambda_f$. On the top left is the rectangular decomposition of $\Lambda_f$ projecting to $\WPC(f)$, and on the top right is a train track with junctions corresponding to the vertical boundaries of the rectangles. From left to right, each junction corresponds to a vertical boundary of the same color. Moreover, a junction contains a loop if and only if it corresponds to a line projecting to an element of $\PC(f)$. In this case, the loop is above (resp. below) the spine of $\tau$ if the rectangles adjoining the line are lower- (resp. upper-)aligned, and is on the left (resp. right) of the junction if the rectangle intersecting $\partial_V\Lambda_f$ is on the left (resp. right) of the line.}\label{fig:revtrack}
\end{figure}

\begin{proof}
We reverse engineer the invariant train track $\tau$ for the exterior left-veering thick interval map $F$ projecting to $f$. Construct $\tau$ to be the train track with a real edge $e_j$ for each rectangle $R_j$, a connecting infinitesimal edge between the real edges $e_j$, $e_{j+1}$ for adjacent rectangles $R_j$, $R_{j+1}$, and an infinitesimal loop for each $e_j$ corresponding to a $p_i$. The loop is on the left (resp. right) of $e_j$ if $p_i$ is on the left (resp. right) vertical boundary of $R_j$, and if the loop is on the same side of $e_j$ as a connecting infinitesimal edge then the loop is above (resp. below) the connecting edge if the vertical boundary component of $L$ that contains $p_i$ is above (resp. below) the adjacent rectangle.

The map induced by $f_G$ on $\tau$ leaves $\tau$ invariant under the pseudo-isotopies defined in section 2, hence $\tau$ is the invariant train track for a thick interval map projecting to $f$. By Proposition \ref{veer} this thick interval map must be either $F_L$ or $F_R$. One easily checks that the Galois conjugate coordinates imply that it must be $F_L$.

To complete the proof, observe that by Lemma \ref{measure} we may choose a right $\lambda$-eigenvector of the transition matrix $M$ of $f$ such that the rectangles obtained from $\tau$ have the exact dimensions of those in the rectangle decomposition of $L$. Therefore we may define an isometry $i: S' \to \Lambda_f$ taking the horizontal (resp. vertical) foliation of $S'$ to that of $\Lambda_f$. By construction, the map induced on $\Lambda_f$ is $f_G$.
\end{proof}


\subsection{The forward direction}


Lemma \ref{reverse} completes the proof of the reverse direction of Theorem \ref{Galois}. Now it remains to prove the forward direction.\\

We consider the case of $f: I \to I$ a positive $\lambda$-zig-zag of pseudo-Anosov type, the case for $f$ negative being essentially identical. Let $F$ be the exterior left-veering thickening of $f$. Let $S'$ be as in the statement of the theorem. $F$ induces a map $\tilde{f}: S' \to S'$. Let $p \in S'$ be the unique fixed point of $\tilde{f}$ projecting to $x=0 \in I$, and let $q \in S'$ be the unique periodic point projecting to $x=1 \in I$.

We wish to define a map $i: S' \to I \times \bbR$ such that $i \circ \tilde{f}=f_G \circ i$ and such that $i$ sends the horizontal and vertical foliations of $S'$ to those of $I \times \bbR$ in an orientation-preserving way. Fix $s, t \in \bbR$ distinct and define $i(p)=(0,s)$ and $i(q)=(1,t)$. Observe that these two choices determine $i$, and moreover the number $d=t-s$ controls the area of $i(S')$: if $A_d$ is the Lebesgue measure of $i(S')$ for any choice of $i$ satisfying $t-s=d$, then $A_d=|d| \cdot A_1$.

By construction, there is a piecewise affine $G: I \times \bbR \to I \times \bbR$ satisfying $i \circ \tilde{f} = G \circ i$. In particular, if $f_0, \ldots, f_k$ denote the linear maps defining the original zig-zag $f$ then $G$ is of the form

\[
G(x,y)=(f_i(x), g_i(y)) \hspace{2mm} \text{if $f(x)=f_i(x)$}
\]

\noindent where $g_i(y)=a_i(s,t)+(-1)^i \cdot \lambda^{-1}y$. This follows from the fact that $\tilde{f}$ is a piecewise affine orientation-preserving map projecting to $f$. The remainder of the proof is an analysis of the constants $a_i(s,t)$.

Note that $i(p)=(0,s)$ is a fixed point for $G$, and hence $y=s$ is fixed by $g_0(y)=a_0(s,t)+\lambda^{-1}y$. We thus have $a_0(s,t)=(1-\lambda^{-1})s$. To compute the remaining $a_i$ we require the following lemma.

\begin{lem}
Let $c_i=i \cdot \lambda^{-1}$ be a critical point of $f$ and set $g_{i-1}$, $g_i$ to be the defining $y$-coordinate maps of $G$ on either side of the line $x=c_i$. Then $g_{i-1}(y_i)=g_i(y_i)$ for a unique number $y_i$, and moreover this number satisfies $G(c_i, y_i)=s$ or $G(c_i, y_i)=t$ depending on whether $i$ is even or odd, respectively.
\end{lem}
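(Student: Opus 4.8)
The plan is to exploit the fact that the junction $\mathbf{0}$ (projecting to $x=0$) and $\mathbf{1}$ (projecting to $x=1$) each carry exactly one loop of the invariant train track $\tau$, by Theorem~\ref{PCP}. A loop in a junction forces the rectangle decomposition of $S'$ to be symmetric about the periodic point it contains — this is precisely the content encoded by an infinitesimal loop (it identifies a segment of a vertical boundary with itself via an orientation-reversing isometry, fixing the midpoint). So the first step is to translate "there is a single loop in $\mathbf{1}$'' into the statement that $i(q)=(1,t)$ sits at the metric center of the vertical component of $\partial S'$ (equivalently, the vertical boundary of $i(S')$) projecting to $x=1$, and likewise $i(p)=(0,s)$ sits at the metric center of the vertical boundary over $x=0$. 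Similarly, because $F=F_L$ has no loop in the junction over a non-postcritical critical point $c_i$ (Theorem~\ref{PCP} again), the two rectangles $R_{i-1},R_i$ of $S'$ adjacent to the line $x=c_i$ are \emph{both} upper- and lower-aligned; hence their intersection is a single vertical segment, and $G$ folds this segment onto a vertical segment on the line $x = f(c_i) \in \partial I$.

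Next I would do the elementary computation with the affine maps. On either side of $x=c_i$ the branches of $f$ are $f_{i-1}(x)=a_{i-1}\pm\lambda x$ and $f_i(x)=a_i\mp\lambda x$ with $f_{i-1}(c_i)=f_i(c_i)\in\{0,1\}$; correspondingly $g_{i-1}(y)=a_{i-1}(s,t)+(-1)^{i-1}\lambda^{-1}y$ and $g_i(y)=a_i(s,t)+(-1)^i\lambda^{-1}y$ are affine with slopes of opposite sign. Two lines of opposite nonzero slope meet in a unique point $y_i$, so $g_{i-1}(y_i)=g_i(y_i)=:w_i$ exists and is unique; this is the first assertion. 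For the second assertion, the key point is that $G$ is a homeomorphism onto $i(S')$ and the vertical segment $\{c_i\}\times[\,\cdot\,]$ is in the \emph{boundary} of $i(S')$ (it is the shared vertical edge of $R_{i-1}$ and $R_i$, which — since $R_{i-1}$ and $R_i$ are both upper- and lower-aligned — is a genuine vertical boundary component). Its image under $G$ lies on the line $x=f(c_i)\in\{0,1\}$ and must therefore lie in the corresponding vertical boundary of $i(S')$, which over $x=0$ has midpoint $s$ and over $x=1$ has midpoint $t$. The point $(c_i,y_i)$ is exactly where the two branches $g_{i-1},g_i$ agree, i.e. the image of $(c_i,y_i)$ is the point on the fold: using measure-preservation (or injectivity of $G$ on interiors), the two image segments $G(\{c_i\}\times[y_i-\epsilon,y_i])$ and $G(\{c_i\}\times[y_i,y_i+\epsilon])$ must be reflections of each other about $G(c_i,y_i)$, so $G(c_i,y_i)$ is the midpoint of the vertical boundary component it lands in. That midpoint is $s$ when $f(c_i)=0$ and $t$ when $f(c_i)=1$; finally one checks via the positive exterior left-veering convention (Definition~\ref{defn:ext}, Definition~\ref{defn:Galois lift}) that $f(c_i)=0$ exactly when $i$ is even and $f(c_i)=1$ exactly when $i$ is odd, giving the stated dichotomy.

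The main obstacle I anticipate is the second assertion — pinning down \emph{why} $G(c_i,y_i)$ must be the metric center of the boundary segment rather than some other point of it. Uniqueness of $y_i$ is immediate from linear algebra, but the centering requires genuinely using (i) that $\mathbf{0}$ and $\mathbf{1}$ carry loops, which forces the $x=0$ and $x=1$ boundary segments of $i(S')$ to be symmetric about $s$ and $t$ respectively, together with (ii) the fact that a fold of $G$ across $x=c_i$ maps the fold locus to the extreme (innermost) point of the folded image, and (iii) invariance/measure-preservation ruling out overlap of the two image half-segments except along the fold. Assembling (i)--(iii) cleanly — ideally by the same symmetrization argument used in Lemma~\ref{lem:center}, run in reverse — is where the care is needed; the parity bookkeeping at the end is routine once the sign conventions from Definitions~\ref{defn:ext} and~\ref{defn:Galois lift} are made explicit.
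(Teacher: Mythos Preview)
Your plan is on the right track and shares its skeleton with the paper's proof: the uniqueness of $y_i$ via the opposite-slope observation is identical, and you correctly locate the substance of the lemma in the claim that $G(c_i,y_i)$ lands at the \emph{center} $s$ or $t$ of the relevant vertical side.

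However, your proposed ingredients (i)--(iii) do not quite close the gap you identify. Knowing that the full boundary over $x=1$ is symmetric about $t$ (from the loop in $\mathbf{1}$), and that the image of the single line $x=c_i$ under the two branches is symmetric about $w_i=g_{i-1}(y_i)=g_i(y_i)$, does not by itself force $w_i=t$: there are several odd critical points $c_1,c_3,\ldots$ all mapping to $x=1$, and a priori their fold images could be arranged symmetrically about $t$ without each individual $w_j$ equalling $t$. Measure-preservation alone does not resolve this. You also only set up the non-postcritical case; when $c_i=c_1$ is postcritical (which it is, by Proposition~\ref{intloop}), the two adjacent rectangles are \emph{not} both upper- and lower-aligned, and your alignment argument does not apply.

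The paper closes this gap more directly. It treats the two cases separately and in each appeals to the fact that the unique periodic lifts $p,q$ are cone points of angle $\pi$ sitting at the \emph{centers} of the vertical sides over $x=0,1$. The cleanest way to phrase the underlying mechanism (and to repair your argument) is this: on the full quotient $S$, where the loop gluings \emph{are} performed, $\phi$ is a homeomorphism. The loop at $\mathbf{1}$ identifies $(1,t+y)\sim(1,t-y)$. Continuity of $\phi$ at the line $x=c_i$ then forces the two branches to be intertwined by this identification, i.e.\ $g_{i-1}(y)+g_i(y)=2t$ for all $y$; evaluating at $y=y_i$ gives $w_i=t$ immediately. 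This argument works uniformly and makes the case split (postcritical versus not) a matter of locating the fold point on the line $x=c_i$ rather than of proving $w_i=t$. The parity bookkeeping you sketch at the end is correct.
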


\begin{proof}
Since $g_{i-1}$ and $g_i$ are affine with slopes of opposite sign they agree at a unique point $y_i$. To understand this number $y_i$, consider the corresponding situation for $\tilde{f}: S' \to S'$.

If $c_i$ is not a postcritical point of $f$, then the line $x=c_i$ partitions a rectangle of $S'$. Depending on the parity of $i$, the affine maps defining $\tilde{f'}$ on either side of the vertical line send the line to either $x=0$ or $x=1$, with opposite orientations. The fact that the unique periodic lifts of $x=0$ and $x=1$ are points of cone angle $\pi$ lying in the center of the corresponding vertical sides of $S'$ implies that these two affine maps would send the same point of $c_i \times \bbR$ to the singularity if their domains were extended to include this point.

If $c_i$ is a postcritical point, then the unique periodic lift of $c_i$ lies on the vertical boundary of exactly one of the corresponding rectangles of $S'$, and moreover it lies in the center of this boundary component. Since the image of this singularity is the sole singularity of $\tilde{f}$ on the corresponding vertical leaf of $S'$, the two affine maps defining $\tilde{f}$ on either side of $x=c_i$ must map each point of this line to points on $x=0$ or $x=1$ that are equal distances from the singularity. Because all vertical distances are scaled by the same factor, it now follows that both maps agree on the singularity projecting to $c_i$.

In either case, the same is true of the maps $g_{i-1}$ and $g_i$ after mapping into $I \times \bbR$.
\end{proof}

We return to computing the $a_i(s,t)$, thereby completing the proof of Theorem \ref{Galois}.

\begin{proof}[Proof of Theorem \ref{Galois}]
By the lemma, $g_1(y)=2t-g_0(y)$, and in particular

\[
a_1(s,t)=2t-a_0(s,t)=2t-(1-\lambda^{-1})s
\]

\noindent Similarly, we have $g_2(y)=2s-g_1(y)$, hence $a_2(s,t)=2s-a_1(s,t)$, and in general we have

\[
a_i(s,t)=\begin{cases}
(i+1-\lambda^{-1})s-it & \text{if $i$ is even}\\
(\lambda^{-1}-i)s + (i+1)t & \text{if $i$ is odd}
\end{cases}
\]

\noindent Setting $s=0$ and $t=1$ (and hence $d=1$), we specialize to the case

\[
a_i(s,t)=\begin{cases}
-i & \text{if $i$ is even}\\
i+1 & \text{if $i$ is odd}
\end{cases}
\]

\noindent In other words, each affine piece of $G: I \times \bbR \to I \times \bbR$ is of the form

\[
G_i(x,y)=\begin{cases}
(\lambda x-i, \lambda^{-1}y-i) & \text{if $i$ is even}\\
(i+1-\lambda x, i+1-\lambda^{-1}y) & \text{if $i$ is odd}
\end{cases}
\]

Thus $G$ is precisely the Galois lift $f_G$ of Thurston. Condition (2) of the theorem is immediately verified, so it remains to argue that the digit polynomial $D_f$ has $\lambda^{-1}$ as a root. We can define a ``vertical" digit polynomial using the orbit of $y=1$ under the $g_i$. Observe that this new polynomial is precisely $D_f$, and is necessarily satisfied by $\lambda^{-1}$. This completes the proof of Theorem \ref{Galois}.\\
\end{proof}

\begin{rmk}
One quickly verifies that

\[
-\frac{k}{1-\lambda^{-1}} a_0(s,t)+\sum_{i=1}^k a_i(s,t)=0
\]

\noindent and hence the map $(s,t) \mapsto (a_0(s,t), \ldots, a_k(s,t))$ is a linear map of $\bbR^2 \setminus \Delta$ into $\bbR^{k+1}$ whose image is a plane minus the line corresponding to $d=0$. This missing line is the image of the diagonal $\Delta \subseteq \bbR^2$, and we can foliate the image plane with lines parallel to this one, each such line corresponding to a different value of $d=t-s$. The area of the limit set $i_d(S')$ of $G_d$ scales linearly with $|d|$, i.e. if $A_d$ is the area of $i_d(S')$ then $A_d=|d| \cdot A_1$. In this way we can interpret the prohibited case $d=0$ as a degenerate limit set of area zero. Moreover, the two half planes defined by $d>0$ and $d<0$ correspond to the underlying train track map $\tau$ being left- or right-veering, respectively.\\
\end{rmk}


\section{The postcritical orbit of a zig-zag of pseudo-Anosov type}


In light of Theorem \ref{Galois}, it is natural to ask when an exterior left-veering thickening $F_L$ has a finite generalized invariant train track. This section obtains necessary conditions on the structure of the thick interval map $F_L$ associated to a PCP zig-zag of pseudo-Anosov type. In particular, Proposition \ref{type} will be instrumental to our proof of Theorem \ref{bijection} in Section 6.\\

Recall that if $F_L: (S^2, \bbI) \to (S^2, \bbI)$ is a thickening of a PCF interval map $f$, then we denote by \textbf{0} the junction projecting to $0 \in I$, and similarly we denote by \textbf{1} the vertex projecting to $1 \in I$.

\begin{defn}
Let $f: I \to I$ be a $\lambda$-zig-zag map and $c_i=i \cdot \lambda^{-1}$ the critical points of $f$, for $i=1, \ldots, \floor{\lambda}$. Let $F_L: (S^2, \bbI) \to (S^2, \bbI)$ be the exterior left-veering thickening of $f$. For each $i$ we denote by $C_i$ the junction projecting to $c_i$.
\end{defn}

\begin{defn}
Let $\tau$ be the generalized invariant train track for the left-veering thickening $F_L$ of the PCP zig-zag map $f$. By a \textit{connecting infinitesimal edge} we will mean an infinitesimal edge of $\tau$ connecting two distinct real edges. The remaining infinitesimal edges of $\tau$, namely those that connect a real edge to itself, are called \textit{loops}.

We define the \textit{spine} $\tau'$ of $\tau$ to be the union of all real edges and connecting infinitesimal edges of $\tau$. We orient $\tau'$ from \textbf{0} to \textbf{1}. A loop $\gamma \subseteq \tau$ contained in an intermediate vertex of $\bbI$ is called \textit{interior} if $\gamma$ is to the left of $\tau'$ and \textit{exterior} otherwise. See Figure \ref{fig:spine_and_loops}. 
\end{defn}

\begin{rmk}
The interior loops $\gamma$ of $\tau$ are precisely the loops whose image $F(\gamma)$ points toward the horizontal midline of $\bbI$. One can see this by noting that $F$ is exterior left-veering and preserves orientation. It is for this reason that we refer to such loops as ``interior." In the arguments of this section we will see that interior loops are rare, and the existence of more than one for $\tau$ is an obstruction to the finiteness of $\tau$.
\end{rmk}

\begin{figure}
\centering
\includegraphics[scale=.3]{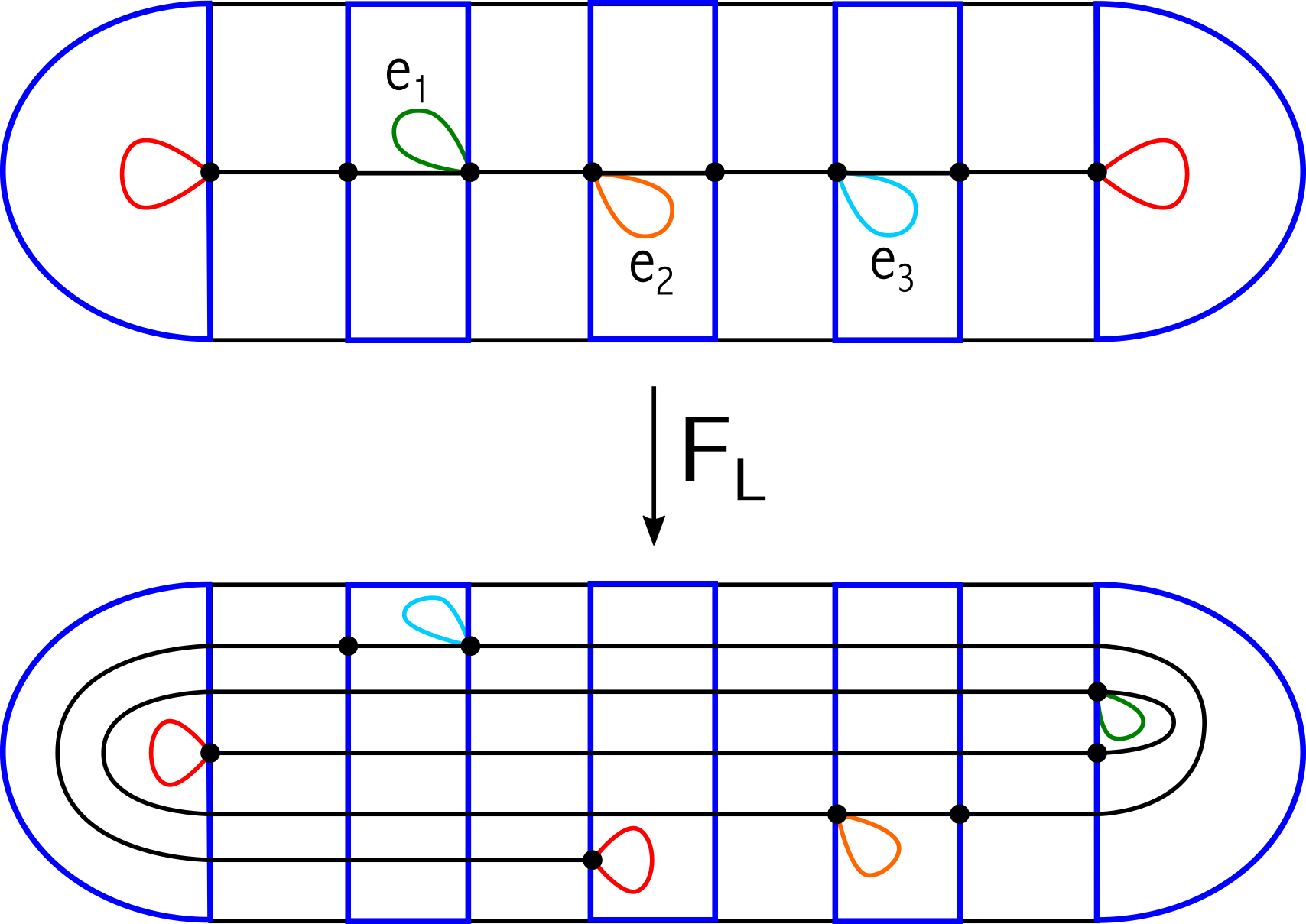}
\caption{A (finite) generalized train track $\tau$ and an exterior left-veering thick interval map preserving it. Here the spine of $\tau$ is the union of the black edges. Of the intermediate loops, $e_1$ is interior while $e_2$ and $e_3$ are exterior. Observe that under the action of $F_L$, the image of an interior loop points in toward the horizontal midline of $\bbI$, hence will be enclosed by arcs after pseudo-isotopy unless it maps into \textbf{0} or \textbf{1}. }\label{fig:spine_and_loops}
\end{figure}

Recall (cf. Definition \ref{defn:zig-zag}) that a zig-zag $f$ is \textit{positive} if $f(0)=0$, and \textit{negative} if $f(0)=1$.

\begin{prop}\label{intloop}
Let $f: I \to I$ be a PCP zig-zag map of growth rate $\lambda > 2$. If $f$ is of pseudo-Anosov type, then $c_1=\lambda^{-1}$ is in the forward orbit of $x=1$. Moreover, the loop in the vertex $C_1$ projecting to $c_1$ must be interior.
\end{prop}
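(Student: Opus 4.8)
### Proof proposal

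The plan is to argue by contradiction in two stages, using the structural constraints imposed by Theorem \ref{PCP} together with the combinatorics of the exterior left-veering thickening. First I would suppose that $f$ is of pseudo-Anosov type but that $c_1 = \lambda^{-1}$ is \emph{not} in the forward orbit of $x=1$. Since $f$ is a zig-zag, the postcritical set $\PC(f)$ is exactly the forward orbit of $x=1$ (this is where we use that $f(c_i) \in \partial I$ for every critical point, so the only critical value that can be periodic is $f(c_1)$, and being PCP forces $x=1$ periodic). By Theorem \ref{PCP}, the invariant train track $\tau$ of $F_L$ has exactly one loop in each fat vertex projecting to an element of $\PC(f)$, and \emph{no} loop in a vertex projecting to a point of $\WPC(f)\setminus\PC(f)$. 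If $c_1 \notin \PC(f)$, then $C_1$ carries no loop. I would then trace how $F_L$ acts near $C_1$: because $c_1$ is a critical point, the image $F_L(\bbI)$ folds back at $C_1$, and the outermost fold of the exterior left-veering map passes through $C_1$. After the tightening pseudo-isotopies of Section 2, this fold produces a loop in $C_1$ (or, if the fold maps into $\mathbf 0$ or $\mathbf 1$, a loop there instead — but the geometry of $\sigma_L$ places the first fold at $c_1$, not at an endpoint, when $\lambda>2$). This contradicts the loop count, proving $c_1 \in \PC(f)$.

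For the second assertion, assume $c_1 \in \PC(f)$ (so $C_1$ carries exactly one loop by Theorem \ref{PCP}) and suppose that loop is \emph{exterior}, i.e. lies on the same side of the spine $\tau'$ as the outward-swirling direction of $F_L$. The key is the observation illustrated in Figure \ref{fig:spine_and_loops}: under $F_L$, the image of an \emph{interior} loop points inward toward the horizontal midline of $\bbI$, so after pseudo-isotopy it gets enclosed by non-parallel arcs — generating a \emph{new} loop — unless it maps directly into $\mathbf 0$ or $\mathbf 1$. Dually, I need to show that an \emph{exterior} loop at $C_1$ leads to a proliferation of loops (hence an infinite train track, contradicting pseudo-Anosov type), \emph{or} a saddle connection. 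The cleanest route: since $c_1 \in \PC(f)$ we have $f(c_1) = f^{k}(1)$ for some $k$; but $f(c_1)=1$ because $c_1=\lambda^{-1}$ maps to an endpoint of $I$ and for the exterior left-veering thickening that endpoint must be $x=1$ (the swirl reaches the outer vertex $\mathbf 1$ through the $c_1$-fold). So $C_1$ maps onto $\mathbf 1$. Now the loop at $\mathbf 1$ is the single loop guaranteed by Theorem \ref{PCP} at the $x=1$ vertex, and $F_L$ must carry the loop at $C_1$ onto it (it cannot create a second loop at $\mathbf 1$). Examining the local picture of $F_L$ at $C_1 \to \mathbf 1$: the half of $\bbI$ that $F_L$ folds across $C_1$ is mapped to the portion of $\bbI$ adjacent to $\mathbf 1$, and checking orientations — $F_L$ is orientation-preserving and left-veering — shows that an exterior loop at $C_1$ is pushed by $F_L$ to the \emph{wrong} side of the spine at $\mathbf 1$ relative to the loop already there, forcing either two non-parallel loops at $\mathbf 1$ (contradiction with Theorem \ref{PCP}) or a stable saddle connection between the singularities over $c_1$ and over $1$ (impossible for a pseudo-Anosov, again by Theorem \ref{PCP}'s argument). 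Hence the loop at $C_1$ is interior.

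The main obstacle I anticipate is making the orientation/side-of-spine bookkeeping in the second stage fully rigorous: one has to set up consistent conventions for "left of $\tau'$," for the orientation of $\tau'$ from $\mathbf 0$ to $\mathbf 1$, and for how the left-veering fold at $C_1$ sits relative to the layer structure of $F_L(\bbI)$, and then verify that these conventions genuinely force the exterior loop to collide with the loop at $\mathbf 1$. This is essentially a careful finite case-check on the local model of $\sigma_L$ near the first fold, together with tracking how the pseudo-isotopy $\psi$ of Section 2 acts on a loop that has been mapped to point outward. I would organize it by first establishing the auxiliary fact $f(c_1)=1$ for exterior left-veering thickenings (which also gives the first assertion cleanly), then doing the loop-side analysis entirely inside the two vertices $C_1$ and $\mathbf 1$. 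The remaining pieces — that $\PC(f)$ equals the orbit of $1$ for a zig-zag, and that a second loop or a saddle connection is fatal — are immediate from the definitions and from Theorem \ref{PCP}.
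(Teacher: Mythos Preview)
Your first stage contains a genuine conceptual error. You write that ``the image $F_L(\bbI)$ folds back at $C_1$'' and that this fold ``produces a loop in $C_1$'' after pseudo-isotopy. But folds of $F_L(\bbI)$ occur in the \emph{image} of critical vertices, not at the critical vertices themselves: since $f(c_1)\in\{0,1\}$ we have $F_L(C_1)\subset\mathbf{0}$ or $\mathbf{1}$, and the turn of $F_L(\tau')$ corresponding to $c_1$ lies in that end-junction. The loop created by this turn therefore lives in $\mathbf{0}$ or $\mathbf{1}$, not in $C_1$. So from $c_1\notin\PC(f)$ you cannot deduce a spurious loop at $C_1$, and the first stage does not close.

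The paper's argument runs in the opposite order and avoids this difficulty. It first notes that \emph{some} $c_i$ lies in the forward orbit of $1$ (immediate from PCP), so $C_i$ carries a loop $\gamma$ together with a connecting edge $\epsilon$. It then shows $\gamma$ must be interior: if $\gamma$ were exterior, then after pseudo-isotopy \emph{both} $F_L(\gamma)$ and $F_L(\epsilon)$ would become non-parallel loops inside $\mathbf{1}$ (or $\mathbf{0}$ in the negative case), contradicting Theorem~\ref{PCP}. Only \emph{after} establishing that the loop at $C_i$ is interior does the paper argue that $i=1$: if $i\neq 1$, the connecting edge in $C_1$ (which carries no loop) maps strictly further interior in $\mathbf{1}$ than the images of $\epsilon$ and $\gamma$ from $C_i$, producing a third loop enclosing two non-parallel ones---again violating Theorem~\ref{PCP}.

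Your second stage is closer in spirit to the paper's interior-versus-exterior argument, but you have tied it to $C_1$ prematurely; the paper proves it for the a priori unknown postcritical $C_i$ and uses that as the lever to pin down $i=1$. The mechanism you describe---an exterior loop landing on the ``wrong side'' at $\mathbf{1}$---is essentially correct, but the contradiction comes from the interaction of $F_L(\gamma)$ with $F_L(\epsilon)$ (the connecting edge at the \emph{same} vertex), not with ``the loop already there.'' (A small side issue: for a positive zig-zag $\PC(f)$ also contains the fixed point $0$, so it is not literally the orbit of $1$.)
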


\begin{proof}
Suppose first that $f$ is positive, so that $f(0)=0$. Since $f$ is PCP, some $c_i=i \cdot \lambda^{-1}$ is in the forward orbit of $x=1$. By Theorem \ref{PCP}, the corresponding junction $C_i$ contains an edge $\epsilon$ and a loop $\gamma$ of the invariant train track $\tau$. If $\gamma$ is exterior then after pseudo-isotopy the images $F(\epsilon)$ and $F(\gamma)$ will be non-parallel loops in \textbf{1}, contradicting Theorem \ref{PCP}. See Figure \ref{fig:sec5_1} below. Thus $\gamma$ must be interior.

\begin{figure}[h!]
\centering
\includegraphics[scale=.4]{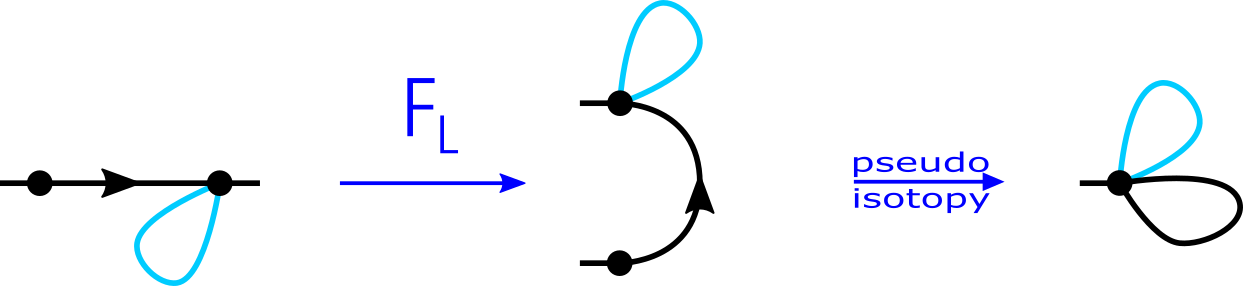}
\caption{An exterior loop in $C_i$ produces two non-parallel loops in \textbf{1}}\label{fig:sec5_1}
\end{figure}

If $i \neq 1$ then $F$ will map the connecting edge $\epsilon'$ of $\tau$ that is within $C_1$ into \textbf{1} further interior than the images of $\epsilon$ and $\gamma$, and thus after pseudo-isotopy we will obtain two non-parallel loops enclosed by a third, again contradicting Lemma \ref{PCP}. See Figure \ref{fig:sec5_2}. On the other hand, if $i=1$ then the images of $\epsilon$ and $\gamma$ will be the furthest interior in \textbf{1} and all other edges that map into \textbf{1} will pinch to parallel loops, which then combine under pseudo-isotopy.

\begin{figure}[h!]
\centering
\includegraphics[scale=.3]{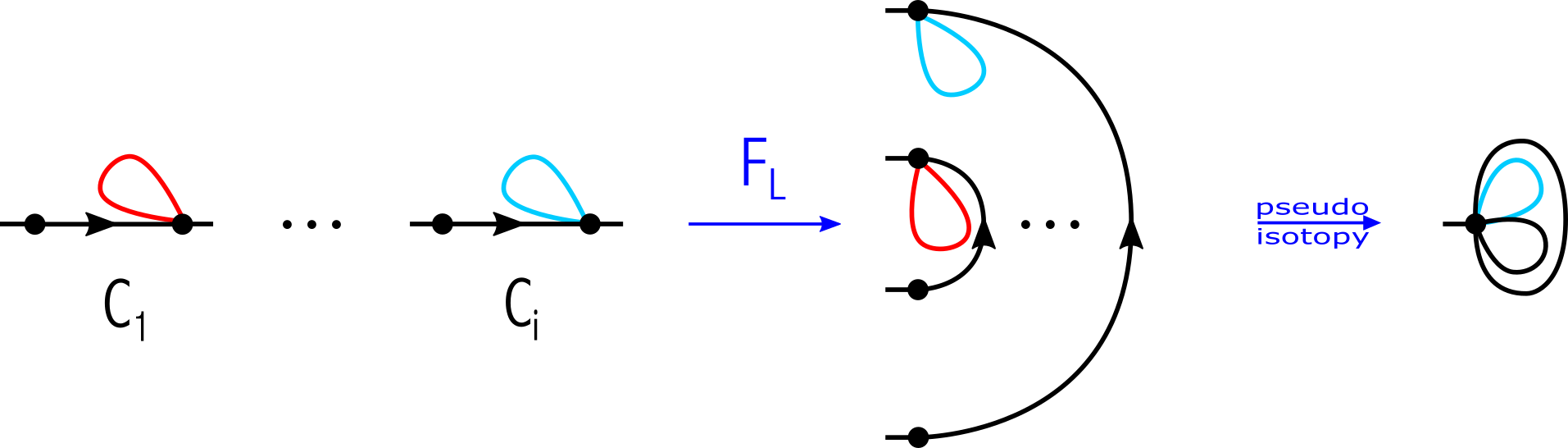}
\caption{The image of all edges in $C_1$ is further interior than that of any other $C_i$ mapping into \textbf{1}, so any loop in $C_i$ generates a loop in \textbf{1} that persists after pseudo-isotopy.}\label{fig:sec5_2}
\end{figure}

Now suppose that $f$ is negative, so that $f(0)=1$. The same argument applies, except that the role of \textbf{1} is taken by \textbf{0}, which maps into \textbf{1}. 
\end{proof}

\begin{prop}\label{singleint}
Let $f: I \to I$ be a PCP zig-zag map of growth rate $\lambda >2$. If $f$ is of pseudo-Anosov type, then the invariant train track $\tau=\tau_L$ has exactly one interior loop, namely the loop contained in the vertex $C_1$.
\end{prop}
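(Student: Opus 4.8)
The plan is to proceed by contradiction: suppose $\tau = \tau_L$ has an interior loop $\gamma$ in some intermediate vertex $V$ other than $C_1$, and show that this forces a prohibited configuration in the train track, contradicting either Theorem \ref{PCP} (no saddle connections) or the finiteness/pseudo-Anosov hypothesis. By Proposition \ref{intloop} we already know that $C_1$ contains an interior loop, so the content here is that $C_1$ is the \emph{only} such vertex. First I would recall that each loop of $\tau$ sits over an element of $\PC(f)$, so $V = C_j$ for some critical point $c_j$ lying in the forward orbit of $x=1$, or $V$ sits over some other postcritical point; in either case $V$ has a real edge $\epsilon$ passing through it together with the loop $\gamma$, and $\gamma$ being interior means it lies to the left of the spine $\tau'$, i.e. it points toward the horizontal midline of $\bbI$ under the orientation from \textbf{0} to \textbf{1}.

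The key geometric step is to track the image of $\gamma$ under $F_L$ and apply the tightening pseudo-isotopies, exactly in the style of the proof of Proposition \ref{intloop}. Because $F_L$ is exterior left-veering, the image $F_L(V)$ is a vertex $V'$ that is \emph{not} bookended in the stack $F_L(\bbI)$ the way \textbf{0} and \textbf{1} are; in particular $V'$ lies strictly between two other layers of $F_L(\bbI)$ in the vertical ordering. An interior loop in $V$, whose image under $F_L$ points back toward the midline, will then be enclosed by the strips of $F_L(\bbI)$ that pass above and below $V'$. After the tightening pseudo-isotopy in the junction $V'$, the image $F_L(\gamma)$ cannot be homotoped to be parallel to the connecting infinitesimal edges there — it is trapped by the crossing strips — so it survives as a non-loop arc enclosing a disc with no puncture, or it produces a second loop in a vertex further along the orbit. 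Iterating $F_L$ and following the orbit of $V$ until it reaches $C_1$ or wraps around, one accumulates extra infinitesimal edges at each step, and the only way to avoid a violation of Theorem \ref{PCP} (two loops in one vertex, hence a saddle connection) is for the loop to have been in $C_1$ to begin with, since $C_1$ is the unique vertex whose image under $F_L$ is the furthest-interior layer (this is precisely the mechanism isolated in Proposition \ref{intloop} and Figure \ref{fig:sec5_2}).

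Concretely, the steps are: (1) invoke Theorem \ref{PCP} to see every loop lies over a postcritical point and there is exactly one loop per such vertex; (2) invoke Proposition \ref{intloop} to place an interior loop in $C_1$ and note that the loop in $C_1$ is forced to be interior; (3) suppose for contradiction an interior loop $\gamma$ exists in a vertex $V \neq C_1$, and analyze $F_L(\gamma)$ after pseudo-isotopy, using that $V' = F_L(V)$ is not an extreme layer of $F_L(\bbI)$ so that $F_L(\gamma)$ is enclosed by crossing strips; (4) conclude that the tightening cannot remove this arc, so either a non-parallel enclosed pair of edges appears immediately in $V'$, or following the forward orbit of $V$ a second loop appears in some $C_i$ already containing a loop — both contradicting Theorem \ref{PCP}; (5) hence $C_1$ is the unique vertex carrying an interior loop. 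The main obstacle I anticipate is step (3)–(4): making rigorous the claim that an interior loop in a non-extreme vertex \emph{cannot} tighten away, i.e. carefully showing that the image strips straddling $V'$ genuinely enclose $F_L(\gamma)$ in a disc with no puncture and no other edge, so that the pseudo-isotopy of Definition \ref{defn:track}(2) does not apply. This requires a precise picture of the exterior left-veering folding pattern near $V'$, analogous to Figures \ref{fig:sec5_1}–\ref{fig:sec5_2}, and a short induction along the postcritical orbit to handle the case where the obstruction only manifests after several iterates of $F_L$.
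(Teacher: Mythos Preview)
Your approach is essentially the same as the paper's, but you have overcomplicated the argument and obscured the reason why the contradiction is immediate rather than requiring iteration.

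The paper's proof hinges on a clean case split that you do not make explicit. First, if $V = C_i$ for some $i \neq 1$, then the argument already given in the proof of Proposition~\ref{intloop} rules out an interior loop there (the connecting edge in $C_1$ maps further interior in $\mathbf{1}$ than anything coming from $C_i$, producing non-parallel loops). Second, if $V$ is an intermediate vertex that is \emph{not} a critical vertex, then $F_L(V)$ cannot land in $\mathbf{0}$ or $\mathbf{1}$ at all: the only vertices mapping into $\mathbf{0}$ or $\mathbf{1}$ are the critical ones. Hence $F_L(V)$ is again an intermediate vertex $V'$, and since $\gamma$ was interior, $F_L(\gamma)$ points toward the horizontal midline of $\bbI$. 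Because every layer of $F_L(\tau')$ except the last spans the full length of $\bbI$, this inward-pointing loop is enclosed by connecting edges after a single tightening, contradicting the structure of $\tau$ described in Theorem~\ref{PCP}. No iteration or induction along the postcritical orbit is needed.

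Your anticipated obstacle in steps (3)--(4), and your proposed ``short induction along the postcritical orbit to handle the case where the obstruction only manifests after several iterates,'' are therefore unnecessary. The obstruction always manifests after exactly one iterate once you have separated off the critical-vertex case. The sentence ``$V' = F_L(V)$ is a vertex that is not bookended in the stack $F_L(\bbI)$'' also conflates two different things: what matters is not the position of $V'$ among the layers of $F_L(\bbI)$, but simply that $V'$ is an intermediate junction of $\bbI$ (not $\mathbf{0}$ or $\mathbf{1}$), which follows from $V$ being non-critical.
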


\begin{proof}
Proposition \ref{intloop} tells us that $C_1$ contains an interior loop, so it remains to show that $\tau$ has no other interior loops. Furthermore, our argument in the proof of \ref{intloop} demonstrates that no other critical vertex $C_i$ can contain an interior loop, so if $\tau$ contains another interior loop $\gamma$ then $F(\gamma)$ cannot lie in \textbf{0} or in \textbf{1}. In other words, $F(\gamma)$ lies in some intermediate vertex $V$. However, since $\gamma$ is interior the image $F(\gamma)$ faces toward the center of $\bbI$. See Figure \ref{fig:sec5_3}. Thus after pseudo-isotopy $F(\gamma)$ will be enclosed by multiple connecting edges, contradicting the structure of $\tau$ described in Theorem \ref{PCP}.
\end{proof}

\begin{figure}[h!]
\centering
\includegraphics[scale=.3]{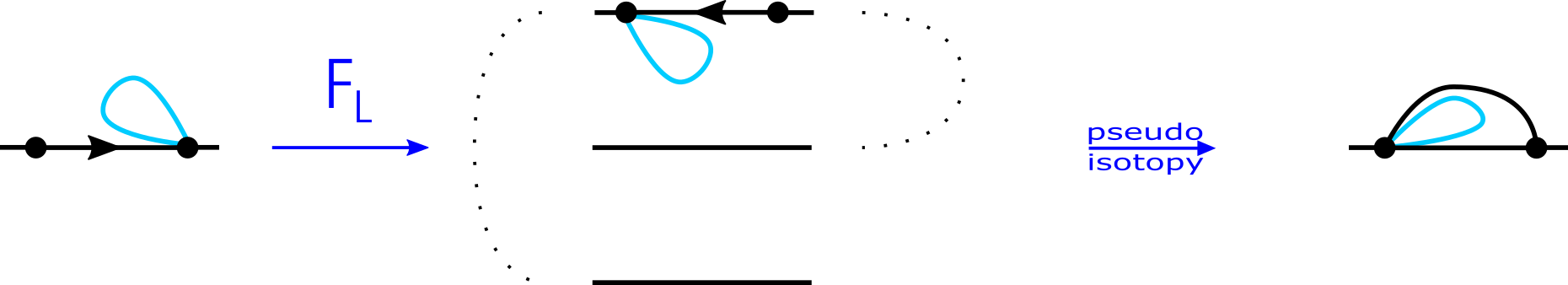}
\caption{The image of an interior loop points inward, and because all horizontal layers of $F(\tau)$ span the full length of $\bbI$ except for the last, the image of an interior loop will be trapped by other edges after pseudo-isotopy unless it maps into \textbf{0} or \textbf{1}, where it can be absorbed into the parallel loop formed by the turn.}\label{fig:sec5_3}
\end{figure}

\begin{cor}\label{quad}
Let $f: I \to I$ be a PCP zig-zag map of growth rate $\lambda>2$, and suppose $f$ is of pseudo-Anosov type.

\begin{enumerate}
\item If $f$ is positive and $\floor{\lambda}=m$ is odd, then the orbit of $x=1$ under $f$ is $1 \mapsto \lambda^{-1} \mapsto 1$.
\item If $f$ is negative and $\floor{\lambda}=m$ is even, then the orbit of $x=1$ under $f$ is $1 \mapsto \lambda^{-1} \mapsto 0 \mapsto 1$.
\end{enumerate}

\noindent In either case, $\lambda$ has minimal polynomial $p(x)=x^2-(m+1)x+1$. 
\end{cor}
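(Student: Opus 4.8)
The plan is to combine the structural conclusions of Propositions \ref{intloop} and \ref{singleint} with a direct analysis of where the critical vertices can map in the exterior left-veering thickening $F_L$. First I would treat the positive case with $m = \floor{\lambda}$ odd. By Proposition \ref{intloop}, $c_1 = \lambda^{-1}$ lies in the forward orbit of $x = 1$ and the loop in $C_1$ is interior; by Proposition \ref{singleint}, this is the \emph{only} interior loop of $\tau_L$. The key point is that for a positive zig-zag, the exterior left-veering image $F_L(\bbI)$ begins at $\textbf{0}$, and one checks (using the parity of $m$ together with the way $\sigma_L^+$ orders the horizontal layers) that the vertex $C_1$ is the unique critical vertex whose image sits at the innermost layer inside $\textbf{1}$; every other critical vertex $C_i$ with $i \geq 2$ has its loop forced to be exterior, hence cannot be postcritical without violating Theorem \ref{PCP}. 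So the only postcritical critical point is $c_1$, meaning $f(c_1) = 1$ must close up the orbit of $1$. Since $f$ is a zig-zag, $f(c_1) \in \partial I$, and $f(c_1) = 0$ would make $1$ eventually land on the fixed point $0$ rather than be periodic; thus $f(c_1) = 1$ and the orbit of $x = 1$ is exactly $1 \mapsto \lambda^{-1} \mapsto 1$, a $2$-cycle.

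Next I would do the negative case with $m$ even. Here $f(0) = 1$, so $\textbf{0}$ maps into $\textbf{1}$, and the argument at the end of the proof of Proposition \ref{intloop} tells us that the role played by $\textbf{1}$ in the positive case is now played by $\textbf{0}$. Again $c_1$ is the unique postcritical critical point and its loop is interior. The orbit of $x = 1$ must therefore consist of $1$, the point $c_1 = \lambda^{-1}$ with $f(c_1) \in \partial I$, and possibly $0$. If $f(c_1) = 1$ we would get a $2$-cycle $1 \mapsto \lambda^{-1} \mapsto 1$ not involving $0$; but one checks this is incompatible with $f$ being the \emph{negative} zig-zag of even modality (the negative branch structure forces $f(\lambda^{-1}) = 0$ when $m$ is even, since $f$ alternates and $f(0)=1$). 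Hence $f(\lambda^{-1}) = 0$, and since $f(0) = 1$ we obtain the $3$-cycle $1 \mapsto \lambda^{-1} \mapsto 0 \mapsto 1$.

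With the orbit of $x = 1$ pinned down, computing the minimal polynomial of $\lambda$ is a short calculation. In the positive odd case, the $2$-cycle $1 \mapsto \lambda^{-1} \mapsto 1$ means $f$ maps $c_1 = \lambda^{-1}$ to $1$ via the branch $f(x) = a \pm \lambda x$; since $f(\lambda^{-1}) = 1$ and $f(1) = \lambda^{-1}$, writing out the two linear branches and eliminating gives a quadratic relation. Concretely, the digit polynomial $D_f$ has degree $2$ (since $n$, the least integer with $f^n(1) \in \partial I$, is $2$ here — indeed $f(1) = \lambda^{-1}$, $f^2(1) = 1 \in \partial I$), and carrying out the computation of Definition \ref{defn:digit} as in Example \ref{ex:tent} yields $D_f(t) = t^2 - (m+1)t + 1$; since $\lambda > 2$ this polynomial is irreducible over $\bbQ$ (its roots are $\tfrac{(m+1) \pm \sqrt{(m+1)^2 - 4}}{2}$, irrational for $m \geq 2$), so it is the minimal polynomial of $\lambda$. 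The negative even case is analogous: now $n = 3$, but the digit polynomial factors, and the same computation produces $D_f(t) = (t+1)(t^2 - (m+1)t + 1)$ or directly the quadratic factor as the minimal polynomial of $\lambda$; the extra linear factor reflects the fixed point at $0$.

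The main obstacle I anticipate is the case analysis in the first two paragraphs: one must be careful about exactly which horizontal layer of $F_L(\bbI)$ contains the image of each $C_i$, and this depends delicately on the parity of $m$ and on whether $f$ is positive or negative — getting the bookkeeping of $\sigma_L^\pm$ right is where the real work lies. Once the orbit structure is established, the remainder is a routine elimination of $\lambda$ from the branch equations.
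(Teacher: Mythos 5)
There is a genuine gap at the decisive step. In the positive odd case you establish (correctly, via Propositions \ref{intloop} and \ref{singleint}) that $c_1=\lambda^{-1}$ is the unique critical point lying in the orbit of $x=1$ and that $f(c_1)=1$, and you then conclude ``thus \dots the orbit of $x=1$ is exactly $1 \mapsto \lambda^{-1} \mapsto 1$.'' That conclusion does not follow: nothing you have said excludes an orbit $1 \mapsto p_1 \mapsto \cdots \mapsto p_r \mapsto c_1 \mapsto 1$ with non-critical intermediate points $p_j$ (each of which would simply carry an exterior loop, perfectly compatible with Theorem \ref{PCP} and Proposition \ref{singleint}). Indeed, for \emph{standard} zig-zags exactly this happens — the orbit can be arbitrarily long, with intermediate points of types $P_{m-2}$, $P_m$, $R$ as in Proposition \ref{type} — so the fact that it cannot happen for the non-standard parities (positive with $m$ odd, negative with $m$ even) is precisely the content of the corollary, and it is the part your argument omits. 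The paper's proof supplies the missing ingredient: for these parities the image under $F_L$ of the loop $\gamma$ of $\tau$ contained in $\textbf{1}$ sits above all other strands of $F_L(\tau)$ and therefore becomes an \emph{interior} loop after the tightening pseudo-isotopies; by Proposition \ref{singleint} it must then be the unique interior loop, the one in $C_1$, which forces $f(1)=\lambda^{-1}$ directly and closes the orbit in one (resp.\ two) more steps. Your analysis of where the critical vertices $C_i$ map cannot substitute for this, because the issue is where $\textbf{1}$ (and its loop) maps, not where the $C_i$ map. The same gap occurs verbatim in your negative even paragraph at ``the orbit of $x=1$ must therefore consist of \dots''.

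Once $f(1)=\lambda^{-1}$ is in hand, your final paragraph is essentially fine, though simpler than you make it: the relation $\lambda^{-1}=f(1)=(m+1)-\lambda$ is read off the last branch in both cases, giving $\lambda^2-(m+1)\lambda+1=0$, and your irreducibility check is correct. A minor slip there: in the negative even case the minimal $n$ with $f^n(1)\in\partial I$ is $n=2$ (since $f^2(1)=0$), so $D_f$ is again the quadratic itself, not a degree-$3$ polynomial with a factor of $(t+1)$; this does not affect the conclusion, but the digit-polynomial detour is unnecessary in any case.
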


\begin{proof}
First suppose $f$ is positive and $m$ is odd. The image $F_L(\gamma)$ of the loop $\gamma \subseteq \tau$ contained in \textbf{1} is above the other strands of $F_L(\tau)$, and so after pseudo-isotopy it will be an interior loop $\gamma'$. This interior loop must necessarily be contained in $C_1$ and must map into \textbf{1} by Proposition \ref{singleint}. It follows that $x=1$ has period $2$. The linear branch of $f$ that applies to $x=1$ is $x \mapsto (m+1)-\lambda x$, hence we have $\lambda^{-1}=m+1-\lambda$.

The case when $f$ is negative and $m$ is even is similar. The image $F_L(\gamma)$ is again an interior loop after pseudo-isotopy, and so must be contained in $C_1$. Since the linear branch of $f$ that applies to $x=1$ is $x \mapsto (m+1)-\lambda x$, it follows that $\lambda^{-1}=m+1-\lambda$. 
\end{proof}

\begin{rmk}
The examples described in Corollary \ref{quad} are the simplest examples of uniform expanders of pseudo-Anosov type. The corresponding pseudo-Anosovs all live on the four-punctured sphere $\Sigma_{0,4}$ and lift via a branched double cover to hyperbolic automorphisms of the torus. To avoid these simple cases, we make the following definition (see Remark \ref{rmk:four} below).
\end{rmk}

\begin{defn}\label{defn:PA}
For $m \geq 2$ and $p \geq 3$, define $\PA(m,p)$ to be the set of zig-zags $f$ of pseudo-Anosov type such that

\begin{enumerate}
\item $f$ has $m$ critical points, and
\item $\#\PC(f) = p$.
\end{enumerate}

\noindent We also define the set

\[
\PA(m) = \bigcup_{p \geq 4} \PA(m,p).
\]
\end{defn}

\begin{rmk}\label{rmk:four}
An element $f \in \PA(m,p)$ defines a pseudo-Anosov on the $(p+1)$-punctured sphere, where the point at infinity is fixed. Ripping open this point to a boundary component, we may also think about this pseudo-Anosov as a braid on the $p$-punctured disc, up to multiplication by a full twist $\Delta^2$ around the boundary. The simple examples described in Corollary \ref{quad} are precisely the maps $f \in \PA(m,3)$, and the definition of $\PA(m)$ allows us to avoid these examples in the future.
\end{rmk}

Recall that we use the notation $c_i=i \cdot \lambda^{-1}$. For a $\lambda$-zig-zag map $f$ we have the subintervals of definition $I_j=[c_j, c_{j+1})$ for $0 \leq j \leq m-1$ and $I_m=[c_m, 1]$.

\begin{prop}\label{type}
Suppose $f \in \PA(m)$ with slope $\lambda$. Then $\PC(f)$ always contains the following 3 types of points:

\begin{enumerate}
\item[(E)] The extremal points $x=0$ and $x=1$
\item[(C)] The critical point $c_1=\lambda^{-1}$ 
\item[(R)] Exactly one point in the interior of $I_{m-1}$
\end{enumerate}

\noindent If $\PC(f)$ contains other points, then they are of the following 2 types:

\begin{enumerate}[resume]
\item[($P_{m-2}$)] Points in the interior of $I_{m-2}$
\item[($P_m$)] Points in the interior of $I_m$
\end{enumerate}
\end{prop}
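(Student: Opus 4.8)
The plan is to combine the loop‑counting already established (Propositions \ref{PCP}, \ref{intloop}, \ref{singleint}) with a careful reading of how the exterior left‑veering thickening $F_L$ spirals, tracking what happens to an infinitesimal loop of $\tau=\tau_L$ under one application of $F_L$ followed by the tightening pseudo‑isotopies. First I would dispose of types (E) and (C). Since $m=\floor{\lambda}\geq 2$, a $\lambda$‑zig‑zag has a critical point mapping onto $0$ and one mapping onto $1$, so $\{0,1\}\subseteq\PC(f)$, which is (E); Proposition \ref{intloop} gives $c_1\in\PC(f)$ and that the loop of $\tau$ in $C_1$ is interior, while Proposition \ref{singleint} says this is the \emph{only} interior loop. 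I would also note that the argument of Proposition \ref{intloop} applies to any postcritical critical point: since $f$ is PCP such a point is periodic, hence lies in a periodic postcritical orbit, and the only such orbit is the orbit of $1$ (the fixed point $0$ in the positive case is not critical); that argument then forces the point to equal $c_1$. Thus $\PC(f)\cap\{c_2,\dots,c_m\}=\emptyset$, and every point of $\PC(f)\setminus\{0,1,c_1\}$ lies in the interior of some $I_j$ and carries an exterior loop of $\tau$.

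The technical heart is a dichotomy for such an exterior loop $\gamma_p$ at the vertex $V_p$ over $p\in\inte(I_j)$. Under $F_L$ this loop is carried into the junction $V_{f(p)}$ and, after the strip pseudo‑isotopies, comes to lie at the height of the horizontal layer of $F_L(\bbI)$ associated to $I_j$. By the explicit form of the veering permutation $\sigma_L$, these layers spiral outward, the layer over $I_{m-1}$ being topmost and the layer over $I_m$ bottommost; crucially, because $f(1)$ is interior to $I$, the $I_m$‑layer is only a \emph{partial} layer, terminating over $f(1)$, so over points on the far side of $f(1)$ the bottommost layer present is the one over $I_{m-2}$. I would then argue, with the picture as in the proofs of Propositions \ref{intloop} and \ref{singleint}, that for the tightened image of $\gamma_p$ to coincide with the single loop $\gamma_{f(p)}$ guaranteed by Proposition \ref{PCP}—rather than be trapped by other strands and force an extra loop—it must occupy the \emph{outermost} layer present over $f(p)$. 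If $f$ is increasing on the branch through $p$, the loop stays on the exterior side and must be the bottommost layer there, i.e.\ $I_m$ if $f(p)\le f(1)$ and $I_{m-2}$ otherwise, forcing $p\in\inte(I_m)\cup\inte(I_{m-2})$. If $f$ is decreasing on that branch, the loop is flipped to the interior side, so it must be the topmost layer, the one over $I_{m-1}$; but the unique interior loop is $\gamma_{c_1}$, so $f(p)=c_1$ and $p\in\inte(I_{m-1})$. This yields types $(P_{m-2})$ and $(P_m)$, and that any point of type (R) lies in $\inte(I_{m-1})$ and is sent by $f$ to $c_1$. (Alternatively one could run essentially this bookkeeping on the rectangular limit set $\Lambda_f$ of Theorem \ref{Galois}, whose vertical boundary components are exactly the ``steps'' over the points of $\PC(f)$.)

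It remains to see there is \emph{exactly one} point of type (R). Since $f(c_1)\in\{0,1\}\neq c_1$, the point $c_1$ is periodic of period $\geq 2$, so its orbit contains a point $q\neq c_1$ with $f(q)=c_1$; then $q\in\PC(f)$ carries an exterior loop whose $F_L$‑image is the interior loop $\gamma_{c_1}$, so by the dichotomy $q\in\inte(I_{m-1})$, giving existence (and incidentally confirming that $I_{m-1}$ is a decreasing branch for a standard zig‑zag, as its index has parity opposite to $m$). For uniqueness, $F_L$ is injective on the loops of $\tau$ and $f$ is injective on $\inte(I_{m-1})$, so any $q'\in\inte(I_{m-1})\cap\PC(f)$ again satisfies $f(q')=c_1$, whence $q'=q$.

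The step I expect to be the main obstacle is the dichotomy in the second paragraph: establishing rigorously, from the combinatorics of $\sigma_L$ and the left‑veering geometry, that an exterior loop survives the tightening pseudo‑isotopy as the \emph{unique} loop at $V_{f(p)}$ precisely when it lands in the outermost available layer there, together with the correct bookkeeping of the three outermost layers ($I_{m-1}$ on top; $I_m$, then $I_{m-2}$, on the bottom, the asymmetry coming from the $I_m$‑layer being only partial). As in Propositions \ref{intloop} and \ref{singleint}, this is the genuinely picture‑driven part of the argument.
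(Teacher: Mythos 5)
Your proposal is correct and follows essentially the same route as the paper: types (E) and (C) come from Proposition \ref{intloop}, and the remaining postcritical points are localized via the single-interior-loop constraint of Proposition \ref{singleint} together with the left-veering layer geometry of $F_L$. Your layer dichotomy is just a more explicit version of the paper's picture-driven step that an untrapped exterior image must lie after the third-to-last turn of $F_L$ (hence in $I_{m-2}\cup I_{m-1}\cup I_m$), with the orientation-reversing case pinned to $I_{m-1}$ as the unique preimage of $c_1$.
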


\begin{proof}
The existence of type $E$ points is obvious, and the existence of type $C$ follows from Proposition \ref{intloop}. The single point of type $R$ is precisely the postcritical point that maps to $c_1$. Since the loop of $\tau$ in $C_1$ must be interior, it follows that the image of the loop of $\tau$ that maps into $C_1$ must be interior, hence lies over a postcritical point at which $f$ is orientation-reversing. Since there is only one interior loop of $\tau$, this is the only postcritical point at which $f$ reverses orientation.

To finish the proof, suppose $f$ has another critical point $x=p$ other than type $E$, $C$, and $R$. Then the loop of $\tau$ over this critical point must have an image that is exterior so that $F(\tau)$ does not trap the loop after tightening. Since $F$ is exterior veering, it follows that the image of this loop must come after the third-to-last turn of $F$, hence $p$ is in the interior of  $I_{m-2} \cup I_{m-1} \cup I_m$. Moreover, since $p$ is not of type $R$, $f$ preserves orientation at $p$, hence $p \not \in I_{m-1}$. 
\end{proof}

\begin{ex}
Consider the thick interval map in Figure \ref{fig:spine_and_loops}. This is the exterior left-veering thickening of the positive $\lambda$ zig-zag $f$, where $\lambda$ is the Perron root of $D_f(t)=t^4-4t^3-2t^2-4t+1$. The graph of $f$ is shown in Figure \ref{fig:sec5_4}.
\end{ex}

\begin{figure}[h!]
\centering
\includegraphics[scale=.7]{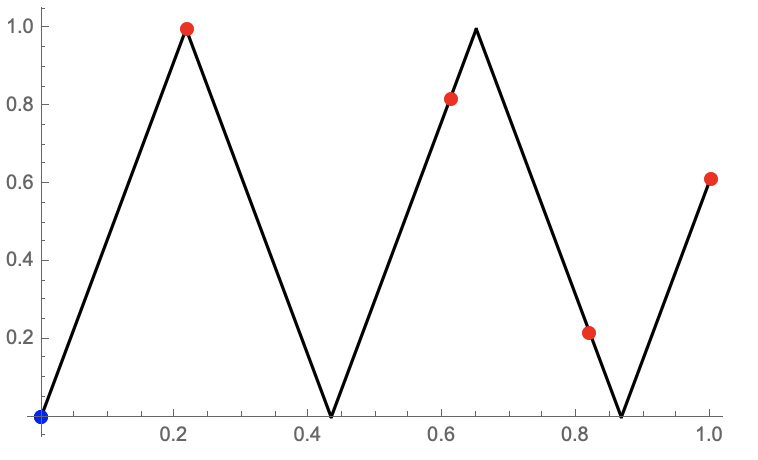}
\caption{The positive zig-zag for $\lambda$ the Perron root of $t^4-4t^3-2t^2-4t+1$. The map $f$ is of pseudo-Anosov type, as demonstrated by Figure \ref{fig:spine_and_loops}, and the orbit of $x=1$ is shown in red. In particular, $\PC(f)$ has points of type $E$, $C$, $R$, and $P_{m-2}$.}\label{fig:sec5_4}
\end{figure}

\begin{cor}
Suppose $f \in \PA(m)$ has slope $\lambda$. If $\PC(f)$ contains no points of type $P_{m-2}$ or $P_m$ then $\lambda$ is a quadratic integer.
\end{cor}

\begin{proof}
If $f$ is positive and has no postcritical points of type $P_i$ then the orbit of $x=1$ is 

\[
1 \mapsto \lambda-m \mapsto m+m \lambda -\lambda^2 \mapsto 2-m\lambda-m \lambda^2 +\lambda^3 = 1,
\]

\noindent hence $\lambda$ satisfies the relation

\[
0=\lambda^3-m\lambda^2-m\lambda+1=(\lambda+1) \left (\lambda^2-(m+1)\lambda+1 \right ).
\] 

\noindent Since $\lambda>1$ and $x^2-(m+1)x+1$ is irreducible over $\bbQ$, we see that $\lambda$ is a quadratic integer. Similarly, if $f$ is negative and has no postcritical points of type $P_i$ then from the orbit of $x=1$ we obtain the relation

\[
0=\lambda(\lambda+1)\left (\lambda^2-(m+1)\lambda+1 \right )
\]

\noindent Hence $\lambda$ is again a quadratic integer.
\end{proof}


\section{Classifying zig-zags of pseudo-Anosov type}


In this section we consider the set $\PA(m)$ of zig-zags of pseudo-Anosov type with modality $m \geq 2$. The case $m=1$ was considered by Hall in Theorem 2.1 of \cite{H}, in which he gave an explicit bijection between $\PA(1)$ and $\bbQ \cap (0, 1/2)$. We give an explicit bijection between $\PA(m)$ and $\bbQ \cap (0,1)$ for each $m \geq 2$.\\

The proof of Theorem \ref{bijection} naturally breaks into 3 cases: (1) $m \geq 4$ even, (2) $m \geq 3$ odd, and (3) $m=2$. In preparation for the proof, we investigate each of these cases separately. We treat case (1) first, since it is essential to understanding cases (2) and (3).

Throughout this section, $f$ is a PCP $\lambda$-zig-zag map with modality $m=\floor{\lambda}$. Moreover, we always assume that $\PC(f) \geq 4$, so that it is possible for $f$ to belong to $\PA(m)$ (cf. Definition \ref{defn:PA}).


\subsection{The case $m \geq 4$ even}


\begin{defn}\label{perm}
Given $n \geq 3$ and $2 \leq k \leq n-1$, define $\rho_e(n,k) \in S_n$ to be the permutation such that

\[
\rho_e(n,k)(i)=\begin{cases}
n & i=1 \\
i+(n-k) & 2 \leq i \leq k-1\\
i-(k-1) & k \leq i \leq n
\end{cases}
\]

\noindent Visually, $\rho_e(n,k)$ takes the form 

\[
\rho_e(n,k)=\begin{pmatrix}
1 & 2 & \cdots & k-1 & k & k+1 & \cdots & n-1 & n\\
n & n-k+2 & \cdots & n-1 & 1 & 2 & \cdots & n-k & n-k+1
\end{pmatrix}
\]
\end{defn}

\begin{ex}
Here are some examples of $\rho_e(n,k)$ for $n=7$.

\[
\rho_e(7,2)=\begin{pmatrix}
1 & 2 & 3 & 4 & 5 & 6 & 7\\
7 & 1 & 2 & 3 & 4 & 5 & 6
\end{pmatrix},
\hspace{5mm}
\rho_e(7,3)=\begin{pmatrix}
1 & 2 & 3 & 4 & 5 & 6 & 7\\
7 & 6 & 1 & 2 & 3 & 4 & 5
\end{pmatrix},
\]
\[
\rho_e(7,6)=\begin{pmatrix}
1 & 2 & 3 & 4 & 5 & 6 & 7\\
7 & 3 & 4 & 5 & 6 & 1 & 2
\end{pmatrix}
\]

\noindent Note that $\rho_e(7,2)$ and $\rho_e(7,6)$ are full $7$-cycles, whereas $\rho_e(7,3)=(7,5,3,1)(6,4,2)$ is not. As we will see, $\rho_e(n,k)$ is an $n$-cycle if and only if $n-k$ and $k-1$ are coprime (cf. Proposition \ref{cycle}).
\end{ex}

\begin{defn}\label{defn:layer}
Let $F$ be a thickening of a PCF map $f$, and let $\tau$ denote the invariant generalized train track of $F$. As usual, denote by \textbf{0} and \textbf{1} the junctions projecting to $0, 1 \in I$, respectively, and let $\tau'$ be the spine of $\tau$. We define a \textit{layer} of the image $F(\tau)$ to be a connected component of the complement of $\textbf{0} \cup \textbf{1}$ in $F(\tau')$. See Figure \ref{fig:layers}.
\end{defn}

\begin{figure}
\centering
\includegraphics[scale=.2]{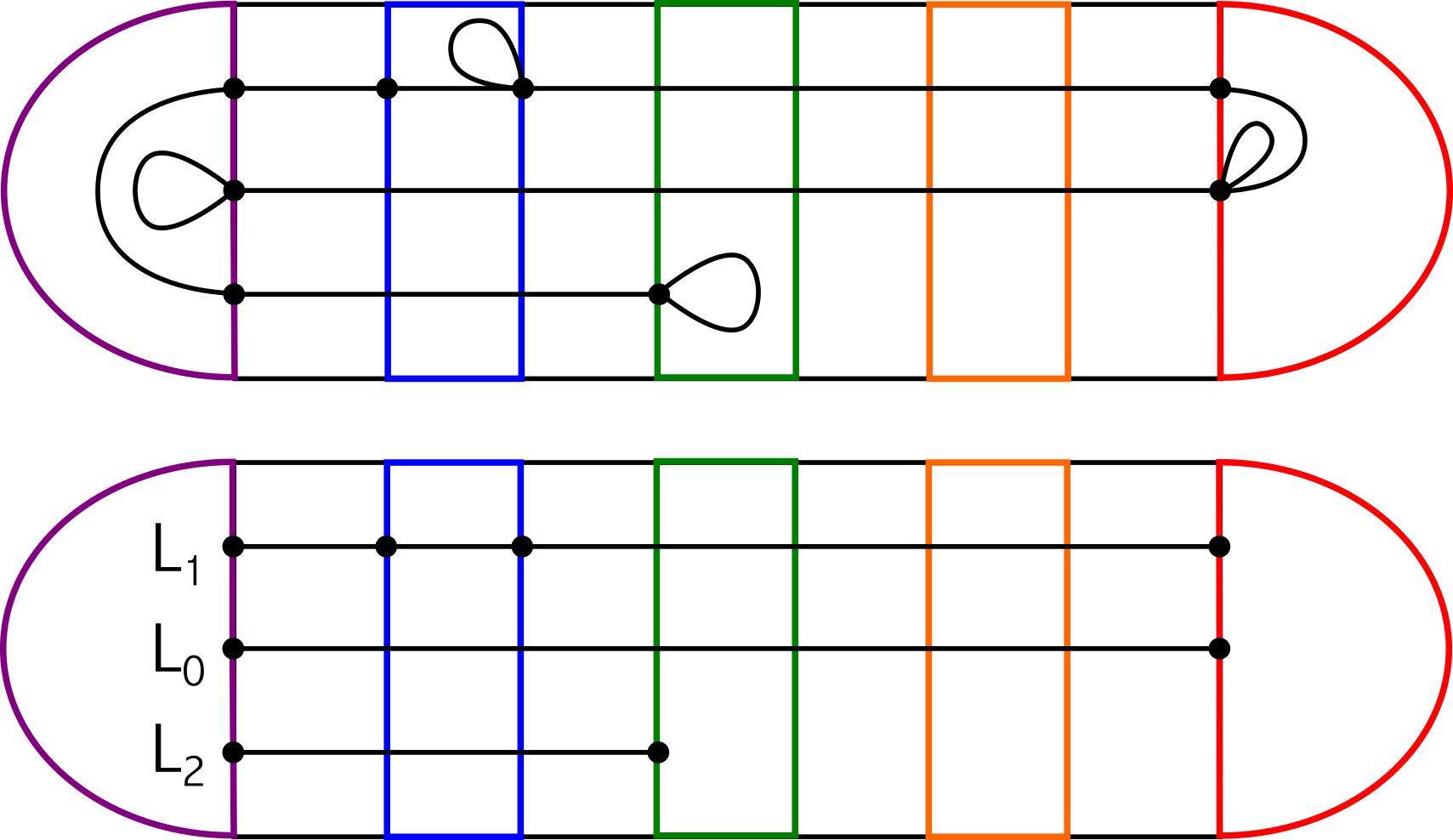}
\caption{The layers of $F(\tau)$ for the exterior left-veering thickening of a bimodal zig-zag.}\label{fig:layers}
\end{figure}

\begin{rmk}
If $f$ is an $m$-modal zig-zag and $F$ is any thickening of $f$, then $F(\tau)$ has $m+1$ layers. Indeed, if $L$ is a layer of $F(\tau)$, then $F^{-1}(L)$ projects to one of the $m+1$ different intervals of monotonicity $I_j$ for $f$. 
\end{rmk}

\begin{defn}
Let $f$ be an $m$-modal zig-zag and $F$ the exterior left-veering thickening of $f$. For each $0 \leq j \leq m$, the \textit{$j$-th layer} of $F(\tau)$ is the layer $L_j$ such that

\[
\pi(F^{-1}(L_j))=I_j
\]
\end{defn}

Recall (cf. Definition \ref{defn:ptype}) that the \textit{permutation type} of a PCP interval map $f$ is the permutation $\rho(f)$ such that $f(x_i)=x_{\rho(f)(i)}$. 

\begin{prop}\label{even}
Let $f: I \to I$ be a PCP $\lambda$-zig-zag map with $\floor{\lambda}=m \geq 4$ even. Then $f \in \PA(m)$ if and only if the following hold:

\begin{enumerate}
\item Under the ordering $x_1<x_2<\ldots<x_n=1$ of the orbit of $x=1$, we have $\rho(f)=\rho_e(n,k)$ for some $k$.
\item The $\{x_i\}$ satisfy the conclusion of Proposition \ref{type}, with $x_1$ being the sole point of type (C) and $x_k$ being the sole point of type (R).
\end{enumerate}
\end{prop}

\begin{proof}
We begin first by assuming that $f$ is of pseudo-Anosov type. Let $\tau$ be the invariant train track for the exterior left-veering thick interval map $F$. By Proposition \ref{intloop} the junction $C_1$ over $c_1=\lambda^{-1}$ contains the sole interior loop $\gamma$ of $\tau$, and this loop must map into \textbf{1}. Moreover, since $\floor{\lambda} \geq 4$, the layers $L_0$, $L_1$, and $L_2$ stretch fully between \textbf{0} and \textbf{1}, with $L_1$ above $L_3$ and below $L_2$. In particular, there can be no intermediate loops of $\tau$ before $\gamma$, since otherwise these would be trapped by $L_2$ or $L_3$. Thus if $\rho$ is the permutation describing the action of $f$ on the orbit of $x=1$, we must have $\rho(1)=n$.

Now set $\rho^{-1}(1)=k$. By Proposition \ref{singleint}, all loops in $\tau$ corresponding to $i \neq k$ must have exterior image, so their images must lie in $L_{m-2}$ or $L_m$. In particular, if $k=2$ then the remaining $n-k$ loops must be sent in order to the remaining $k-2$ junctions containing a loop. In particular, we have

\[
\rho=\begin{pmatrix}
1 & 2 & 3 & \cdots & n\\
n & 1 & 2 & \cdots & n-1
\end{pmatrix}
=\rho_e(n,2)
\]

If $k>2$, then the loops labelled $2$ through $k-1$ must map to the last $k-3$ loops not in \textbf{1}, in order. This is again because these loops must all have exterior images, and so in particular cannot be covered from below by loops on $L_m$. Now the loops labelled $k+1$ through $n$ must map to the remaining loops labelled $2$ through $n-k+1$ in order, and thus we see that $\rho=\rho_e(n,k)$.

Suppose instead that $f$ satisfies properties (1) and (2) above. Let $F$ be the exterior left-veering thickening of $f$. We construct the invariant generalized train track $\tau$ of $F$ as described in Section 2. We begin with $\tau_0$, the (disconnected) union of real edges, and after the  tightening pseudo-isotopies, $\tau_1=F_\ast(\tau_0)$ contains a loop $\gamma$ projecting to $x_n=1$. Consider the successive images $F_\ast^j(\gamma)$, which by conditions (1) and (2) are exterior until some minimal index $j=a$. Moreover, each of these initial forward images is guaranteed to be uncovered from below by layers of $F(\tau_{j-1})$, and so are the only infinitesimal edges within the corresponding junctions aside from a single connecting edge between the two adjacent real edges.

$F_\ast^a(\gamma)$ is an interior loop, and by assumption must project to $x_1=\lambda^{-1}$, since $x_k$ is the sole postcritical point of type (R) and $\rho_e(n,k)(k)=1$. In particular, $F_\ast^{a+1}(\gamma)$ is a loop over $x_n$ and since $x_1=\lambda^{-1}$ this loop is interior to every turn of $F_\ast(\tau_a)$. Hence after tightening we recover a single loop over $x_n$ and the resulting generalized train track $\tau_{a+1}=\tau$ is invariant. It follows that $\tau$ is finite and that $f$ is of pseudo-Anosov type.
\end{proof}

In order to completely characterize positive PCP zig-zags with modality $m \geq 4$ even, it remains to determine when $\rho_e(n,k)$ is an $n$-cycle.

\begin{prop}\label{cycle}
The permutation $\rho_e(n,k) \in S_n$ is an $n$-cycle if and only if $(n-k, k-1)=1$. 
\end{prop}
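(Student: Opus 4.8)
The plan is to understand the cycle structure of $\rho_e(n,k)$ directly from its definition and reduce the question to a statement about the rotation $i \mapsto i + (n-k) \bmod (n-1)$. First I would observe that $\rho_e(n,k)$ fixes no element and that the element $1$ plays a special role: $\rho_e(n,k)(1) = n$ and $\rho_e(n,k)(k) = 1$, so $1$ and $n$ lie in the same cycle, as do $k$ and $1$. The key structural fact is that on the complement $\{2, 3, \ldots, n\}$ — a set of size $n-1$ — the permutation acts (after relabeling) essentially as a cyclic rotation. Concretely, identify $\{2,\ldots,n\}$ with $\mathbb{Z}/(n-1)\mathbb{Z}$ via $i \mapsto i - 1 \pmod{n-1}$ (so that $n \leftrightarrow n-1 \leftrightarrow 0$), and check from the three cases in Definition \ref{perm} that the induced map is $j \mapsto j + (n-k) \pmod{n-1}$: for $2 \le i \le k-1$ one has $\rho_e(i) = i + (n-k)$, which in the relabeled coordinate is $j \mapsto j + (n-k)$; for $k \le i \le n$ one has $\rho_e(i) = i - (k-1)$, and since $i - (k-1) \equiv i + (n-k) \pmod{n-1}$ (because $(n-1) - (k-1) = n - k$), this is again $j \mapsto j + (n-k)$. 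So, modulo handling the element $1$, the permutation restricted to $\{2,\ldots,n\}$ is conjugate to the shift by $n-k$ on $\mathbb{Z}/(n-1)\mathbb{Z}$.

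The next step is to splice in the element $1$. The map $\rho_e(n,k)$ sends $k \mapsto 1 \mapsto n$; in the relabeled picture, $k$ corresponds to $k - 1$ and $n$ corresponds to $0$, and indeed $(k-1) + (n-k) = n-1 \equiv 0$, so on $\mathbb{Z}/(n-1)\mathbb{Z}$ the shift sends $k-1 \mapsto 0$ directly. Inserting $1$ between them means $\rho_e(n,k)$ is obtained from the shift by $n-k$ on $\mathbb{Z}/(n-1)\mathbb{Z}$ by "blowing up" the single orbit step $k-1 \mapsto 0$ into two steps $k-1 \mapsto 1 \mapsto 0$. This operation turns a cycle of length $\ell$ through $k-1$ into a cycle of length $\ell + 1$ and leaves all other cycles untouched. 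Therefore $\rho_e(n,k)$ is an $n$-cycle if and only if the shift by $n-k$ on $\mathbb{Z}/(n-1)\mathbb{Z}$ is an $(n-1)$-cycle. The latter is the completely standard fact that a rotation $j \mapsto j + d$ on $\mathbb{Z}/N\mathbb{Z}$ is a single $N$-cycle precisely when $\gcd(d, N) = 1$; here $d = n-k$ and $N = n-1$, giving the condition $\gcd(n-k, n-1) = 1$. Finally I would note $\gcd(n-k, n-1) = \gcd(n-k, (n-1)-(n-k)) = \gcd(n-k, k-1)$, so this matches the stated condition $(n-k, k-1) = 1$.

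The main obstacle — really the only thing requiring care rather than routine bookkeeping — is verifying cleanly that the "blow-up at one orbit step" operation does exactly what I claimed to the cycle structure, i.e., that inserting a new symbol into the middle of one transition of a permutation increments the length of that one cycle by one and changes nothing else. This is elementary but deserves an explicit sentence: if $\sigma$ is a permutation of a set $X$ with $\sigma(a) = b$, and $\sigma'$ is the permutation of $X \sqcup \{*\}$ with $\sigma'(a) = *$, $\sigma'(*) = b$, and $\sigma'(x) = \sigma(x)$ otherwise, then the cycle of $\sigma'$ containing $a$ is the cycle of $\sigma$ containing $a$ with $*$ inserted after $a$, and all cycles of $\sigma$ not containing $a$ are cycles of $\sigma'$. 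Once that is in place, together with the identification of the restricted permutation with a rotation and the standard cyclic-rotation fact, the proof is complete. A slicker but equivalent alternative would be to argue directly: for $i \neq 1$, track the orbit of $i$ under $\rho_e(n,k)$ and show it visits every residue class mod $n-1$ iff $\gcd(n-k,n-1)=1$, then note that $1$ is swept in on the step landing at $n$; but the conjugation-to-a-rotation framing is cleaner to write and less error-prone.
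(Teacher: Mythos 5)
Your proposal is correct and is essentially the paper's own argument: delete the symbol $1$ (equivalently, your ``blow-up'' lemma in reverse), identify the resulting permutation of $\{2,\ldots,n\}$ with the rotation $j \mapsto j+(n-k)$ on $\bbZ/(n-1)\bbZ$, and invoke the standard fact that such a rotation is a single cycle iff $\gcd(n-k,n-1)=1$. If anything, your write-up is slightly more complete, since you make explicit both the insertion/deletion effect on cycle structure and the identity $\gcd(n-k,n-1)=\gcd(n-k,k-1)$, which the paper leaves implicit.
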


\begin{proof}
Set $\rho=\rho_e(n,k)$, and define $\rho' \in S_{n-1}$ to be the permutation on $\{2, \ldots, n\}$ defined by

\[
\rho'(i)=\begin{cases}
\rho(i) & 2 \leq i \neq k\\
n & i=k
\end{cases}
\]

\noindent Thus $\rho'$ is the element of $S_{n-1}$ obtained by deleting the symbol $1$ from the cycle decomposition of $\rho$. Observe that, since $1$ is  in the orbit of $n$, $\rho$ is an $n$-cycle if and only if $\rho'$ is an $(n-1)$-cycle. Shifting all labels down by $1$, we have

\[
\rho'(i)=\begin{cases}
i+(n-k) & 1 \leq i \leq k-1\\
i-(k-1) & k \leq i \leq n-1
\end{cases}
\]

\noindent Interpreting this modulo $n-1$, we see that $\rho'$ acts via addition by $n-k$: indeed, subtraction by $k-1 \pmod{n-1}$ is equivalent to addition by $n-1-(k-1) \equiv n-k \pmod{n-1}$. Thus this action is transitive if and only if $(n-k, n-1)=1$.
\end{proof}


\subsection{The case $m \geq 3$ odd}


We next consider the case when $f$ is a PCP zig-zag of modality $m \geq 3$ odd. In this case we observe that the orbit of $x=1$ ends with $c_1 \mapsto 0 \mapsto 1$.

\begin{defn}
Given $n \geq 3$ and $2 \leq k \leq n-1$, define $\rho_o(n,k) \in S_{n+1}$ to be the permutation such that

\[
\rho_o(n,k)(i) = \begin{cases}
n & i=0\\
0 & i=1\\
i+(n-k) & 2 \leq i \leq k-1\\
i-(k-1) & k \leq i \leq n
\end{cases}
\]

\noindent Alternatively, $\rho_o(n,k)$ is of the form 

\[
\rho_o(n,k)=\begin{pmatrix}
0 & 1 & 2 & \cdots & k-1 & k & k+1 & \cdots & n-1 & n\\
n & 0 & n-k+2 & \cdots & n-1 & 1 & 2 & \cdots & n-k & n-k+1
\end{pmatrix}
\]
\end{defn}

\begin{rmk}
Note that if we define $\rho' \in S_n$ to be the permutation obtained from $\rho=\rho_o(n,k)$ by deleting $0$ from the orbit of $n$, then $\rho'$ is an $n$-cycle if and only if $\rho$ is an $(n+1)$-cycle. Since $\rho'=\rho_e(n,k)$, we see that this is again the case if and only if $(n-k, n-1)=1$. We have therefore proven the following statement.
\end{rmk}

\begin{prop}\label{cycle3}
The permutation $\rho_o(n,k) \in S_{n+1}$ is an $(n+1)$-cycle if and only if $(n-k, k-1)=1$. 
\end{prop}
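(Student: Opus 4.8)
The plan is to make precise the two-line reduction indicated in the preceding remark: delete the symbol $0$ from $\rho_o(n,k)$, identify the result with $\rho_e(n,k)$, and quote Proposition~\ref{cycle}.

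First I would record the following elementary fact about cycle structure. Let $\sigma$ be a permutation of a finite set $X$ with $|X|\ge 2$ and let $x\in X$. Define $\sigma'\in\mathrm{Sym}(X\setminus\{x\})$ by $\sigma'(y)=\sigma(y)$ whenever $\sigma(y)\ne x$, and $\sigma'(\sigma^{-1}(x))=\sigma(x)$. Tracing the cycle $(x\;a_1\;a_2\;\cdots\;a_{\ell-1})$ of $\sigma$ through $x$, one sees at once that in $\sigma'$ it becomes the cycle $(a_1\;a_2\;\cdots\;a_{\ell-1})$ of length $\ell-1$, while every cycle of $\sigma$ not containing $x$ is unchanged. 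In particular, $\sigma$ is a single $|X|$-cycle if and only if $\sigma'$ is a single $(|X|-1)$-cycle.

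Next I would apply this with $\sigma=\rho_o(n,k)$ acting on $\{0,1,\ldots,n\}$ and $x=0$. From the explicit form of $\rho_o(n,k)$ one checks $\rho_o(n,k)(1)=0$, $\rho_o(n,k)(0)=n$, and $\rho_o(n,k)(i)\ne 0$ for $2\le i\le n$ (for $2\le i\le k-1$ one has $i+(n-k)\ge 3$, and for $k\le i\le n$ one has $i-(k-1)\ge 1$). Hence $\sigma'(1)=\rho_o(n,k)(0)=n$ and $\sigma'(i)=\rho_o(n,k)(i)$ for $2\le i\le n$. Comparing termwise with Definition~\ref{perm}, namely $\rho_e(n,k)(1)=n$, $\rho_e(n,k)(i)=i+(n-k)$ for $2\le i\le k-1$, and $\rho_e(n,k)(i)=i-(k-1)$ for $k\le i\le n$, gives $\sigma'=\rho_e(n,k)$ in $S_n$. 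By the deletion fact, $\rho_o(n,k)$ is an $(n+1)$-cycle if and only if $\rho_e(n,k)$ is an $n$-cycle.

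Finally I would invoke Proposition~\ref{cycle}, which says $\rho_e(n,k)$ is an $n$-cycle precisely when $(n-k,\,k-1)=1$ (equivalently $(n-k,\,n-1)=1$, since $n-1=(n-k)+(k-1)$). Chaining the last two statements yields the proposition. There is no genuine obstacle here: the only thing requiring care is the index bookkeeping in matching $\sigma'$ with $\rho_e(n,k)$ and confirming that $0$ lies on the $n$-orbit of $\rho_o(n,k)$, both of which are immediate from the two-line definitions; the proposition is essentially a corollary of Proposition~\ref{cycle}.
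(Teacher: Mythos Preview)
Your proposal is correct and follows exactly the paper's approach: the paper's proof is the remark immediately preceding the proposition, which deletes $0$ from $\rho_o(n,k)$, identifies the result with $\rho_e(n,k)$, and invokes Proposition~\ref{cycle}. You have simply written out the deletion argument and the index-matching in more detail than the paper does.
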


A nearly identical argument to that of Proposition \ref{even} proves the following analogue.

\begin{prop}\label{odd}
Let $f: I \to I$ be a PCP $\lambda$-zig-zag map with $\floor{\lambda}=m \geq 3$ odd. Then $f \in \PA(m)$ if and only if the following hold:

\begin{enumerate}
\item Under the ordering $0=x_0<x_1<\ldots<x_n=1$ of the orbit of $x=1$, we have $\rho(f)=\rho_o(n,k)$ for some $k$.
\item The $\{x_i\}$ satisfy the conclusion of Proposition \ref{type}, with $x_1$ being the sole point of type (C) and $x_k$ being the sole point of type (R).
\end{enumerate}
\end{prop}


\subsection{The case $m=2$}


It remains to consider the case of unrestricted PCP zig-zags of modality $m=2$.\\

\begin{defn}
Let $n \geq 3$. For $2 \leq k \leq n-1$ set $\kappa(n,k) = (1, 2, \ldots, k-1)(k) \cdots (n) \in S_n$ and define

\[
\rho_2(n,k)=[\kappa(n,k)]^{-1} \circ \rho_e(n,k) \circ \kappa(n,k)
\]
\end{defn}

\begin{lem}\label{lem:bimodal}
Fix $n \geq 3$ and $2 \leq k \leq n-1$ and set $\rho_2=\rho_2(n,k)$. Then the following conditions hold:

\begin{enumerate}
\item $\rho_2(k)=k-1$.
\item $\rho_2(k-1)=n$.
\item If $i \leq j<k$ then $\rho_2(i) \leq \rho_2(j)$.
\item If $k<i \leq j$ then $\rho_2(i) \leq \rho_2(j)$.
\item If $i<k<j$ then $\rho_2(j)<\rho_2(i)$. 
\end{enumerate}

\noindent Moreover, $\rho_2(n,k)$ is the only element of $S_n$ to satisfy these conditions. Finally, $\rho_2(n,k)$ is a full $n$-cycle if and only if $\gcd(n-k, k-1)=1$. 
\end{lem}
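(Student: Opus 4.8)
The plan is to verify the five conditions directly from the definition $\rho_2 = [\tau(n,k)]^{-1} \circ \rho_e(n,k) \circ \tau(n,k)$, then establish uniqueness by a counting/ordering argument, and finally deduce the cycle structure from the fact that conjugation preserves cycle type together with Proposition \ref{cycle}.

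First I would unpack the conjugating permutation $\tau = \tau(n,k) = (1,2,\ldots,k-1)$, which fixes $k, k+1, \ldots, n$ and cyclically shifts $\{1,\ldots,k-1\}$. Concretely $\tau(i) = i+1$ for $1 \le i \le k-2$, $\tau(k-1) = 1$, and $\tau(i)=i$ for $i \ge k$; its inverse sends $i \mapsto i-1$ for $2 \le i \le k-1$, $1 \mapsto k-1$, and fixes $i \ge k$. To compute $\rho_2(j) = \tau^{-1}(\rho_e(n,k)(\tau(j)))$ for each $j$, I would split into the ranges $j \in \{1,\ldots,k-2\}$, $j = k-1$, and $j \in \{k,\ldots,n\}$, using the explicit two-line form of $\rho_e(n,k)$ from Definition \ref{perm}. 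For instance, for $j=k$: $\tau(k)=k$, $\rho_e(n,k)(k)=1$, and $\tau^{-1}(1)=k-1$, giving condition (1). For $j=k-1$: $\tau(k-1)=1$, $\rho_e(n,k)(1)=n$, $\tau^{-1}(n)=n$, giving condition (2). Conditions (3)--(5) are then monotonicity statements that follow because $\rho_e(n,k)$ is "two increasing runs" (the block $2,\ldots,k-1$ maps increasingly to $n-k+2,\ldots,n-1$ and the block $k,\ldots,n$ maps increasingly to $1,\ldots,n-k+1$, with $1 \mapsto n$), and conjugating by $\tau$ merely relabels the domain by cycling the first $k-1$ symbols — so the image of the "small" block $\{1,\ldots,k-1\}$ lies in the high range and the image of the "large" block $\{k,\ldots,n\}$ lies in the low range, each run staying monotone. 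I would make the bookkeeping precise by writing $\rho_2$ in two-line notation explicitly.

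For uniqueness: conditions (3) and (4) say $\rho_2$ is increasing on each of the two blocks $\{1,\ldots,k-1\}$ and $\{k,\ldots,n\}$, and condition (5) says every value on the first block exceeds every value on the second block. A permutation of $\{1,\ldots,n\}$ that is increasing on $\{1,\ldots,k-1\}$, increasing on $\{k,\ldots,n\}$, and maps the first block entirely above the second block is forced: the first block must be sent bijectively and increasingly onto $\{n-k+2,\ldots,n\}$ and the second block onto $\{1,\ldots,n-k+1\}$. This already pins down $\rho_2$ on the second block up to the single remaining ambiguity of where within the two target sets things land — but in fact being increasing on each block with the block-dominance forces the assignment completely \emph{except} for one free choice, which conditions (1) and (2) then remove: (2) forces $k-1 \mapsto n$ (so the first block's top element maps to $n$), and (1) forces $k \mapsto k-1$, which together with increasingness fixes everything. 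I would write this out as: the block-dominance plus the two increasing runs determine $\rho_2|_{\{1,\ldots,k-2\}}$ to be the increasing bijection onto $\{n-k+2,\ldots,n-1\}$ once (2) places $k-1 \mapsto n$, and determine $\rho_2|_{\{k+1,\ldots,n\}}$ to be the increasing bijection onto $\{1,\ldots,n-k-1\} \cup$ (whatever is left) once (1) places $k \mapsto k-1$ — a short finite check.

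Finally, the cycle structure: since $\rho_2(n,k)$ is conjugate to $\rho_e(n,k)$, the two permutations have the same cycle type, so $\rho_2(n,k)$ is a full $n$-cycle if and only if $\rho_e(n,k)$ is, which by Proposition \ref{cycle} happens if and only if $(n-k, k-1) = 1$. This last part is immediate. The main obstacle I anticipate is purely organizational: getting the index arithmetic in the conjugation computation right across the boundary cases $j = k-2, k-1, k$ and $j = n$, since off-by-one slips in the definition of $\tau$ and of $\rho_e$ are easy to make; I would guard against this by first writing down $\rho_e(n,k)$ and $\tau(n,k)$ each in full two-line notation for a small example like $n=5, k=3$ as a sanity check before stating the general computation.
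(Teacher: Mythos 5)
Your computation of (1) and (2) by direct conjugation and your cycle-type argument (conjugation preserves cycle type, then invoke Proposition \ref{cycle}) are correct and match the paper. The genuine gap is in your treatment of (3)--(5) and, more seriously, in the uniqueness argument: you read the second block as $\{k,\ldots,n\}$ and claim $\rho_2$ is increasing there, that $\{1,\ldots,k-1\}$ maps entirely above $\{k,\ldots,n\}$, and that the blocks are sent onto $\{n-k+2,\ldots,n\}$ and $\{1,\ldots,n-k+1\}$ respectively. But conditions (4) and (5) only concern indices strictly greater than $k$; the index $k$ itself is governed by (1), and the stronger statements you assert are simply false. Concretely, for $n=5$, $k=4$ one computes that $\rho_2(5,4)$ sends $1,2,3,4,5$ to $2,4,5,3,1$ respectively: here $\rho_2(4)=3>\rho_2(5)=1$ (not increasing on $\{k,\ldots,n\}$), $\rho_2(4)=3>\rho_2(1)=2$ (no block dominance once $k$ is included), and the image of $\{4,5\}$ is $\{1,3\}$, not $\{1,2\}$. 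Indeed your claimed structure is internally inconsistent with (1): increasing on $\{k,\ldots,n\}$ onto the bottom values would force $\rho_2(k)=1$, contradicting $\rho_2(k)=k-1$ whenever $k\geq 3$. A related slip feeds this: conjugation does not ``merely relabel the domain'' --- the values are also acted on by $\tau^{-1}$ --- and the mechanism that actually makes (3)--(5) true is that $\tau^{-1}$ is order-preserving on the values $\{2,\ldots,n\}$, with only the value $1$ displaced (to $k-1$).

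The repair follows the paper. For (3)--(5): write $\rho_2(i)=\tau^{-1}(i+1+(n-k))$ for $i\leq k-1$ and $\rho_2(i)=\tau^{-1}(i-(k-1))$ for $i\geq k+1$; all these arguments lie in $\{2,\ldots,n\}$ (the value $1$ never occurs), where $\tau^{-1}$ is increasing, so monotonicity on each of $\{1,\ldots,k-1\}$ and $\{k+1,\ldots,n\}$ and the dominance in (5) (arguments $\geq n-k+2$ versus $\leq n-k+1$) follow at once. For uniqueness: first use (1) and (2) to fix $\rho(k)=k-1$ and $\rho(k-1)=n$; the remaining indices are $\{1,\ldots,k-2\}\cup\{k+1,\ldots,n\}$ with image $\{1,\ldots,n\}\setminus\{k-1,n\}$, and (3), (4), (5) force $\{k+1,\ldots,n\}$ to go increasingly onto the bottom $n-k$ of these remaining values and $\{1,\ldots,k-2\}$ increasingly onto the top $k-2$, so every value of $\rho$ is determined and $\rho=\rho_2(n,k)$. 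Note that in the correct argument (1) and (2) are used first and are not redundant, whereas in your version (2) would follow automatically from your (false) block structure and (1) would contradict it.
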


\begin{proof}
Set $\rho=\rho_e(n,k)$ and $\kappa=\kappa(n,k)$ so that $\rho_2=\kappa^{-1}\rho\kappa$. One readily checks conditions (1) and (2) from this equation. Now suppose $i \leq j<k$. Then we have $\rho_2(i)=\kappa^{-1}(i+1+(n-k))$ and $\rho_2(j)=\kappa^{-1}(j+1+(n-k))$. Since $i \leq j$ and $\kappa$ preserves this ordering except at $k$, we see that $\rho_2(i) \leq \rho_2(j)$ unless $i+1+(n-k)=1$, which is impossible. Similarly, if $k<i \leq j$, then $\rho_2(i)=\kappa^{-1}(i-(k-1))$, and $\rho_2(j)=\kappa^{-1}(j-(k-1))$. Again we see that $\rho_2(i) \leq \rho_2(j)$ unless $i-(k-1)=1$, i.e. that $i=k$. This is also impossible.

To prove condition (5), it is enough by (4) to argue that if $i<k$ then $\rho_2(n)<\rho_2(i)$. $\rho_2(i)=\kappa^{-1}(i+1+(n-k))$, whereas $\rho_2(n)=\kappa^{-1}\rho(n)=\kappa^{-1}(n-k+1)$. Again by the fact that $\kappa$ preserves ordering except at $k$, it follows that $\rho_2(n)<\rho_2(i)$.

Suppose that $\rho \in S_n$ satisfies conditions (1)-(5) above. We wish to show that $\rho=\rho_2(n,k)$. Condition (1) implies that this second set has image including $n$, and condition (2) determines the image of the last remaining index. The monotonicity conditions (3) and (4) impose a strict ordering on the two subsets $\{1, \ldots, k-1\}$ and $\{k+1, \ldots, n\}$ of remaining indices, and condition (5) implies that the image of the second subset must be completely below that of the first. It follows that $\rho(i)$ is determined for each $i$, and hence $\rho=\rho_2(n,k)$.

Finally, since $\rho_2$ is conjugate to $\rho$ the two share the same cycle type, and hence by Proposition \ref{cycle}, $\rho_2$ is an $n$-cycle if and only if $\gcd(n-k,n-1)=1$. 
\end{proof}

\begin{prop}\label{prop:bimodal}
Let $f: I \to I$ be a PCP $\lambda$-zig-zag map with $\floor{\lambda}=2$. Then $f \in \PA(2)$ if and only if the following hold:

\begin{enumerate}
\item Under the ordering $x_1<x_2<\ldots<x_n=1$ of the orbit of $x=1$, we have $\rho(f)=\rho_2(n,k)$ for some $k$.
\item The $\{x_i\}$ satisfy the conclusion of Proposition \ref{type}, with $x_{k-1}$ being the sole point of type (C) and $x_k$ being the sole point of type (R).
\end{enumerate} 
\end{prop}

\begin{proof}
Suppose that $f$ is of pseudo-Anosov type. By Proposition \ref{type}, $f$ has exactly one postcritical point in the interior of $I_1$, as well as another at $x=\lambda^{-1}$. Let $\rho \in S_n$ describe the action of $f$ on the periodic orbit of $x=1$. Let $F$ be the exterior left-veering thick interval map projecting to $f$ and let $\tau$ be the invariant generalized train track for $F$. Since $f$ is of pseudo-Anosov type, $\tau$ is finite and has structure described by Lemma \ref{PCP} and Proposition \ref{singleint}. In particular, $\tau$ has a single interior loop, and hence there is a single loop $\gamma$ of $\tau$ whose image is this interior loop. Let $x_k$ be the postcritical point of $f$ to which this loops projects. Since the single interior loop of $\tau$ necessarily maps to the loop projecting to $x_n=1$, we have $\rho^2(k)=n$.

Since $\gamma$ maps to the unique interior loop of $\tau$, $\gamma$ is the unique loop such that $F(\gamma) \in L_2$. Moreover, since $\rho^2(k)=n$ we see that $F(F_\ast(\gamma))$ is after $L_1$ and before $L_2$. In particular, there are no loops of $\tau$ after $F_\ast(\gamma)$ and before $\gamma$. Therefore $F_\ast(\gamma)$ projects to $x_{k-1}$, and so $\rho(k)=k-1$. Since $\rho^2(k)=n$ it now follows that $\rho(k-1)=n$. It is now readily checked that $\rho$ satisfies the five conditions of Lemma \ref{lem:bimodal}, and hence $\rho=\rho_2(n,k)$.

Suppose on the other hand that conditions (1) and (2) above hold. Let $F$ be the exterior left-veering thick interval map projecting to $f$, and let $\tau$ be the invariant generalized train track of $F$. Note that $\tau$ has a loop $\gamma$ projecting to $x=1$. Let $F^a$ be the first iterate that sends $\gamma$ to an interior loop. All forward images of $\gamma$ before this must be exterior loops, by the definition of $\rho_2$ and the fact that $F$ is exterior left-veering. Note that $a \geq 2$ and that $F^{a-1}(\gamma)$ is a loop projecting to $x_k$, since its image is an interior loop.

We claim that $F^{a+1}(\gamma)=\gamma$. Indeed, $x=\lambda^{-1}$ must be the $(k-1)$-st postcritical point of $f$ from left to right, since $\rho_2(n,k)(k-1)=n$. Moreover, since $f$ has a single postcritical point in $I_1$, this point must be the $k$-th postcritical point, immediately proceeding $x=\lambda^{-1}$, and since $\rho_2(n,k)(k-1)=n$ we see that this point maps to $x=\lambda^{-1}$. Thus the single interior loop of $\tau$ is mapped into the fat vertex over $x=1$, and hence after isotopy becomes simply $\gamma$. In particular, $\tau$ is finite, and hence $f$ is of pseudo-Anosov type.
\end{proof}

\begin{rmk}
Propositions \ref{even}, \ref{odd}, and \ref{prop:bimodal} imply that if $f \in \PA(m)$ then 

\[
\rho(f)=\begin{cases}
\rho_e(n,k) & \text{if $m \geq 4$ even}, \\
\rho_o(n,k) & \text{if $m \geq 3$ odd}, \\
\rho_2(n,k) & \text{if $m=2$}
\end{cases}
\]

\noindent Therefore, when $m$ is unspecified we will write $\rho(f)=\rho_m(n,k)$.
\end{rmk}


\subsection{The proof of Theorem \ref{bijection}}


We are now ready to prove Theorem \ref{bijection}, which we restate here for convenience. 

\begin{mainthm}
Fix $m \geq 2$ and let $\Phi: \PA(m) \to \bbQ \cap (0,1)$ be the map defined by

\[
\Phi(f)=\frac{n-k}{n-1} \hspace{5mm} \text{if $\rho(f)=\rho_m(n,k)$}
\]

\noindent Then $\Phi$ is a bijection. Moreover, for each $p \geq 4$ the image $\Phi(\PA(m,p))$ consists of the set of reduced rationals in $(0,1)$ of denominator $p-2$.
\end{mainthm}
\begin{proof}
To fix ideas, let $m=2$. Suppose $f$ and $g$ are two bimodal unrestricted PCP zig-zags of pseudo-Anosov type, and that $\rho(f)=\rho(g)=\rho_2(n,k)$ for some $n, k$ satisfying $(n-k, n-1)=1$. We claim that $f=g$. Indeed, $\rho_2(n,k)$ and bimodality determine the transition matrix $M$ of $f$, by Proposition \ref{prop:bimodal}. In particular, $f$ and $g$ are uniform expanders with the same topological entropy, and hence the same slope. It follows that $f=g$, and so the map is injective.

To prove surjectivity, it is enough to show that for every $\rho_2(n,k)$ with $(n-k, n-1)$ there is a bimodal unrestricted PCP zig-zag $f$ of pseudo-Anosov type that acts on the orbit of $x=1$ as $\rho_2(n,k)$. We do this by constructing the exterior left-veering thick interval map and then projecting onto the horizontal coordinate to obtain a zig-zag with the necessary combinatorics. Indeed, fix some $\rho_2(n,k)$ satisfying $(n-k, n-1)=1$ and let $\tau$ be a train track consisting of real edges $e_1, \ldots, e_n$ and infinitesimal edges $f_1, \ldots, f_{n-1}$, with $f_j$ joining $e_j$ to $e_{j+1}$. Further adorn $\tau$ with infinitesimal loops as follows:

\begin{enumerate}
\item Loops $\gamma_0$ and $\gamma_n$ on the left of $e_1$ and the right of $e_n$, respectively.
\item An upward-pointing loop $\gamma_{k-1}$ attached to the left of $e_k$.
\item Downward-pointing loops $\gamma_j$ attached to the right of $e_j$ for $j=1, \ldots, \hat{k}, \ldots, n-1$. 
\end{enumerate}

Define $F$ to be the exterior left-veering train track map permuting the $\gamma_k$ according to $\rho_2(n,k)$, while also fixing $\gamma_0$. It is not hard to see by the construction of $F$ and the structure of $\rho_2(n,k)$ that $F$ preserves $\tau$.

We claim that the transition matrix $M$ of the $e_j$ is irreducible, i.e. for every pair of edges $e_{j_1}, e_{j_2}$ some iterate of $F$ maps $e_{j_1}$ across $e_{j_2}$. Since each $\gamma_j$ for $j \geq 1$ maps to $\gamma_n$, every $e_j$ eventually maps across $e_n$, and so it is enough to prove that each $e_j$ is eventually covered by some forward image of $e_n$. Since $\gamma_n$ is on the right of $e_n$ and $\gamma_n$ maps to $g_j$ for each $j \geq 1$, it follows that $e_n$ must map across the edge to the left of $\gamma_j$ for each positive $j \neq k$: that is, $e_n$ eventually maps across $e_j$ for each $j \neq k-1$.

It remains to prove that $e_n$ eventually maps across $e_{k-1}$. But some $e_j$ does this, and since $e_n$ already maps to every edge besides $e_{k-1}$ we will be done unless $e_{k-1}$ is the only edge that maps across $e_{k-1}$. This is impossible: the image of $e_{k-1}$ is entirely before the first turn of $\tau$, since $\gamma_{k-1}$ maps to $\gamma_n$, and so the image of some other $e_j$ covers $e_{k-1}$ between the first and second turns. Thus $M$ is irreducible.

Let $\lambda$ be the spectral radius of $M$. By the Perron-Frobenius theorem, $\lambda$ is a simple eigenvalue for $M$, and $M$ has positive left- and right-eigenvectors for $\lambda$. Let $u=(u_1, \ldots, u_n)$ be the unique left eigenvector such that $\sum_i u_i=1$. Then assigning each $e_j$ the length $u_j$ (and declaring each infinitesimal edge to be length $0$) we obtain a uniform $\lambda$-expander $f$ by projecting the action of $F$ onto the horizontal coordinate. Since $u$ is positive none of the $e_j$ are collapsed, and so $f$ is a PCP zig-zag having the necessary combinatorics: $x=1$ is periodic and $f$ acts on this orbit by $\rho_2(n,k)$. Thus the map is surjective for $m=2$.

Finally, consider the image $\Phi(\PA(2,p))$. For a map $f \in \PA(2,p)$, the fraction $\Phi(f)$ is reduced and has denominator $p-2$, since $\rho(f) = \rho_2(p-1, k)$ for some $k$ such that $k-1$ is coprime to $p-2$. The fact that $\Phi(\PA(2,p))$ contains all such fractions follows because $\Phi$ is a bijection onto $\bbQ \cap (0,1)$.

A similar argument works for $m \geq 3$ odd and $m \geq 4$ even. In this case, the placement of infinitesimal loops as in steps 1-3 above is chosen accordingly to ensure that $\tau$ is invariant under the exterior left-veering thickening. 

 \end{proof}

\section{The digit polynomial $D_f$}

In this section we prove Theorem \ref{Perm}, restated below for convenience. This theorem generalizes Lemma 2.5 in \cite{H}, which treats the unimodal case. In \cite{H}, the result is phrased in terms of the kneading sequence of the unique critical point.

\begin{mainthm}\label{Perm}
Suppose $f \in \PA(m)$ for $m \geq 2$ with $\Phi(f)=\tfrac{a}{b} \in \bbQ \cap (0,1)$ in lowest terms. Define $L: [0, b] \to \bbR$ by $L(t)=\frac{a}{b} \cdot t$. Then

\[
D_f(t)=t^{b+1}+1-\sum_{i=1}^b c_it^{b+1-i},
\]

\noindent where the $c_i$ satisfy

\begin{equation}
c_i=\begin{cases}
m & \text{if $L(t) \in \bbN$ some $t \in [i-1,i]$}\\
m-2 & \text{otherwise}
\end{cases}
\end{equation} 

\noindent In particular, $c_i=c_{b-i}$, so $D_f$ is reciprocal: that is,

\[
D_f(t)=t^{b+1} D_f(t^{-1}).
\]
\end{mainthm}

\begin{ex}
Let $f, g \in \PA(2)$ be the zig-zags such that $\Phi(f)=\tfrac{1}{7}$ and $\Phi(g)=1-\tfrac{1}{7}=\tfrac{6}{7}$. According to Theorem \ref{Perm} and Figure \ref{fig:polyex1}, we have

\[
D_f(t)=t^8-2t^7-2t+1
\]

\noindent and 

\[
D_g(t)=t^8-2t^7-2t^6-2t^5-2t^4-2t^3-2t^2-2t+1
\]
\end{ex}

\begin{figure}[h!]
\begin{subfigure}{.4\paperwidth}
\centering
\includegraphics[scale=.25]{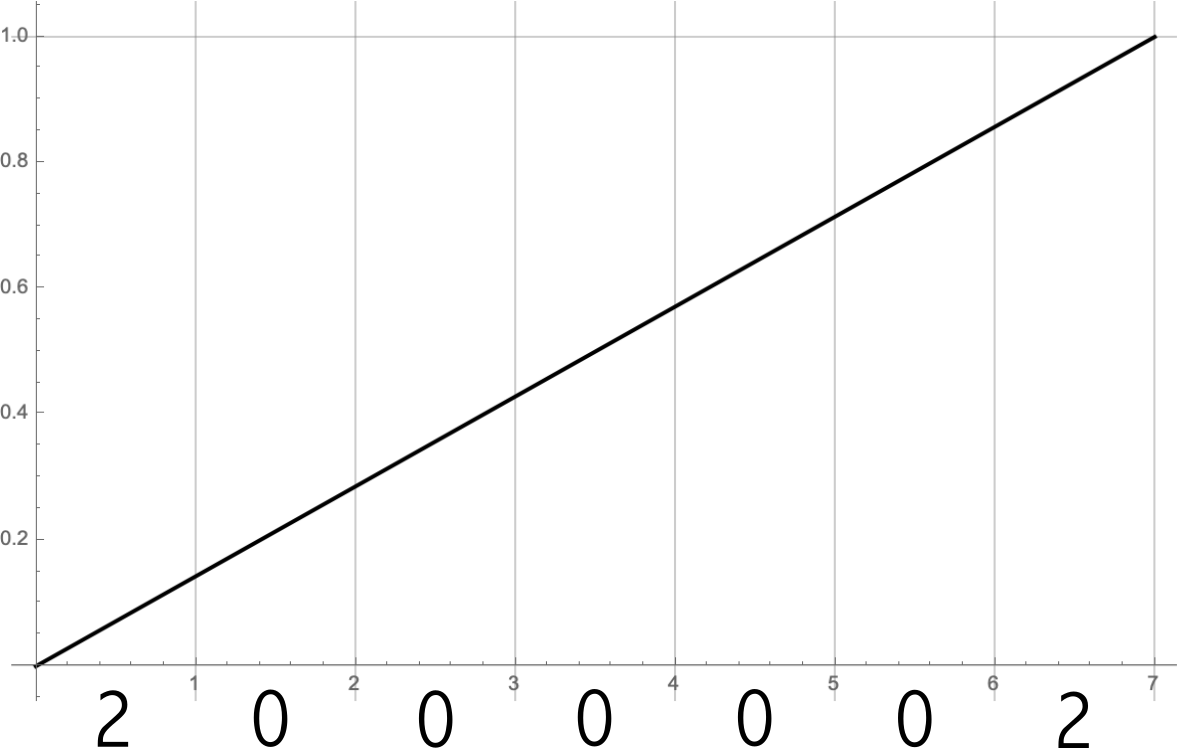}
\caption{Computing the coefficients of $D_f$ for the unique $f \in \PA(2)$ such that $\Phi(f)=\frac{1}{7}$.}
\end{subfigure}
\begin{subfigure}{.4\paperwidth}
\centering
\includegraphics[scale=.25]{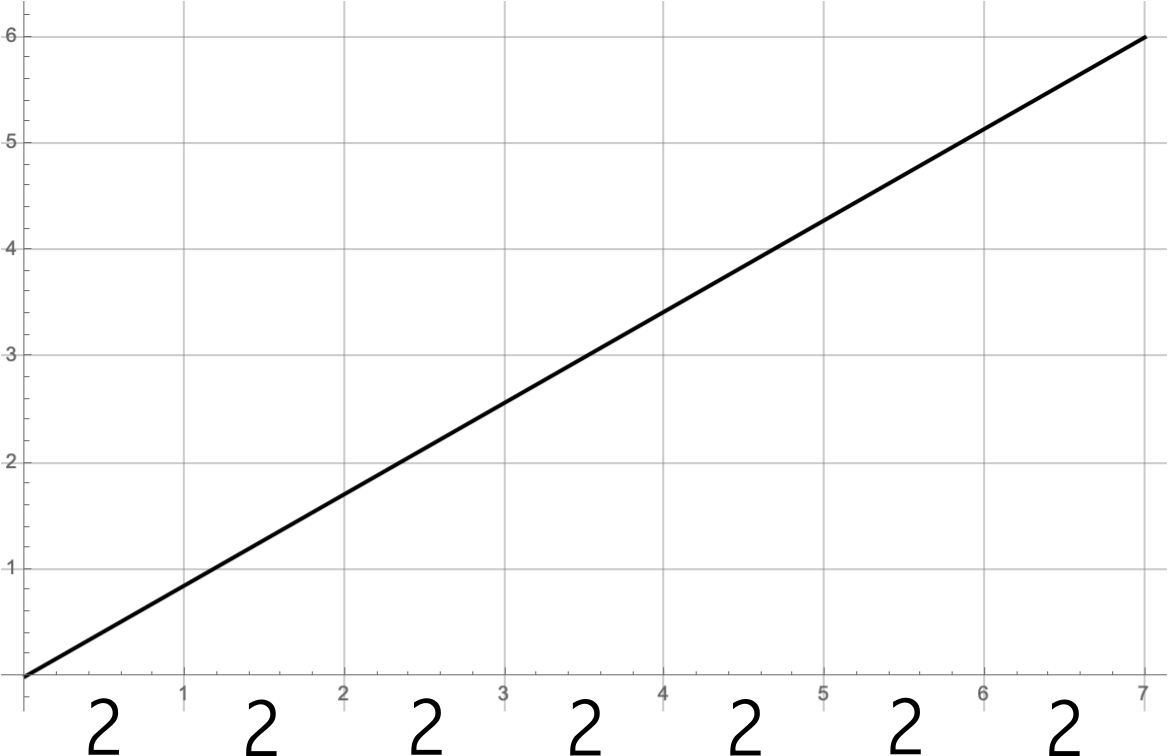}
\caption{Computing the coefficients of $D_g$ for the unique $g \in \PA(2)$ such that $\Phi(g)=\frac{6}{7}$.}
\end{subfigure}
\caption{Computing the digit polynomials of bimodal maps using Theorem \ref{Perm}.}\label{fig:polyex1}
\end{figure}

\begin{ex}
Let $p, q \in\PA(7)$ be the zig-zags such that $\Phi(p)=\tfrac{4}{13}$ and $\Phi(q)=1-\tfrac{4}{13}=\tfrac{9}{13}$. According to Theorem \ref{Perm} and Figure \ref{fig:polyex2}, we have

\[
D_p(t)=t^{14}-7t^{13}-5t^{12}-5t^{11}-7t^{10}-5t^9-5t^8-7t^7-5t^6-5t^5-7t^4-5t^3-5t^2-7t+1
\]

\noindent and 

\[
D_q(t)=t^{14}-7t^{13}-7t^{12}-7t^{11}-5t^{10}-7t^9-7t^8-5t^7-7t^6-7t^5-5t^4-7t^3-7t^2-7t+1
\]
\end{ex}

\begin{figure}[h!]
\begin{subfigure}{.35\paperwidth}
\centering
\includegraphics[scale=.2]{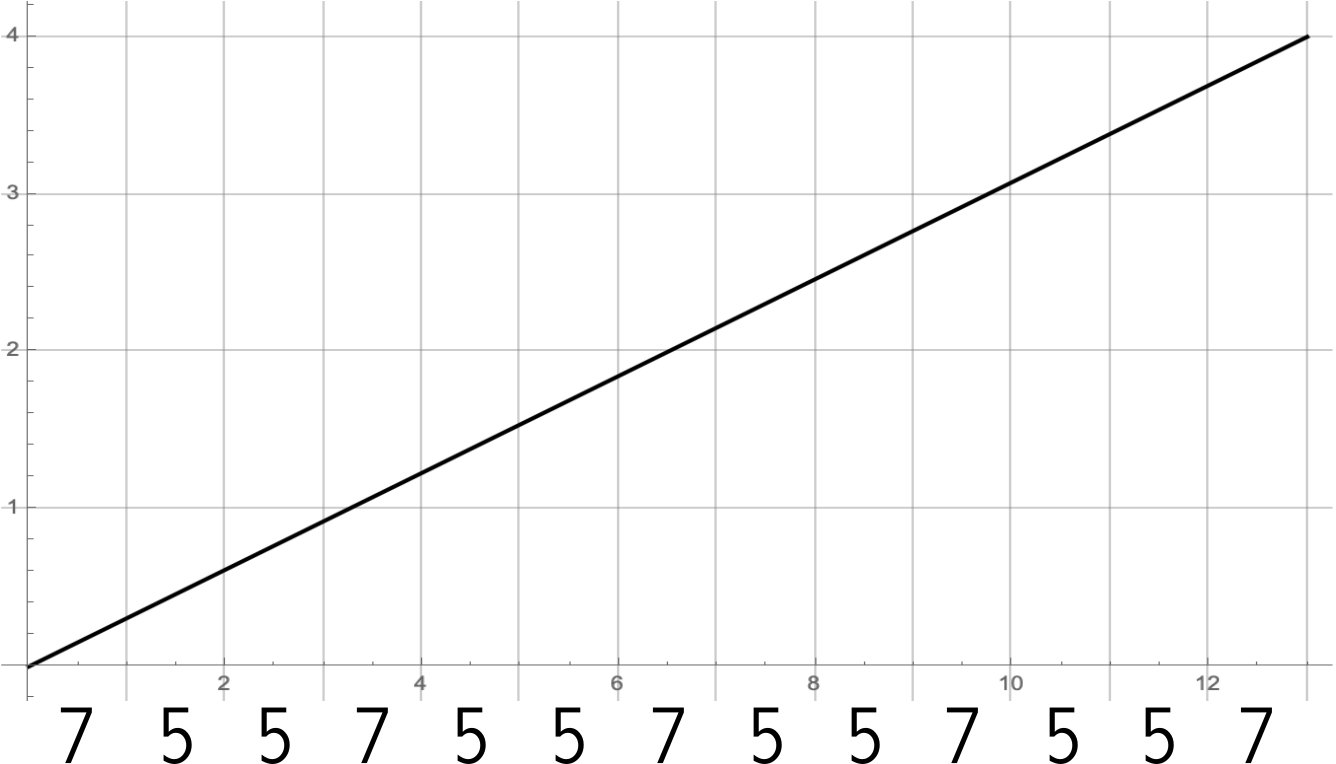}
\caption{Computing the coefficients of $D_p$ for the unique $p \in \PA(7)$ such that $\Phi(p)=\frac{4}{13}$.}
\end{subfigure}
\begin{subfigure}{.45\paperwidth}
\centering
\includegraphics[scale=.2]{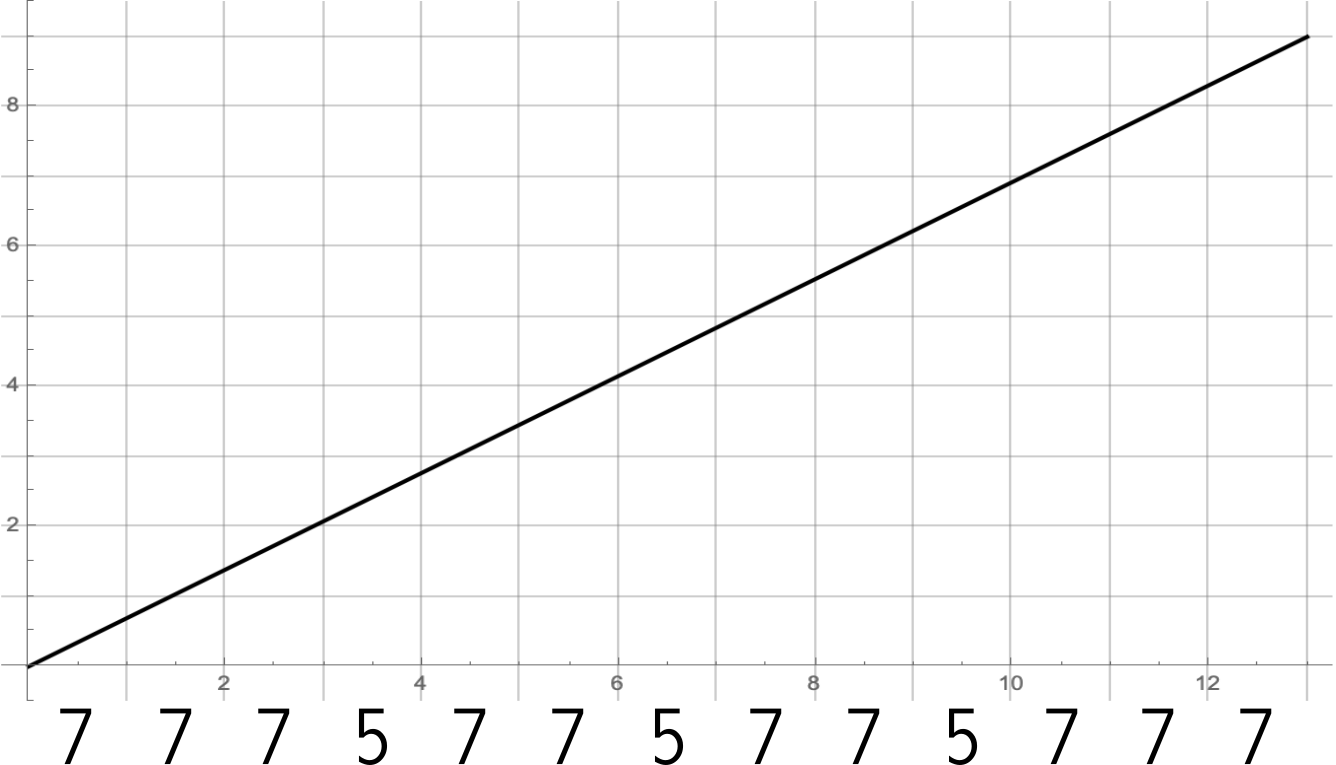}
\caption{Computing the coefficients of $D_q$ for the unique $q \in \PA(7)$ such that $\Phi(q)=\frac{9}{13}$.}
\end{subfigure}
\caption{Computing the digit polynomials of $7$-modal maps using Theorem \ref{Perm}.}\label{fig:polyex2}
\end{figure}

We first prove Theorem \ref{Perm} for the case that $f$ has modality $m \geq 4$ even. This case being completed, we will then use it to deduce the cases for $m \geq 3$ odd and $m=2$. We proceed via a sequence of lemmas.

\begin{lem}\label{coefs1}
Suppose $f \in \PA(m)$ for $m \geq 4$ even. Let $n$ be minimal such that $f^n(1)=1$, and let $f_i(x)$ be the defining linear branch of $f$ at $f^{i-1}(1)$; that is, $f^i(1)=f_i(f^{i-1}(1))$. Then there exist constants $c_i$ for $i=1, \ldots, n-2$ such that 

\[
f_i(x)=\begin{cases}
\lambda x -c_i & i=1, \ldots, n-2\\
m-\lambda x & i=n-1\\
2-\lambda x & i=n
\end{cases}
\]
\end{lem}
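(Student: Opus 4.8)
The plan is to extract the structure of the linear branches $f_i$ directly from the combinatorial description of $\rho(f) = \rho_e(n,k)$ furnished by Proposition \ref{even}, together with the orientation information recorded in Proposition \ref{type}. Recall that a standard $\lambda$-zig-zag has branches of the form $f_j(x) = a_j \pm \lambda x$ on each interval of monotonicity $I_j$, with the sign alternating as $j$ increases and with $f(0) \in \partial I$. The key point is that the sign of the slope of the branch applied at $f^{i-1}(1)$ is $+1$ except when $f^{i-1}(1)$ lies in an interval of monotonicity on which $f$ is decreasing, and Proposition \ref{type} tells us exactly which postcritical points those are: only the unique type (R) point $x_k$ lies in $I_{m-1}$ (a decreasing interval, since $m$ is even and $f$ is standard), and no postcritical point lies in any other decreasing interval strictly between $0$ and $1$. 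Since $\rho_e(n,k)(k) = 1 = x_1 = \lambda^{-1} = c_1$, the branch applied at $x_k$ — which is $f_i$ for the index $i$ with $f^{i-1}(1) = x_k$ — is the one sending a neighborhood into a neighborhood of $c_1$; but we have reindexed so that $f^{n-1}(1)$ is the last point before returning to $1$, and one checks from $\rho_e(n,k)$ that the orbit visits $x_k$ at time $n-2$ (since $\rho_e(n,k)(k)=1$, $\rho_e(n,k)(1)=n$, so $x_k \mapsto x_1 \mapsto x_n = 1$, i.e. $x_k = f^{n-2}(1)$ and $x_1 = f^{n-1}(1)$). This pins down $f_{n-1}$ and $f_n$.

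Concretely, I would argue as follows. First, $x_1 = f^{n-1}(1) = \lambda^{-1} = c_1$, and $f$ maps $c_1$ to $1 \in \partial I$; the branch $f_n$ active at $c_1$ is the right branch of the first "tooth," which on a standard zig-zag is $x \mapsto 2 - \lambda x$ (it carries $c_1 \mapsto 2 - \lambda \cdot \lambda^{-1} = 1$ and is orientation-reversing), giving $f_n(x) = 2 - \lambda x$. Second, $x_k = f^{n-2}(1)$ lies in $\inte(I_{m-1})$ by the type (R) clause of Proposition \ref{type}, and the branch $f_{n-1}$ active on $I_{m-1}$ is $x \mapsto a_{m-1} - \lambda x$ with the constant determined by the requirement that it glue correctly to the neighboring branches of the zig-zag: on the standard $m$-modal zig-zag one computes $a_{m-1} = m$, so $f_{n-1}(x) = m - \lambda x$, and indeed this carries $x_k$ to $x_1 = c_1$ as required since $\rho_e(n,k)(k) = 1$. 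Third, for every other $i \in \{1, \dots, n-2\}$, the point $f^{i-1}(1) = x_{j}$ (for the appropriate $j \neq k$) lies — again by Proposition \ref{type} — in some $I_\ell$ with $\ell$ even (the type (C) point $c_1 = x_1$ lies at the left endpoint of $I_1$ but is the \emph{image} of $x_k$, not a point where a decreasing branch is applied going forward in this range; the type $P_{m-2}$ and $P_m$ points lie in $I_{m-2}$ and $I_m$, both increasing since $m$ is even). Hence $f_i(x) = a_\ell + \lambda x$ for some integer $a_\ell$, and we set $c_i := -a_\ell \in \bbZ$, yielding $f_i(x) = \lambda x - c_i$.

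The main obstacle — and the step deserving the most care — is the bookkeeping in the third paragraph: verifying that \emph{every} postcritical point other than $x_k$ genuinely sits in an interval of monotonicity on which $f$ has positive slope, so that no stray orientation-reversing branch appears among $f_1, \dots, f_{n-2}$. This is precisely the content of Proposition \ref{type} combined with the parity of $m$ and the standardness hypothesis (sign $(-1)^m = +1$): the type (E), (C), ($P_{m-2}$), ($P_m$) points land in $\{0\} \cup \{c_1\} \cup \inte(I_{m-2}) \cup \inte(I_m) \cup \{1\}$, all of which are either endpoints or lie in even-indexed (increasing) subintervals, leaving $x_k \in \inte(I_{m-1})$ as the unique decreasing-branch case. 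One must also double-check that the constant in the $(n-1)$st branch is $m$ and in the $n$th branch is $2$; this is a direct consequence of how the $m$ teeth of the standard zig-zag are normalized ($f_j(x) = j + \lambda x$ or $(j+1) - \lambda x$ depending on parity), evaluated at $j = m-1$ and at the first tooth respectively, and I would include this short computation explicitly. Everything else is immediate from the piecewise-affine form of a zig-zag and the orbit combinatorics of $\rho_e(n,k)$.
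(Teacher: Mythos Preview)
Your proposal is correct and follows essentially the same approach as the paper's own proof: invoke Proposition \ref{type} to classify the postcritical points by type, observe that the unique type (R) point $x_k=f^{n-2}(1)$ and the type (C) point $x_1=c_1=f^{n-1}(1)$ are the only ones where a decreasing branch is applied (yielding $f_{n-1}(x)=m-\lambda x$ and $f_n(x)=2-\lambda x$ respectively), and note that all remaining orbit points lie in $I_{m-2}$ or $I_m$, where the branches are $\lambda x-(m-2)$ and $\lambda x-m$. Your write-up is in fact more explicit than the paper's---you spell out the parity reasoning for why $I_{m-2}$ and $I_m$ are increasing under the standardness hypothesis, and you justify via $\rho_e(n,k)$ that $x_k$ and $x_1$ occur at times $n-2$ and $n-1$---whereas the paper's proof compresses all of this into two sentences.
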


\begin{proof}
The sole point of type $R$ (cf. Proposition \ref{type}) in the orbit of $x=1$ must map to the point of type $C$, by Proposition \ref{singleint}, and this latter point is precisely $x=\lambda^{-1}$, and hence maps to $x=1$ by $f_n(x)=2-\lambda x$. At the point of type $R$ we have $f_{n-1}(x)=m-\lambda x$.

All other points in the orbit of $x=1$ are of type $P_1$ or $P_2$, and the corresponding $f_i$ are $f_i(x)=\lambda x - (m-2)$ and $f_i(x)=\lambda x-m$, respectively.
\end{proof}

\begin{lem}\label{coefs2}
Suppose $f \in \PA(m)$ for $m \geq 4$ even. Let $D_f(t)$ be the digit polynomial of $f$. Then

\[
D_f(t)=t^n+1-\sum_{i=1}^{n-1} c_it^{n-i}
\]

\noindent where, as in Lemma \ref{coefs1}, the $c_i$ are defined by $f_i(x)=\lambda x- c_i$ for $1 \leq i \leq n-2$ and $f_{n-1}(x)=c_{n-1}-\lambda x$. In particular, $c_1=c_{n-1}=m$.
\end{lem}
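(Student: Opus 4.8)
The plan is to unwind the definition of the digit polynomial (Definition \ref{defn:digit}) using the explicit branch formulas from Lemma \ref{coefs1}. Recall that $D_f(t) = \epsilon\big[f_{n-1}|_{z=t} \circ \cdots \circ f_0|_{z=t}(1) - f^n(1)\big]$, where $f^n(1)=1$ since $f \in \PA(m)$ and $x=1$ is periodic of period $n$. Indexing carefully (the statement of Lemma \ref{coefs1} numbers branches $f_1,\dots,f_n$ acting as $f^i(1)=f_i(f^{i-1}(1))$, so I will match this indexing consistently with Definition \ref{defn:digit}), we substitute the generic-$z$ versions: $f_i|_{z=t}(x) = tx - c_i$ for $1 \le i \le n-2$, $f_{n-1}|_{z=t}(x) = c_{n-1} - tx$ (with $c_{n-1}=m$ by Lemma \ref{coefs1}), and $f_n|_{z=t}(x) = 2 - tx$.

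The main computation is to track the effect of composing these affine-in-$x$, polynomial-in-$t$ maps starting from $x=1$. First I would iterate the ``forward'' branches $f_i|_{z=t}(x) = tx - c_i$ for $i=1,\dots,n-2$: starting from $x_0 = 1$, one gets $x_{n-2} = t^{n-2} - \sum_{i=1}^{n-2} c_i t^{n-2-i}$ by an immediate induction. Then applying $f_{n-1}|_{z=t}$ gives $c_{n-1} - t \cdot x_{n-2} = c_{n-1} - t^{n-1} + \sum_{i=1}^{n-2} c_i t^{n-1-i}$, and finally applying $f_n|_{z=t}(x) = 2 - tx$ yields $2 - c_{n-1}t + t^n - \sum_{i=1}^{n-2} c_i t^{n-i}$. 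Subtracting $f^n(1) = 1$ gives $t^n + 1 - c_{n-1}t - \sum_{i=1}^{n-2} c_i t^{n-i} = t^n + 1 - \sum_{i=1}^{n-1} c_i t^{n-i}$ (absorbing the $c_{n-1}t$ term, which is the $i=n-1$ term since $t^{n-(n-1)} = t$). This already has leading coefficient $+1$, so the normalization constant is $\epsilon = 1$, and the claimed form follows. The assertion $c_1 = c_{n-1} = m$ comes from Lemma \ref{coefs1}: $c_{n-1}=m$ is stated there directly, and $c_1 = m$ because $f^0(1)=1$ is a point of type $P_2$ — indeed $x=1$ lies in the interior of $I_m$ (it is the right endpoint, but the branch $f_n$ is the one at $x=1$ itself, while $f_1$ is the branch at $f(1)$); here one must check that $f(1)$ is of type $P_2$, i.e. lies in $I_m$, so that $f_1(x) = \lambda x - m$. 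Actually, re-examining: $f_1$ is the branch at $f^0(1) = 1$... I would align the index conventions so that $f_1$ is the branch at the point immediately after $1$ in forward orbit, and argue via the permutation structure $\rho_e(n,k)$ (specifically $\rho_e(n,k)(1) = n$, so $f(1) = x_n = 1$... no). Let me instead argue: $c_1 = c_{n-1}$ will follow a posteriori from the reciprocity in Theorem \ref{Perm}, but for this lemma it suffices to note that $f_{n-1}$ is the branch at the type-$R$ point $x_k$, which has $f_{n-1}(x) = m - \lambda x$ giving $c_{n-1} = m$, while $f_1$ is the branch at $f(1)$; since $\rho_m(n,k)$ sends $1 \mapsto n$ and $n \mapsto n-k+1 \ge 2$...

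The hard part will be pinning down the index bookkeeping between Definition \ref{defn:digit}'s convention (branches $f_0,\dots,f_{n-1}$) and Lemma \ref{coefs1}'s convention (branches $f_1,\dots,f_n$), and confirming that $f(1)$ is a type-$P_2$ point so that $c_1 = m$. For the latter: since $f$ is a standard $\lambda$-zig-zag of even modality $m$, it is a positive zig-zag, so the branch $f_n$ applying at $x=1$ is $x \mapsto 2-\lambda x$ (sending $1 \mapsto 2-\lambda = f(1)$); one checks $f(1) = 2 - \lambda \in I_m = [m\lambda^{-1}, 1]$ using $\lambda > m$, so $f(1) = m\lambda^{-1} \cdot(\text{something}) $... more carefully, $2 - \lambda$ lies in $(c_{m-1}, c_m)$ or $(c_m, 1)$ depending on $\lambda$; in the range $m < \lambda < m+1$ we have $m\lambda^{-1} < 2 - \lambda$ iff $m < 2\lambda - \lambda^2$, which fails, so in fact $f(1) \in I_{m-1}$ or earlier — this requires care and I would handle it by directly invoking Proposition \ref{type}, which says the only orientation-reversing postcritical point (type $R$) is the one mapping to $c_1$, forcing all other branches in the orbit except the one at $1$ and the type-$R$ point to be orientation-preserving of the form $\lambda x - c_i$ with $c_i \in \{m-2, m\}$, and then the $i=1$ branch specifically is determined by which interval $f(1)$ lands in. I expect this interval-location step to be the only genuinely delicate point; everything else is the routine affine iteration above.
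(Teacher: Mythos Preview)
Your core computation is correct and is essentially identical to the paper's proof: both iterate the affine branches formally starting from $1$, obtaining $t^{n-2} - \sum_{i=1}^{n-2} c_i t^{n-2-i}$ after the first $n-2$ steps, then apply the two orientation-reversing branches and subtract $f^n(1)=1$ to arrive at $t^n + 1 - \sum_{i=1}^{n-1} c_i t^{n-i}$.

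Your only difficulty is self-inflicted confusion about the indexing, and it leads you to chase a red herring. In Lemma~\ref{coefs1}'s convention, $f_i$ is the branch at $f^{i-1}(1)$, so $f_1$ is the branch \emph{at} $x=1$, not at $f(1)$. Since $f$ is a positive zig-zag of even modality $m$, the branch on $I_m = [c_m,1]$ is $x \mapsto \lambda x - m$, whence $c_1 = m$ immediately. There is no need to locate $f(1)$ in any interval; that question would be relevant to $c_2$, not $c_1$. (The index shift between Definition~\ref{defn:digit}, which uses $f_0,\dots,f_{n-1}$, and Lemma~\ref{coefs1}, which uses $f_1,\dots,f_n$, is a harmless relabeling by $1$.) Once you drop the digression about whether $f(1)$ is of type $P_2$, your argument is complete and matches the paper's.
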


\begin{proof}
For $i=0, \ldots, n$ define $g_i(t) \in \bbZ[t]$ by $g_i(\lambda)=f^i(\lambda)$. Thus for example $g_0(\lambda)=1$ and $g_1(\lambda)=\lambda-m$. Here it is important that we are treating $\lambda$ as a formal variable, rather than an algebraic integer satisfying a polynomial relation.

Observe that $g_{i+1}(\lambda)=f_{i+1}(g_i(\lambda))$ by definition. In particular, by Lemma \ref{coefs1} for $i=0, \ldots, n-3$ we have $g_{i+1}(\lambda)=\lambda g_i(\lambda)-c_i$, and so inductively we see that for 

\[
g_i(\lambda)=\lambda^i-c_1\lambda^{i-1}-\cdots-c_{i-1}\lambda-c_i \hspace{5mm} \text{for $0 \leq i \leq n-2$}
\]

\noindent We therefore have the equalities

\[
g_{n-1}(\lambda)=c_{n-1}-\lambda^{n-1}+c_1\lambda^{n-2}+\cdots+c_{n-3}\lambda^2+c_{n-2}\lambda
\]

\noindent and

\[
g_n(\lambda)=2-c_{n-1}\lambda-c_{n-2}\lambda^2- \cdots - c_1\lambda^{n-1}+\lambda^n
\]

\noindent By the definition of $D_f(t)$ and the fact that $g_n(\lambda)=f^n(1)=1$, we have $D_f(\lambda)=g_n(\lambda)-1$. 
\end{proof}

\begin{lem}\label{coefs3}
Suppose $f \in \PA(m)$ for $m \geq 4$ even. Let $n$ be minimal such that $f^n(1)=1$ and suppose that $f$ acts on the orbit of $x=1$ by the permutation $\rho(f)=\rho_e(n,k)$ with $\gcd(n-k, k-1)=1$. Let $D_f(t)=t^n+1-\sum_{i=1}^{n-1} c_it^{n-i}$. Then for $i=1, \ldots, n-1$ we have

\[
c_i=\begin{cases}
m & \text{if} \ \rho(f)^{i-1}(n) \geq k\\
m-2 & \text{if} \ 2 \leq \rho(f)^{i-1}(n) \leq k-1
\end{cases}
\]
\end{lem}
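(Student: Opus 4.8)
The key to Lemma \ref{coefs3} is to translate the recursive description of the $c_i$ from Lemma \ref{coefs2} into a statement about the permutation $\rho = \rho(f) = \rho_e(n,k)$ governing the forward orbit of $x=1$. By Lemma \ref{coefs1}, the branch $f_i$ applied at the point $f^{i-1}(1)$ is of the form $\lambda x - c_i$ for $i \le n-2$, and $c_i = m-2$ or $c_i = m$ according to whether $f^{i-1}(1)$ is a point of type $P_1$ (i.e.\ lies in $\inte I_{m-2}$) or of type $P_2$ (lies in $\inte I_m$); and $c_{n-1} = m$, $c_n = 2$, coming from the type $R$ and type $C$ points, as established in the proof of Lemma \ref{coefs1}. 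So the whole lemma reduces to identifying, for each $1 \le i \le n-1$, the \emph{type} of the point $f^{i-1}(1)$ purely in terms of $\rho$, and showing it is $P_2$ (giving $c_i = m$) exactly when $\rho^{i-1}(n) \ge k$, and $P_1$ (giving $c_i = m-2$) exactly when $2 \le \rho^{i-1}(n) \le k-1$.

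\textbf{Key steps.} First I would set up the bookkeeping: write $x_1 < x_2 < \cdots < x_n = 1$ for the orbit of $x=1$, so $x_n = 1$, $x_1 = \lambda^{-1}$ is the type $C$ point, and $x_k$ is the type $R$ point (Proposition \ref{even}, part (2)). The point $f^{i-1}(1) = f^{i-1}(x_n)$ has index $\rho^{i-1}(n)$ in this ordering, i.e.\ $f^{i-1}(1) = x_{\rho^{i-1}(n)}$. Thus $c_i$ depends only on which of the five point-types $x_{\rho^{i-1}(n)}$ has. Now I would use Proposition \ref{type}: the orbit points in $\inte(I_{m-1})$ consist solely of the type $R$ point $x_k$; the type $C$ point is $x_1$; the extremal points are $x_n=1$ (and $x=0$, absent here since $m$ is even and $f$ is standard hence positive — wait, for $m$ even standard means sign $(-1)^m = +$, so $f$ is positive and $f(0)=0$, so $0 \notin$ orbit of $1$; good, so the orbit has exactly $n$ points with $x_n=1$). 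The remaining points are of type $P_{m-2}$ (in $\inte I_{m-2}$, giving $c=m-2$) or type $P_m$ (in $\inte I_m$, giving $c=m$). So I must show: for $2 \le i \le n-1$, the point $x_{\rho^{i-1}(n)}$ lies in $\inte I_m$ iff $\rho^{i-1}(n) \ge k$, and in $\inte I_{m-2}$ iff $2 \le \rho^{i-1}(n) \le k-1$. The case $i = 1$ is handled separately: $\rho^{0}(n) = n \ge k$ and indeed $c_1 = m$ since $x_n = 1 \in I_m$ (the branch at $x=1$ is $\lambda x - m$... actually $f_n$ is special, but $c_1$ corresponds to the branch at $f^0(1)=1$, and from Lemma \ref{coefs2} $c_1 = m$). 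And $i = n-1$: $\rho^{n-2}(n) = \rho^{-1}(n) \cdot$... here I must check $\rho^{n-2}(n) \ge k$. Since $\rho(k) = 1$ and $\rho$ is an $n$-cycle (as $\gcd(n-k,k-1)=1$, Proposition \ref{cycle}), the orbit of $n$ visits $1 = \rho^{?}(n)$ at the step after visiting $k$; so $\rho^{n-2}(n)$ is the point mapping to $x_n = 1$ under one more application — i.e.\ the type $R$ point $x_k$, which has $\rho^{n-2}(n) = k \ge k$, matching $c_{n-1} = m$. The crux of the argument is then: \emph{the partition of the index set $\{2,\dots,n\}\setminus\{k\}$ into $\{2,\dots,k-1\}$ and $\{k+1,\dots,n\}$ coincides with the partition into $P_{m-2}$-points and $P_m$-points.} This should follow from the dynamical construction in the proof of Proposition \ref{even}: the loops over the $P_m$ points map (in order) into layer $L_m$, the loops over $P_{m-2}$ points map into $L_{m-2}$, and the combinatorial description of $\rho_e(n,k)$ — namely that $\{k+1,\dots,n\}$ maps order-preservingly onto $\{2,\dots,n-k+1\}$ while $\{2,\dots,k-1\}$ maps order-preservingly onto $\{n-k+2,\dots,n-1\}$ — forces exactly this identification, because exterior-veering geometry means the $P_m$-loops (covered from below, mapping to the ``late'' indices $2,\dots,n-k+1$) are precisely those with index $> k$, and the $P_{m-2}$-loops (the ``early'' target indices $n-k+2,\dots,n-1$) are those with index in $\{2,\dots,k-1\}$.

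\textbf{Main obstacle.} The routine parts are the index-chasing and the boundary cases $i=1, n-1$. The genuine content — and where I expect the real work to lie — is justifying that the index ranges $\{k+1,\dots,n\}$ and $\{2,\dots,k-1\}$ correspond exactly to the point-types $P_m$ and $P_{m-2}$ respectively. This is essentially a matter of reading off, from the exterior left-veering thick interval map and the structure of $\rho_e(n,k)$ worked out in Proposition \ref{even}, which orbit points sit in $\inte I_{m-2}$ versus $\inte I_m$; the cleanest route is probably to observe that the geometry of $F_L$ forces the orbit to be \emph{increasing} on each of the two index-blocks separated by $k$ in a way that pins each block inside a single monotonicity interval of $f$, and then to match the $f$-orientation (orientation-preserving on $I_{m-2}$ and $I_m$, orientation-reversing on $I_{m-1}$) against the known fact that $x_k$ is the unique orientation-reversing orbit point (Proposition \ref{type}, type $R$). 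Once that correspondence is nailed down, the stated formula for $c_i$ is immediate, and the reciprocity $c_i = c_{n-i}$ claimed in Theorem \ref{Perm} will follow in a later lemma from the symmetry $k \leftrightarrow n-k+1$ in the definition of $\rho_e(n,k)$.
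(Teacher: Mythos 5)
Your proposal is correct and follows essentially the same route as the paper: write $f^{i-1}(1)=x_{\rho^{i-1}(n)}$, determine the type of that orbit point using Propositions \ref{type} and \ref{even}, and read off $c_i$ from Lemmas \ref{coefs1} and \ref{coefs2}. The one step you flag as the ``main obstacle'' --- matching the index blocks $\{2,\dots,k-1\}$ and $\{k+1,\dots,n\}$ with the types $P_{m-2}$ and $P_m$ --- is in fact immediate from the increasing labelling of the orbit (since $x_1=\lambda^{-1}$ lies below $I_{m-2}$, $x_k$ is the unique orbit point in $I_{m-1}$, and $I_{m-2}<I_{m-1}<I_m$ on the line), so no return to the thick-interval geometry is needed; this is exactly what the paper's terse proof asserts.
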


\begin{proof}
The permutation $\rho(f)$ is defined such that postcritical points corresponding to symbols $i$ satisfying $k+1 \leq i \leq n$ are of type $P_2$, whereas those corresponding to $i$ satisfying $2 \leq i \leq k-1$ are of type $P_1$. Finally, the point corresponding to the symbol $k$ is of type $R$, specifically $f^{n-2}(1)$. The result follows by Lemmas \ref{coefs1} and \ref{coefs2}.
\end{proof}

\begin{lem}\label{coefs4}
Theorem \ref{Perm} holds for $f \in \PA(m)$ with $m \geq 4$ even.
\end{lem}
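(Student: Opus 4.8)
The plan is to combine Lemma~\ref{coefs3}, which pins down each coefficient $c_i$ in terms of $\rho(f)^{i-1}(n)$, with the description of $\rho_e(n,k)$ as a rotation obtained in the proof of Proposition~\ref{cycle}, and then reduce everything to elementary congruence arithmetic modulo $b=n-1$.

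First I would set up the dictionary. Since the forward orbit of $x=1$ is a single $f$-cycle of length $n$, the permutation $\rho(f)=\rho_e(n,k)$ is an $n$-cycle, so by Proposition~\ref{cycle} $\gcd(n-k,k-1)=1$; hence $\Phi(f)=\tfrac{n-k}{n-1}$ is already in lowest terms, i.e. $a=n-k$, $b=n-1$, and $n=b+1$. By Lemma~\ref{coefs2}, $D_f(t)=t^{b+1}+1-\sum_{i=1}^{b}c_it^{b+1-i}$, so the shape of the polynomial is correct and only the identification of the $c_i$ remains. By Lemma~\ref{coefs3}, $c_i=m$ when $\rho(f)^{i-1}(n)\ge k$ and $c_i=m-2$ when $2\le\rho(f)^{i-1}(n)\le k-1$. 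Now $\rho_e(n,k)(1)=n$ and $\rho_e(n,k)(k)=1$, so in the $n$-cycle $\rho(f)$ the symbol $1$ is immediately preceded by $k$ and followed by $n$; consequently $\rho(f)^{n-1}(n)=1$, $\rho(f)^{n-2}(n)=k$, and for $i=1,\dots,n-1$ the value $\rho(f)^{i-1}(n)$ lies in $\{2,\dots,n\}$, so the two cases of Lemma~\ref{coefs3} are exhaustive. A straightforward induction on $i$ (using that $\rho(f)^{i-1}(n)=k$ only at $i=n-1$) shows that for $i=1,\dots,n-1$ this value equals $(\rho')^{i-1}(n)$, where $\rho'$ is the permutation of $\{2,\dots,n\}$ obtained by deleting the symbol $1$ from the cycle of $\rho(f)$, exactly as in the proof of Proposition~\ref{cycle}.

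Next I would quote from that proof the fact that, after relabelling $\{2,\dots,n\}$ by subtracting $1$, $\rho'$ becomes the rotation $j\mapsto j+a\pmod{b}$ of $\{1,\dots,b\}$. Iterating from $n$ (which becomes $b$) gives $\rho(f)^{i-1}(n)=r_i+1$, where $r_i\in\{1,\dots,b\}$ is the representative of $(i-1)a\bmod b$. Since $k-1=b-a$, the condition $\rho(f)^{i-1}(n)\ge k$ is exactly $r_i\ge b-a$. I then claim this holds iff the interval $[(i-1)a,\,ia]$ contains a multiple of $b$: writing $(i-1)a=qb+s$ with $0\le s<b$, if $s=0$ then $(i-1)a$ itself is such a multiple and $r_i=b\ge b-a$; if $s\ge 1$ then $r_i=s$, and because $0<a<b$ the only possible multiple of $b$ in $[qb+s,\,qb+s+a]$ is $(q+1)b$, which lies there iff $s+a\ge b$, i.e. $s\ge b-a$. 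Because $L$ is increasing and continuous with $L([i-1,i])=[(i-1)a/b,\,ia/b]$, the interval $[(i-1)a,ia]$ meets $b\bbZ$ precisely when $L$ takes an integer value on $[i-1,i]$; and since $0\le L(t)\le a$ on $[0,b]$ such an integer value is automatically a natural number. This yields $c_i=m$ iff $L(t)\in\bbN$ for some $t\in[i-1,i]$ and $c_i=m-2$ otherwise, which is the assertion of Theorem~\ref{Perm} in the even case.

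Reciprocity then follows from the symmetry $L(b-t)=a-L(t)$, which maps $[i-1,i]$ onto $[(n-i)-1,\,n-i]$ and preserves the property that $L$ attains an integer value; hence $c_i=c_{n-i}$, and substituting $j=n-i$ into $D_f(t)=t^{b+1}+1-\sum_i c_it^{b+1-i}$ gives $D_f(t)=t^{b+1}D_f(t^{-1})$. The only delicate point in the argument is the bookkeeping in the middle paragraph: keeping straight the three successive relabellings (the $\rho(f)$-orbit of $n$; the deletion of $1$ to pass to $\rho'$; the shift by $1$ turning $\rho'$ into rotation by $a$) and the two residue conventions $\{0,\dots,b-1\}$ versus $\{1,\dots,b\}$, together with the boundary case $(i-1)a\equiv0\pmod b$ — in particular $i=1$, which must give $c_1=m$.
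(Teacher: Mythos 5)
Your proposal is correct and follows essentially the same route as the paper: it reduces via Lemma \ref{coefs3} to deciding when $\rho(f)^{i-1}(n)\geq k$, invokes the deletion-of-the-symbol-$1$ trick and the rotation-by-$n-k$ description of $\rho'$ from the proof of Proposition \ref{cycle}, and translates the resulting residue condition modulo $b=n-1$ into the statement that $L$ attains an integer value on $[i-1,i]$, exactly as in the paper's argument (which phrases the same computation via fractional parts of $L_q$). Your extra care with the relabellings, the boundary case $i=1$, and the explicit symmetry $L(b-t)=a-L(t)$ yielding $c_i=c_{n-i}$ only makes explicit what the paper leaves implicit.
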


\begin{proof}
We have already shown that $c_0=c_n=1$ and that $c_i \in \{m-2, m\}$ for $1 \leq i \leq n-1$. By Lemma \ref{coefs3} it is enough to understand when $\rho^i(n) \geq k$. Let $\rho' \in S_{n-1}$ be the permutation obtained by deleting the symbol $1$ from the cycle decomposition of $\rho$ and decreasing all remaining labels by $1$, as in the proof of Proposition \ref{cycle}. As observed previously, $\rho'$ acts on $\{1, \ldots, n-1\}$ as addition by $n-k$ modulo $n-1$. Therefore, if we set $q=\frac{n-k}{n-1}$, the values $L_q(t)=qt$ for $t=1, \ldots, n-1$ satisfy

\[
L_q(t)-\floor{L_q(t)}=\frac{(\rho')^t(n-1)}{n-1}
\]

To see why this is true, observe first that the left-hand side is the fractional part of $L_q(t)$. This quantity is a rational number with denominator $n-1$, and the numerator increases by $n-k$ modulo $n-1$. Since $(\rho')^t(n-1)$ also changes in this way, it remains to note that 

\[
L_q(1)=\frac{n-k}{n-1}=\frac{(\rho')(n-1)}{n-1}
\]

\noindent Now we observe that 

\begin{align*}
c_i=m & \iff \rho^{i-1}(n) \geq k \\
& \iff (\rho')^{i-1}(n-1) \geq k-1\\
& \iff L_q(i-1)-\floor{L_q(i-1)} \geq \frac{k-1}{n-1}\\
& \iff L_q(t) \in \bbN \ \text{for some $t \in [i-1,i]$.} 
\end{align*}

\noindent The last equivalence holds because $L_q(t)=qt$ is a line of slope $\frac{n-k}{k-1}$. The proof is complete.
\end{proof}

It remains to prove Theorem \ref{Perm} for the case when the modality of $f$ is $m=2$ or $m \geq 3$ odd. Recall that $\rho_2(n,k)=\kappa^{-1}(n,k) \circ \rho_e(n,k) \circ \kappa(n,k)$, where $\kappa(n,k)=(1, 2, \ldots, k-1) \in S_n$.

\begin{lem}\label{coefs5}
For any $l \geq 0$, $\rho_2^l(n) \geq k$ if and only if $\rho_e^l(n) \geq k$, and in this case $\rho_2^l(n)=\rho_e^l(n)$.
\end{lem}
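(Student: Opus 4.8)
The plan is to use the conjugacy relation $\rho_2(n,k) = \tau(n,k)^{-1} \circ \rho_e(n,k) \circ \tau(n,k)$ together with the fact that $\tau = \tau(n,k) = (1,2,\ldots,k-1)$ has support confined to $\{1,\ldots,k-1\}$. The first step is to record the elementary observation that both $\tau$ and $\tau^{-1}$ fix every symbol in $\{k,k+1,\ldots,n\}$ and restrict to bijections of $\{1,\ldots,k-1\}$; in particular, since $2 \le k \le n-1$ we have $n \ge k$, so $\tau(n) = n$.

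Next I would iterate the conjugacy to get $\rho_2^l = \tau^{-1} \circ \rho_e^l \circ \tau$ for every $l \ge 0$, and hence
\[
\rho_2^l(n) \;=\; \tau^{-1}\!\bigl(\rho_e^l(\tau(n))\bigr) \;=\; \tau^{-1}\!\bigl(\rho_e^l(n)\bigr).
\]
Then split into two cases according to the value of $\rho_e^l(n)$. If $\rho_e^l(n) \ge k$, then $\tau^{-1}$ fixes it, so $\rho_2^l(n) = \rho_e^l(n) \ge k$; this gives both implications in that direction and the claimed equality. If instead $\rho_e^l(n) \le k-1$, then $\tau^{-1}$ sends it to another element of $\{1,\ldots,k-1\}$, so $\rho_2^l(n) \le k-1 < k$. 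Combining the two cases yields the stated equivalence $\rho_2^l(n) \ge k \iff \rho_e^l(n) \ge k$ and the equality $\rho_2^l(n) = \rho_e^l(n)$ whenever this common inequality holds.

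There is no substantive obstacle here; the only point requiring care is the boundary behavior of $\tau$ at the symbol $k$ — it is $k$, not $k-1$, that $\tau$ fixes — so that the partition of $\{1,\ldots,n\}$ into $\{1,\ldots,k-1\}$ and $\{k,\ldots,n\}$ is respected exactly by the conjugating cycle. With this lemma in hand, the criterion of Lemma \ref{coefs3} transports verbatim from $\rho_e(n,k)$ to $\rho_2(n,k)$, which is what the $m=2$ (and, after the analogous insertion of the symbol $0$, the $m\ge 3$ odd) cases of Theorem \ref{Perm} require.
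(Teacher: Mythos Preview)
Your proof is correct and is essentially identical to the paper's argument: both use the conjugacy $\rho_2^l = \tau^{-1}\rho_e^l\tau$, the fact that $\tau=(1,2,\ldots,k-1)$ fixes every symbol $\geq k$ (in particular $n$), and then split on whether $\rho_e^l(n)$ lies in $\{k,\ldots,n\}$ or in $\{1,\ldots,k-1\}$. The paper's version is simply more terse.
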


\begin{proof}
Suppose $\rho_e^l(n) \geq k$. Then $\rho_2^l(n)=\kappa^{-1}\rho_e^l\kappa(n)=\rho_e^l(n)$, since $\kappa(j)=j$ for all $j \geq k$. Similarly, if $\rho_2^l(n) \geq k$ it follows that $\rho_e^l(n) \geq k$ as well.
\end{proof}

We now complete the proof of Theorem \ref{Perm}.

\begin{proof}[Proof of Theorem \ref{Perm}]
One quickly verifies that Lemmas \ref{coefs1} through \ref{coefs3} hold for $f$ an $m$-modal zig-zag of pseudo-Anosov type for $m=2$ and $m \geq 3$ odd, after replacing all instances of $\rho_e(n,k)$ with either $\rho_2(n,k)$ or $\rho_o(n,k)$. Lemmas \ref{coefs3} and \ref{coefs5} now imply the Theorem for $m=2$.

For $m \geq 3$ odd, recall that if we delete the symbol $0$ from the cycle type of $\rho_o(n,k)$ we obtain the permutation $\rho_e(n,k)$. Deleting this symbol corresponds to ignoring the linear map $f_{n+1}(x)=1-\lambda x$. This linear branch is not used to compute $D_f$, since the fact that $f \in \PA(m)$ for $m \geq 3$ odd implies that $f^n(1)=0$, terminating the process of constructing $D_f$ before the $(n+1)$-st step (cf. Definition \ref{defn:digit}).

The arguments of Lemmas \ref{coefs1} through \ref{coefs4} now prove the Theorem in this case.
\end{proof}

\section{A family of pseudo-Anosovs with Salem dilatation}

In this section we provide an application of the theory developed over the course of this paper. Recall that a \textit{Salem number}, introduced in \cite{S}, is a real algebraic integer $\lambda>1$ such that all Galois conjugates of $\lambda$ are contained within the closed unit disc, with at least one of these conjugate lying on the unit circle. It is not hard to show that $\lambda^{-1}$ must be among the Galois conjugates of $\lambda$ in this case, and that all other conjugates lie on the unit circle. In particular, a Salem number is a Perron number of even degree $d=2g$. If $p(x) \in \bbZ[x]$ is the minimal polynomial of a Salem number, then $p(x)$ is \textit{reciprocal}, i.e. 

\[
p_\ast(x):=x^{\deg(p)}p(x^{-1})=p(x)
\]

It is well-known that if $f(x) \in \bbZ[x]$ is a reciprocal polynomial of degree $d=2g$ then $f(x)=x^gq(x+x^{-1})$ for some integral polynomial $q$. We call $q(x)$ the \textit{companion polynomial} to $f(x)$. If $\deg(f)=2g+1$ then $f(x)=(x+1)f_1(x)$ for $f_1(x)$ reciprocal of even degree, and therefore $f(x)=(x+1)x^gq(x+x^{-1})$ for some $q(x) \in \bbZ[x]$. In this case we again call $q$ the companion polynomial of $f$.

Note that there is a bijection between roots of $q$ and pairs of roots of $f$: if $\lambda$, $\lambda^{-1}$ are roots of $f$ then $\lambda+\lambda^{-1}$ is a root of $q$. Moreover, if $|\lambda|=1$, then $\lambda^{-1}=\overline{\lambda}$ and so the root $\lambda+\lambda^{-1}$ of $q$ is a real number contained in the interval $[-2,2]$. We refer to this interval, with or without its endpoints, as the \textit{critical interval}.

In the case of a Salem number $\lambda$ of degree $2g$, the companion polynomial is irreducible of degree $g$ with dominant root $\lambda+\lambda^{-1}>2$ and the remaining $g-1$ roots in the critical interval. In particular, the companion polynomial has all real roots, so $\lambda+\lambda^{-1}$ is a totally real algebraic integer of degree $g$.


Recall that a \textit{translation surface} is a pair $(X,\omega)$ of a Riemann surface $X$ equipped with a non-zero abelian differential $\omega$, i.e. a holomorphic one-form. If $X$ is of genus $g$, then $\omega$ has $2g-2$ zeros, counting multiplicity. Let $\Sigma$ be the collection of these zeros. Then away from $\Sigma$, $X$ has a Euclidean structure: in other words, $X \setminus \Sigma$ admits an atlas of charts to $\bbC$ whose transition functions are translations. In the neighborhood of a zero $p$ of order $k$, $X$ has the structure of $2(k+1)$ metric half-discs glued together, so that the total angle about $p$ is $2\pi(k+1)$.


Fixing $g>1$, let $\mu$ be a positive integer partition of $2g-2$. We think of $\mu$ as describing the multiplicities of the zeros of an abelian differential $\omega$ on $X$. The \textit{stratum} $\calH(\mu)$ is the collection of genus $g$ translations surfaces with zero orders specified by $\mu$.

Given some $(X,\omega) \in \calH(\mu)$ and $A \in \SL_2(\bbR)$, define $A \cdot (X,\omega)$ to be the translation surface obtained by post-composing the charts of $(X, \omega)$ into $\bbR^2 \cong \bbC$ by $A$.

The \textit{Veech group} of a translation surface $(X,\omega)$, denoted $\SL(X, \omega)$, is the stabilizer of $(X, \omega)$ under the action by $\SL_2(\bbR)$. The \textit{trace field} of $(X,\omega)$ is the field $K$ obtained by adjoining to $\bbQ$ the traces of all elements of $\SL(X,\omega)$. By a result of M\"oller in \cite{Mo2}, the degree of the trace field satisfies

\[
[K:\bbQ] \leq g(X)
\]

If the degree of this extension is maximal, i.e. is equal to the genus of $X$, then we say that $(X, \omega)$ is \textit{algebraically primitive}. One remarks that such surfaces cannot arise as covers of translation surfaces of lower genus: if $\pi: (X, \omega) \to (Y, \eta)$ is a translation cover, then the trace fields of $X$ and $Y$ coincide, and the result of M\"oller mentioned above now shows that if $X$ is algebraically primitive, then $g(X)=g(Y)$.

A translation surface is called \textit{Veech} if its Veech group is a lattice in $\SL_2(\bbR)$, i.e. as large as possible. The $\GL_2(\bbR)$-orbit of a Veech surface is called a \textit{Teichm\"uller curve}. By results of M\"oller \cite{Mo1} and Apisa \cite{A}, there are only finitely many algebraically primitive Teichm\"uller curves in any genus $g \geq 3$. Therefore, it is interesting to find algebraically primitive surfaces with non-trivial Veech group of arbitrarily high genus.

We now restate our last main result.

\begin{mainthm}
For each $g \geq 1$ define $f_g: I \to I$ to be the bimodal PCP zig-zag map of pseudo-Anosov type corresponding to $r_g=\tfrac{1}{2g} \in \bbQ \cap (0,1)$. Let $\lambda_g$ be the growth rate of $f_g$. Then the following are true for each $g \geq 1$.

\begin{enumerate}
\item $\lambda_g$ is a Salem number of degree $2g$.
\item The pseudo-Anosov $\phi_g$ obtained from $f_g$ is defined on a $(2g+2)$-punctured sphere $\Sigma_{0, 2g+2}$.
\item The translation surface $(X_g, \omega_g)$ obtained as the hyperelliptic double cover of $\Sigma_{0, 2g+2}$ is of genus $g$, and hence algebraically primitive.
\end{enumerate}
\end{mainthm}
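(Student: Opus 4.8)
The plan is to reduce part (1) to an explicit polynomial identity coming from Theorem \ref{Perm}, verify the Salem shape by an elementary trigonometric count, and settle irreducibility by a root-of-unity argument; parts (2) and (3) are then combinatorial bookkeeping together with Riemann--Hurwitz and M\"oller's bound.

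\emph{Step 1 (the candidate minimal polynomial).} Since $f_g\in\PA(2)$ with $\Phi(f_g)=\tfrac1{2g}$, Theorem \ref{Perm} applies with $m=2$, $a=1$, $b=2g$: the line $L(t)=t/(2g)$ meets $\bbN$ on $[i-1,i]$ only for $i=1$ and $i=2g$, so $c_1=c_{2g}=2$ and the remaining $c_i$ vanish. Hence
\[
D_{f_g}(t)=t^{2g+1}-2t^{2g}-2t+1=(t+1)\,q_g(t),\qquad q_g(t)=t^{2g}-3t^{2g-1}+3t^{2g-2}-\cdots+3t^2-3t+1 .
\]
Since $\lambda_g$ is the growth rate of $f_g$, its minimal polynomial divides $D_{f_g}$ but not $(t+1)$ (as $q_g(-1)=6g-1\neq 0$), so $\lambda_g$ is a root of $q_g$. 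The sign pattern $q_g(0)>0$, $q_g(1)=q_g(2)=-1<0$, $q_g(3)=(3^{2g}-5)/4>0$ produces a real root of $q_g$ in $(0,1)$ and one in $(2,3)$; by reciprocity of $q_g$ these two are reciprocal, and the one in $(2,3)$ is $\lambda_g$.

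\emph{Step 2 (Salem shape and irreducibility --- the crux).} On $|t|=1$, writing $t=e^{i\theta}$ and dividing by $e^{i(g+\frac12)\theta}$ turns $D_{f_g}(t)=0$ into $\cos((g+\tfrac12)\theta)=2\cos((g-\tfrac12)\theta)$, equivalently $\cot((g-\tfrac12)\theta)=\sin\theta/(\cos\theta-2)$. The right side is continuous and bounded on $[0,2\pi]$, while the left side decreases strictly from $+\infty$ to $-\infty$ on each of the $2g-1$ intervals between its poles $\theta=2k\pi/(2g-1)$; hence there is at least one solution in each, one of them being $\theta=\pi$ (the factor $t+1$). So $q_g$ has at least $2g-2$ unit-circle roots; together with the two real off-circle roots $\lambda_g^{\pm1}$ and $\deg q_g=2g$, it has \emph{exactly} $2g-2$ roots on the circle and exactly two off it. To conclude $q_g$ is the minimal polynomial of $\lambda_g$ (so $\lambda_g$ is Salem of degree $2g$) it remains to rule out cyclotomic factors: a monic integer factor with all roots on the circle is a product of cyclotomics by Kronecker, and a monic integer factor carrying exactly one of $\lambda_g^{\pm1}$ violates the constant-term (product-of-roots) bound, so $q_g$ is irreducible unless some root of unity $\zeta$ of order $N$ has $D_{f_g}(\zeta)=0$. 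In that case $\zeta^{2g}(\zeta-2)=2\zeta-1=2(\zeta-2)+3$ forces $(\zeta-2)\mid 3$ in $\overline{\bbZ}$, so $\Phi_N(2)=\pm N_{\bbQ(\zeta)/\bbQ}(\zeta-2)$ is a power of $3$; Bang's theorem on primitive prime divisors of $2^N-1$ leaves only $N\in\{1,2,6\}$, and direct substitution ($\zeta=1$: $D_{f_g}(1)=-2$; $\zeta=-1$: killed by $(t+1)$; $\zeta$ a primitive sixth root, where $\tfrac{2\zeta-1}{\zeta-2}=\zeta^{-1}$ forces the impossible $6\mid 2g+1$) excludes all three. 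Thus $q_g$ is irreducible, proving (1); its companion polynomial, of degree $g$, is then also irreducible, giving $[\bbQ(\lambda_g+\lambda_g^{-1}):\bbQ]=g$. I expect this irreducibility step to be the main obstacle: no single Eisenstein prime or congruence handles all $g$, and the argument above instead leverages the arithmetic of $\Phi_N(2)$.

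\emph{Step 3 (the surface and algebraic primitivity).} Here $f_g$ is the positive bimodal zig-zag with $f_g(c_1)=1$ periodic of period $n=2g+1$ and $f_g(c_2)=0$ fixed, so $\PC(f_g)$ has $(2g+1)+1=2g+2$ points. By Theorem \ref{PCP} and the structure of $\tau_L$, $\phi_g$ is a pseudo-Anosov of $S^2$ with one one-pronged singularity over each of these $2g+2$ points and, by Euler--Poincar\'e, one further singularity of valence $2g$ (regular when $g=1$) on the horizontal boundary; puncturing at the $2g+2$ one-pronged points yields $\phi_g\colon\Sigma_{0,2g+2}\to\Sigma_{0,2g+2}$ with dilatation $\lambda_g$, which is (2). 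The invariant quadratic differential then has a simple pole at each of the $2g+2$ punctures and a zero of order $2g-2$ at the remaining point, so the odd-order singularities are exactly the $2g+2$ punctures; hence the square root of this quadratic differential is single-valued on the hyperelliptic double cover $X_g\to\Sigma_{0,2g+2}$ branched over those $2g+2$ points, yielding a holomorphic one-form $\omega_g$. Riemann--Hurwitz gives $2-2g(X_g)=2\cdot 2-(2g+2)$, so $g(X_g)=g$ and $\omega_g$ has two zeros of order $g-1$. Finally $\phi_g$ lifts to an affine automorphism of $(X_g,\omega_g)$ with derivative of trace $\lambda_g+\lambda_g^{-1}$, so the trace field of $(X_g,\omega_g)$ contains $\bbQ(\lambda_g+\lambda_g^{-1})$, of degree $g$ by Step 2; since M\"oller's bound forces the trace-field degree to be at most $g(X_g)=g$, equality holds and $(X_g,\omega_g)$ is algebraically primitive, proving (3).
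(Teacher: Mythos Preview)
Your proof is correct, and for part (1) it takes a genuinely different route from the paper. The paper proves the Salem shape by passing to the companion polynomial $q_g(w)$ of $d_g(t)=t^g q_g(t+t^{-1})$, establishing a three-term recurrence $q_{g+2}(w)=wq_{g+1}(w)-q_g(w)$, and then running an interlacing argument to place $g-1$ roots of $q_g$ in the critical interval $(-2,2)$. You instead work directly on the unit circle, reducing $D_{f_g}(e^{i\theta})=0$ to $\cot\bigl((g-\tfrac12)\theta\bigr)=\sin\theta/(\cos\theta-2)$ and counting crossings between a bounded continuous function and a function with $2g-1$ pole-separated monotone branches. For irreducibility the contrast is sharper: the paper devotes its longest lemma to a case-by-case trigonometric analysis of equations (\ref{Salem4b})--(\ref{Salem4d}) and a quadrant argument on $\varphi=(2g-1)\pi/m$, whereas you observe that $D_{f_g}(\zeta)=0$ forces $(\zeta-2)(\zeta^{2g}-2)=3$, so $\Phi_N(2)\mid 3^{\phi(N)}$, and then invoke Bang's theorem on primitive prime divisors of $2^N-1$ to pin down $N\in\{1,2,6\}$. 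Your argument is shorter and more structural; the paper's has the virtue of being entirely self-contained with no appeal to Zsygmondy-type results. For parts (2) and (3) your argument is essentially the paper's, though you make the quadratic-differential bookkeeping (orders $-1$ at the $2g+2$ punctures, order $2g-2$ at infinity, holonomy trivialised on the branched double cover) more explicit than the paper does.
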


\begin{rmk}
One might reasonably object that there is no such thing as a Salem number of degree $2$. Indeed, Salem numbers are normally defined to have at least one Galois conjugate on the unit circle, in which case all Salem numbers must be of degree at least four: if $\lambda$ is a Salem number and $\alpha \in S^1$ a Galois conjugate, then $\overline{\alpha}=\alpha^{-1}$ is also a conjugate of $\lambda$, and hence so is $\lambda^{-1}$. However, one may think of the quadratic units $\lambda>1$ as degenerate Salem numbers. That is what we choose to do here.
\end{rmk}


\subsection{$\lambda_g$ is Salem of degree $2g$}


As before, the proof of Theorem \ref{Salem} will proceed in a sequence of lemmas. The bulk of our efforts will be focused on establishing statement (1) of Theorem \ref{Salem}.

\begin{lem}\label{Salem1}
Let $D_g(t)$ be the digit polynomial of $f_g$. Then for all $g \geq 1$ we have

\[
D_g(t)=t^{2g+1}-2t^{2g}-2t+1=(t+1)d_g(t), \hspace{5mm} \text{where} \ d_g(t)=t^{2g}+1+3 \sum_{i=1}^{2g-1}(-1)^it^i
\]
\end{lem}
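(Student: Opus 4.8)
The plan is to deduce Lemma \ref{Salem1} directly from Theorem \ref{Perm}. By definition $f_g$ is the bimodal PCP zig-zag of pseudo-Anosov type with $\Phi(f_g)=\tfrac{1}{2g}$; this map exists and is unique by Theorem \ref{bijection}, so $f_g\in\PA(2)$ and Theorem \ref{Perm} applies with $m=2$ and $\tfrac{a}{b}=\tfrac{1}{2g}$ (already in lowest terms, so $a=1$, $b=2g$). This yields
\[
D_g(t)=t^{2g+1}+1-\sum_{i=1}^{2g}c_i\,t^{2g+1-i},\qquad
c_i=\begin{cases} 2 & \text{if } L(t)\in\bbN \text{ for some } t\in[i-1,i],\\ 0 & \text{otherwise,}\end{cases}
\]
where $L(t)=\tfrac{t}{2g}$.

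First I would pin down which coefficients are nonzero. The value $L(t)=\tfrac{t}{2g}$ is a non-negative integer exactly when $t$ is an integer multiple of $2g$, and the only such multiples lying in $[0,2g]$ are $t=0$ and $t=2g$. Hence, among $1\le i\le 2g$, the interval $[i-1,i]$ contains a multiple of $2g$ precisely for $i=1$ (through $t=0$) and $i=2g$ (through $t=2g$). Therefore $c_1=c_{2g}=2$ and $c_i=0$ for $2\le i\le 2g-1$, which gives at once
\[
D_g(t)=t^{2g+1}+1-2t^{2g}-2t=t^{2g+1}-2t^{2g}-2t+1,
\]
the asserted closed form. (This is the same pattern confirmed by the worked examples with $\Phi=\tfrac17$ above, now with $7$ replaced by $2g$.)

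It remains to split off the factor $(t+1)$. Since $D_g(-1)=(-1)^{2g+1}-2(-1)^{2g}-2(-1)+1=-1-2+2+1=0$, we have $(t+1)\mid D_g(t)$, so $D_g(t)=(t+1)d_g(t)$ for a monic $d_g\in\bbZ[t]$ of degree $2g$. To identify $d_g$ I would verify the product directly: writing $S(t)=\sum_{i=1}^{2g-1}(-1)^i t^i$, the telescoping identity $(t+1)S(t)=tS(t)+S(t)=-t-t^{2g}$ (all interior powers cancel because the coefficient of $t^j$ is $(-1)^j$ from $S$ and $(-1)^{j-1}$ from $tS$ for $2\le j\le 2g-1$) gives
\[
(t+1)\bigl(t^{2g}+1+3S(t)\bigr)=t^{2g+1}+t^{2g}+t+1-3t-3t^{2g}=t^{2g+1}-2t^{2g}-2t+1=D_g(t),
\]
so $d_g(t)=t^{2g}+1+3\sum_{i=1}^{2g-1}(-1)^i t^i$, as claimed. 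I expect no real obstacle here: the only points requiring care are the identification of the two nonzero coefficients $c_1,c_{2g}$ and the telescoping cancellation in the last display. The degenerate case $g=1$ is covered as well — there $d_1(t)=t^2-3t+1$, whose dominant root $(3+\sqrt5)/2$ is precisely the quadratic unit playing the role of a ``degenerate Salem number'' noted in the remark preceding this subsection.
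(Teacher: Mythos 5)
Your proposal is correct and follows essentially the same route as the paper: the closed form $D_g(t)=t^{2g+1}-2t^{2g}-2t+1$ is read off from Theorem \ref{Perm} with $m=2$, $\tfrac{a}{b}=\tfrac{1}{2g}$, and the factorization $(t+1)d_g(t)=D_g(t)$ is verified by direct computation. You simply spell out the two steps (identifying $c_1=c_{2g}=2$, $c_i=0$ otherwise, and the telescoping check) that the paper leaves to the reader.
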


Thus for example $D_1(t)=(t+1)(t^2-3t+1)$ and $D_2(t)=(t+1)(t^4-3t^3+3t^2-3t+1)$. 

\begin{proof}
The fact that $D_g(t)=t^{2g+1}-2t^g-2t+1$ follows from Theorem \ref{Perm}. One readily checks that $(t+1)d_g(t)=D_g(t)$. 
\end{proof}

We wish to show that $d_g(t)$ is the minimal polynomial of a Salem number. To do this, we must prove that $d_g(t)$ has $2g-2$ roots on the unit circle, and also that the polynomial is irreducible. As we have seen, to accomplish the first task it will be enough to show that the companion polynomial $q_g(t)$, defined by $d_g(t)=t^gq_g(t+t^{-1})$, has $g-1$ roots in the critical interval. This is the content of the next two lemmas.

\begin{table}
\centering
\begin{tabular}{| l |}
\hline
$q_2(w)=w^2-3w+1$\\
\hline
$q_3(w)=w^3-3w^2+3$\\
\hline
$q_4(w)=w^4-3w^3-w^2+6w-1$\\
\hline
$q_5(w)=w^5-3w^4-2w^3+9w^2-w-3$\\
\hline
\end{tabular}
\caption{The first few companion polynomials $q_g$}
\label{tab1}
\end{table}

\begin{lem}\label{Salem2}
The companion polynomials $q_g(w)$ satisfy the following properties.

\begin{enumerate}
\item For all $g \geq 4$ w have the recurrence relation

\begin{equation}\label{rec}
q_{g+2}(w)=wq_{g+1}(w)-q_g(w)
\end{equation}

\item $q_g(2)=-1$ for all $n$. 
\item Each $q_g$ has a real root $\alpha_g>2$.
\item For all $g \geq 2$ we have $(-1)^gq_g(-2)>0$. In particular, $q_g(-2)$ and $q_{g+1}(-2)$ have opposite signs.
\end{enumerate}
\end{lem}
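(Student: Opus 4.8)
The plan is to deduce all four statements from the single identity $d_g(t) = t^g q_g(t + t^{-1})$ of Lemma~\ref{Salem1}, together with the closed form $d_g(t) = t^{2g} + 1 + 3\sum_{i=1}^{2g-1}(-1)^i t^i$. Note first that $d_g$ is reciprocal of degree $2g$ with leading coefficient $1$, so the substitution is legitimate and $q_g$ is monic of degree $g$; this already pins down the shape of $q_g$ used below. The two fibers of $t \mapsto t + t^{-1}$ that matter are $t = 1$ (lying over $w = 2$) and $t = -1$ (lying over $w = -2$), and evaluating $d_g$ at these two points will handle parts (2) and (4); the intermediate value theorem will then give part (3); and part (1) comes from pushing a three-term recurrence for $d_g$ through the substitution.

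For part (1): I would first establish, by telescoping the defining sum, that $d_{g+1}(t) - d_g(t) = t^{2g+2} - 3t^{2g+1} + 2t^{2g} = t^{2g}(t-1)(t-2)$ --- the constant terms and the monomials $3(-1)^i t^i$ with $1 \le i \le 2g-1$ cancel between consecutive indices, leaving only the boundary terms. Replacing $g$ by $g+1$ gives $d_{g+2}(t) - d_{g+1}(t) = t^2\big(d_{g+1}(t) - d_g(t)\big)$, hence $d_{g+2}(t) = (1+t^2)d_{g+1}(t) - t^2 d_g(t)$. Writing $d_j(t) = t^j q_j(w)$ with $w = t + t^{-1}$ and dividing through by $t^{g+2}$ converts this into $q_{g+2}(w) = (t + t^{-1})q_{g+1}(w) - q_g(w) = w\,q_{g+1}(w) - q_g(w)$, an identity of polynomials in $w$. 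The derivation in fact yields this for every $g \ge 1$, which covers the asserted range $g \ge 4$; I would also check it against $q_2, q_3, q_4, q_5$ in Table~\ref{tab1} as a sanity test.

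For parts (2)--(4): Setting $t = 1$ in the identity gives $q_g(2) = d_g(1)$, and $d_g(1) = 2 + 3\sum_{i=1}^{2g-1}(-1)^i = 2 - 3 = -1$ (equivalently $D_g(1) = 2 d_g(1) = -2$), which is (2). For (3), $q_g$ is monic of positive degree, so $q_g(w) \to +\infty$ as $w \to +\infty$; combined with $q_g(2) = -1 < 0$, the intermediate value theorem yields a real root $\alpha_g > 2$. For (4), setting $t = -1$ gives $d_g(-1) = (-1)^g q_g(-2)$, while directly $d_g(-1) = 1 + 1 + 3\sum_{i=1}^{2g-1} 1 = 6g - 1 > 0$; hence $(-1)^g q_g(-2) = 6g - 1 > 0$ for all $g \ge 1$, and since $(-1)^g$ and $(-1)^{g+1}$ have opposite signs, so do $q_g(-2)$ and $q_{g+1}(-2)$.

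I do not expect any real obstacle here: the lemma is a bookkeeping step (the substantive work --- irreducibility of $d_g$ and the location of its roots --- comes afterward). The only computations needing care are the telescoping identity for $d_{g+1} - d_g$ and the tracking of powers of $t$ when passing from the $d$-recurrence to the $q$-recurrence, and both of these I would double-check numerically against the tabulated $q_g$.
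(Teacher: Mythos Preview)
Your proposal is correct and follows essentially the same route as the paper: both derive the $q$-recurrence from the equivalent identity $d_{g+2}(t)=(1+t^2)d_{g+1}(t)-t^2 d_g(t)$ on the $d$-side, and both handle (3) and (4) exactly as you do via monicity, the intermediate value theorem, and the evaluation $d_g(-1)=6g-1$. The only small deviation is in (2): the paper proves $q_g(2)=-1$ by induction using the recurrence from (1), whereas you obtain it directly from $q_g(2)=d_g(1)=-1$; your argument is slightly cleaner since it is independent of (1).
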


\begin{proof}
By definition, $q_g$ satisfies $q_g(t+t^{-1})=t^{-g}d_g(t)$, so relation (\ref{rec}) is equivalent to the recurrence

\[
d_{g+2}(t)=t^2 \left [ d_{g+1}(t)-d_g(t) \right ] + d_{g+1}(t)
\]

\noindent This formula is a straightforward consequence of the pattern of the coefficients of $d_g$, proving statement (1). Statement (2) follows inductively after noting that it holds for $q_2$ and $q_3$. Now note $q_2$ and $q_3$ are monic, so by equation (\ref{rec}) $q_g$ is monic for all $g$. In particular, $\lim_{w \to \infty} q_g(w)=\infty$ for all $g$, so the intermediate value theorem and statement (2) together imply statement (3).

Finally, to prove statement 4 observe that 

\[
d_g(-1)=1+3(2g-1)+1=6g-1>0
\]

\noindent for all $g \geq 2$. Since $d_g(-1)=(-1)^gq_g(-2)$, the result follows.
\end{proof}

\begin{lem}\label{Salem3}
For each $g \geq 2$ the polynomial $q_g$ has $g-1$ roots in $(-2,2)$. Moreover, if these roots are denoted $-2<a_1<\cdots<a_{g-1}<2$, then the $g$ roots of $q_{g+1}$ in $(-2,2)$, denoted $b_1, \ldots, b_g$, satisfy the ordering

\[
-2<b_1<a_1<b_2<a_2<\cdots<b_{g-1}<a_{g-1}<b_g<2
\]

\begin{proof}
We proceed inductively. The quadratic polynomial $q_2$ has a single root $a_1=\frac{3-\sqrt{5}}{2}$ in $(-2,2)$. By Lemma \ref{Salem3}(3), this is in fact the only root of $q_2$ in the critical interval. Since $q_3(-1)=-1$, $q_3(0)=3$, $q_3(1)=1$, and $q_3(2)=-1$, we see that $q_3$ has two roots $b_1, b_2 \in (-2,2)$ satisfying $-1<b_2<0$ and $1<b_2<2$. Since $0<a_1<1$, the claim is satisfied in this case.

Suppose now that the claim holds for all $n \leq g+1$. We may assume without loss of generality that $q_{g+1}(-2)<0$: the other case is essentially identical. Lemma \ref{Salem2}(4) implies that both $q_g$ and $q_{g+2}$ are positive at $w=-2$. Let $b_1<\ldots<b_g$ be the roots of $q_{g+1}$ in the critical interval. Then we have

\[
q_{g+2}(b_1)=b_1q_{g+1}(b_1)-q_g(b_1)=-q_g(b_1)
\]

\noindent By assumption, the smallest root $a_1$ of $q_g$ in $(-2,2)$ is greater than $b_1$. Since $q_g(-2)>0$, it follows that $q_g(b_1)>0$ and thus $q_{g+2}(b_1)<0$. Therefore $q_{g+2}$ has a root $c_1 \in (-2, b_1)$. Next we observe that 

\[
q_{g+2}(b_2)=b_2q_{g+1}(b_2)-q_g(b_2)=-q_g(b_2)
\]

\noindent Since $a_1<b_2<a_2$, we see that $q_g(b_2)<0$, so $q_{g+2}(b_2)>0$, implying that $q_{g+2}$ has a root $c_2 \in (b_1, b_2)$. Continuing in this fashion we find roots $c_i \in (b_{i-1}, b_i)$ for $2 \leq i \leq g$. Finally, note that since $q_g(2)=-1$ we have $q_g(w)<0$ for all $w \in (a_{g-1}, -2]$. Therefore, since $a_{g-1}<b_g<2$ we have 

\[
q_{g+2}(b_g)=-q_g(b_g)>0
\]

\noindent Since $q_{g+2}(2)=-1$, it follows that $q_{g+2}$ has a root $c_{g+1} \in (b_g,2)$. Lemma \ref{Salem2}(4) implies that $q_{g+2}$ has another root $\alpha_{g+2}>2$, so there cannot be any other roots of $q_{g+2}$. The proof is complete.
\end{proof}
\end{lem}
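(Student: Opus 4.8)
The plan is to prove, by strong induction on $g\ge 2$, the joint statement $P(g)$: \emph{$q_g$ has exactly $g-1$ roots in $(-2,2)$, namely simple roots $a_1<\cdots<a_{g-1}$; $q_{g+1}$ has exactly $g$ simple roots $b_1<\cdots<b_g$ in $(-2,2)$; and they strictly interlace as $-2<b_1<a_1<b_2<\cdots<b_{g-1}<a_{g-1}<b_g<2$.} The two tools are the three--term recurrence $q_{g+2}(w)=wq_{g+1}(w)-q_g(w)$ from Lemma~\ref{Salem2}(1), which one verifies from Table~\ref{tab1} also holds for $g=2,3$, and the boundary data from Lemma~\ref{Salem2}(2)--(4): $q_g(2)=-1$, $(-1)^gq_g(-2)>0$, and the existence of a real root $\alpha_g>2$. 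A basic observation used throughout: $q_g$ is monic of degree $g$, so as soon as $P(g)$ supplies $g-1$ roots of $q_g$ in $(-2,2)$, together with $\alpha_g>2$ these are \emph{all} $g$ roots; in particular $q_g$ has no root $\le -2$ and no real root other than these.

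For the base case $P(2)$: $q_2(w)=w^2-3w+1$ has roots $\tfrac{3\pm\sqrt5}{2}$, of which $a_1=\tfrac{3-\sqrt5}{2}\in(0,1)$ lies in $(-2,2)$ while $\tfrac{3+\sqrt5}{2}>2$; and evaluating $q_3(w)=w^3-3w^2+3$ at $-2,-1,0,1,2$ gives signs $-,-,+,+,-$, so by the intermediate value theorem $q_3$ has roots $b_1\in(-1,0)$ and $b_2\in(1,2)$, which together with $\alpha_3>2$ are all of its roots. Then $b_1<0<a_1<1<b_2$ yields $-2<b_1<a_1<b_2<2$, establishing $P(2)$.

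For the inductive step, assume $P(g)$; I claim $P(g+1)$ follows, and it suffices to locate $g+1$ simple roots of $q_{g+2}$ in $(-2,2)$ interlacing the $b_j$. The recurrence evaluated at a root $b_j$ of $q_{g+1}$ collapses to $q_{g+2}(b_j)=-q_g(b_j)$. By $P(g)$, $q_g$ has exactly one simple root $a_j$ in each $(b_j,b_{j+1})$ and none in $(-2,b_1)$ or $(b_g,2)$; hence $\mathrm{sign}\,q_g(b_j)$ alternates in $j$, with $\mathrm{sign}\,q_g(b_1)=\mathrm{sign}\,q_g(-2)$ equal to the sign of $(-1)^g$ and $\mathrm{sign}\,q_g(b_g)=\mathrm{sign}\,q_g(2)=-1$. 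Therefore $\mathrm{sign}\,q_{g+2}(b_j)=-\mathrm{sign}\,q_g(b_j)$ also alternates, so $q_{g+2}$ changes sign, hence has a root, in each $(b_j,b_{j+1})$, $j=1,\dots,g-1$. At the left end, Lemma~\ref{Salem2}(4) makes $\mathrm{sign}\,q_{g+2}(-2)$ the sign of $(-1)^{g+2}=(-1)^g$, opposite to $\mathrm{sign}\,q_{g+2}(b_1)=-\mathrm{sign}\,q_g(-2)$, giving a root $c_1\in(-2,b_1)$; at the right end, $q_{g+2}(b_g)=-q_g(b_g)>0$ while $q_{g+2}(2)=-1<0$, giving a root in $(b_g,2)$. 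This is $g+1$ roots of $q_{g+2}$, in disjoint open intervals and none equal to any $b_i$ (since $q_{g+2}(b_i)=-q_g(b_i)\ne0$); since $q_{g+2}$ is monic of degree $g+2$ with the extra root $\alpha_{g+2}>2$ of Lemma~\ref{Salem2}(3), these are all of its roots in $(-2,2)$ and all simple, and relabelling them $c_1<\cdots<c_{g+1}$ gives precisely $-2<c_1<b_1<c_2<\cdots<b_g<c_{g+1}<2$, i.e.\ $P(g+1)$.

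The step needing the most care is the sign bookkeeping in the inductive step: correctly tracking the parity of $\mathrm{sign}\,q_g(b_j)$, reconciling it at both endpoints with Lemma~\ref{Salem2}(2) and (4), and using the degree count to guarantee no spurious real roots of $q_g$ or $q_{g+2}$ outside $(-2,2)$ beyond the single root $\alpha>2$. A cleaner but more structural alternative would be to recognize $(q_g,q_{g-1},\dots,q_2)$ as a Sturm chain on $(-2,2)$: consecutive members never share a zero (a common zero of $q_g,q_{g+1}$ descends via the recurrence to one of $q_2,q_3$, which are coprime since $\gcd(w^2-3w+1,\,w^3-3w^2+3)=1$), and the chain's boundary behavior is pinned by Lemma~\ref{Salem2}; the interlacing then follows from the standard Sturm / orthogonal-polynomial separation theorem.
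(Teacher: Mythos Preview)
Your proof is correct and follows essentially the same approach as the paper: induction with the base case $g=2$ handled by direct evaluation, and the inductive step by evaluating the recurrence $q_{g+2}(b_j)=-q_g(b_j)$ at the roots of $q_{g+1}$, tracking the alternating signs via the interlacing hypothesis and the boundary values from Lemma~\ref{Salem2}. Your version is in fact slightly more careful---you note that the recurrence, stated in Lemma~\ref{Salem2}(1) only for $g\ge 4$, must be checked separately for $g=2,3$, and you make the degree-counting argument explicit---and the closing remark about the Sturm-chain interpretation is a nice structural gloss, but the underlying argument is the same.
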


\begin{figure}
\centering
\includegraphics[scale=.5]{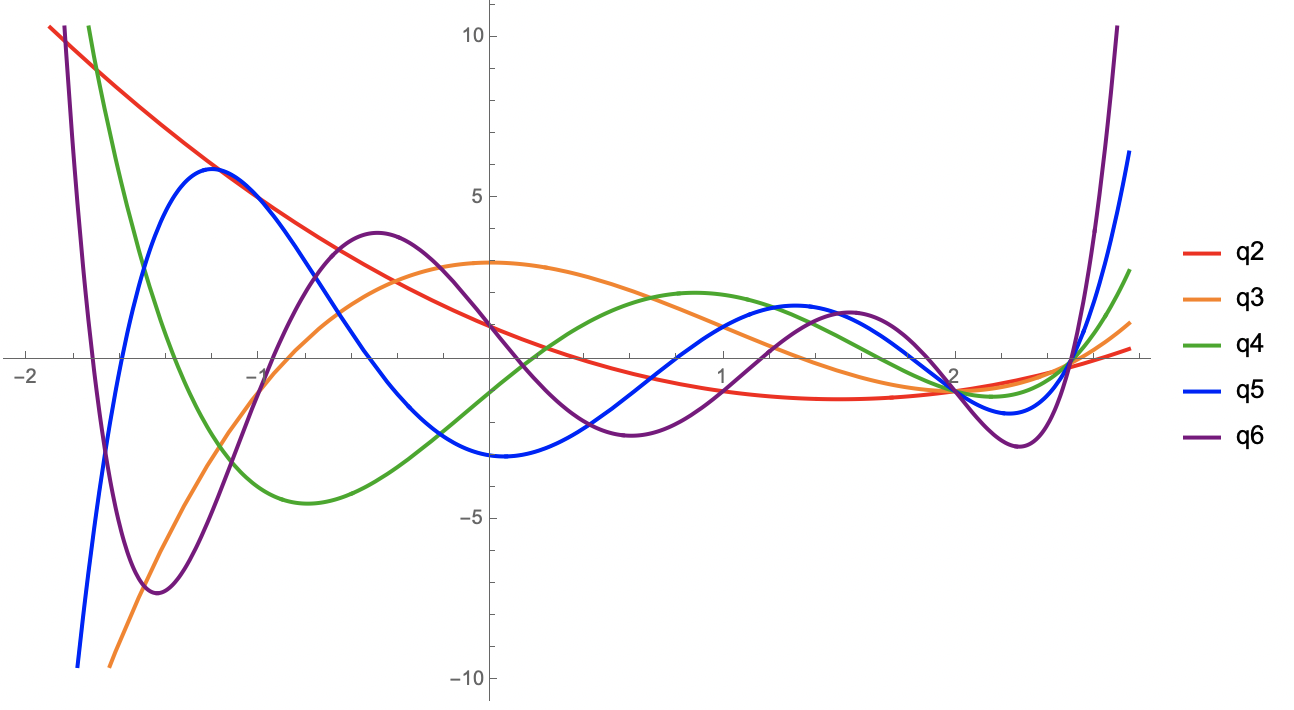}
\caption{The interlacing property of the roots of the companion polynomials $q_g$.}\label{fig:interlace}
\end{figure}

Lemma \ref{Salem3} implies that $d_g$ has $g-1$ pairs of roots on the unit circle in addition to a pair of positive real roots $\lambda_g$ and $\lambda_g^{-1}$. This is already enough to conclude that $\lambda_g$ is a Salem number. Indeed, $\lambda_g$ must be Galois conjugate to $\lambda_g^{-1}$, since otherwise the conjugates of $\lambda_g^{-1}$ would be contained in the closed unit disc, implying that they are all roots of unity, by Kronecker's theorem. This last is impossible, since $\lambda_g^{-1}<1$. Hence $\lambda_g$ and $\lambda_g^{-1}$ are Galois conjugate and $\lambda_g$ is a Salem number. It remains to determine whether $d_g$ is irreducible.

We follow a similar proof given by Shin in \cite{Sh}. Interestingly, this proof shows that the ``dual" Perron roots $\psi_g$ corresponding to the fraction $\tilde{r}_g = 1-r_g=\tfrac{2g-1}{2g}$ are also Salem of degree $2g$. 

\begin{lem}\label{Salem4}
Each $d_g(t)$ is irreducible over $\bbZ[t]$. Consequently, $\lambda_g$ is a Salem number of degree $2g$ for all $g \geq 1$.
\end{lem}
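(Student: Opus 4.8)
The plan is to prove the only remaining assertion, namely that each $d_g(t)$ is irreducible over $\bbZ[t]$; the degree statement then follows since $\lambda_g$ has already been shown to be a Salem number, so its minimal polynomial $m_g(t)$ is a reciprocal divisor of $d_g$, and any irreducible factor of $d_g(t)/m_g(t)$ has all of its roots on the unit circle and hence (Kronecker's theorem) is a cyclotomic polynomial $\Phi_n$. Thus it suffices to show $\Phi_n \nmid d_g$ for every $n$. The case $g=1$ is trivial as $d_1(t)=t^2-3t+1$ is irreducible, so assume $g\ge 2$. I would first dispose of the small indices: $d_g(1)=-1$ and $d_g(-1)=6g-1$ are nonzero, ruling out $\Phi_1$ and $\Phi_2$; and $\Phi_{p^s}(1)=p$ for a prime power $p^s$ cannot divide $d_g(1)=-1$, so no cyclotomic polynomial of prime-power index divides $d_g$. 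Hence a hypothetical bad index $n$ has at least two distinct prime factors.

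The core of the argument is a pair of reductions modulo $2$ and modulo $3$, available precisely because every interior coefficient of $d_g$ equals $\pm 3$. Modulo $2$ one has $d_g(t)\equiv (t^{2g+1}-1)/(t-1)$, whose roots in $\overline{\bbF_2}$ are the nontrivial $(2g+1)$-th roots of unity; since $\Phi_n\mid d_g$ forces $\overline{\Phi_n}$ to have a root whose multiplicative order is the odd part $n_0$ of $n$, we get $n_0\mid 2g+1$. Modulo $3$ one has $d_g(t)\equiv t^{2g}+1$, and the same ``shared root'' reasoning produces $\xi\in\overline{\bbF_3}$ of order $n_0'$ (the prime-to-$3$ part of $n$) with $\xi^{2g}=-1$, so $n_0'\mid 4g$ but $n_0'\nmid 2g$.

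Combining these, write $n=2^a3^bk$ with $\gcd(k,6)=1$: from the two divisibilities, $k$ divides both $2g+1$ and $4g$, and $k$ is odd, hence $k\mid\gcd(2g+1,g)=1$, so $k=1$ and $n=2^a3^b$. Since $n$ is not a prime power we have $a\ge 1$ and $b\ge 1$; moreover $b\ge 1$ forces $3\mid 2g+1$, hence $3\nmid g$, and the mod-$3$ constraint then pins down $a=v_2(g)+2\ge 2$. The remaining and, I expect, most delicate case is therefore $n=2^a3^b$ with $a\ge 2$, $b\ge 1$. Here I would pass to $D_g(t)=(t+1)d_g(t)=t^{2g+1}-2t^{2g}-2t+1$, which is legitimate since a primitive $n$-th root of unity is never $-1$, so $\Phi_n\mid d_g\iff\Phi_n\mid D_g$. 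If $\zeta$ is such a root with $D_g(\zeta)=0$, then $\zeta^{2g}=(2\zeta-1)/(\zeta-2)$; a short order computation (using $a=v_2(g)+2$ and $3\nmid g$) shows $\zeta^{2g}$ is a primitive $(2\cdot 3^b)$-th root of unity, hence $\zeta^{2g}=-\omega$ for some primitive $3^b$-th root of unity $\omega$. Solving the Möbius relation for $\zeta$ gives $\zeta=(2\omega+1)/(2+\omega)\in\bbQ(\omega)$, so $\bbQ(\zeta)\subseteq\bbQ(\omega)$; but $[\bbQ(\zeta):\bbQ]=\varphi(2^a3^b)=2^{a-1}\varphi(3^b)>\varphi(3^b)=[\bbQ(\omega):\bbQ]$ because $a\ge 2$, a contradiction.

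This shows $d_g$ has no cyclotomic factor, hence is irreducible, completing the proof of Lemma~\ref{Salem4} and of statement (1) of Theorem~\ref{Salem}. The same bookkeeping — the small-index checks, the reductions mod $2$ and mod $3$, and the cyclotomic-field degree argument in the endgame — applies verbatim to the digit polynomial of the dual fraction $\tilde r_g=1-r_g$, giving that the dual Perron root $\psi_g$ is likewise a Salem number of degree $2g$; I would remark that in the dual case the mod-$3$ reduction produces $(t^{2g+2}-1)/(t^2-1)$, so that the odd part of a bad index divides $\gcd(2g+1,2g+2)=1$ and the argument closes even more quickly.
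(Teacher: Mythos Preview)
Your argument is correct and takes a genuinely different route from the paper's proof. The paper proceeds trigonometrically: assuming a primitive $m$-th root of unity $e^{2\pi i/m}$ satisfies $D_g(e^{2\pi i/m})=0$, it separates real and imaginary parts, applies sum-to-product identities to reduce to
\[
\cos\!\left(\frac{(2g+1)\pi}{m}\right)-2\cos\!\left(\frac{(2g-1)\pi}{m}\right)=0,
\]
and then carries out a somewhat delicate case analysis on the quadrant of $\varphi=(2g-1)\pi/m$, together with subadditivity of $\sin$ on $[0,\pi]$, to reach a contradiction. Your proof is purely algebraic: the observation that the interior coefficients of $d_g$ are all $\pm 3$ lets you reduce modulo~$2$ and modulo~$3$ to the explicit polynomials $(t^{2g+1}-1)/(t-1)$ and $t^{2g}+1$, which immediately forces the prime-to-$2$ and prime-to-$3$ parts of a hypothetical bad index $n$ to divide $2g+1$ and $4g$ respectively; combined with the $\Phi_{p^s}(1)=p$ trick this pins down $n=2^a3^b$ with $a=v_2(g)+2\ge 2$ and $b\ge 1$. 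The endgame via the M\"obius relation $\zeta=(2\omega+1)/(2+\omega)$ and the cyclotomic-degree inequality $\varphi(2^a3^b)>\varphi(3^b)$ is clean and avoids all the angle-chasing. Each approach has its virtue: the paper's is self-contained and follows an existing template (Shin), while yours is more conceptual, exploits the arithmetic of the coefficients directly, and would adapt more readily to other digit polynomials whose interior coefficients share a common prime factor.

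One small caution on your closing remark about the dual fraction $\tilde r_g=(2g-1)/(2g)$: the mod-$3$ reduction there indeed gives $(t^{2g+2}-1)/(t^2-1)$, and $k\mid\gcd(2g+1,2g+2)=1$ as you say, but the residual case $n=2^a3^b$ does not collapse quite as instantly as you suggest (one still needs a short field-degree or M\"obius argument analogous to your main endgame). This does not affect the lemma itself, which concerns only $d_g$.
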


\begin{proof}
Since $\lambda_g$ is necessarily Galois conjugate to $\lambda_g^{-1}$, any factor of $d_g(t)$ other than the minimal polynomial of $\lambda_g$ must have all roots on the unit circle, and therefore must be cyclotomic, by Kronecker's theorem. Suppose therefore that $e^{2\pi i/m}$ is a root of $d_g(t)$. Then we have 

\[
D_g(e^{2\pi i/m})=e^{(2g+1)\cdot 2\pi i/m}-2e^{2g \cdot 2\pi i/m}-2e^{2 \pi i/m}+1=0
\]

\noindent We take real and imaginary parts to obtain the system of equations

\begin{equation}\label{Salem4a}
\begin{cases}
\cos \left ( \frac{(2g+1)2\pi}{m} \right ) - 2 \cos \left ( \frac{2g \cdot 2\pi}{m} \right ) - 2 \cos \left ( \frac{2\pi}{m} \right ) + 1=0\\
\sin \left ( \frac{(2g+1)2\pi}{m} \right ) - 2 \sin \left ( \frac{2g \cdot 2\pi}{m} \right ) - 2 \sin \left ( \frac{2\pi}{m} \right )=0
\end{cases}
\end{equation}\

For the first equation, we use the formula $\cos(2x)=2\cos^2(x)-1$ for the first cosine term and the formula $\cos(a)+\cos(b)=2\cos(\frac{a+b}{2})\cos(\frac{a-b}{2})$ for the latter two terms to obtain

\begin{equation}\label{Salem4b}
\cos\left ( \frac{(2g+1)\pi}{m} \right ) \left [ \cos \left ( \frac{(2g+1)\pi}{m} \right ) - 2\cos \left ( \frac{(2g-1)\pi}{m} \right ) \right ] = 0
\end{equation}

\noindent Similarly for the second equation in (\ref{Salem4a}) we use the formula $\sin(2x)=2\sin(x)\cos(x)$ and the formula $\sin(a)+\sin(b)=2\sin(\frac{a+b}{2})\cos(\frac{a-b}{2})$ to find

\begin{equation}\label{Salem4c}
\sin \left ( \frac{(2g+1)\pi}{m} \right ) \left [ \cos \left ( \frac{(2g+1)\pi}{m} \right )-2\cos \left ( \frac{(2g-1)\pi}{m} \right ) \right ]=0
\end{equation}

Since $\sin(x)$ and $\cos(x)$ do not have any common roots, it must be the case that 

\[
\cos \left ( \frac{(2g+1)\pi}{m} \right ) - 2\cos \left ( \frac{(2g-1)\pi}{m} \right )=0
\]

\noindent Setting $\varphi=\tfrac{(2g-1)\pi}{m}$ we rewrite this as 

\begin{equation}\label{Salem4d}
\cos \left (\varphi+\frac{2\pi}{m} \right )-2\cos(\varphi)=0
\end{equation}

\noindent It follows that $-1/2 \leq \cos(\varphi) \leq 1/2$. In other words, since $\cos(\pi/3)=1/2$ and $\cos(2\pi/3)=-1/2$ we must have

\[
-\frac{2\pi}{3} \leq \varphi \leq -\frac{\pi}{3} \ \text{or} \ \frac{\pi}{3} \leq \varphi \leq \frac{2\pi}{3}
\]

\noindent Moreover, by equation (\ref{Salem4d}), $\varphi$ and $\varphi+\tfrac{2\pi}{m}$ are angles on the same side of the $y$-axis, since their cosines have the same sign. We claim that $\varphi$ must be in either the second or fourth quadrant. Suppose for contradiction that $\varphi$ is in the first quadrant, so that $\tfrac{\pi}{3} \leq \varphi \leq \tfrac{\pi}{2}$. We may assume $m \geq 3$, since $x=\pm 1$ are clearly not roots of $d_g$, and thus $\tfrac{2\pi}{m} < \pi$. In particular, $\varphi+\tfrac{2\pi}{m} < \tfrac{\pi}{2}+\pi=\tfrac{3\pi}{2}$, hence cannot be in the fourth quadrant.

Since both $\varphi$ and $\varphi+\tfrac{2\pi}{m}$ are on the same side of the $y$-axis and we assumed $\varphi$ is in the first quadrant, it follows that $\tfrac{\pi}{3} \leq \varphi+\tfrac{2\pi}{m}\leq \tfrac{\pi}{2}$ as well. Since $\cos(x)$ is decreasing on this interval we have

\[
\cos(\varphi)>\cos \left (\varphi+\frac{2\pi}{m} \right ) > 0 \implies 2 \cos(\varphi)>\cos \left (\varphi+\frac{2\pi}{m} \right),
\]

\noindent contradicting equation (\ref{Salem4d}). A similar argument shows that $\varphi$ cannot be in the third quadrant. Thus we revise the restrictions on $\varphi$ to be

\[
-\frac{\pi}{2} < \varphi \leq -\frac{\pi}{3} \ \text{or} \ \frac{\pi}{2} < \varphi \leq \frac{2\pi}{3}
\]

\noindent Appealing to the formula $\cos(\theta)=\sin(\theta+\pi/2)$, we may equivalently consider the equation $\sin(\psi+2\pi/m)-2\sin(\psi)=0$ with

\[
0 < \psi \leq \frac{\pi}{6} \ \text{or} \ \pi < \psi \leq \frac{7\pi}{6}
\]

Suppose $\psi$ is in the first quadrant and write

\[
\psi=\varphi+\frac{\pi}{2}=\frac{(2g-1)\pi}{m}+\frac{\pi}{2} \equiv \frac{j\pi}{2m} \pmod{2\pi}
\]

\noindent for some positive integer $j \leq 2m-1$ such that $0<\tfrac{j\pi}{2m} \leq \tfrac{\pi}{6}$. Using the subadditivity of $\sin(x)$ on $[0,\pi]$ now gives

\begin{align*}
\sin \left ( \psi+\frac{2\pi}{m} \right)-2\sin(\psi) & \leq \sin(\psi)+\sin \left ( \frac{2\pi}{m} \right ) - 2\sin(\psi) \\
& = \sin \left ( \frac{2\pi}{m} \right ) - \sin \left ( \frac{j\pi}{2m} \right)
\end{align*}

\noindent This expression cannot be zero unless $j=4$ because of the restriction on $j$. In this case,

\begin{align*}
\sin \left ( \psi+\frac{2\pi}{m} \right ) -2\sin(\psi) & = \sin \left ( \frac{4\pi}{m} \right)-2\sin \left ( \frac{2\pi}{m} \right ) \\
& = 2\sin \left ( \frac{2\pi}{m} \right) \cos \left ( \frac{2\pi}{m} \right ) - 2 \sin \left ( \frac{2\pi}{m} \right )\\
& = 2\sin \left ( \frac{2\pi}{m} \right) \left [ \cos \left ( \frac{2\pi}{m} \right ) - 1 \right ]
\end{align*}

\noindent This quantity can only be zero if $m=1$, which we quickly rule out after observing that $d_g(1)=6g-1$. The same argument works if we assume $\psi$ is in the third quadrant. Therefore $d_g(t)$ has no cyclotomic factor, and hence is irreducible.
\end{proof}


\subsection{Completing the proof of Theorem \ref{Salem}}


\begin{proof}[Proof of Theorem \ref{Salem}]
Lemma \ref{Salem4} proves statement (1). Since the $\lambda_g$-uniform expander $f_g$ corresponds to the fraction $q_g=\frac{1}{2g}$, the point $x=1$ is periodic of length $2g+1$. Each of these points lifts to a one-pronged singularity of the pseudo-Anosov $\phi_g$ on a punctured sphere, as does the fixed postcritical point $x=0$. There are no other one-pronged singularities, so taking the double cover of the surface by branching at each of these $2g+2$ points produces a surface on which the lift of each point is a flat point, i.e. has cone angle $2\pi$. The only other cone point downstairs is the fixed point at infinity, with cone angle $2g \cdot \pi$. This point lifts to two points of angle $g \cdot 2\pi$.

The derivative of the lifted pseudo-Anosov is 

\[
D\tilde{\phi_g}=\begin{pmatrix}
\lambda_g & 0 \\ 0 & \lambda_g^{-1}
\end{pmatrix},
\] 

\noindent an element of the Veech group $\SL(X_g, \omega_g)$. The trace of this matrix is $\lambda_g+\lambda_g^{-1}$, contained in the trace field $K_g$ by definition. But now

\[
g \geq [K_g:\bbQ] \geq [\bbQ(\lambda_g+\lambda_g^{-1}):\bbQ]=g,
\]

\noindent so in fact we have $[K_g: \bbQ]=g$, implying that $(X_g, \omega_g)$ is algebraically primitive.
\end{proof}

\begin{figure}[h!]
\centering
\includegraphics[scale=.4]{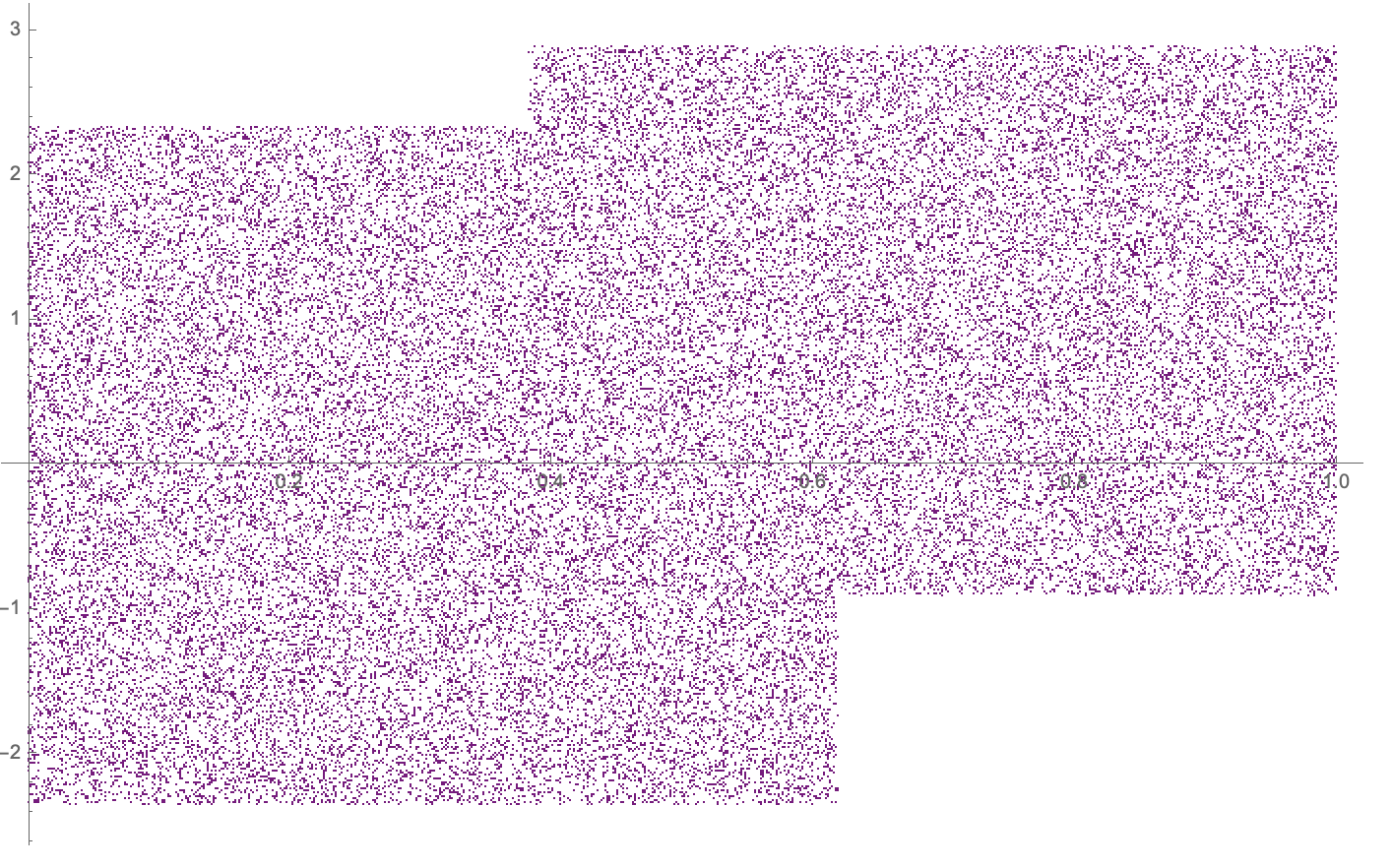}
\caption{The limit set of the Galois lift of $f_1$. This glues to a sphere with 4 marked points, which then lifts to a torus. The lift of the pseudo-Anosov $\phi_1$ is a linear Anosov diffeomorphism of the torus with stretch factor $\lambda_1=\frac{3+\sqrt{5}}{2}$, hence is conjugate to Arnold's cat map.}
\end{figure}

\begin{figure}
\centering
\includegraphics[scale=.4]{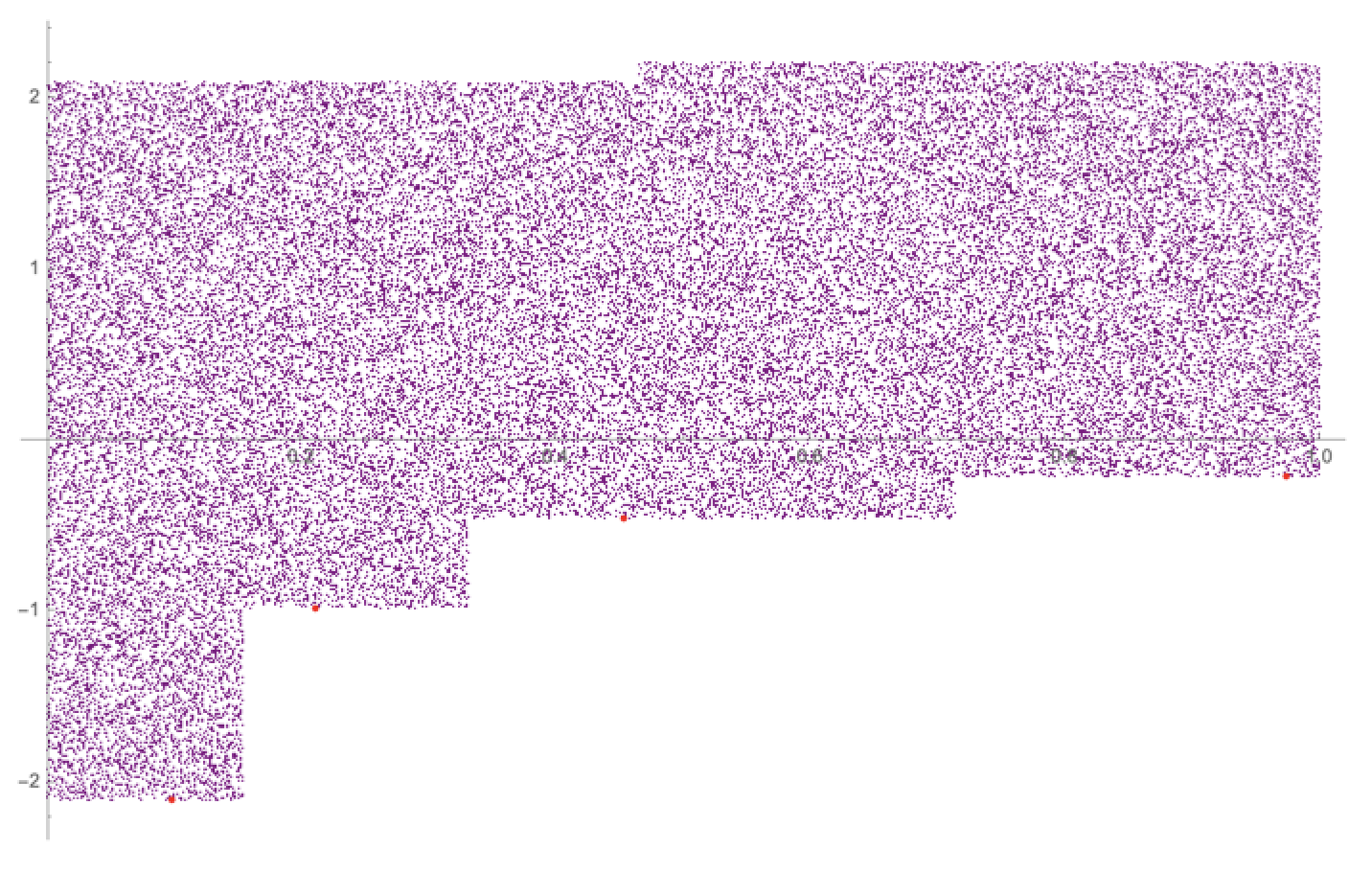}
\caption{The limit set of the Galois lift of $f_2$. This glues to a sphere with 6 cone points of angle $\pi$, one at the center of each vertical edge. Taking the double cover of this sphere branched at the 6 cone points produces a genus 2 surface.}
\end{figure}

\begin{figure}
\centering
\includegraphics[scale=.4]{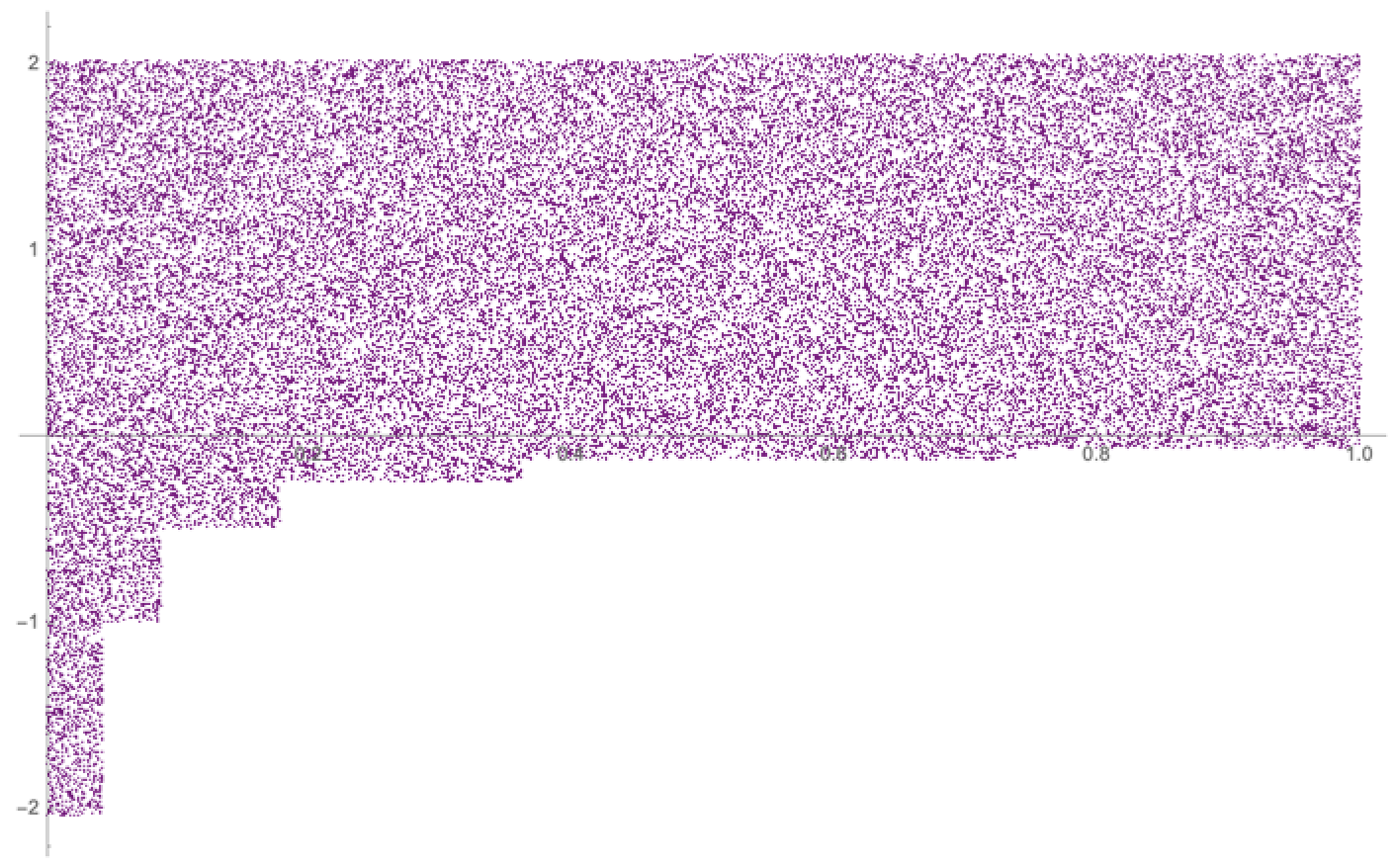}
\caption{The limit set of the Galois lift of $f_3$. This glues to a sphere with 8 marked points, which then lifts to a genus 3 surface.}
\end{figure}

\newpage

\bibliography{Expander}
\bibliographystyle{alpha}

\end{document}